\numberwithin{equation}{section}
\theoremstyle{plain}
\newtheorem{theorem}[equation]{Theorem}%[section]
\newtheorem{lemma}[equation]{Lemma}
\newtheorem{corollary}[equation]{Corollary}
\newtheorem{proposition}[equation]{Proposition}
\theoremstyle{definition}
\newtheorem{definition}[equation]{Definition}
\newtheorem{conjecture}[equation]{Conjecture}
\newtheorem{construction}[equation]{Construction}
\newtheorem{remark}[equation]{Remark}
 \DeclareMathOperator{\Spec}{Spec}
 \DeclareMathOperator{\Spf}{Spf}
 \DeclareMathOperator{\Hom}{Hom}
 \DeclareMathOperator{\Aut}{Aut}
\DeclareMathOperator{\Stab}{Stab}
\DeclareMathOperator{\Frob}{Frob}
\DeclareMathOperator{\F}{F}
\DeclareMathOperator{\Trans}{Trans}
\DeclareMathOperator{\Tr}{Tr}
\DeclareMathOperator{\Img}{Im}
\let\into\hookrightarrow
\newcommand{\defeq}{\colonequals}
\newcommand{\Gm}[1][\empty]{
  \ifthenelse{\equal{#1}{\empty}}
    {\mathbb{G}_m}
    {\mathbb{G}_{m,#1}}}
 \newcommand{\Gred}[1][\empty]{
  \ifthenelse{\equal{#1}{\empty}}
    {G^{\text{red}}}
    {G^{\text{red},#1}}}
 \newcommand{\Rep}[1][\empty]{
  \ifthenelse{\equal{#1}{\empty}}
    {\mathop{\text{\tt Rep}}\nolimits}
    {\mathop{\text{$#1$-{\tt Rep}}}\nolimits}}
\newcommand{\per}{\text{per}}
\DeclareMathOperator{\AFS}{AFS}
\DeclareMathOperator{\Max}{Max}
\DeclareMathOperator{\Tor}{Tor}
\newcommand\lto{\longrightarrow}
\newcommand\ltoover[1]{\mathrel{\smash{\overset{#1}{\lto}}}}
\newcommand\toover[1]{\mathrel{\smash{\overset{#1}{\to}}}}
\newcommand\varto[1]{\mathrel{\hbox to #1pt{\rightarrowfill}}}
\renewcommand{\implies}{\Rightarrow}
\renewcommand{\iff}{\Leftrightarrow}
\let\longto\longrightarrow
\def\isoto{\stackrel{\sim}{\longto}}
\newcommand{\BF}{{\mathbb{F}}}
\newcommand{\BQ}{{\mathbb{Q}}}
\newcommand{\BZ}{{\mathbb{Z}}}
\newcommand{\Fm}{{\mathfrak{m}}}
\newcommand{\Fn}{{\mathfrak{n}}}
\newcommand{\Fp}{{\mathfrak{p}}}
\newcommand{\Fq}{{\mathfrak{q}}}
\newcommand{\CA}{{\mathcal A}}
\newcommand{\CB}{{\mathcal B}}
\newcommand{\CI}{{\mathcal I}}
\newcommand{\CO}{{\mathcal O}}
\newcommand{\CT}{{\mathcal T}}
\newcommand{\CX}{{\mathcal X}}
\newcommand{\CY}{{\mathcal Y}}
\newcommand{\Gscr}{{\mathscr G}}
\newcommand{\Hscr}{{\mathscr H}}
\newcommand{\Xscr}{{\mathscr X}}
\newcommand{\Yscr}{{\mathscr Y}}
\let\phi\varphi
\newcommand{\Rper}{{R^\text{per}}}
\begin{document}
\title{Mordell-Lang in positive characteristic}

\author{Paul Ziegler\footnote{Dept. of Mathematics,
 ETH Z\"urich
 CH-8092 Z\"urich,
 Switzerland,
 {\tt paul.ziegler@math.ethz.ch}
\newline
Supported by the Swiss National Science Foundation}
}
%\date{Dezember 8, 2011}

\maketitle
\abstract{We give a new proof of the Mordell-Lang conjecture in positive characteristic for finitely generated subgroups. We also make some progress towards the full Mordell-Lang conjecture in positive characteristic.}

\section{Introduction}
\label{sec:introduction}

This article gives an algebro-geometric proof of the following result:
\begin{theorem}[{Hrushovski, \cite{HrushovskiML}}] \label{IntroML}
  Let $L$ be an algebraically closed field of characteristic $p > 0$. Let $A$ be a semiabelian variety over $L$, let $X\subset A$ be an irreducible subvariety and let $\Gamma\subset A(L)$ be a finitely generated subgroup. If $X(L)\cap \Gamma$ is Zariski dense in $X$, then there exist a semiabelian variety $B$ over $\bar \BF_p$, a subvariety $Y$ of $B$ over $\bar \BF_p$, a homomorphism $h\colon B_L\to A/\Stab_A(X)$ with finite kernel and an element $a\in (A/\Stab_A(X))(L)$ such that $X/\Stab_A(X)=h(Y)+a$.
\end{theorem}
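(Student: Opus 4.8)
The plan is to keep the overall architecture of Hrushovski's proof — reduce to a finitely generated base field, split off the constant part via the Chow trace, and then settle a dichotomy for subvarieties of a semiabelian variety with trivial $\bar\BF_p$-trace — but to carry out the dichotomy by algebraic geometry (the relative Frobenius and the Galois action on division points) in place of the model theory of difference fields. Concretely: replacing $A$ by $A/\Stab_A(X)$ and $X$ by its image, I may assume $\Stab_A(X)=0$, so it suffices to produce $B$ over $\bar\BF_p$, a subvariety $Y\subseteq B$, a homomorphism $h\colon B_L\to A$ with finite kernel and a point $a\in A(L)$ with $X=h(Y)+a$. Choosing $x_0\in X\cap\Gamma$, translating by $-x_0$, replacing $A$ by the semiabelian subvariety generated by $X-X$ and $\Gamma$ by its intersection with it, I reduce to the case that $X$ generates $A$; this changes neither the stabilizer nor the Zariski density hypothesis. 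Since $\Gamma$ is finitely generated and $A,X$ are defined over a finitely generated field, I may take a subfield $k\subseteq L$ finitely generated over $\BF_p$ over which $A$ and $X$ are defined and with $\Gamma\subseteq A(k)$. In this situation the conclusion is equivalent to the statement that $A$ becomes isogenous over $L$ to a semiabelian variety defined over $\bar\BF_p$ while $X$ becomes a translate of a subvariety defined over $\bar\BF_p$; when the $\bar\BF_p$-trace of $A$ is trivial it says exactly that $X$ is a point.

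\textbf{Separating the constant part.} By the theory of the Chow trace, together with the Lang--N\'eron theorem, $A$ is isogenous to a product of a semiabelian variety defined over $\bar\BF_p$ and one of trivial $\bar\BF_p$-trace, whose group of $k$-points is moreover finitely generated. Inducting on the dimension of the trivial-trace factor, one reduces to two cases. In the purely constant case, $\Gamma\cap A(\bar\BF_p)$ is finite (being finitely generated and torsion), and a form of the Manin--Mumford statement for constant semiabelian varieties — proved by exploiting the Frobenius endomorphism attached to a finite field of definition of $A$ — shows that $X/\Stab_A(X)$ is a translate of a subvariety over $\bar\BF_p$. The remaining case, trivial trace, is treated below; fibering $X$ over the constant and the moving part and combining the two cases (together with the elementary fact that a coset with trivial stabilizer is a point) yields the general statement, with $B,Y,h,a$ read off from the isogeny decomposition and the intervening translations.

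\textbf{The trivial-trace case.} Here $\Gamma\subseteq A(k)$ is finitely generated by Lang--N\'eron, and the claim is that if $X$ generates $A$, $\Stab_A(X)=0$ and $X\cap\Gamma$ is Zariski dense in $X$, then $X$ is a point. The mechanism is a dichotomy between group-like and field-like behaviour. On the group-like side, multiplication by $p$ on $A$ factors through the relative Frobenius $A\to A^{(p)}$, so iterating relates $X$ to its Frobenius twists $X^{(p^m)}\subseteq A^{(p^m)}$; by semisimplicity up to isogeny and triviality of the trace, these twists together with the multiplication maps provide an essentially unbounded supply of pairwise independent translates of $X$. One then shows, by a rigidity argument of Bogomolov/socle type together with a Kummer-type openness statement for the prime-to-$p$ division hull of $\Gamma$ (valid precisely because the trace is trivial), that a positive-dimensional $X$ with trivial stabilizer cannot remain Zariski dense near a finitely generated group once invariance under all of these translates is imposed — unless $X$ is itself isotrivial, i.e.\ descends to $\bar\BF_p$. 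Triviality of the trace excludes the latter for $\dim X\ge 1$, forcing $X$ to be a point.

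\textbf{Main obstacle.} The difficulty is concentrated in the trivial-trace case: one must replace the Chatzidakis--Hrushovski classification of finite-rank types over difference-closed fields by a direct geometric argument. The two delicate points are ruling out unexpected algebraic correspondences between $X$ and its Frobenius twists, where inseparability obstructs the usual characteristic-zero reasoning, and controlling the Kummer image in characteristic $p$ — both being exactly where the constant field re-enters. The argument must therefore be organised so that any failure of the conclusion is localised onto a constant subvariety, which then feeds back into the induction of the previous step.
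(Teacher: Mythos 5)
Your write-up reproduces the overall shell of Hrushovski's strategy (reduce to a finitely generated base, split off the constant part by the Chow trace, then handle the trivial-trace case), but the step that actually carries the theorem is not proved. In the trivial-trace case you assert that Frobenius twists supply ``pairwise independent translates'' of $X$, and that ``a rigidity argument of Bogomolov/socle type together with a Kummer-type openness statement for the prime-to-$p$ division hull of $\Gamma$'' forces a positive-dimensional $X$ with trivial stabilizer to be isotrivial or a point. No such rigidity statement is formulated, let alone proved, and your own closing paragraph concedes that precisely these two points (correspondences with Frobenius twists in the presence of inseparability, and control of the Kummer image in characteristic $p$) are open in your approach. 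As it stands this is a restatement of the difficulty, not an argument: the model-theoretic dichotomy of Chatzidakis--Hrushovski has been removed without anything concrete put in its place. There is a second, more local error: in the ``purely constant'' case you invoke Manin--Mumford after observing that $\Gamma\cap A(\bar\BF_p)$ is finite, but the density hypothesis concerns $X(L)\cap\Gamma$ with $\Gamma\subset A(L)$ finitely generated and in general non-torsion, so Manin--Mumford is not applicable; the isotrivial case is a genuine theorem (Abramovich--Voloch) whose proof already needs the descent-through-$p$-th-powers argument.

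For comparison, the paper avoids the trace decomposition and the trivial-trace dichotomy entirely. It spreads $A$ out to a semiabelian scheme $\CA$ over the valuation ring $R$ of a local field with $\Gamma\subset\CA(R)$, arranges (up to isogeny) that the formal completion $\hat\CA$ is completely slope divisible, and uses this to construct a canonical Frobenius endomorphism $F_{\hat\CA}$ of $\hat\CA_{\Spf(\Rper)}$ (Construction \ref{FConstr}). A formal Mordell--Lang theorem (Theorem \ref{FormalML}), proved by exactly the Abramovich--Voloch descent trick (translating so that $p^i\Gamma$-points stay dense and descending to $\cap_i R^{q^i}=k$), shows that the formal schematic closure of $\Gamma\cap\CX(R)$ is, after translation, defined over $k$ and hence $F_{\hat\CA}$-invariant; the Pink--R\"ossler classification of isogeny-invariant subvarieties (via Theorem \ref{InfSpecialCrit} and Theorem \ref{SpecialFChar}) then converts this formal Frobenius-invariance into specialness of $X$. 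If you want to salvage your outline, the missing trivial-trace dichotomy would have to be replaced by (or reduced to) a statement of this kind; the Kummer-theoretic and twist-correspondence issues you flag do not otherwise resolve themselves.
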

Here $\Stab_A(X)$ denotes the translation stabilizer of $X$ in $A$. We call irreducible subvarieties $X$ satisfying the conclusion of the above theorem \emph{special}. 

An algebro-geometric proof of Theorem \ref{IntroML} has previously been given by R\"ossler \cite{Roessler1}.

We give an overview of the structure of this article. In Section \ref{sec:FormalSchemes} we collect a number of facts about a class of formal schemes which includes the closed formal subschemes of the completion of a semiabelian scheme over a discrete valuation ring along its zero section. In Section \ref{sec:SpecialSubvarieties} we collect some facts about special subvarieties and give a criterion for special subvarieties (Theorem \ref{InfSpecialCrit}) based on the classification of subvarieties of semiabelian subvarieties which are invariant under an isogeny due to Pink and R\"ossler. 

In Section \ref{sec:setup} we set up our method for proving Theorem \ref{IntroML}. In Subsection \ref{sec:CSD} we collect some facts about certain $p$-divisible groups. These are $p$-divisible groups which possess a filtration such that on each graded piece, a power of the relative Frobenius coincides, up to an isomorphism, with a power of the multiplication-by-$p$ morphism. 

In Subsection \ref{sec:NiceSetup}, we construct our central tool, a certain Frobenius morphism $F$: Let $R$ be the valuation ring of a local field $K$ of positive characteristic, let $k$ be the residue field of $R$, let $\bar K$ be an algebraic closure of $K$ and let $\Rper\subset \bar K$ be the perfection of $R$. Let $\CA$ be a semiabelian scheme over $R$. Denote by $\hat\CA$ the completion of $\CA$ along the zero section of the special fiber. This is a $p$-divisible group over $\Spf(R)$. Assume that $\hat\CA$ is completely slope divisible. Then the facts from Subsection \ref{sec:CSD} yield a canonical isomorphism $(\hat\CA_k)_{\Spf(\Rper)} \cong \hat\CA_{\Spf(\Rper)}$. By transfering the Frobenius endomorphism of $\hat\CA_k$ with respect to $k$ via this isomorphism, we obtain an endomorphism $F$ of $\hat\CA_\Rper$. Its significance lies in the following characterization of special subvarieties of $\CA_{\bar K}$, where for a subvariety $\CX$ of $\CA$ containing the zero section we denote by $\hat \CX$ its completion along the zero section:
\begin{theorem}[see Theorem \ref{SpecialFChar}]\label{SpecialFCharIntro}
     Let $\CX\subset \CA$ be an irreducible subvariety. Then the following are equivalent:
  \begin{enumerate}[(i)]
  \item The subvariety $\CX_{\bar K}$ is special in $\CA_{\bar K}$.
  \item There exist $x\in \CX(\bar K)$ and $n\geq 1$ such that $F^n_{\hat\CA}(\widehat{X-x})\subset\widehat{X-x}$.
  \end{enumerate}

\end{theorem}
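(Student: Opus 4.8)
The plan is to treat the two implications separately. The implication (ii)\,$\Rightarrow$\,(i) should follow almost formally from the criterion for specialness established in Theorem~\ref{InfSpecialCrit} (which is where the Pink--R\"ossler classification of isogeny-invariant subvarieties is put to work), while (i)\,$\Rightarrow$\,(ii) should be read off directly from the explicit shape of special subvarieties given in Theorem~\ref{IntroML}. Two structural remarks about $F$ will be used throughout. First, $\hat\CA$ is a \emph{connected} $p$-divisible group, being the completion of $\CA$ along its zero section, so all its slopes lie in $(0,1]$; by the description of $F$ on the graded pieces of the slope filtration in Subsection~\ref{sec:CSD} it is an isogeny of $\hat\CA_{\Rper}$ whose eigenvalues all have positive valuation, and in particular are never roots of unity, so it meets the genericity hypotheses that feed the classification. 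Second, by rigidity of $p$-divisible groups (a homomorphism over $\Rper$ is determined by its reduction to $k$, and base change provides a section), the canonical isomorphism $(\hat\CA_k)_{\Spf(\Rper)}\cong\hat\CA_{\Spf(\Rper)}$ identifies $\End(\hat\CA_{\Rper})$ with $\End(\hat\CA_k)$ and carries $F$ to the Frobenius of $\hat\CA_k$; consequently powers of $F$ are the only endomorphisms of $\hat\CA_{\Rper}$ lifting powers of that Frobenius, and the formation of $F$ is compatible with every homomorphism of completely slope divisible $p$-divisible groups over $R$ (both sides of the relevant square reduce on the special fibre to the functoriality of Frobenius).

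For (ii)\,$\Rightarrow$\,(i): given $x$ and $n$ with $F^n_{\hat\CA}(\widehat{X-x})\subset\widehat{X-x}$, the morphism $F^n$ is an isogeny of the connected $p$-divisible group $\hat\CA_{\Rper}$ stabilising the completion of the translate $X-x$; this is exactly the input required by Theorem~\ref{InfSpecialCrit}, which then yields that $\CX_{\bar K}$ is special. (If that criterion is instead formulated for isogenies defined over $R$ rather than over $\Rper$, a descent or base-enlargement step is needed first; and if it is formulated in terms of honest isogenies of $\CA_{\bar K}$ rather than of $\hat\CA$, one additionally needs an algebraisation step, which I would then expect to be the crux of this direction.)

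For (i)\,$\Rightarrow$\,(ii): write $\CX_{\bar K}/\Stab_A(X)=h(Y)+a$ as in Theorem~\ref{IntroML}, with $A=\CA_{\bar K}$, with $B$ and $Y$ over $\bar\BF_p$ and with $h\colon B_{\bar K}\to A/\Stab_A(X)$ of finite kernel. I would first reduce to $\Stab_A(X)=0$: after a finite extension of $R$ the stabiliser extends to a semiabelian subscheme $\Sscr\subset\CA$, the quotient $\CA/\Sscr$ is again semiabelian with completely slope divisible completion (its slope filtration being induced by that of $\hat\CA$), the epimorphism $\hat\CA\twoheadrightarrow\widehat{\CA/\Sscr}$ carries $F_{\hat\CA}$ to $F_{\widehat{\CA/\Sscr}}$, and $\widehat{X-x}$ is the preimage of the completion of $(X/\Stab_A(X))-\bar x$, so condition (ii) holds for $\CX$ if and only if it holds for $\CX/\Stab_A(X)$. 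Assuming then $\Stab_A(X)=0$, after a further finite extension $h$ extends to an isogeny $h_R$ of semiabelian schemes over $R$, and I would choose $x:=h(y_0)+a$ with $y_0\in Y(\bar\BF_p)$, so that $X-x=h(Y-y_0)$ with $Y-y_0$ an irreducible subvariety of $B$ containing the origin and defined over a finite field. A suitable power of the corresponding geometric Frobenius of $B$ then fixes $Y-y_0$, hence fixes its completion along the origin. Since $Y-y_0$ lives in a constant semiabelian scheme, the $F$ attached to that scheme is the base change of the geometric Frobenius of $\hat B$; since $\hat h_R$ is an isogeny of connected $p$-divisible groups it intertwines the two copies of $F$ by the compatibility above; and the completion of $h(Y-y_0)$ is the image under $\hat h_R$ of the completion of $Y-y_0$ (Section~\ref{sec:FormalSchemes}). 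Putting these together shows that a power of $F_{\hat\CA}$ stabilises $\widehat{X-x}$.

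The main obstacle, taking Theorem~\ref{InfSpecialCrit} as a black box, is the package of compatibilities entering (i)\,$\Rightarrow$\,(ii) rather than any single estimate: that complete slope divisibility is inherited by the quotient $\CA/\Sscr$, that $F$ is compatible with the isogeny $h$ — which mixes the generic fibre $A$ with a group coming from $\bar\BF_p$ — and, most fundamentally, that the canonically constructed $F$ of a constant semiabelian scheme over $\bar\BF_p$ is simply its geometric Frobenius. This last identification is the real reason varieties over $\bar\BF_p$ give rise to special subvarieties, and checking it cleanly against the construction of Subsection~\ref{sec:NiceSetup} is where I expect the actual work to lie; the deep ingredient, the Pink--R\"ossler classification, has already been absorbed into Theorem~\ref{InfSpecialCrit} and enters only the easy implication.
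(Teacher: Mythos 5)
Your overall route for (i)$\Rightarrow$(ii) is the paper's (Theorem \ref{SpecialFChar}): kill the stabiliser, spread $h$ out over $R$, use that $\hat Y$ is Frobenius-invariant and that $F$ is compatible with homomorphisms. But there is a genuine gap at your step ``the completion of $h(Y-y_0)$ is the image under $\hat h_R$ of the completion of $Y-y_0$''. For an arbitrary $y_0\in Y(\bar\BF_p)$ this is unjustified and can fail: $\widehat{X-x}$ is the completion of the schematic closure $\CX-x$ along a point of the special fibre, and at a point where $\CX$ is not smooth over $R$ this completion may well be reducible, while the formal schematic image of $\widehat{Y-y_0}$ (coming from a single point of $h^{-1}(x)$) is a priori only some closed formal subscheme of it; knowing that a proper closed formal subscheme of $\widehat{X-x}$ is $F^n$-invariant does not give (ii). The paper has to choose the translation point with care: it picks $y\in Y(\bar k)$ such that $\CX$ is smooth over $R$ at $h(y)$, so that $\widehat{\CX-h(y)}\cong\Spf(R[[x_1,\hdots,x_n]])$ is irreducible by Proposition \ref{HatXProps}, and such that $h$ is flat at $y$, so that Proposition \ref{CompletionImage} shows every irreducible component of the formal image is a component of the (irreducible) target; only then does $\widehat{X-x}=\hat h(\widehat{Y-y})$ hold and the Frobenius-invariance transport. (A smaller slip: $h$ need not be an isogeny, it only has finite kernel and $\dim B$ may be smaller than $\dim A$; this is harmless, since Proposition \ref{FProperties}(iii) gives $F$-equivariance for arbitrary homomorphisms of nice semiabelian schemes, not just isogenies.)

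In the direction (ii)$\Rightarrow$(i) there is a second gap: Theorem \ref{InfSpecialCrit} does not accept a formal subscheme as input. It requires a subgroup $G\subset A(L)$, an endomorphism $\Phi$ of $G$ annihilated by a good polynomial in $\BZ[t]$, and a Zariski dense subset $T\subset G\cap X(L)$ with $\Phi(T)\subset T$. The bridge from the formal invariance $F^n(\widehat{X-x})\subset\widehat{X-x}$ to such a $T$ is Proposition \ref{FormalPointsDense}(ii): the set of $\bar R$-points of the completion, $T=\hat\CA(\bar R)\cap(X-x)(\bar K)$, is Zariski dense in $X_{\bar K}-x$ (this is exactly the intermediate condition (iii) of Theorem \ref{SpecialFChar}, proved via excellence and a dimension count). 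Your parenthetical hedge guesses that the possible missing step is an algebraisation or descent of the isogeny; it is not --- what is needed and never invoked in your argument is this density of formal points. Finally, the good polynomial annihilating $F$ should be obtained as in Proposition \ref{FProperties}(i), namely from the characteristic polynomial of the Frobenius of $\CA_k$ acting by functoriality on $\hat\CA_k$, whose roots are Weil numbers and hence not roots of unity; the slope argument you sketch only shows the eigenvalues have positive $p$-adic valuation and does not by itself produce the monic integral polynomial that Theorem \ref{InfSpecialCrit} requires.
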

This can be considered as a formal analogue of the classification of subvarieties of semiabelian subvarieties which are invariant under an isogeny due to Pink and R\"ossler. Finally, in Subsection \ref{sec:NiceValuation}, we show that given a finitely generated field $L_0$ and a semiabelian variety $A$ over $L_0$, up to isogeny one can always spread out $A$ to an abelian scheme $\CA$ as above.

In Section \ref{sec:proof1}, using the methods from Section \ref{sec:setup}, we prove Theorem \ref{IntroML}: First we reduce to the situation considered above which allows us to define $F$. Then using Theorem \ref{SpecialFCharIntro} we show that Theorem \ref{IntroML} follows from a variant of the following formal Mordell-Lang result:
\begin{theorem}[see Theorem \ref{FormalML}]  \label{IntroFormalML}
  Let $\Gscr$ be a formal group over $k$ which as a formal scheme is isomorphic to $\Spf(k[[x_1,\hdots,x_n]])$. Let $\Gamma\subset \Gscr(R)$ a finitely generated subgroup. Let $\Xscr \subset \Gscr_R$ be a closed formal subscheme. If $\Xscr$ is the minimal closed formal subscheme containing $\Xscr(R) \cap \Gamma$, then there exist closed formal subschemes $\Xscr_1,\hdots,\Xscr_m$ of $\Gscr_R$ defined over a finite field extension of $k$ and elements $\gamma_1,\hdots,\gamma_m \in\Gscr(R)$ such that $\Xscr=\cup_j \Xscr_j+\gamma_j$.
\end{theorem}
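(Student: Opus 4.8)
The plan is to derive the assertion from the formal analogue of the Pink--R\"ossler classification of isogeny-invariant subvarieties, in the spirit of Theorem~\ref{SpecialFCharIntro}: after a chain of reductions I will use the hypothesis that $\Xscr$ is minimal among closed formal subschemes containing $\Xscr(R)\cap\Gamma$ to force $\Xscr$ to be invariant, up to translation, under a power of a Frobenius-type endomorphism of $\Gscr$, and then the classification will identify $\Xscr$ as a finite union of translates of subschemes defined over a finite extension of $k$.

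First I would reduce to the case that $\Xscr$ is irreducible: its irreducible components $\Xscr_i$ satisfy $\Xscr(R)\cap\Gamma=\bigcup_i(\Xscr_i(R)\cap\Gamma)$, and since $\Xscr=\overline{\Xscr(R)\cap\Gamma}$ one gets $\Xscr_i=\overline{\Xscr_i(R)\cap\Gamma}$ for each $i$. If $\Xscr(R)\cap\Gamma=\emptyset$ then $\Xscr=\emptyset$ and there is nothing to prove; otherwise I fix $\gamma_0\in\Xscr(R)\cap\Gamma$ and replace $\Xscr$ by $\Xscr-\gamma_0$. This is harmless because $\gamma_0\in\Gamma$ and $\Gamma$ is a subgroup, so $(\Xscr-\gamma_0)(R)\cap\Gamma=(\Xscr(R)\cap\Gamma)-\gamma_0\subseteq\Gamma$, and now the zero section of $\Gscr_R$ lies on $\Xscr$. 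Arguing by induction on $\dim\Gscr$, I may further assume that $\Xscr$ is not contained in any translate of a proper closed formal subgroup of $\Gscr_R$ defined over a finite extension of $k$; otherwise, since the zero section lies on $\Xscr$, the subgroup contains $\Xscr$, it meets $\Gamma$ in a finitely generated group, and the inductive hypothesis applies. In the same spirit, using the facts on formal groups collected in Section~\ref{sec:FormalSchemes}, I divide $\Gscr$ by the largest such subgroup contained in $\Stab_{\Gscr_R}(\Xscr)$: the quotient is again of the required shape and inherits all hypotheses. Finally I replace $k$ by its perfect closure (a subscheme defined over a finite extension of $k^{\mathrm{perf}}$ descends to one over a finite, purely inseparable, extension of $k$) and $\Gscr$ by an isogenous formal group (preimages of translates of constant subschemes under a $k$-isogeny are again such, the relevant translation points lifting because they come from $\Gamma$); since the $R$-points of the quotient of $\Gscr$ by its maximal $p$-divisible formal subgroup are annihilated by $p$, these reductions let me assume that $\Gscr$ is $p$-divisible over the perfect field $k$ and, invoking Section~\ref{sec:CSD}, completely slope divisible.

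Now the relative Frobenius $F\colon\Gscr\to\Gscr^{(p)}\cong\Gscr$ (the last identification using perfectness of $k$) is an isogeny, and by the structure of completely slope divisible groups recalled in Section~\ref{sec:CSD} some power $F^h$ differs from $[p]^d$ only by an automorphism on each graded piece of the slope filtration; in particular $F$ and $[p]$ are commensurable on $\Gscr(R)$ up to automorphisms of $\Gscr$. Since $[p]\Gamma\subseteq\Gamma$, it follows that some power of $F$ carries a finite-index subgroup of $\Gamma$ into a subgroup commensurable with $\Gamma$ --- the formal incarnation of Hrushovski's ``going through Frobenius''. After replacing $\Gamma$ by a finite-index subgroup and $\Xscr$ by a suitable translate (harmless as above), this arranges that $F^N$ maps $\Xscr(R)\cap\Gamma$ into a single coset of $\Xscr(R)\cap\Gamma$ for some $N\geq 1$. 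Because forming the minimal closed formal subscheme containing a given set of $R$-points commutes with the finite morphism $F^N$, a careful treatment of the translations and kernel points involved --- the main technical point, see below --- then yields $F^{N'}(\Xscr)\subseteq\Xscr$ for some $N'\geq 1$; replacing $\Xscr$ by $F^{-MN'}(\Xscr)$ for $M\gg 0$, which stabilises because $R[[x_1,\dots,x_n]]$ is Noetherian, one obtains a closed formal subscheme containing $\Xscr$ that is exactly $F^{-N'}$-invariant, and finite generation of $\Gamma$ shows it equals $\Xscr$. The formal analogue of the Pink--R\"ossler classification --- the input underlying Theorem~\ref{InfSpecialCrit} and Theorem~\ref{SpecialFCharIntro} --- applied to this $F^{-N'}$-invariant subscheme then exhibits $\Xscr$ as a finite union of translates of subschemes defined over a finite extension of $k$, which is the claim.

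I expect the main obstacle to be precisely the passage from the group-theoretic statement (a power of Frobenius almost preserves $\Xscr(R)\cap\Gamma$) to the clean scheme-theoretic invariance $F^{N'}(\Xscr)\subseteq\Xscr$: the finitely many cosets and kernel points that appear when $F$ and the scheme-theoretic preimages under its powers are applied must be absorbed, and it is exactly the minimality of $\Xscr$, together with the Noetherian descending-chain argument on ideals of formal power series rings and the stabiliser reduction carried out above, that makes this possible. The mixed-slope case --- where $F^h$ agrees with $[p]^d$ only after an automorphism twist, so that Frobenius does not literally stabilise $\Gamma$ --- is the most delicate, and is where the detailed structure of completely slope divisible formal groups from Section~\ref{sec:CSD} is genuinely needed.
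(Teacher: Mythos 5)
Your proposal does not close the argument, and the route it takes is essentially the reverse of what is needed. The decisive step --- passing from ``a power of the relative Frobenius isogeny $F$ of $\Gscr$ almost preserves $\Xscr(R)\cap\Gamma$'' to $F^{N'}(\Xscr)\subseteq\Xscr$, and then to the conclusion via ``the formal analogue of the Pink--R\"ossler classification'' --- rests on a result that does not exist in the paper and is not otherwise available: Theorem \ref{SpecialFChar} (and Theorem \ref{InfSpecialCrit} behind it) classifies \emph{algebraic} subvarieties of semiabelian varieties using the actual Pink--R\"ossler theorem; there is no classification of arbitrary closed formal subschemes of a formal group invariant under a Frobenius isogeny, and proving one would be at least as hard as the theorem you are asked to prove. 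Even the group-theoretic input is unjustified: complete slope divisibility gives $F^{s}=[p^{r}]$ up to isomorphism only on the graded pieces, and those twisting isomorphisms need not preserve an arbitrary finitely generated $\Gamma\subset\Gscr(R)$, so there is no reason a power of $F$ carries a finite-index subgroup of $\Gamma$ into a group commensurable with $\Gamma$; you yourself flag the subsequent passage to scheme-theoretic invariance as an open obstacle. A further reduction is simply false: the quotient of a formal group over a perfect field by its maximal $p$-divisible formal subgroup need not have $R$-points killed by $p$ (take formal Witt vectors of length $2$, where $p\cdot(a_0,a_1)=(0,a_0^{p})$), so you cannot discard the unipotent part this way --- and in fact no $p$-divisibility or slope-divisibility hypothesis is needed at all.

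The missing idea is that, because $\Gscr$ is already defined over the finite residue field $k=\BF_q$, the correct Frobenius to use is the \emph{semilinear} one acting on coefficients (Lemma \ref{FrobeniusDescent} / Lemma \ref{FormalFieldDefinition}), not the isogeny $F$ of the group. The paper's proof of Theorem \ref{FormalML} is an elementary descent in the style of Abramovich--Voloch: reduce to $\Xscr$ irreducible; using finiteness of $\Gamma/p^{i}\Gamma$, irreducibility (Lemma \ref{IrrUnion}) and a $p$-adic limit of a compatible system of translations, find a single $\gamma$ such that $\Xscr+\gamma$ is the formal schematic closure of its intersection with $p^{i}\Gamma$ for \emph{every} $i$ (your one-time translation by some $\gamma_0\in\Xscr(R)\cap\Gamma$ does not achieve this); then observe via $[q]=F\circ V$ (Lemma \ref{pMult}) that $q^{i}\Gamma\subset\Gscr(R^{q^{i}})$, so Proposition \ref{BaseChangeClosure} makes $\Xscr+\gamma$ defined over $R^{q^{i}}$ for all $i$, hence over $k$ by Lemma \ref{FormalFieldDefinition}. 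None of the machinery you invoke (induction on dimension, stabilizer quotients, isogenies to completely slope divisible groups, a formal Pink--R\"ossler theorem) is needed, and the parts of it that are load-bearing in your sketch are exactly the parts that are unproved.
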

Theorem \ref{IntroFormalML} is proven by the same method which was used by Abramovich and Voloch in \cite{AbramovichVoloch} to prove Theorem \ref{IntroML} in the isotrivial case: First one reduces to the case that $\Xscr$ is irreducible in a suitable sense. Then after a suitable translation one may assume that $\Xscr(R)\cap p^i\Gamma$ is dense is $\Xscr$ for all $i$. Denote by $R^{p^i}$ the subring of $R$ consisting of all $p^i$-th powers. Since $p^i\Gamma\subset \Gscr(R^{p^i})$ it follows that $\Xscr$ is defined over $R^{p^i}$ for all $i\geq 0$. Hence $\Xscr$ is defined over $k=\cap_{i\geq 0}R^{p^i}$.

In Section 5 we consider the full Mordell-Lang conjecture, which is the statement obtained by allowing the group $\Gamma$ in Theorem \ref{IntroML} to be of finite rank. This conjecture is still open in general. We show that in case $A$ is ordinary by combining our method with a reduction due to Ghioca, Moosa and Scanlon, Conjecture \ref{fullML} can be reduced to the following special case:
\begin{conjecture}\label{fullMLreduction2Intro}
  Let $L_0$ be a field which is finitely generated over $\BF_p$, let $L$ an algebraic closure of $L_0$ and let $L_0^\per$ be the perfect closure of $L_0$ in $L$. Let $A$ be a semiabelian variety over $L_0$ and $X \subset A_{L_0^\per}$ an irreducible subvariety. Assume that the canonical morphism $\Tr_{L/\bar \BF_p}A\to A$ is defined over $L_0$, that there exists a finite subfield $\BF_q$ of $L_0$ over which $\Tr_{L/\bar\BF_p}A$ can be defined and that $\Stab_{A_{L_0^\text{per}}}(X)$ is finite. If $X(L_0^\per)$ is Zariski dense in $X_{0}$, then a translate of $X$ by an element of $A(L_0^\text{per})$ is defined over $L_0$.
\end{conjecture}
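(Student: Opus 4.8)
I would first extract a weak form of the conclusion which is essentially formal, so as to locate the genuine difficulty. Since $X(L_0^\per)$ is nonempty, fix $x_0\in X(L_0^\per)$; replacing $X$ by $X-x_0$ changes no hypothesis and only translates the conclusion, so assume $0\in X$. For $m\ge 0$ let $W_m\subseteq X$ be the Zariski closure of the set of $x\in X(L_0^\per)$ all of whose coordinates lie in $L_0^{1/p^m}$; the $W_m$ form an increasing chain of closed subvarieties with union $X$, so by Noetherianity $W_M=X$ for some $M$. Applying the relative $p^M$-power Frobenius $\Frob^M\colon A\to A^{(p^M)}$ over $L_0$ (base-changed to $L_0^\per$), a point of $X$ with coordinates in $L_0^{1/p^M}$ maps to a point of $A^{(p^M)}$ with coordinates in $L_0$; hence the image $\Frob^M(X)$ has a Zariski-dense set of points defined over $L_0$ and so descends to a subvariety of $A^{(p^M)}$ over $L_0$. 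Taking the reduced preimage of this descent under $\Frob^M$ and comparing dimensions exhibits $X$ as an irreducible component of the base change to $L_0^\per$ of a subvariety of $A$ over $L_0$; i.e. $X$ is automatically defined over $L_0$ up to purely inseparable nilpotents and \emph{without} translating. The real content of the conjecture is to promote this to an equality of reduced subschemes after translating by a point of $A(L_0^\per)$, and so far none of the hypotheses on $A$, on $\Stab(X)$, or on $\Tr_{L/\bar\BF_p}A$ has been used.

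For the refined statement I would pass to the formal setting of Sections \ref{sec:FormalSchemes}--\ref{sec:setup}. By Subsection \ref{sec:NiceValuation}, after replacing $A$ by an isogenous semiabelian variety (which preserves finiteness of $\Stab(X)$, the trace hypotheses, and the shape of the conclusion), choose a place of $L_0$ with valuation ring $R$ and a compatible extension of the valuation so that $A$ spreads out to a semiabelian scheme $\CA/R$ with $\hat\CA$ completely slope divisible, and $X$, translated to contain $0$, spreads out with $0$ in its special fibre. Ordinarity of $A$ makes the Frobenius $F$ on $\hat\CA_{\Rper}$ of Subsection \ref{sec:NiceSetup} explicit: on each graded piece a power of $F$ agrees, up to an isomorphism, with a power of $[p]$, and under the canonical isomorphism $(\hat\CA_k)_{\Spf(\Rper)}\cong\hat\CA_{\Spf(\Rper)}$ it corresponds to the $q$-power Frobenius of $\hat\CA_k$, $q=\#k$. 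As in the Abramovich--Voloch argument behind Theorem \ref{IntroFormalML}, density of $X(L_0^\per)$ together with $0\in X$ forces the points of the completion $\hat\CX$ that reduce to $0$ to be dense, and these have coordinates in $\Rper$. The aim would then be to show that, after a further translation, $F^n(\widehat{X-x'})\subseteq\widehat{X-x'}$ for some $x'\in X(\bar K)$ and $n\ge 1$; granting this, a variant of Theorem \ref{SpecialFCharIntro} makes $X_{\bar K}$ special, so $X/\Stab_A(X)=h(Y)+a$ with $B$ and $Y$ over $\bar\BF_p$. Since $\Stab_A(X)$ is finite, $X$ then agrees up to a finite isogeny and a translation with $h(Y)$; the hypotheses that $\Tr_{L/\bar\BF_p}A$ is defined over $L_0$ and already over a finite subfield are exactly what is needed to choose the isogeny, the homomorphism $h$, the subvariety $Y$ and the translating element compatibly with the $L_0$-structure, yielding the conclusion.

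The main obstacle---and presumably why this is stated as a conjecture---is the passage from mere density of the $\Rper$-points of $\hat\CX$ to honest $F$-invariance of a translate. A direct attempt stalls: invoking Noetherianity of $R[[x_1,\dots,x_n]]$ one may assume the relevant points have coordinates in a single $R^{1/q^{M'}}$, whence $F^{M'}(\hat\CX)$ has a dense set of $R$-points; but as $F^{M'}$ is Frobenius this only re-expresses the already-known fact that $\hat\CX$ is defined over $R^{1/q^{M'}}$, and contributes nothing new by itself. In the finitely generated case of Theorem \ref{IntroFormalML} the decisive extra input is that a single translation puts $p^i\Gamma\subseteq\Gscr(R^{p^i})$ for \emph{all} $i$ simultaneously, forcing descent to $k=\bigcap_i R^{p^i}$; in the finite-rank ordinary case the analogous role should be played by the $p$-divisible part of $\Gamma$ that survives the Ghioca--Moosa--Scanlon reduction, but converting this into the statement that a translate of $\hat\CX$ is $F$-\emph{stable}, rather than merely rich in $F$-special points, is precisely the ingredient I do not see how to supply in general---which is exactly what this conjecture isolates.
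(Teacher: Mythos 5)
The statement you were asked about is not proved in the paper at all: it is Conjecture \ref{fullMLreduction2}, which the paper deliberately leaves open. The paper only establishes implications around it --- Proposition \ref{Reduction2} shows that the full Mordell--Lang conjecture (Conjecture \ref{fullML}) implies it, and the final theorem shows that, for ordinary or supersingular abelian varieties, it conversely implies Conjecture \ref{fullML}, via Theorem \ref{RelSpecial}, Theorem \ref{FormalDescent} and Theorem \ref{SpecialFChar}. So there is no proof in the paper to compare against, and your proposal is, as you yourself acknowledge in its last paragraph, not a proof either: it ends exactly where the open problem begins.

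To make the gap concrete: your first paragraph (the exhaustion by the $W_m$ and Frobenius twisting) only re-derives the fact that any closed subvariety of $A_{L_0^\per}$ is defined over some finite level $L_0^{1/p^M}$, which is automatic and uses none of the hypotheses; the conjecture's content is the existence of a single translate defined over $L_0$ itself. Your second paragraph proposes to obtain, after translation, $F$-stability of $\widehat{X-x'}$ and then specialness via Theorem \ref{SpecialFChar}; but deducing specialness of $X$ from Zariski density of $X(L_0^\per)$ is precisely Conjecture \ref{fullML} for the finite-rank group $A(L_0^\per)$ (Lemma \ref{PerfectionPoints}), i.e.\ the open full Mordell--Lang conjecture. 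This also reverses the paper's logical flow: the paper uses the conjecture as the input that (through Theorem \ref{FormalDescent}) produces an $F$-invariant translate of the formal completion, rather than deriving the conjecture from such invariance, so your route is circular relative to the paper's program. Two smaller issues: your sketch assumes $A$ ordinary, which is not a hypothesis of the conjecture (ordinarity enters only the paper's reduction theorems); and the closing step, that the trace hypotheses ``are exactly what is needed \dots yielding the conclusion,'' compresses a real argument --- the descent in Proposition \ref{Reduction2} using pure inseparability of $\tau$ (\cite[Theorem 6.12]{ConradTrace}), \cite[Proposition 3.9]{GhiocaMoosa}, and finiteness of $\Gamma/q^n\Gamma$ --- which you would still have to carry out even if specialness were granted.
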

This depends crucially on the fact that in case $A$ is ordinary, the endomorphism $F$ of $\hat\CA_{\Spf(\Rper)}$ described above can already be defined over $R$. The argument proceeds similarly to the proof of Theorem \ref{IntroML} sketched above by reduction to an analogous statement (see Theorem \ref{FormalDescent}) for formal group schemes.

\paragraph{Acknowledgement} I am deeply grateful to Richard Pink for suggesting this topic to me and for his guidance. I thank Damian R\"ossler and Thomas Scanlon for pointing out a mistake in an earlier version of this article and for helpful conversations. I also thank Ambrus P\'al for a helpful conversation.

\section{Formal Schemes} \label{sec:FormalSchemes}
Let $R$ be a the valuation ring of a local field $K$ of characteristic $p>0$. Denote by $\Fm$ the maximal ideal of $R$ and by $k$ the residue field of $R$. Let $\bar K$ be an algebraic closure, let $\bar R$ be the valuation ring of the unique extension of the valuation of $R$ to $\bar K$ and let $\bar\Fm$ be the maximal ideal of $\bar R$. 

We denote by $\hat{\bar K}$ the completion of $\bar K$. By a complete overfield $K'\subset \hat{\bar K}$ of $K$ we mean a field which is complete with respect to the valuation induced from $\hat{\bar K}$. The valuation ring of such a $K'$ will be denoted $R'$ and the the formal scheme associated to $R'$ with its equipped with the valuation topology will be denoted by $\Spf(R')$.

 By an adic ring we mean the same as in \cite{EGA1}, that is a complete and separated topological ring whose topology is defined by an ideal $J$. We will also call such a ring a $J$-adic ring.

For $R'$ as above and $n,m\geq 0$ we denote by $C'_{n,m}$ the ring $R'[[x_1,\hdots,x_n]]\langle y_1,\hdots,y_m\rangle$ which consists of those power series $\sum_{I\in (\BZ^{\geq 0})^n, J\in(\BZ^{\geq 0})^m}a_{IJ}x^Iy^J$ with coefficients from $R'$ such that for each $I$ the coefficients $a_{IJ}$ converge to zero as $J$ goes to infinity. We endow $C'_{n,m}$ with the topology defined by the ideal $J'_{n,m}$ generated by $\Fm$ and the variables $x_1,\hdots,x_n$. This makes $C'_{n,m}$ into an adic ring. For $R'=R$ we let $C_{n,m}\defeq C'_{n,m}$ and $J_{n,m}\defeq J'_{n,m}$.

 By formal schemes, we mean the same as in \cite[Section 10]{EGA1}. In this section, we are concerned with affine formal schemes $\Xscr$ over $\Spf(R)$ defined by the following class of rings:

 \begin{definition}[{c.f. \cite[Section 2.1]{KappenUniform} and \cite[Section 1]{BerkovichVanishingCyclesII}}]
   A topological $R$-algebra $C$ is of \emph{formally finite type} if it is adic and if for some ideal of definition $J$ the quotients $C/J^i$ are of finite type over $R$ for all $i\geq 0$.
 \end{definition}
\begin{definition}
  We denote by $\AFS_R$ the full subcategory of of the category of formal schemes over $\Spf(R)$ whose objects are the formal schemes of the form $\Spf(C)$ for $C$ a topological $R$-algebra of formally finite type. 
\end{definition}

\begin{lemma}[{\cite[Lemma 1.2]{BerkovichVanishingCyclesII}}] \label{FFChar}
   For a $J$-adic $R$-algebra $C$ the following are equivalent:
   \begin{enumerate}[(i)]
   \item The ring $C$ is of formally finite type.
   \item The ring $C/J^2$ is finitely generated over $R$.
   \item The ring $C$ is topologically isomorphic over $R$ to a quotient of $C_{n,m}$ for some $n,m\geq 0$.
   \end{enumerate}
 \end{lemma}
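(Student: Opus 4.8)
The plan is to prove the cycle of implications (iii) $\Rightarrow$ (i) $\Rightarrow$ (ii) $\Rightarrow$ (iii). The first two are routine and I would dispatch them quickly; all the content is in (ii) $\Rightarrow$ (iii).

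For (iii) $\Rightarrow$ (i), write $C\cong C_{n,m}/\Fa$ topologically; $\Fa$ is then necessarily closed, so $C$ is complete and hence adic. With $J$ the image of $J_{n,m}$ one has $C/J^i=C_{n,m}/(\Fa+J_{n,m}^i)$, so it suffices to see that every $C_{n,m}/J_{n,m}^i$ is of finite type over $R$. For this I would check that modulo $J_{n,m}^i$ each element of $C_{n,m}$ is congruent to a polynomial in $x_1,\dots,x_n,y_1,\dots,y_m$: monomials $x^I$ with $|I|\ge i$ already lie in $J_{n,m}^i$, and for $|I|<i$ the tail of the series $\sum_J a_{IJ}y^J$ has coefficients in $\Fm^{\,i-|I|}\subset J_{n,m}^{\,i-|I|}$, so multiplying it by $x^I$ lands it in $J_{n,m}^i$. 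For (i) $\Rightarrow$ (ii): if $J_0$ is an ideal of definition witnessing formal finiteness, it defines the same topology as $J$, so $J_0^s\subset J$ for some $s$; then $J_0^{2s}\subset J^2$ and $C/J^2$ is a quotient of the finite-type $R$-algebra $C/J_0^{2s}$.

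The main step is (ii) $\Rightarrow$ (iii). Since $R$ is a discrete valuation ring it is Noetherian, so $C/J^2$ is a Noetherian ring. I would first replace $J$ by $J+\Fm C$ — harmless, since continuity of $R\to C$ gives $\Fm^cC\subset J$ for some $c$, whence the two ideals define the same topology — and so assume $\Fm C\subset J$. The ideal $J/J^2$ of $C/J^2$ is finitely generated and killed by $J/J^2$, hence finitely generated as a $C/J$-module; choose $t_1,\dots,t_n\in J$ lifting a generating set. Likewise $C/J$ is of finite type over $R$; choose $s_1,\dots,s_m\in C$ lifting $R$-algebra generators. Sending $x_i\mapsto t_i$ and $y_j\mapsto s_j$ then defines a continuous $R$-algebra homomorphism $\phi\colon C_{n,m}\to C$, the convergence of images being guaranteed by $t_i\in J$, $\Fm C\subset J$, and completeness of $C$.

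It remains to see that $\phi$ is a topological quotient map, and this is the point I expect to be delicate. By construction $\phi$ is surjective modulo $J$, and from the relation $J=\sum_i t_iC+J^2$ together with this surjectivity one deduces by induction on $i$ that $J^i\subset\phi(J_{n,m}^i)+J^{i+1}$ for all $i\ge1$ — this step must be carried out before $C$ is known to be Noetherian. Successive approximation then produces, for any $c\in C$, elements $b_i\in J_{n,m}^i$ with $c-\phi\!\left(\sum_{i\le N}b_i\right)\in J^{N+1}$ for all $N$; as $C_{n,m}$ is complete the series $\sum_i b_i$ converges, and as $C$ is separated its $\phi$-image is $c$, so $\phi$ is surjective. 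Hence $C$ is a quotient of $C_{n,m}$, which is Noetherian because $R[[x_1,\dots,x_n]]$ is Noetherian and passing to restricted power series in $y_1,\dots,y_m$ preserves this; so $C$ is Noetherian. In a Noetherian $J$-adically complete ring every finitely generated ideal is closed (Krull's intersection theorem, using that $J$ lies in the Jacobson radical), so $\phi(J_{n,m}^i)$ is closed, and the inclusions $\phi(J_{n,m}^i)\subset J^i\subset\overline{\phi(J_{n,m}^i)}=\phi(J_{n,m}^i)$ force $\phi(J_{n,m}^i)=J^i$. Thus $\phi$ carries the neighborhood basis $\{J_{n,m}^i\}$ onto $\{J^i\}$, so it is open, and $C\cong C_{n,m}/\ker\phi$ topologically. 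Apart from this surjectivity-and-openness argument, the proof is bookkeeping with monomials and with comparisons of ideals of definition.
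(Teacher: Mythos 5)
Your argument is correct. Note, however, that the paper does not prove this lemma at all: it is quoted verbatim from Berkovich (\cite[Lemma 1.2]{BerkovichVanishingCyclesII}), so there is no in-paper proof to compare with; what you have written is essentially the standard argument of the cited source, namely lift $R$-algebra generators of $C/J$ and module generators of $J/J^2$, map $C_{n,m}\to C$ by sending the $x_i$ into $J$ and the $y_j$ to the algebra generators, prove surjectivity by successive approximation using completeness of $C_{n,m}$ and separatedness of $C$, and then use Noetherianity of $C_{n,m}$ (hence of $C$) together with closedness of ideals in a complete Noetherian adic ring to see that $\phi(J_{n,m}^i)=J^i$, so the map is a topological quotient. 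Two small points are worth making explicit but are not gaps: the hypothesis that $C$ is a \emph{topological} $R$-algebra (so $\Fm^cC\subset J$ for some $c$) is genuinely needed and you correctly invoke it --- without it the implication (ii)$\Rightarrow$(iii) fails, e.g.\ for $K$ with the discrete topology; and the inclusion $J^i\subset\overline{\phi(J_{n,m}^i)}$ used at the end follows by iterating your displayed relation to get $J^i\subset\phi(J_{n,m}^i)+J^N$ for all $N\geq i$, which deserves the one extra line.
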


\begin{remark} 
  Let $\Xscr=\Spf(C)\in\AFS_R$. By the remark after \cite[Definition 10.14.2]{EGA1} closed formal subschemes of $\Xscr$ correspond to ideals of $C$. Thus by Lemma \ref{FFChar} a formal scheme over $\Spf(R)$ is in $\AFS_R$ if and only if it admits a closed embedding into $\Spf(C_{n,m})$ for some $n,m\geq 0$. 
\end{remark}

The following summarizes properties of topological $R$-algebras of formally finite type:
\begin{proposition} \label{CProps}
  Let $C$ and $C'$ be topological $R$-algebras of formally finite type.
  \begin{enumerate}[(i)]
  \item The Jacobson radical of $C$ is an ideal of definition, in fact it is the largest ideal of definition. In particular there is a unique topology on the ring $C$ which makes $C$ into a topological $R$-algebra of formally finite type.
  \item Every homomorphism $C\to C'$ is continuous.
  \item If $C\to C'$ is a surjection and $I$ an ideal of definition of $C$, then the ideal generated by the image of $I$ is an ideal of definition of $C'$.
  \item Each ideal of $C$ is closed.
  \item The ring $C\otimes_R K$ is Jacobson.
  \item For each maximal ideal $\Fn$ of $C\otimes_R K$ the quotient $(C\otimes_R K)/\Fn$ is a finite field extension of $K$.
  \end{enumerate}
\end{proposition}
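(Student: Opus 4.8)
The plan is to reduce everything to the model ring $C_{n,m}$ via the characterization in Lemma \ref{FFChar}(iii), and then to exploit the fact that $C_{n,m}/\Fm^{i+1}$ is a finitely generated algebra over the Artinian ring $R/\Fm^{i+1}$, so that the classical Jacobson/Nullstellensatz machinery applies level by level. For (i), I would first observe that for any ideal of definition $J$ of $C$, the quotient $C/J$ is a finitely generated algebra over the field $k$, hence Jacobson, so its nilradical equals its Jacobson radical; since $J$ is contained in the Jacobson radical of $C$ (as $C$ is $J$-adically complete, so $1+J$ consists of units) and $C/J$ has nilpotent-mod-its-radical structure only up to the radical, one gets that the Jacobson radical $\mathrm{rad}(C)$ satisfies $J\subset \mathrm{rad}(C)$ and $\mathrm{rad}(C)^N\subset J$ for a suitable $N$ after passing to $C/J^i$; combining these shows $\mathrm{rad}(C)$ is itself an ideal of definition and visibly the largest one. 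Uniqueness of the topology follows since the topology is determined by any ideal of definition and $\mathrm{rad}(C)$ is intrinsic.

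For (ii), given a homomorphism $\phi\colon C\to C'$ with ideals of definition $J,J'$, it suffices to show $\phi(J^i)\subset J'$ for some $i$; since $\phi$ sends $\mathrm{rad}(C)$ into $\mathrm{rad}(C')$ (a ring homomorphism sends units to units, hence reflects maximal ideals), part (i) gives the claim. Part (iii) is the statement that if $\pi\colon C\epi C'$ is surjective and $I$ is an ideal of definition of $C$, then $\pi(I)C'=\pi(I)$ is an ideal of definition of $C'$: completeness and separatedness of $C'$ for the $\pi(I)$-adic topology is inherited from $C$ (the surjection is a topological quotient once we know it is continuous, which is (ii), and $\bigcap_i \pi(I)^i = \pi(\bigcap_i I^i + \ker\pi)/\ker\pi$ — here one checks that the $I$-adic topology on $C$ restricts to the $I$-adic topology on $\ker\pi$ by Artin–Rees, using that $C/I^2$ is Noetherian since finitely generated over $R$). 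Part (iv): again reduce to $C_{n,m}$; the ring $C_{n,m}$ is Noetherian (it is a quotient of a convergent power series ring, or one checks directly using that $C_{n,m}/J_{n,m}^i$ is Noetherian and $C_{n,m}$ is $J_{n,m}$-adically complete, so Noetherianness lifts), and in a Noetherian adic ring that is complete and separated every ideal is closed by the Krull intersection theorem applied to the quotient.

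For (v) and (vi), write $D\defeq C\otimes_R K$. Choose a presentation $C = C_{n,m}/\Fa$; then $D$ is a localization of $C_{n,m}\otimes_R K$, so it suffices to treat $C_{n,m}\otimes_R K$. Here I would use that $C_{n,m}\otimes_R K = K[[x_1,\dots,x_n]]\langle y_1,\dots,y_m\rangle\otimes_{R}K$ — after inverting a uniformizer this is an affinoid-type algebra over the field $K$ — and invoke the theory of such rings: they are Noetherian, Jacobson, and every maximal ideal has finite residue field over $K$. Concretely, the slickest route is: $C_{n,m}\otimes_R K$ is a quotient of the Tate-type algebra obtained by further allowing the $x_i$ to converge, but since here the $x_i$ are only formal, one argues that a maximal ideal $\Fn$ necessarily contains $\Fm$-adically small elements forcing each $x_i\in\Fn$ (the image of $x_i$ in the residue field is topologically nilpotent, hence $0$ in a field that is finite over $K$), reducing to the genuinely affinoid algebra $K\langle y_1,\dots,y_m\rangle/(\cdots)$, for which Jacobsonness and finiteness of residue fields are the standard Nullstellensatz for affinoid algebras.

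The main obstacle I anticipate is part (i): correctly juggling the two directions $J\subset\mathrm{rad}(C)$ and $\mathrm{rad}(C)^N\subset J$ (equivalently, that $\mathrm{rad}(C)$ is finitely generated and its powers form a neighborhood basis), since $C$ itself need not be Noetherian a priori and one must pass through the finite-type quotients $C/J^i$ carefully; once (i) is in hand, (ii) and (iii) are formal, (iv) is Krull intersection after establishing Noetherianness of $C_{n,m}$, and (v)–(vi) are an application of the Nullstellensatz for affinoid algebras after the reduction to the $y$-variables.
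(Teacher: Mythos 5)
The fatal problem is in your treatment of (v) and (vi). Your reduction to affinoid algebras rests on the claim that every maximal ideal $\Fn$ of $C_{n,m}\otimes_R K$ contains the images of $x_1,\hdots,x_n$, ``because the image of $x_i$ in the residue field is topologically nilpotent, hence $0$ in a field finite over $K$''. This fails on two counts: a nonzero element of a finite extension of $K$ can perfectly well be topologically nilpotent (any element of the maximal ideal, e.g.\ a uniformizer $\pi$), and concretely, for $n=1$, $m=0$, the ideal $\Fn=(x_1-\pi)$ of $R[[x_1]]\otimes_R K$ is maximal with residue field $K$ (Weierstrass division by the distinguished polynomial $x_1-\pi$ gives $R[[x_1]]/(x_1-\pi)\cong R$), and the image of $x_1$ is $\pi\neq 0$. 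The rigid points of the open unit polydisc are not concentrated at the origin, so one cannot reduce (v)--(vi) to the Tate algebra $K\langle y_1,\hdots,y_m\rangle$ by killing the formal variables; note also that finiteness of the residue field over $K$ is exactly what (vi) asserts, so invoking it at this point is circular. A correct argument needs a Weierstrass-division induction on the formal variables, which is how these statements are proved in the sources the paper quotes (\cite[Proposition 2.16, Lemma 2.3]{KappenUniform}, \cite[Lemma 1.1]{BerkovichVanishingCyclesII}); the paper itself gives no proof, only these citations, so a self-contained argument would indeed be a different route, but yours breaks at this step.

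There is a second genuine gap in (ii): you deduce $\phi(\operatorname{rad}(C))\subset\operatorname{rad}(C')$ from ``a ring homomorphism sends units to units, hence reflects maximal ideals''. That principle is false in general (the inclusion $\BZ_{(p)}\into\BQ$ pulls the maximal ideal $(0)$ back to a non-maximal prime and does not map $p\BZ_{(p)}$ into $\operatorname{rad}(\BQ)=0$). The inclusion you want is true here, but for a reason specific to these rings: every maximal ideal $\Fn'$ of $C'$ contains an ideal of definition (by the same completeness argument you use in (i)) and hence some $\Fm^jC'$, so $C'/\Fn'$ is a field finitely generated as an algebra over $R/\Fm^j$, hence finite over $k$ by Zariski's lemma; consequently the kernel of $C\to C'/\Fn'$ is a prime whose residue ring is a $k$-subalgebra of a finite extension of $k$, hence a field, so this kernel is a maximal ideal of $C$ and contains $\operatorname{rad}(C)$. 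You need to supply this (or an equivalent) argument. With it, your outline of (i) is essentially sound (small correction: an ideal of definition need only contain some $\Fm^jC$, so the finite-type quotients are algebras over the Artinian ring $R/\Fm^j$ rather than over $k$; the Jacobson-plus-Noetherian argument is unaffected), (iv) works as you say once Noetherianity of $C_{n,m}$ is established, and in (iii) the assertion ``a continuous surjection is a topological quotient'' is not automatic --- the clean fix is to check that the $\pi(I)$-adic topology makes $C'$ formally of finite type (separatedness and completeness use that $\ker\pi$ is closed, i.e.\ (iv)) and then invoke the uniqueness of the topology from (i).
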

\begin{proof}
 For $(i)$ and $(ii)$ see \cite[Lemma 2.1]{KappenUniform}. For $(iii)$ and $(iv)$ see \cite[Lemma 1.1]{BerkovichVanishingCyclesII}. For $(v)$ ee \cite[Proposition 2.16]{KappenUniform} and for $(vi)$ see \cite[Lemma 2.3]{KappenUniform}.
\end{proof}

We will also have to work with formal schemes $\Xscr_{\Spf(R')}$ for $\Xscr\in \AFS_R$ and $R'$ the valuation ring of a complete overfield $K'\subset \hat{\bar K}$ and with closed formal subschemes of such formal schemes. However, in \cite{EGA1} the notion of a formal subscheme is only defined for locally Noetherian formal schemes, and valuation rings $R'$ as above are in general not Noetherian. Thus we make the following definition:
\begin{definition}
  A morphism $\Spf(C)\to\Spf(C')$ of affine formal schemes is a \emph{closed embedding} if the corresponding homomorphism $C'\to C$ is surjective and the topology on $C$ is the quotient topology induced from $C'$. In this case we will say that $\Spf(C)$ is a \emph{closed formal subscheme} of $\Spf(C')$. 
\end{definition}
Thus closed formal subschemes of $\Spf(C')$ correspond to closed ideals of $C'$. In case $C'$ is Noetherian, this definition coincides with the one from \cite{EGA1} by the remark after \cite[Definition 10.14.2]{EGA1}. 

\begin{definition}
  Let $R'$ be the valuation ring of a complete overfield $K'\subset \hat{\bar K}$. We denote by $\AFS_{R'}$ the full subcategory of the category of formal schemes over $\Spf(R')$ whose objects are those formal schemes which admit a closed embedding into $\Spf(C'_{n,m})$ for some $n,m\geq 0$. 
\end{definition}

\begin{lemma} \label{BaseChangeExact}
  Let $C$ be a Noetherian $J$-adic ring and $C'$ a $J'$-adic ring. Let $C\to C'$ be a ring homomorphism such that $JC'\subset J'$ and such that for each $i\geq 0$ the induced homomorphism $C/J^i\to C'/(J')^i$ is faithfully flat.

Let $0\to M' \to M\to M''\to 0$ a sequence of finitely generated $C$-modules. We endow these modules with the $J$-adic topology.  Then the sequence $0\to M'\to M\to M''\to 0$ is exact if and only if $0\to M'\hat\otimes_C C'\to M\hat\otimes_C C'\to M''\hat\otimes_C C'\to 0$ is exact.
\end{lemma}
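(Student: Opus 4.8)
The plan is to deduce the lemma from two properties of the functor $M\mapsto M\hat\otimes_C C'$ on the category of finitely generated $C$-modules: that it is \emph{exact}, and that it is \emph{faithful}, meaning $M\hat\otimes_C C'=0$ forces $M=0$. Throughout I will use the identification $M\hat\otimes_C C'=\varprojlim_i\, M\otimes_C(C'/(J')^i)$, which holds because for $M$ finitely generated over the Noetherian ring $C$ the tensor-product topology on $M\otimes_C C'$ coincides with the $(J')$-adic one (using $JC'\subseteq J'$, hence $J^iC'\subseteq(J')^i$). I will also repeatedly use that $J\subseteq\operatorname{Jac}(C)$ since $C$ is $J$-adically complete, and that the inverse system $\{M\otimes_C C'/(J')^i\}_i$ has surjective transition maps, so that its limit surjects onto the $i=1$ term.

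\emph{Exactness.} Let $0\to N'\to N\to N''\to 0$ be exact with all three modules finitely generated; up to isomorphism I may assume $N'\subseteq N$ with quotient $N''$. For each $i$ there is an exact sequence of $C/J^i$-modules
\[
0\to N'/(N'\cap J^iN)\to N/J^iN\to N''/J^iN''\to 0,
\]
the left term being a $C/J^i$-module since $J^iN'\subseteq N'\cap J^iN$. Tensoring over $C/J^i$ with the flat module $C'/(J')^i$ keeps this exact, and its middle and right terms become $N\otimes_C C'/(J')^i$ and $N''\otimes_C C'/(J')^i$. Passing to $\varprojlim_i$ is left exact, so it remains to identify the limit of the left terms and to see that its $\varprojlim^1$ vanishes. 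Here the Artin--Rees lemma (this is where the Noetherian hypothesis on $C$ and finite generation enter) shows that the filtrations $\{N'\cap J^iN\}_i$ and $\{J^iN'\}_i$ of $N'$ are pro-isomorphic; tensoring the natural comparison maps with the $C'/(J')^i$ shows that $\{(N'/(N'\cap J^iN))\otimes_{C/J^i}C'/(J')^i\}_i$ and $\{N'\otimes_C C'/(J')^i\}_i$ are pro-isomorphic, hence share the same $\varprojlim$, namely $N'\hat\otimes_C C'$, and the same $\varprojlim^1$, which vanishes because the second system has surjective transition maps. Therefore $0\to N'\hat\otimes_C C'\to N\hat\otimes_C C'\to N''\hat\otimes_C C'\to 0$ is exact.

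\emph{Faithfulness.} If $M$ is finitely generated and nonzero then $M/JM\neq 0$ by Nakayama, since $J\subseteq\operatorname{Jac}(C)$; hence $M\otimes_C C'/J'=(M/JM)\otimes_{C/J}(C'/J')\neq 0$ by faithful flatness of $C/J\to C'/J'$; and $M\hat\otimes_C C'$ surjects onto this $i=1$ term, so $M\hat\otimes_C C'\neq 0$.

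\emph{Conclusion.} Given the complex $S\colon 0\to M'\xrightarrow{f}M\xrightarrow{g}M''\to 0$, form $K=\ker f$, $I=\operatorname{im}f$, $Z=\ker g$, $W=\operatorname{im}g$, $Q=M''/W$ and $H=Z/I$ (note $I\subseteq Z$ since $S$ is a complex). All of these are finitely generated over the Noetherian ring $C$, and $S$ is exact if and only if $K=H=Q=0$. Splitting $S$ into the four short exact sequences $0\to K\to M'\to I\to 0$, $0\to I\to Z\to H\to 0$, $0\to Z\to M\to W\to 0$, $0\to W\to M''\to Q\to 0$ and applying the (now exact) functor $-\hat\otimes_C C'$, one sees by splicing that $S\hat\otimes_C C'$ is again a complex and that its homology at $M'\hat\otimes_C C'$, $M\hat\otimes_C C'$, $M''\hat\otimes_C C'$ equals $K\hat\otimes_C C'$, $H\hat\otimes_C C'$, $Q\hat\otimes_C C'$ respectively. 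By faithfulness these vanish precisely when $K$, $H$, $Q$ do, i.e.\ precisely when $S$ is exact, which is the assertion.

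I expect the genuinely delicate point to be the exactness step, and within it the passage to the inverse limit: one must write the completed tensor product as the correct double-indexed limit and invoke Artin--Rees to kill the relevant $\varprojlim^1$-term, since completion is only left exact in general. The faithfulness statement and the homology bookkeeping in the last step are routine.
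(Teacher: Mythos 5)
Your proof is correct and follows essentially the same route as the paper: the exactness half via termwise tensoring of $0\to M'/(M'\cap J^iM)\to M/J^iM\to M''/J^iM''\to 0$ with the flat $C'/(J')^i$, Artin--Rees control of the induced filtration on the submodule, and vanishing of $\varprojlim^1$ from surjective transition maps; and the converse reduced to faithfulness of $-\hat\otimes_C C'$, which the paper likewise gets from nonvanishing of the $M/J^iM$, faithful flatness of $C/J^i\to C'/(J')^i$, and surjectivity of the transition maps. Your final homology bookkeeping just spells out what the paper dismisses as a ``direct verification'' (and if one does not take the given sequence to be a complex a priori, exactness plus faithfulness of the functor already force the composite to vanish, so nothing is lost).
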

\begin{proof}
    Since $JC'\subset J'$ the completed tensor product $M\hat\otimes_C C'$ can be written as
  \begin{equation*}
    M\hat\otimes_C C'=\varprojlim_i M\otimes_C C'/(J')^i,
  \end{equation*}
and analogously for $M'$ and $M''$. 

Assume that $0\to M'\to M\to M''\to 0$ is exact.

 For $i\geq 0$ let $M'_i\defeq J^iM\cap M'$. By \cite[Theorem III.3.2.2]{BourbakiCA} the topology on $M'$ defined by the $M'_i$ is the $J$-adic topology. This together with the fact that $JC'\subset J'$ implies that $(M_i\otimes_C (J')^iC')_{i\geq 0}$ is a fundamental system of neighborhoods of the identity in $M'\otimes_C C'$. Thus $M'\hat\otimes_C C'$ can be written as
 \begin{equation*}
     M'\hat\otimes_C C'=\varprojlim_i M'/M'_i\otimes_C C'/(J')^i.
 \end{equation*}

 For $i\geq 0$ there is an exact sequence $0\to M'/M'_i\to M/J^iM\to M''/J^i M''\to 0$ of $C/J^i$-modules. Since $C/J^i\to C'/(J')^i$ is flat, this induces an exact sequence 
 \begin{equation*}
   0\to M'/M'_i\otimes_{C/J^i} C'/(J')^i\to M/J^iM\otimes_{C/J^i} C'/(J')^i \to M''/J^i M''\otimes_{C/J^i} C'/(J')^i \to 0.
 \end{equation*}
This sequence can also be written as
\begin{equation*}
   0\to M'/M'_i\otimes_{C} C'/(J')^i\to M\otimes_{C} C'/(J')^i \to M''\otimes_{C} C'/(J')^i \to 0.
\end{equation*}
 The transition morphisms $M'/M'_i\otimes_C C'/(J')^i \to M'/M'_{i-1}\otimes_C C'/(J')^{i-1}$ are surjective. By \cite[Proposition 10.2]{AtiyahMacdonald} this surjectivity implies that the sequence $0\to M'\hat\otimes_C C'\to M\hat\otimes_C C'\to M''\hat\otimes_C C'\to 0$ obtained by taking the inverse limit of the above exact sequences is again exact. 

To prove the other direction of the claim, by a direct verification it suffices to show that if $M$ is a non-zero finitely generated $C$-module endowed with the $J$-adic topology, the ring $M\hat\otimes_C C'$ is non-zero. As above we can write $M \hat\otimes_C C'$ as $\varprojlim_i M/J^iM \otimes_{C/J^i} C'/(J')^i$ with surjective transition homomorphisms $M/J^iM \otimes_{C/J^i} C'/(J')^i\to M/J^{i-1}M\otimes_{C/J^{i-1}} C'/(J')^{i-1}$. As it is finitely generated over the complete Noetherian ring $C$, the module $M$ is complete. Hence the modules $M/J^iM$ are non-zero. Thus by the faithful flatness of $C/J^i\to C'/(J')^i$ the modules $M/J^iM\otimes_{C/J^i}C'/(J')^i$ are non-zero. Hence the surjectivity of the transition morphisms implies that $M\hat\otimes_C C'$ is non-zero.
\end{proof}

\begin{lemma} \label{BaseChangeFlat}
  Let $R'$ be the valuation ring of a completely valued overfield $K'\subset \hat{\bar K}$ of $K$. For $n,m,i \geq 0$ the ring homomorphism $C_{n,m}/J^i_{n,m}\to C'_{n,m}/(J'_{n,m})^i$ induced by the inclusion $C_{n,m}\into C'_{n,m}$ is faithfully flat.
\end{lemma}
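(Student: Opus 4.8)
The plan is to compute both rings $C_{n,m}/J_{n,m}^i$ and $C'_{n,m}/(J'_{n,m})^i$ completely explicitly as finite direct sums and then to recognize the morphism between them as a base change along $R\to R'$, which reduces the claim to the faithful flatness of $R\to R'$.

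First I would identify $C_{n,m}$ with $R\langle y_1,\dots,y_m\rangle[[x_1,\dots,x_n]]$, the ring of formal power series in the $x_j$ over the Tate algebra, which is just a regrouping of the series defining $C_{n,m}$. Since $K$ is a local field, $\Fm=(\pi)$ is principal, so $J_{n,m}=(\pi,x_1,\dots,x_n)$. An elementary computation then shows that $J_{n,m}^i$ consists exactly of the series $\sum_I f_I x^I$ (the sum over multi-indices $I$) with $f_I\in\pi^{\max(0,\,i-|I|)}R\langle y_1,\dots,y_m\rangle$ for all $I$: the inclusion ``$\subseteq$'' follows by expanding products $\pi^a x^I\cdot(\text{arbitrary element})$ with $a+|I|=i$ and inspecting coefficients, and ``$\supseteq$'' by peeling off $\sum_{|I|\geq i}f_I x^I$ into $(x_1,\dots,x_n)^i$ and writing each remaining term with $|I|<i$ as $h_I\cdot\pi^{i-|I|}x^I$. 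Since reducing coefficients modulo $\pi^j$ identifies $R\langle y_1,\dots,y_m\rangle/\pi^j R\langle y_1,\dots,y_m\rangle$ with the polynomial ring $(R/\pi^j)[y_1,\dots,y_m]$, this gives
\[
  C_{n,m}/J_{n,m}^i\;\cong\;\bigoplus_{|I|<i}\bigl(R/\pi^{i-|I|}\bigr)[y_1,\dots,y_m]\cdot x^I
\]
as $R$-modules, a finite direct sum. The verbatim computation over $R'$ — which only uses that $\pi$ has positive valuation in $R'$, so that a sequence tending to $0$ in $R'$ eventually lies in $\pi^j R'$, whence $R'\langle y_1,\dots,y_m\rangle/\pi^j R'\langle y_1,\dots,y_m\rangle\cong(R'/\pi^j R')[y_1,\dots,y_m]$ — yields the analogous decomposition $C'_{n,m}/(J'_{n,m})^i\cong\bigoplus_{|I|<i}(R'/\pi^{i-|I|}R')[y_1,\dots,y_m]\cdot x^I$.

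Next I would compare the two decompositions. Because $(R/\pi^j)\otimes_R R'\cong R'/\pi^j R'$ and the polynomial variables are free, tensoring the displayed decomposition of $C_{n,m}/J_{n,m}^i$ with $R'$ over $R$ produces an $R'$-module isomorphism $(C_{n,m}/J_{n,m}^i)\otimes_R R'\cong C'_{n,m}/(J'_{n,m})^i$; since it carries each $x^I\otimes 1$ to $x^I$, each $y_j\otimes 1$ to $y_j$, and $1\otimes r$ to the image of $r$, it is an isomorphism of $(C_{n,m}/J_{n,m}^i)$-algebras, identifying the homomorphism of the statement with the canonical map $C_{n,m}/J_{n,m}^i\to(C_{n,m}/J_{n,m}^i)\otimes_R R'$. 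It remains to observe that $R\to R'$ is faithfully flat: it is flat because $R'$ is torsion-free over the valuation ring $R$, and faithfully so because $\Fm R'=\pi R'$ lies in the maximal ideal of $R'$ and is therefore proper. As base change of a faithfully flat morphism is faithfully flat, the morphism $C_{n,m}/J_{n,m}^i\to C'_{n,m}/(J'_{n,m})^i$ is faithfully flat, as asserted.

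I expect the only point needing genuine care to be the explicit description of $J_{n,m}^i$ and the resulting finite direct-sum decomposition, especially carrying this out over the possibly non-Noetherian valuation ring $R'$, whose value group need not be discrete; once these decompositions are available the rest of the argument is purely formal.
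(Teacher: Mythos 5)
Your proof is correct and takes essentially the same route as the paper: identify the quotients $C_{n,m}/J_{n,m}^i$ and $C'_{n,m}/(J'_{n,m})^i$ explicitly and recognize the induced map as the base change of $R\to R'$ along $R\to C_{n,m}/J_{n,m}^i$, so everything reduces to the faithful flatness of $R\to R'$. If anything, your description $C_{n,m}/J_{n,m}^i\cong\bigoplus_{|I|<i}\bigl(R/\pi^{i-|I|}\bigr)[y_1,\dots,y_m]\,x^I$ is more precise than the paper's one-line identification with $(R/\Fm^i)[x_1,\dots,x_n,y_1,\dots,y_m]/(x_1,\dots,x_n)^i$, which tacitly replaces the power $J_{n,m}^i$ by the cofinal ideal $\Fm^i+(x_1,\dots,x_n)^i$; with either description the target is the base change of the source over $R\to R'$ and the lemma follows.
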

\begin{proof}
  The homomorphism in question is
  \begin{align*}
     R/\Fm^i[x_1,\hdots,x_n,y_1,\hdots,y_n]/(x_1,\hdots,x_n)^i&\to R'/(\Fm R')^i[x_1,\hdots,x_n,y_1,\hdots,y_n]/(x_1,\hdots,x_n)^i \\
     &\cong R'\otimes_R R/\Fm^i[x_1,\hdots,x_n,y_1,\hdots,y_n]/(x_1,\hdots,x_n)^i.
  \end{align*}
Thus the claim follows from the faithful flatness of $R\to R'$.
\end{proof}
\begin{lemma} \label{ClosedSubschemeBaseChange}
  Let $\Xscr=\Spf(C)\in \AFS_R$ and $\Xscr'=\Spf(C')$ a closed formal subscheme of $\Xscr$ defined by an ideal $I$ of $C$. Let $R'$ be the valuation ring of a complete overfield $K'\subset \hat{\bar K}$ of $K$. Then $\Xscr'_{\Spf(R')}$ is the closed formal subscheme of $\Xscr_{\Spf(R')}$ defined by the ideal $I(C\hat\otimes_R R')$ of $C\hat\otimes_R R'$. This ideal is equal to $I\hat\otimes_RR'$.
\end{lemma}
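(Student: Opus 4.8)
The content of the statement is an exactness property of the base change $\blank\hat\otimes_R R'$ along the (generally non-Noetherian) extension $R\to R'$, and the plan is to deduce it from Lemma \ref{BaseChangeExact}, whose hypotheses are provided by Lemma \ref{BaseChangeFlat}. Since $R$ is Noetherian, so is $C_{n,m}$, and hence so is $C$; in particular $I$ and the ideal $\Fa$ in any presentation $C=C_{n,m}/\Fa$ are finitely generated.

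First I would dispose of the coordinate case: a direct inverse-limit computation — the levelwise one carried out in the proof of Lemma \ref{BaseChangeFlat}, combined with the $J'_{n,m}$-adic completeness of $C'_{n,m}$ — yields a canonical topological isomorphism $C_{n,m}\hat\otimes_R R'\cong C'_{n,m}$ extending the inclusion $C_{n,m}\into C'_{n,m}$, and therefore $M\hat\otimes_R R'\cong M\hat\otimes_{C_{n,m}}C'_{n,m}$ for every $C_{n,m}$-module $M$ carrying its $J_{n,m}$-adic topology. Now, using Lemma \ref{FFChar}, fix a closed embedding $\Xscr\into\Spf(C_{n,m})$, write $C=C_{n,m}/\Fa$, and let $\Fb\supseteq\Fa$ be the preimage of $I$, so that $C/I=C_{n,m}/\Fb$. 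Applying Lemma \ref{BaseChangeExact} to $0\to\Fa\to C_{n,m}\to C_{n,m}/\Fa\to 0$ along $C_{n,m}\into C'_{n,m}$ — legitimate since $J_{n,m}C'_{n,m}\subseteq J'_{n,m}$ and each $C_{n,m}/J^i_{n,m}\to C'_{n,m}/(J'_{n,m})^i$ is faithfully flat by Lemma \ref{BaseChangeFlat} — gives an exact sequence $0\to\Fa\hat\otimes_{C_{n,m}}C'_{n,m}\to C'_{n,m}\to(C_{n,m}/\Fa)\hat\otimes_{C_{n,m}}C'_{n,m}\to 0$, and hence, via the coordinate case, $C\hat\otimes_R R'\cong C'_{n,m}/\Fa C'_{n,m}$ with the quotient topology; the same argument with $\Fb$ in place of $\Fa$ gives $(C/I)\hat\otimes_R R'\cong C'_{n,m}/\Fb C'_{n,m}$. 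Passing to formal spectra (fibre products of affine formal schemes are formal spectra of completed tensor products), $\Xscr_{\Spf(R')}=\Spf(C'_{n,m}/\Fa C'_{n,m})$ and $\Xscr'_{\Spf(R')}=\Spf(C'_{n,m}/\Fb C'_{n,m})$, so $\Xscr'_{\Spf(R')}$ is the closed formal subscheme of $\Xscr_{\Spf(R')}$ cut out by $\Fb C'_{n,m}/\Fa C'_{n,m}$, which under the above isomorphism is exactly $I\,(C\hat\otimes_R R')$; this is the first assertion.

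For the equality $I\,(C\hat\otimes_R R')=I\hat\otimes_R R'$ I would apply Lemma \ref{BaseChangeExact} a second time, now with the Noetherian base ring $C$ and the sequence $0\to I\to C\to C/I\to 0$ along $C\to C\hat\otimes_R R'$; the required faithful flatness of $C/J^i\to(C\hat\otimes_R R')/(J'')^i$ for compatible ideals of definition $J,J''$ follows from Lemma \ref{BaseChangeFlat} by base change, and one concludes $I\hat\otimes_R R'=I\hat\otimes_C(C\hat\otimes_R R')$, whose image in $C\hat\otimes_R R'$ is $I\,(C\hat\otimes_R R')$. I expect the only real obstacle to be a topological one: throughout, one must know that the image of $\Fa\hat\otimes_{C_{n,m}}C'_{n,m}$ in $C'_{n,m}$ (and, later, of $I\hat\otimes_C(C\hat\otimes_R R')$ in $C\hat\otimes_R R'$) is the \emph{algebraic} ideal $\Fa C'_{n,m}$, not merely its closure — that is, that these finitely generated ideals are closed — and, relatedly, that the various completed tensor products carry the quotient topology; these rest on the analogue of Proposition \ref{CProps} for $C'_{n,m}$ and its quotients. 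Apart from that, the single genuinely non-formal input is the faithful flatness of $R\to R'$ feeding Lemma \ref{BaseChangeFlat}.
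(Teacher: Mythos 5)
Your overall strategy coincides with the paper's: everything is funneled through Lemma \ref{BaseChangeExact}, with its hypotheses supplied by Lemma \ref{BaseChangeFlat}, after the identification $M\hat\otimes_R R'=M\hat\otimes_{C_{n,m}}C'_{n,m}$. However, the step you yourself flag as ``the only real obstacle'' is not a deferrable technicality: knowing that the image of $\Fa\hat\otimes_{C_{n,m}}C'_{n,m}$ in $C'_{n,m}$ (and later of $I\hat\otimes_R R'$ in $C\hat\otimes_R R'$) is the algebraic ideal $\Fa C'_{n,m}$ (resp.\ $I(C\hat\otimes_R R')$) rather than its closure is precisely the second assertion of the lemma, and the source you propose for it --- ``the analogue of Proposition \ref{CProps} for $C'_{n,m}$ and its quotients'' --- is not available. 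Proposition \ref{CProps} is about topological $R$-algebras of formally finite type over the Noetherian discrete valuation ring $R$; the ring $C'_{n,m}$ lives over the generally non-Noetherian $R'$ and is not of formally finite type over $R$, so no closedness-of-ideals statement for it is provided by the paper, nor is it obvious. Since your first paragraph already invokes this unproved identification to get $C\hat\otimes_R R'\cong C'_{n,m}/\Fa C'_{n,m}$, both assertions of the lemma rest on the gap as written.

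The missing ingredient is the finite-presentation trick the paper uses, and it closes your gap with tools you already have: since $C_{n,m}$ and $C$ are Noetherian, choose a surjection $C^{\oplus k}\to I$ (resp.\ $C_{n,m}^{\oplus k}\to \Fa$), apply Lemma \ref{BaseChangeExact} once more over $C_{n,m}\to C'_{n,m}$ to the short exact sequence having this surjection as its right-hand map, and conclude that $(C\hat\otimes_R R')^{\oplus k}\cong C^{\oplus k}\hat\otimes_R R'\to I\hat\otimes_R R'$ is surjective; hence the image of $I\hat\otimes_R R'$ in $C\hat\otimes_R R'$ is generated by finitely many elements of $I$, i.e.\ equals $I(C\hat\otimes_R R')$, and likewise for $\Fa$ and $\Fb$. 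With this inserted your argument works, but it is more roundabout than the paper's: the paper applies Lemma \ref{BaseChangeExact} directly to $0\to I\to C\to C/I\to 0$ viewed as $C_{n,m}$-modules (using Proposition \ref{CProps} to see that the topology on $C/I$ is the $J_{n,m}$-adic one), which avoids both your detour through the presentations $C=C_{n,m}/\Fa$, $C/I=C_{n,m}/\Fb$ and your second application of Lemma \ref{BaseChangeExact} over the new base map $C\to C\hat\otimes_R R'$, whose adic hypotheses (that $C\hat\otimes_R R'$ is $J''$-adic with $(C\hat\otimes_R R')/(J'')^i$ the expected level-$i$ base change) would themselves need verification.
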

\begin{proof}
 Pick a surjection $C_{n,m}\to C$ for some $n,m\geq 0$. Note that $M\hat\otimes_R R'=M\hat\otimes_{C_{n,m}}C'_{n,m}$ for any topological $C$-module $M$.   By Proposition \ref{CProps} the topology on $C'$ is the same as the topology defined by $J_{n,m}C'$. Hence by Lemma \ref{BaseChangeExact} applied to $C_{n,m}\to C'_{n,m}$, which is possible by Lemma \ref{BaseChangeFlat}, there is an exact sequence $0\to I\hat\otimes_R R'\to C\hat\otimes_R R'\to (C/I)\hat\otimes_R R'\to 0$. Thus $\Xscr'$ is the closed formal subscheme of $\Xscr_{\Spf(R')}$ defined by the ideal $I\hat\otimes_R R'$ in $C\hat\otimes_R R'$. Since $C$ is Noetherian, there is a surjective homomorphism of $C$-modules $C^{\oplus k}\to I$ for some $k\geq 0$. Again by Lemma \ref{BaseChangeExact} this induces a surjection $(C\hat\otimes_R R')^{\oplus k}\cong C^{\oplus k}\hat\otimes_R R'\to I\hat\otimes_R R'$ which implies $I\hat\otimes_R R'=I(C\hat\otimes_R R')$. 
\end{proof}
\begin{definition} \label{UnionDef}
  Let $\Xscr=\Spf(C)$ be an affine formal scheme and $\Xscr_1,\hdots,\Xscr_m$ be closed formal subschemes of $\Xscr$ defined by closed ideals $I_1,\hdots,I_m$ of $C$. We say that $\Xscr$ is the union of the formal subschemes $\Xscr_i$ if the intersection of the ideals $I_i$ is the zero ideal of $C$.
\end{definition}

 \subsection{Points over $\bar R$}
\begin{definition}
  Let $\Xscr=\Spf(C)\in \AFS_R$. We define $\Xscr(\bar R)$ to be the set of homomorphisms $C \to \bar R$ of $R$-algebras. 
\end{definition}

\begin{lemma} \label{hProp}
  Let $C$ be a topological $R$-algebra of formally finite type and let $h\colon C\to\bar R$ be a homomorphism of $R$-algebras.
  \begin{enumerate}[(i)]
  \item The homomorphism $h$ factors through the valuation ring $R'$ of a finite field extension $K'\subset \bar K$ of $K$.
  \item The homomorphism $h$ is continous.
  \end{enumerate}
\end{lemma}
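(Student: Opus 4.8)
The plan is to deduce both statements from the general facts about topological $R$-algebras of formally finite type collected in Proposition \ref{CProps}. For part (i), I would first pick a surjection $C_{n,m}\to C$ (possible by Lemma \ref{FFChar}), so that a homomorphism $h\colon C\to\bar R$ of $R$-algebras corresponds to a homomorphism $C_{n,m}\to\bar R$ which kills the defining ideal. Since $\bar R$ is a valuation ring, its fraction field is $\bar K$, and the composite $C_{n,m}\to\bar R\hookrightarrow\bar K$ factors through $(C_{n,m}\otimes_R K)/\Fn$ for some maximal ideal $\Fn$ (the image of $C_{n,m}\otimes_R K$ is a finitely generated $K$-subalgebra of a field, hence a field, using that $C_{n,m}\otimes_R K$ is Jacobson, Proposition \ref{CProps}(v)). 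By Proposition \ref{CProps}(vi) the quotient $(C_{n,m}\otimes_R K)/\Fn$ is a finite field extension $K'$ of $K$; one may embed $K'$ into $\bar K$ compatibly. The image of $h$ then lands in the intersection of $\bar R$ with $K'$, which is exactly the valuation ring $R'$ of $K'$ (with the valuation induced from that of $\bar K$, which is the unique extension of the valuation of $K$). Hence $h$ factors through $R'$, as claimed. The mild subtlety to check here is that the image of $C$ under $h$ genuinely lies in $R'$ and not merely in $K'$; this holds because every element of the image lies in $\bar R$ by hypothesis and in $K'$ by the factorization, and $\bar R\cap K'=R'$.

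For part (ii), once we know by (i) that $h$ factors as $C\to R'\hookrightarrow\bar R$ with $K'/K$ finite, continuity of $h$ reduces to continuity of the homomorphism $C\to R'$. Now $R'$ is the valuation ring of the finite extension $K'$ of the local field $K$, so it is itself a complete discrete valuation ring; in particular $R'$ is a Noetherian adic ring whose topology is the $\Fm'$-adic topology, where $\Fm'$ is its maximal ideal. A homomorphism of $R$-algebras $C\to R'$ is continuous as soon as the preimage of some ideal of definition of $R'$ is open in $C$. Since $C$ is of formally finite type, its topology is the $J$-adic topology for $J$ the Jacobson radical (Proposition \ref{CProps}(i)), and $C/J$ is a finite-type $R$-algebra. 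The composite $C\to C/J\to R'/\Fm'$ is a homomorphism of $R$-algebras to the residue field of $R'$, which is a finite field; since $C/J^i$ is of finite type over $R$ for all $i$, one checks that the preimage of $(\Fm')^i$ contains $J^N$ for $N$ large enough — essentially because the maximal ideal of $R$ maps into $\Fm'$ (as $\Fm'$ lies over $\Fm$) and the finitely many algebra generators of $C/J$ over $R$ each map to elements of $R'$, so some power of $J$ is killed modulo $(\Fm')^i$. Alternatively, and more cleanly, I would invoke Proposition \ref{CProps}(ii): every homomorphism between topological $R$-algebras of formally finite type is continuous, and $R'$ is such an algebra (it is $\Spf(R')\in\AFS_{R'}$, and as a $\Fm'$-adic Noetherian ring with finite residue field it is of formally finite type over $R$ — indeed $R'/\Fm' = R'/(\Fm R')^?$ is finite over $R$). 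So continuity is immediate from the general principle.

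I expect the only real obstacle to be bookkeeping about which ring the homomorphism actually lands in and reconciling the various topologies: one must be careful that $\bar R$ is \emph{not} itself of formally finite type over $R$ (it is not Noetherian), so part (ii) cannot be quoted directly for $C\to\bar R$ — it must first be routed through the Noetherian ring $R'$ produced in part (i). Once that factorization is in hand, everything follows from Proposition \ref{CProps} and the standard structure theory of complete discrete valuation rings. I would present (i) first, then use it as the key input for (ii).
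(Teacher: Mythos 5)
Your argument is essentially the paper's own proof: for (i) you pass to $C\otimes_R K\to\bar K$, identify the kernel as a maximal ideal whose quotient is a finite extension $K'$ of $K$ by Proposition \ref{CProps}(vi), and observe that the image of $h$ lies in $\bar R\cap K'=R'$; for (ii) you reduce via (i) to a homomorphism into the valuation ring $R'$ of a finite extension, which is of formally finite type over $R$, so Proposition \ref{CProps}(ii) gives continuity, exactly as in the paper. One small repair: the image of $C\otimes_R K$ (or of $C_{n,m}\otimes_R K$) in $\bar K$ is not obviously finitely generated over $K$, so your ``finitely generated plus Jacobson'' justification for maximality of the kernel does not work as stated; the correct and immediate reason is that any $K$-subalgebra of $\bar K$ is a field because $\bar K/K$ is algebraic, whence the kernel is maximal and Proposition \ref{CProps}(vi) applies.
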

\begin{proof}

$(i)$: The homomorphism $h$ induces a homomorphism $C\otimes_R K\to \bar R\otimes_R K\to \bar K$ with the last homomorphism given by multiplication. Its kernel is a maximal ideal $\Fn$ of $C\otimes_R K$. By Proposition \ref{CProps} the quotient $(C\otimes_RK)/\Fn$ is a finite field extension of $K$. This implies $(i)$.

$(ii)$: By $(i)$ it suffices to show that if $R'$ is the valuation ring of a finite field extension $K'\subset \bar K$ of $K$ any homomorphism $h\colon C \to R'$ of $R$-algebras is continuous. Since $R'$ is of formally finite type this is a special case of Proposition \ref{CProps} (ii).
\end{proof}

 Caution: The ring $\bar R$ is not complete with respect to the valuation topology. Thus there is no formal scheme $\Spf(\bar R)$ and $\Xscr(\bar R)$ cannot be considered as $\Xscr(\Spf(\bar R))$. The set $\Xscr(\bar R)$ is also not the same as $\Xscr(\Spf(\hat{\bar R}))$.

Our interest in the set $\Xscr(\bar R)$ comes from the following situation, to which we will later apply the results of this section: Let $\CA$ be a semiabelian scheme of dimension $g$ over $R$ and $\CX\subset \CA$ a closed subscheme containing the zero section. Let $s$ be the closed point of the zero section of $\CA$. Let $\hat\CA$ and $\hat\CX$ be the formal schemes associated to the completions of the local rings $\CO_{\CA,s}$ and $\CO_{\CX,s}$ with respect to their maximal ideals. Then $\hat\CA\cong \Spf(R[[x_1,\hdots,x_g]])$ so that $\hat\Xscr\in\AFS_R$ and $\hat\CX(\bar R)$ is naturally identified with the set points in $\CX(\bar R)$ which map to $0$ in the special fiber (c.f. Subsection \ref{FormalSchemeFromScheme}).

Note that the formation of $\Xscr(\bar R)$ is functorial in $\Xscr$.

For a finite field extension $K'\subset \bar K$ of $K$ with valuation ring $R'$ and $\Xscr\in\AFS_R$ we denote by $\Xscr(R')$ the set of homomorphisms $\Xscr\to\Spf(R')$ over $\Spf(R)$. There is a natural inclusion $\Xscr(R')\into \Xscr(\bar R)$ and Lemma \ref{hProp} $(i)$ implies:
\begin{lemma} \label{XRBarUnion}
  Let $\Xscr\in\AFS_R$. The set $\Xscr(\bar R)$ is the union $\Xscr(\bar R)=\cup_{K'}\Xscr(R')$ where $K'$ varies over all finite field extensions of $K$ contained in $\bar K$.
\end{lemma}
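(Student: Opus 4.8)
The plan is to unwind the two definitions and check inclusion in both directions. Write $\Xscr = \Spf(C)$ with $C$ a topological $R$-algebra of formally finite type, so that $\Xscr(\bar R) = \Hom_{R\text{-alg}}(C,\bar R)$ by definition. For a finite extension $K' \subset \bar K$ of $K$ with valuation ring $R'$, the set $\Xscr(R')$ consists of morphisms of formal schemes $\Spf(C) \to \Spf(R')$ over $\Spf(R)$, i.e.\ of \emph{continuous} $R$-algebra homomorphisms $C \to R'$. Post-composing with the inclusion $R' \hookrightarrow \bar R$ realizes the natural inclusion $\Xscr(R') \hookrightarrow \Xscr(\bar R)$ already mentioned in the text, so the right-hand side $\bigcup_{K'}\Xscr(R')$ is visibly contained in the left-hand side.

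For the reverse inclusion, I would take an arbitrary $R$-algebra homomorphism $h\colon C \to \bar R$. By Lemma \ref{hProp}(i) it factors as $C \toover{h'} R' \hookrightarrow \bar R$ for $R'$ the valuation ring of some finite field extension $K' \subset \bar K$ of $K$; since $R' \hookrightarrow \bar R$ is injective and $h$ is an $R$-algebra map, $h'$ is again an $R$-algebra map. It then remains only to see that $h'$ is continuous, so that it defines a point of $\Xscr(R')$ whose image in $\Xscr(\bar R)$ is $h$. Because $R$ is a complete discrete valuation ring and $K'/K$ is finite, $R'$ is module-finite (indeed finite free) over $R$; in particular $R'$ is $\Fm R'$-adic with $R'/(\Fm R')$ finite over $k$, hence a topological $R$-algebra of formally finite type. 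Continuity of $h'$ is then immediate from Proposition \ref{CProps}(ii) (equivalently from Lemma \ref{hProp}(ii)). This shows $h \in \bigcup_{K'}\Xscr(R')$ and completes the proof.

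I do not expect any real obstacle here: all the content has already been extracted in Lemma \ref{hProp}, whose proof rested on the Jacobson property of $C\otimes_R K$ and the finiteness over $K$ of its residue fields (Proposition \ref{CProps}(v),(vi)). The only point requiring a moment's care is checking that $R'$ itself lies in the class of rings to which Proposition \ref{CProps}(ii) applies, i.e.\ that it is of formally finite type over $R$; this is routine given that $R'$ is finite free over the DVR $R$. So the "proof" is really just the observation that Lemma \ref{hProp}(i), together with automatic continuity, says exactly that every $\bar R$-point descends to an $R'$-point for a suitable finite $K'/K$.
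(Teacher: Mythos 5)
Your proof is correct and follows the paper's own route: the paper deduces the lemma directly from Lemma \ref{hProp}(i) (factorization of any $h\colon C\to\bar R$ through the valuation ring of a finite extension) together with the automatic continuity from Lemma \ref{hProp}(ii)/Proposition \ref{CProps}(ii). Your extra remark that $R'$ is finite free over $R$ and hence of formally finite type is exactly the point underlying that continuity step, so there is nothing to correct.
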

\begin{remark} \label{XBarExplicit}
For $\Xscr\in\AFS_R$, the set $\Xscr(\bar R)$ can be described more concretely as follows: For any $r_1,\hdots ,r_n\in\bar\Fm$ and $s_1,\hdots,s_n\in \bar R$ there exists a unique continous homomorphism of $R$-algebras $C_{n,m}\to \bar R$ which sends the $x_i$ to $r_i$ and the $y_i$ to $s_i$ and each homomorphism $C_{n,m}\to \bar R$ is of this form. Thus associating to an element $h\in \Spf(C_{n,m})(\bar R)$ the images of the $x_i$ and the $y_i$ gives a bijection $\Spf(C_{n,m} )(\bar R) \isoto \bar\Fm^{\oplus n} \oplus \bar R^{\oplus m}$. Any closed subscheme $\Xscr$ of $\Spf(C_{n,m})$ is cut out by a family of formal power series $\{f_i\mid i\in I\}\subset C_{n,m}$. Each formal power series $f\in C_{n,m}$ induces a function $\bar\Fm^{\oplus n}\oplus \bar R^{\oplus m} \to R$. The above bijection identifies $\Xscr(\bar R)$ with the set of points in $\bar\Fm^{\oplus n}\oplus \bar R^{\oplus m}$ on which the $f_i$ are zero.  
\end{remark}
 \begin{definition}
    Let $\Gamma\defeq\Aut_R(\bar R)\cong \Aut_K(\bar K)$. For $\Xscr\in\AFS_R$, we let $\Gamma$ act on $\Xscr(\bar R)$ from the left by 
    \begin{align*}
      \Gamma\times\Xscr(\bar R)&\to \Xscr(\bar R)\\
      (\gamma, h)&\mapsto \gamma\cdot h\colon \Gamma(\Xscr,\CO_\Xscr)\toover{h} \bar R \toover{\gamma} \bar R. 
    \end{align*}

 \end{definition}
For a ring $C$, we denote by $\Max(C)$ the set of maximal ideals of $C$ equipped with the Zariski topology. 
\begin{proposition} \label{AnalyticZariski}
    Let $\Xscr=\Spf(C)\in \AFS_R$. Let $\psi$ be the map $\Xscr(\bar R)\to \Max(C\otimes_RK)$ which associates to $h\in \Xscr(\bar R)$ the kernel of the induced homomorphism $h\otimes_R K\colon C\otimes_RK\to \bar R\otimes_R K\cong \bar K$. 
    \begin{enumerate}[(i)]
    \item The map $\psi$ makes $\Max(C\otimes_R K)$ into the set-theoretic quotient of $\Xscr(\bar R)$ by the action of $\Gamma$.
    \item Let $\Yscr=\Spf(C')$ be a closed formal subscheme of $\Xscr$. Then there is a commutative diagram 
      \begin{equation*}
         \xymatrix{
          \Yscr(\bar R) \ar[r]^{\psi'} \ar[d] & \Max(C'\otimes_R K) \ar[d] \\
          \Xscr(\bar R) \ar[r]^{\psi} & \Max(C\otimes_RK) 
}
      \end{equation*}
in which $\psi'$ is the analogue of $\psi$ for $\Yscr$  and $\Max(C'\otimes_R K)\to \Max(C\otimes_RK)$ is induced by the surjection $C\to C'$.
    \end{enumerate}

    \end{proposition}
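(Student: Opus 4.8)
The plan is to deduce (i) from three facts about $\psi$: that it is well defined, that it is $\Gamma$-invariant, and that every fibre is a single $\Gamma$-orbit (so that in particular $\psi$ is surjective); together these say precisely that $\psi$ identifies $\Max(C\otimes_RK)$ with the set-theoretic quotient $\Xscr(\bar R)/\Gamma$. Part (ii) will then be a short diagram chase. We may assume $C\otimes_RK\neq 0$, as otherwise both statements are vacuous. For well-definedness: given $h\in\Xscr(\bar R)$, Lemma~\ref{hProp}(i) factors $h$ through the valuation ring $R'$ of a finite extension $K'\subset\bar K$ of $K$; hence $h\otimes_RK$ factors through the field $R'\otimes_RK\cong K'$, and its image in $\bar R\otimes_RK\cong\bar K$ is a $K$-subalgebra of the finite extension $K'$, hence a field, so $\ker(h\otimes_RK)$ is a maximal ideal. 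For $\gamma\in\Gamma$ the map $\gamma\otimes_RK$ is an automorphism of $\bar K$, so it leaves this kernel unchanged and $\psi(\gamma\cdot h)=\psi(h)$.

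Next, suppose $\psi(h_1)=\psi(h_2)=\Fn$. Writing $q\colon C\otimes_RK\twoheadrightarrow K':=(C\otimes_RK)/\Fn$, which is finite over $K$ by Proposition~\ref{CProps}(vi), each $h_i\otimes_RK$ factors as $\iota_i\circ q$ for a $K$-embedding $\iota_i\colon K'\hookrightarrow\bar K$. Since $\bar K$ is an algebraic closure of $K$, the isomorphism-extension theorem provides $\gamma\in\Aut_K(\bar K)\cong\Gamma$ with $\gamma\circ\iota_1=\iota_2$, and restricting the resulting identity $\gamma\circ(h_1\otimes_RK)=h_2\otimes_RK$ along $C\to C\otimes_RK$ gives $\gamma\cdot h_1=h_2$ in $\Xscr(\bar R)$. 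It remains to check surjectivity, i.e. that each fibre is nonempty.

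For this, fix $\Fn\in\Max(C\otimes_RK)$, set $K':=(C\otimes_RK)/\Fn$ with valuation ring $R'$ and maximal ideal $\Fm'$, choose an embedding $K'\hookrightarrow\bar K$, and let $f\colon C\to K'$ be the composite $C\to C\otimes_RK\twoheadrightarrow K'$; it suffices to show $f(C)\subseteq R'$, for then $f$ defines a point $h\in\Xscr(\bar R)$ with $\psi(h)=\Fn$ by construction. Choosing a surjection $C_{n,m}\twoheadrightarrow C$ reduces us to $C=C_{n,m}$. The one genuinely non-formal step is the claim that $f(x_i)\in\Fm'$ and $f(y_j)\in R'$: restricting $f$ to the one-variable subrings $R[[x_i]]$ and $R\langle y_j\rangle$ of $C_{n,m}$ and using that $\ker f\cap R=0$ (since $K\hookrightarrow C\otimes_RK$ while $\Fn\cap K=0$), the classification of the prime ideals of these rings, together with Weierstrass preparation — which writes any element not divisible by a uniformizer as a unit times a distinguished polynomial all of whose non-leading coefficients lie in $\Fm$ — shows that $f(x_i)$ and $f(y_j)$ are roots of monic polynomials over $R$, hence lie in the integrally closed ring $R'$, and $f(x_i)$ even in $\Fm'$ because $\Fm'$ is prime. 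Granting this, $f(J_{n,m})\subseteq\Fm'$, so $f$ is continuous; since $R[x_1,\dots,x_n,y_1,\dots,y_m]$ is dense in $C_{n,m}$ and is carried by $f$ into the closed subset $R'\subset K'$, we conclude $f(C_{n,m})\subseteq R'$. This finishes (i).

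For (ii), the closed embedding $\Yscr\hookrightarrow\Xscr$ corresponds to a surjection $C\twoheadrightarrow C'$; precomposition with it is the left vertical map $\Yscr(\bar R)\to\Xscr(\bar R)$ by functoriality, while applying $\otimes_RK$ yields the surjection $C\otimes_RK\twoheadrightarrow C'\otimes_RK$, whose induced right vertical map sends a maximal ideal of $C'\otimes_RK$ to its preimage in $C\otimes_RK$. For $h'\in\Yscr(\bar R)$ with image $h\in\Xscr(\bar R)$ one has $h\otimes_RK=(h'\otimes_RK)\circ(C\otimes_RK\to C'\otimes_RK)$, so $\psi(h)=\ker(h\otimes_RK)$ is the preimage of $\psi'(h')=\ker(h'\otimes_RK)$, which is exactly the asserted commutativity. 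The main obstacle is the integrality claim in the surjectivity step — everything else is purely formal — and it rests on the Noetherian and Weierstrass structure theory of the rings $R[[x_i]]$ and $R\langle y_j\rangle$; it is essentially already contained in the proof of Proposition~\ref{CProps}(vi).
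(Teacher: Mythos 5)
The parts of your argument that overlap with the paper's own proof are correct and essentially identical to it: $\Gamma$-invariance of $\psi$, the fact that two points with the same image $\Fn$ differ by an element of $\Gamma$ (via Proposition \ref{CProps}(vi) and extension of the two $K$-embeddings of $(C\otimes_RK)/\Fn$ into $\bar K$ to an automorphism of $\bar K$), and the diagram chase for (ii). You are also right that surjectivity of $\psi$ is the real content: the paper's proof is silent on it, even though it is exactly what the application in Proposition \ref{FormalJacobson} uses, and your Weierstrass step does correctly show that the images $\bar x_i=f(x_i)$ and $\bar y_j=f(y_j)$ are integral over $R$, hence lie in $\Fm'$ resp.\ $R'$ (one small slip: the preparation statement ``unit times a distinguished polynomial with non-leading coefficients in $\Fm$'' is valid in $R[[x_i]]$ but not in $R\langle y_j\rangle$, cf.\ $y_j-1$; there you only get a monic polynomial over $R$, which is all you need).

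The genuine gap is the last step of the surjectivity argument. From $f(x_i)\in\Fm'$ and $f(y_j)\in R'$ you infer $f(J_{n,m})\subseteq\Fm'$ and hence continuity of $f$. But $J_{n,m}$ is generated by $\pi$ and the $x_i$, so a general element has the form $\pi a_0+\sum_i x_ia_i$ with $a_0,\dots,a_n\in C_{n,m}$ arbitrary, and at this point you have no control over $f(a_i)\in K'$; bounding those values is precisely the assertion $f(C_{n,m})\subseteq R'$ you are trying to prove. The continuity claim is therefore circular, and without it the density-of-polynomials argument cannot be run, since for a discontinuous $f$ the value on a limit of polynomials need not be the limit of the values. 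A fix staying inside your framework: having found monic polynomials $P_i(x_i)\in R[x_i]$ (distinguished) and $Q_j(y_j)\in R[y_j]$ lying in $\ker f$, use Weierstrass \emph{division} in $C_{n,m}$ by $P_1,\dots,P_n,Q_1,\dots,Q_m$ to write any $g\in C_{n,m}$ as an element of the ideal $(P_1,\dots,Q_m)$ plus a polynomial remainder $r\in R[x,y]$; then $f(g)=f(r)=r(\bar x,\bar y)$, so $f$ coincides with the continuous evaluation point of Remark \ref{XBarExplicit} at $(\bar x,\bar y)\in\bar\Fm^{\oplus n}\oplus\bar R^{\oplus m}$, whence $f(C_{n,m})\subseteq R'$ and $\Fn=\psi(\operatorname{ev}_{(\bar x,\bar y)})$ (and this descends to a quotient $C$ of $C_{n,m}$ because the defining ideal is contained in $\Fn$). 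Alternatively one can show that $\Fn$ is closed in the Noetherian $K$-Banach algebra $C_{n,m}\otimes_RK$, so the quotient map is bounded and power-bounded elements land in $R'$; but some such argument must be supplied, and your proposal as written does not contain it.
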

    \begin{proof}
      $(i)$: It follows directly from the definition of $\psi$ that $\psi(h\cdot\gamma)=\psi(h)$ for all $\gamma\in\Gamma$ and $h\in\Xscr(\bar R)$. On the other hand let $h$ and $h'$ be two elements of $\Xscr(\bar R)$ which have the same image $\Fn$ under $\psi$. By Proposition \ref{CProps} the quotient $(C\otimes_RK)/\Fn$ is a finite field extension $K'$ of $K$. The homomorphisms $h\otimes_R K$ and $h'\otimes_RK$ correspond to two different embeddings $i,i$ of $K'$ into $\bar K$ over $K$. There exists $\tilde \gamma\in \Aut_K(\bar K)$ such that $i'=\tilde\gamma\circ i$ and hence the restriction $\gamma\in \Gamma$ of $\tilde\gamma$ to $\bar R$ sends $h$ to $h'$. Thus the fibers of $\psi$ are exactly the $\Gamma$-orbits, which shows $(i)$.

$(ii)$ follows by a direct verification. 
    \end{proof}

 \begin{definition}
   Let $R'$ be the valuation ring of a completely valued overfield $K'$ of $K$. Let $\Xscr=\Spf(C)$ be an affine formal scheme over $\Spf(R')$.
   \begin{enumerate}[(i)]
   \item The formal scheme $\Xscr$ is \emph{reduced} if the ring $C$ is reduced.
   \item The formal scheme $\Xscr$ is \emph{flat over $R'$} if the ring $C$ is flat over $R'$.
   \item We denote by $\AFS_R^\text{rf}$ the full subcategory of $\AFS_R$ whose objects are those formal schemes which are reduced and flat over $R$.
   \end{enumerate}
 \end{definition}
 \begin{lemma} \label{Xrf}
Let $\Xscr=\Spf(C)\in\AFS_R$. Let $\Xscr^\text{rf}$ be the closed formal subscheme of $\Xscr$ defined by the ideal $\{c\in C\mid \exists n \geq 0\colon (a\Fm)^n=0\}$. This formal scheme is reduced and flat over $R$ and the natural map $\Xscr^\text{rf}(\bar R)\to \Xscr(\bar R)$ is a bijection.
 \end{lemma}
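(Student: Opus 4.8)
The plan is to reduce the whole statement to elementary commutative algebra by reformulating the defining ideal. Since $K$ is a local field, $R$ is a discrete valuation ring; fix a uniformizer $\varpi$, so that $\Fm=(\varpi)$ and, for $c\in C$, the ideal $(c\Fm)^n$ is generated by $(\varpi c)^n$. Hence the set $I\defeq\{c\in C\mid \exists n\geq 0\colon (c\Fm)^n=0\}$ equals $\{c\in C\mid \varpi c\text{ is nilpotent in }C\}$, and since the nilradical of $C$ is an ideal, so is $I$. By Proposition \ref{CProps}(iv) the ideal $I$ is closed, and $C/I$ is again of formally finite type, being a quotient of $C$ (Lemma \ref{FFChar}); thus $\Xscr^\text{rf}=\Spf(C/I)$ is a well-defined closed formal subscheme of $\Xscr$ lying in $\AFS_R$.

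Next I would verify directly that $C/I$ is reduced and flat over $R$, using only the characterization that $c\in I$ if and only if $\varpi c$ is nilpotent. For reducedness, suppose $c^n\in I$ with $n\geq 1$; then $\varpi^m c^{nm}=0$ for some $m$, so $(\varpi c)^{nm}=\varpi^{(n-1)m}(\varpi^m c^{nm})=0$, hence $\varpi c$ is nilpotent and $c\in I$. For flatness over the discrete valuation ring $R$ it suffices to show that $C/I$ is $\varpi$-torsion free: if $\varpi c\in I$, then $(\varpi^2 c)^n=\varpi^{2n}c^{n}=0$ for some $n$, whence $(\varpi c)^{2n}=c^{n}(\varpi^{2n}c^{n})=0$, so again $c\in I$. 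Therefore $C/I$ is reduced and flat over $R$, i.e.\ $\Xscr^\text{rf}\in\AFS_R^\text{rf}$.

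It remains to treat the map on $\bar R$-points. By definition this map is given by composition of homomorphisms $C/I\to\bar R$ with the surjection $C\to C/I$, hence it is injective. For surjectivity, let $h\colon C\to\bar R$ be an arbitrary homomorphism of $R$-algebras and let $c\in I$. Then $\varpi h(c)=h(\varpi c)$ is nilpotent in $\bar R$; but $\bar R$, being a valuation ring, is an integral domain, so $\varpi h(c)=0$, and since $\varpi\neq 0$ in $\bar R$ this forces $h(c)=0$. Thus $h(I)=0$, so $h$ factors (necessarily uniquely) through $C/I$, and the map $\Xscr^\text{rf}(\bar R)\to\Xscr(\bar R)$ is surjective, hence bijective.

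I do not anticipate a genuine obstacle: the one idea needed is the identification of $I$ with $\{c\in C\mid \varpi c\text{ is nilpotent}\}$, after which reducedness and flatness become one-line computations and bijectivity on $\bar R$-points uses only that $\bar R$ is an integral domain containing $\varpi\neq 0$. The sole points requiring a word of care — that $I$ is a closed ideal and that $C/I$ is of formally finite type — are supplied by Proposition \ref{CProps} and Lemma \ref{FFChar}.
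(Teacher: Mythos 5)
Your proof is correct and follows essentially the same route as the paper, which simply asserts the result as a ``direct verification using the fact that an $R$-module is flat if and only if it has no $\Fm$-torsion''; your identification of the ideal with $\{c\in C\mid \varpi c \text{ nilpotent}\}$ and the ensuing computations are exactly the verification the paper has in mind. The auxiliary points you cite (closedness of the ideal via Proposition \ref{CProps}, formally finite type of the quotient via Lemma \ref{FFChar}, and that $\bar R$ is a domain for the bijectivity on points) are all correctly used.
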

 \begin{proof}
   Direct verification using the fact that an $R$-module is flat if and only if it has no $\Fm$-torsion.
 \end{proof}

 \begin{proposition} \label{FormalJacobson}
   Let $\Yscr_1, \Yscr_2 \in \AFS_R^\text{rf}$ be two closed formal subschemes of $\Xscr\in \AFS_R$. If $\Yscr_1(\bar R)\subset \Yscr_2(\bar R)$, then $\Yscr_1\subset \Yscr_2$. 
 \end{proposition}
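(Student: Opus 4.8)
The plan is to translate the hypothesis on $\bar R$-points into a statement about maximal ideals of $C\otimes_R K$ via the map $\psi$ of Proposition \ref{AnalyticZariski}, and then to use that $C\otimes_R K$ is Jacobson together with the hypotheses ``reduced'' and ``flat over $R$'' in order to descend the resulting inclusion of ideals back down to $C$.

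Write $\Xscr=\Spf(C)$ and let $I_1,I_2\subseteq C$ be the (automatically closed) ideals with $\Yscr_i=\Spf(C/I_i)$; recall that $\Yscr_1\subseteq\Yscr_2$ holds exactly when $I_2\subseteq I_1$. By Proposition \ref{AnalyticZariski}(i) applied to $\Yscr_i$, the map $\psi_i\colon\Yscr_i(\bar R)\to\Max((C/I_i)\otimes_R K)$ is surjective, and by the diagram in Proposition \ref{AnalyticZariski}(ii) the composite of $\psi_i$ with the natural injection $\Max((C/I_i)\otimes_R K)\hookrightarrow\Max(C\otimes_R K)$ is the restriction of $\psi$ to $\Yscr_i(\bar R)\subseteq\Xscr(\bar R)$. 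Since the image of that injection is the set $V_i$ of maximal ideals of $C\otimes_R K$ containing $I_i(C\otimes_R K)$, we conclude $\psi(\Yscr_i(\bar R))=V_i$; hence $\Yscr_1(\bar R)\subseteq\Yscr_2(\bar R)$ forces $V_1\subseteq V_2$.

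Now $C\otimes_R K$ is Jacobson by Proposition \ref{CProps}(v), so for any ideal $J$ the intersection of all maximal ideals containing $J$ equals $\sqrt{J}$; applying this to $I_1(C\otimes_R K)$ and $I_2(C\otimes_R K)$ and using $V_1\subseteq V_2$ gives $\sqrt{I_2(C\otimes_R K)}\subseteq\sqrt{I_1(C\otimes_R K)}$. Since $K$ is the fraction field of $R$, the ring $(C\otimes_R K)/I_i(C\otimes_R K)\cong(C/I_i)\otimes_R K$ is a localization of $C/I_i$, which is reduced because $\Yscr_i$ is; hence $I_i(C\otimes_R K)$ is a radical ideal, and the previous inclusion sharpens to $I_2(C\otimes_R K)\subseteq I_1(C\otimes_R K)$. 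Finally, flatness of $\Yscr_1$ over $R$ makes $C/I_1$ torsion free over $R$, so an element of $C$ lying in the localized ideal $I_1(C\otimes_R K)$ already lies in $I_1$; therefore $I_2\subseteq C\cap I_2(C\otimes_R K)\subseteq C\cap I_1(C\otimes_R K)=I_1$, which is what we wanted.

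The only step requiring real attention is the first one, namely the identification $\psi(\Yscr_i(\bar R))=V_i$, which rests on the surjectivity of $\psi$ and on its compatibility with passage to a closed formal subscheme, both supplied by Proposition \ref{AnalyticZariski}. Everything afterwards is a formal manipulation with radicals and localizations: ``reduced'' is precisely what makes the ideals $I_i(C\otimes_R K)$ radical, and ``flat over $R$'' is precisely what makes the contraction of $I_1(C\otimes_R K)$ to $C$ recover $I_1$.
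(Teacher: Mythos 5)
Your proof is correct and follows essentially the same route as the paper's: flatness of $\Yscr_1$ over $R$ to reduce the ideal inclusion to $C\otimes_R K$, reducedness to make the ideals $I_i(C\otimes_R K)$ radical, the Jacobson property of $C\otimes_R K$, and Proposition \ref{AnalyticZariski} to translate the hypothesis on $\bar R$-points into a statement about maximal ideals. The only difference is that you spell out explicitly the identification $\psi(\Yscr_i(\bar R))=V_i$, which the paper leaves as an implicit consequence of Proposition \ref{AnalyticZariski}.
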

 \begin{proof}
  Let $C$ be the ring of global sections of $\Xscr$ and $I_1,I_2$ the ideals defining $\Yscr_1,\Yscr_2$. We want to prove $I_2\subset I_1$. Let $\pi$ be a uniformizer of $R$. The fact that the $\Yscr_i$ are flat over $R$ means that $\pi$ is not a zero-divisor in $C/I_i$. This implies that it is enough to prove $I_2\otimes_RK\subset I_1\otimes_RK$ inside $C\otimes_RK$. Since by assumption the ideals $I_i$ are radical, so are the ideals $I_i\otimes_R K$. Since by Proposition \ref{CProps} the ring $C\otimes_RK$ is Jacobson it suffices to prove that each maximal ideal of $C\otimes_RK$ which contains $I_1\otimes_R K$ also contains $I_2\otimes_R K$. This follows from the fact that $\Yscr_1(\bar R)\subset \Yscr_2(\bar R)$ and Proposition \ref{AnalyticZariski}.
 \end{proof}
 For $i\geq 0$, we endow the ring $\bar R/(\Fm \bar R)^i$ with the quotient topology induced from the valuation topology on $\bar R$, with respect to which it is adic. Hence there is a formal scheme $\Spf(\bar R/(\Fm \bar R)^i)$ and for $\Xscr\in\AFS_R$ we denote by $\Xscr(\bar R/(\Fm \bar R)^i)$ the set of morphisms $\Spf(\bar R/(\Fm \bar R)^i)\to \Xscr$ over $\Spf(R)$. There is a natural map $\Xscr(\bar R)\to \Xscr(\bar R/(\Fm \bar R)^i)$ for all $i\geq 0$.
 \begin{corollary} \label{FormalSchemeNonempty}
   Let $\Xscr\in\AFS_R$. The set $\Xscr(\bar R)$ is nonempty if and only if for all $i\geq 0$ the set $\Xscr(\bar R/\Fm^i\bar R)^i$ is nonempty.
 \end{corollary}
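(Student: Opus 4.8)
The plan is to reduce the statement to the single algebraic assertion that $C\otimes_RK\neq 0$, where $\Xscr=\Spf(C)$, and then read off the conclusion from Proposition \ref{AnalyticZariski}. Fix a uniformizer $\pi$ of $R$ and write $v$ for the valuation on $\bar K$; since the value group of $\bar K$ is divisible, $(\Fm\bar R)^i=\pi^i\bar R=\{x\in\bar R\mid v(x)\geq i\,v(\pi)\}$ for every $i\geq 0$.

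The "only if" direction is immediate: a point of $\Xscr(\bar R)$ is a continuous $R$-algebra homomorphism $C\to\bar R$ by Lemma \ref{hProp}, and composing it with the continuous projection $\bar R\to\bar R/(\Fm\bar R)^i$ produces a morphism $\Spf(\bar R/(\Fm\bar R)^i)\to\Xscr$ over $\Spf(R)$, i.e. an element of $\Xscr(\bar R/(\Fm\bar R)^i)$; this is precisely the natural map $\Xscr(\bar R)\to\Xscr(\bar R/(\Fm\bar R)^i)$, whose target is thus nonempty whenever its source is.

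For the converse, suppose $\Xscr(\bar R/(\Fm\bar R)^i)$ is nonempty for all $i$. The first step is to show that $\pi$ is not nilpotent in $C$. If it were, say $\pi^N=0$ in $C$, then choosing any element of $\Xscr(\bar R/\pi^{N+1}\bar R)$ gives in particular an $R$-algebra homomorphism $\phi\colon C\to\bar R/\pi^{N+1}\bar R$, and then $\phi(\pi)^N=\phi(\pi^N)=0$; but $\phi(\pi)$ is the image of $\pi$ in $\bar R/\pi^{N+1}\bar R$, while $\pi^N\notin\pi^{N+1}\bar R$ because $v(\pi^N)=Nv(\pi)<(N+1)v(\pi)$, a contradiction. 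Hence $\pi$ is not nilpotent in $C$, so $C\otimes_RK=C[\pi^{-1}]\neq 0$. The second step is then formal: a nonzero ring has a maximal ideal, so $\Max(C\otimes_RK)\neq\emptyset$, and since by Proposition \ref{AnalyticZariski} the map $\psi\colon\Xscr(\bar R)\to\Max(C\otimes_RK)$ exhibits its target as a quotient of its source (in particular $\psi$ is surjective), the set $\Xscr(\bar R)$ is nonempty.

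I do not anticipate any real obstacle; the only slightly delicate points are the identification of $\pi^i\bar R$ with a valuation ball (which rests on the divisibility of the value group of $\bar K$) and the use of the surjectivity of $\psi$ contained in Proposition \ref{AnalyticZariski}. Should one wish to avoid the latter, one can instead produce a point of $\Xscr(\bar R)$ by hand from a maximal ideal $\Fn\subset C\otimes_RK$: the quotient $(C\otimes_RK)/\Fn$ is a finite extension $K'$ of $K$ by Proposition \ref{CProps}(vi), and, realizing $C$ as a quotient of some $C_{n,m}$, the images in $K'$ of the topologically nilpotent variables $x_j$ and of the power-bounded variables $y_j$ lie in the valuation ring of $K'$, so that the continuous homomorphism $C\to K'$ factors through that valuation ring and hence through $\bar R$.
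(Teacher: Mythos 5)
Your proof is correct, and it reaches the conclusion by a genuinely different route from the paper's. The computation that $\pi^N=0$ in $C$ is incompatible with the existence of a point modulo $(\Fm\bar R)^{N+1}$ is exactly the final step of the paper's argument, run in the opposite direction: the paper argues contrapositively, using Lemma \ref{Xrf} and Proposition \ref{FormalJacobson} to show that emptiness of $\Xscr(\bar R)$ forces some power of $\pi$ to vanish in $C$, and then concludes that points modulo $(\Fm\bar R)^i$ cannot exist for large $i$; you instead deduce from the hypothesis that $\pi$ is not nilpotent, hence $C\otimes_RK\neq 0$, and produce a point of $\Xscr(\bar R)$ from any maximal ideal via the surjectivity of $\psi$ asserted in Proposition \ref{AnalyticZariski}(i). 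This is shorter, bypasses Lemma \ref{Xrf} and the Jacobson property underlying Proposition \ref{FormalJacobson}, and reduces everything to the fact that a nonzero ring has a maximal ideal. The price is that it leans on the surjectivity of $\psi$, which the statement of Proposition \ref{AnalyticZariski}(i) does contain but whose proof in the paper only verifies that the fibres of $\psi$ are the $\Gamma$-orbits; since the paper itself invokes this surjectivity in the proof of Proposition \ref{FormalJacobson}, your argument rests on the same underlying input as the paper's, so this is a legitimate citation rather than a gap. Two small remarks: the identification $(\Fm\bar R)^i=\pi^i\bar R$ needs only that $\Fm=\pi R$, not the divisibility of the value group of $\bar K$; and your alternative construction of a point directly from a maximal ideal (the claim that the images of the $x_j$ and $y_j$ land in the valuation ring of $K'$, with the $x_j$ in its maximal ideal) is precisely the nontrivial content of that surjectivity and would itself require a Weierstrass-type argument or an appeal to Berthelot's generic fibre, rather than the one-line assertion you give.
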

 \begin{proof}
   The ``only if'' direction is clear. Conversely, assume that $\Xscr(\bar R)$ is empty. Let $\Xscr^\text{rf}\in \AFS_R^\text{rf}$ be the closed formal subscheme given by Lemma \ref{Xrf}. Since $\Xscr^\text{rf}(\bar R)=\Xscr(\bar R)$ is empty, it follows from Proposition \ref{FormalJacobson} that $\Xscr^\text{rf}$ is the empty formal scheme. Hence, if we let $C\defeq \Gamma(\Xscr,\CO_\Xscr)$ and $\pi$ is a uniformizer of $R$, it follows from the definition of $\Xscr^\text{rf}$ that for each $c\in C$ there exists $n \geq 0$ such that $(c\pi)^n=0$. For $c=1$ we get that there is an $n\geq 0$ such that the image $\pi^{n}$ in $C$ is zero. Hence for all $i>n$ there is no homomorphism $C\to \bar R/(\Fm\bar R)^i$ of $R$-algebras since for such $i$ the image of $\pi^n$ in $\bar R/(\Fm \bar R)^i$ is not zero. This proves the claim.
 \end{proof}

\begin{lemma} \label{XBarRUnion}
 Let $\Xscr=\Spf(C)\in\AFS_R$ be be the union of closed subschemes $\Xscr_1,\hdots,\Xscr_m$. Then $\Xscr(\bar R)=\cup_i\Xscr_i(\bar R)$. 
\end{lemma}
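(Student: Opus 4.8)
The plan is to unwind the definitions on both sides and reduce everything to the fact that $\bar R$ is an integral domain. Write $\Xscr=\Spf(C)$ and let $I_i\subset C$ be the closed ideal defining $\Xscr_i$, so that by Definition \ref{UnionDef} the hypothesis ``$\Xscr$ is the union of the $\Xscr_i$'' means $\bigcap_i I_i=(0)$ in $C$. Recall that $\Xscr(\bar R)=\Hom_R(C,\bar R)$ and $\Xscr_i(\bar R)=\Hom_R(C/I_i,\bar R)$; since $\Xscr\mapsto\Xscr(\bar R)$ is functorial and $C\to C/I_i$ is surjective, the natural inclusion identifies $\Xscr_i(\bar R)$ with the subset $\{h\in\Hom_R(C,\bar R)\mid I_i\subset\ker h\}$ of $\Xscr(\bar R)$. (Here one uses Proposition \ref{CProps}(iv), which guarantees the $I_i$ are automatically closed, so the $\Xscr_i$ are genuine closed formal subschemes in the sense employed here.) With this identification the inclusion $\bigcup_i\Xscr_i(\bar R)\subset\Xscr(\bar R)$ is immediate.

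For the reverse inclusion I would argue by contradiction. Take $h\in\Xscr(\bar R)$ and suppose $I_i\not\subset\ker h$ for every $i$; choose $a_i\in I_i$ with $h(a_i)\neq 0$. The product $a_1\cdots a_m$ then lies in $\prod_i I_i\subset\bigcap_i I_i=(0)$, so $0=h(a_1\cdots a_m)=\prod_i h(a_i)$. But $\bar R$, being the valuation ring of the field $\bar K$, is an integral domain, so a product of nonzero elements is nonzero --- a contradiction. Hence $I_i\subset\ker h$ for some $i$, which means $h$ factors through $C/I_i$ and therefore defines a point of $\Xscr_i(\bar R)$. This gives $\Xscr(\bar R)\subset\bigcup_i\Xscr_i(\bar R)$ and completes the proof.

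There is no serious obstacle here: the only input beyond a direct unwinding of definitions is that $\bar R$ is a domain, which is clear, and the elementary fact that a finite product of ideals is contained in their intersection. The one point worth stating with a little care is the identification of each $\Xscr_i(\bar R)$ as a subset of $\Xscr(\bar R)$, and in particular that this subset consists exactly of the homomorphisms killing $I_i$.
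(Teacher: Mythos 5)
Your proof is correct and follows essentially the same route as the paper: identify each $\Xscr_i(\bar R)$ with the homomorphisms $C\to\bar R$ killing $I_i$, and argue by contradiction using that a product of elements $a_i\in I_i$ lies in $\bigcap_i I_i=(0)$ while $\bar R$ is a domain. Your version is in fact a slightly cleaner formulation of the paper's argument, which reaches the same contradiction via an auxiliary element and the equation $x^m=0$.
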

\begin{proof}
  Let $I_1,\hdots,I_m\subset C$ be the ideals defining the $\Xscr_i$. Let $h\colon C\to \bar R$. We want to show that $h(I_i)=0$ for some $i$. If this is not the case we pick $0\not= r_i\in h(I_i)$. Then the product $r$ of the $r_i$ is a non-zero element which lies in $h(I_i)$ for all $i$. For each $i$ pick $c_i\in I_i$ such that $h(c_i)=x$. Then the product of the $c_i$ lies in the intersection of the $I_i$ which by assumption is zero. Thus by applying $h$ to this product we get $x^m=0$ and hence $x=0$, which is a contradiction.
\end{proof}
 \subsection{Irreducibility}
\begin{definition} \label{IrredX}
  Let $\Xscr\in\AFS_R^\text{rf}$ be non-empty.
  \begin{itemize}
  \item The formal scheme $\Xscr$ is \emph{irreducible} if and only if $\Spec(C)$ is irreducible.
  \item An irreducible component of $\Xscr$ is a maximal irreducible closed formal subscheme.
  \item We call the formal scheme $\Xscr$ is \emph{geometrically irreducible} if and only if $\Xscr_{R'}$ is irreducible for all valuation rings $R'$ of finite field extensions $K'$ of $K$.
  \end{itemize}
\end{definition}
Note that the irreducible components of $\Xscr$ correspond to the irreducible components of $\Spec(C)$, that is to the minimal prime ideals of $C$. In particular there are finitely many such components. Also, since $C$ is reduced, the intersection of all its minimal prime ideals is the zero ideal. Thus $\Xscr$ is the union of its irreducible components in the language of Definition \ref{UnionDef}. 

 \begin{proposition}
   Let $\Xscr\in\AFS_R^\text{rf}$ be non-empty.
   Each irreducible component of $\Xscr$ is reduced and flat over $R$.
 \end{proposition}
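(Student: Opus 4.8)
The plan is to work entirely with the ring $C\defeq\Gamma(\Xscr,\CO_\Xscr)$. By the remark preceding the proposition, the irreducible components of $\Xscr$ are precisely the closed formal subschemes $\Spf(C/\Fp)$ with $\Fp$ a minimal prime ideal of $C$. Since $C/\Fp$ is an integral domain it is in particular reduced, so the whole content of the statement is that $C/\Fp$ is flat over $R$.

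Recall that $R$, being the valuation ring of a local field of positive characteristic, is a complete discrete valuation ring; fix a uniformizer $\pi$ of $R$. As used in the proof of Lemma~\ref{Xrf}, an $R$-module is flat if and only if it is $\Fm$-torsion free, so $C/\Fp$ is flat over $R$ if and only if $\pi$ is a non-zero-divisor in $C/\Fp$, and since $\Fp$ is prime this is equivalent to $\pi\notin\Fp$. Thus everything reduces to showing that no minimal prime of $C$ contains $\pi$, i.e.\ that restricting to an irreducible component cannot create $\Fm$-torsion.

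This is the only step requiring an argument, and it is a short localization argument. Since $\Xscr$ is flat over $R$, multiplication by $\pi$ is injective on $C$. Suppose a minimal prime $\Fp$ of $C$ contained $\pi$. Localizing at $\Fp$: as $\Fp$ is minimal, $C_\Fp$ is a zero-dimensional local ring, so its maximal ideal $\Fp C_\Fp$ coincides with the nilradical of $C_\Fp$; and $C_\Fp$ is reduced because $C$ is, hence $\Fp C_\Fp=0$. Therefore the image of $\pi$ in $C_\Fp$ vanishes, which means $s\pi=0$ in $C$ for some $s\in C\setminus\Fp$. Then $s\neq 0$ (as $0\in\Fp$), contradicting that $\pi$ is a non-zero-divisor in $C$. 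Hence $\pi\notin\Fp$, and we conclude that $C/\Fp$ is reduced and flat over $R$, as claimed. I do not anticipate any genuine obstacle beyond correctly isolating this point; in particular no Noetherian hypothesis is needed here, only reducedness of $C$ and flatness of $\Xscr$ over $R$.
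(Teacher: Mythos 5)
Your proof is correct and follows essentially the same route as the paper: reducedness of a component is immediate, and flatness over the discrete valuation ring $R$ reduces to showing that no minimal prime of $C$ contains the uniformizer $\pi$, which is exactly the ``direct verification'' the paper leaves implicit (phrased there as schematic density of the generic fiber passing to irreducible components). Your localization-at-a-minimal-prime argument supplies that omitted detail cleanly, and you are right that no Noetherian hypothesis is needed for it.
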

  \begin{proof}
     Let $C\defeq\Gamma(\Xscr,\CO_\Xscr)$. It suffices to show that each irreducible component of $\Spec(C)$ is reduced and flat over $R$. Reducedness is clear. As $R$ is a discrete valuation ring, a scheme $X$ over $\Spec(R)$ is flat if and only if its generic fiber is schematically dense in $X$. By a direct verification, if $\Spec(C)$ satisfies this condition, then so does any irreducible component of $\Spec(C)$.

   \end{proof}
   \begin{lemma} \label{IrrUnion}
     Let $\Xscr=\Spf(C)\in\AFS_R^\text{rf}$ be irreducible. If $\Xscr_1,\hdots,\Xscr_m$ are closed formal subschemes of $\Xscr$ such that $\Xscr$ is the union of the $\Xscr_i$, then $\Xscr=\Xscr_i$ for some $i$.
   \end{lemma}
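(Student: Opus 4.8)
The statement is really a translation exercise, so the plan is to unwind the two definitions into commutative algebra and then apply the usual ``domain versus intersection of ideals'' trick. First I would recall that, since $\Xscr\in\AFS_R^{\text{rf}}$, the ring $C$ is reduced, and that $\Xscr$ being irreducible means by Definition \ref{IrredX} that $\Spec(C)$ is irreducible; a reduced ring with irreducible spectrum has $(0)$ as its unique minimal prime, hence $(0)$ is prime and $C$ is an integral domain. Next, writing $I_i\subset C$ for the closed ideals defining the closed formal subschemes $\Xscr_i$, the hypothesis that $\Xscr$ is the union of the $\Xscr_i$ unwinds, by Definition \ref{UnionDef}, to $\bigcap_{i=1}^m I_i=(0)$.

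Then the main step: I would show that one of the $I_i$ must be the zero ideal. Suppose not, and pick $0\neq a_i\in I_i$ for each $i$. Since $\prod_{i=1}^m I_i\subset\bigcap_{i=1}^m I_i=(0)$, the product $a_1\cdots a_m$ vanishes; but $C$ is a domain and each $a_i\neq0$, a contradiction. Hence $I_i=(0)$ for some $i$.

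Finally I would observe that the closed formal subscheme of $\Xscr$ defined by the zero ideal is $\Xscr$ itself: the corresponding homomorphism $C\to C/(0)$ is the identity and the quotient topology coincides with the original one (consistently, $(0)$ is indeed a closed ideal by Proposition \ref{CProps}(iv), since $C$ is of formally finite type). Therefore $\Xscr_i=\Xscr$ for that index $i$, which is the assertion. There is no genuine obstacle here; the only thing to be careful about is making the dictionary between the formal-geometric language of Definitions \ref{IrredX} and \ref{UnionDef} and the ring-theoretic statement fully explicit, in particular that ``irreducible'' for a reduced object forces $C$ to be a domain rather than merely having a single minimal prime up to nilpotents.
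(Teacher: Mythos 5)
Your proof is correct and follows essentially the same argument as the paper: reduce to the statement that in the integral domain $C$ the ideals $I_i$ have zero intersection, and observe that choosing nonzero elements $a_i\in I_i$ would give a vanishing product, a contradiction. Your extra care in deducing that $C$ is a domain from reducedness plus irreducibility of $\Spec(C)$, and that the zero ideal defines $\Xscr$ itself, only makes explicit what the paper leaves implicit.
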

   \begin{proof}
      Let $I_1,\hdots,I_m\subset C$ be the ideals defining the $\Xscr_i$. By assumption their intersection is zero. By assumption $C$ is integral. If all $I_i$ were non-zero, we could pick elements $0\not=x_i\in I_i$ whose product would be zero. Thus one of the $I_i$ is zero, which is what we wanted.
   \end{proof}
   In \cite[Section 7]{deJong1}, de Jong gives a construction, due to Berthelot, of a ``generic fiber'' functor from $\AFS_R$ to the category of quasi-separated rigid analytic spaces over $K$. We denote this functor by $\Xscr \mapsto \Xscr^\text{rig}$. It can be described as follows: The formal scheme $\Spf(C_{n,m})$ is sent to the product of the open $n$-dimensional unit disc $D^n_K$ over $K$ and the closed $m$-dimensional unit disc $B^m_K$ over $K$. A closed formal subscheme $\Xscr$ as above is cut out by a family of power series $\{f_i\mid i\in I\}\subset C_{n,m}$. These $f_i$ induce global sections of $D^n_K\times B^m_K$, and $\Xscr^\text{rig}$ is the closed rigid analytic subspace of $D^m_K\times B^n_K$ cut out by these global sections. 
 
We will prove Proposition \ref{GeomIrrComp} by using results from \cite{ConradIrreducible}. There Conrad introduces the notion of irreducibility of a quasi-separated rigid analytic space and that of an irreducible component of such a space. He also shows that this notion is well-behaved under the functor $\Xscr\mapsto \Xscr^\text{rig}$. The following is a slight reformulation of \cite[Theorem 2.3.1]{ConradIrreducible}:

 \begin{theorem} \label{FormalRigidIrr}
   Let $\Xscr\in \AFS_R^\text{rf}$ and $\Xscr_1,\hdots,\Xscr_m$ be the irreducible components of $\Xscr$. The closed rigid analytic subvarieties $\Xscr_i^\text{rig}$ of $\Xscr^\text{rig}$ are the irreducible components of $\Xscr^\text{rig}$. 
 \end{theorem}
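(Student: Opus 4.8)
The plan is to deduce this from \cite[Theorem~2.3.1]{ConradIrreducible}, which develops the theory of irreducible components of quasi-separated rigid spaces together with its compatibility with normalization. Write $\Xscr=\Spf(C)$ with $C$ reduced, flat over $R$ and of formally finite type, hence Noetherian. By Definition~\ref{IrredX} the irreducible components of $\Xscr$ are $\Xscr_i=\Spf(C/\Fp_i)$, where $\Fp_1,\dots,\Fp_m$ are the minimal primes of $C$; since $C$ is reduced one has $\bigcap_i\Fp_i=0$, and since $C$ is flat over $R$ no $\Fp_i$ contains a uniformizer, so each $C/\Fp_i$ is a domain containing $R$ --- again reduced, flat over $R$ and of formally finite type. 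Thus $\Xscr$ is the union of the $\Xscr_i\in\AFS_R^{\text{rf}}$ in the sense of Definition~\ref{UnionDef}.

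First I would check $\Xscr^{\text{rig}}=\bigcup_i\Xscr_i^{\text{rig}}$. The rigid space $\Xscr^{\text{rig}}$ is reduced (Berthelot's generic-fibre construction preserves reducedness for objects of $\AFS_R^{\text{rf}}$, cf.\ \cite[Section~7]{deJong1} and \cite{ConradIrreducible}), and its set of classical points is $\Max(C\otimes_R K)$: a classical point is a homomorphism $C\to R'$ with $R'$ the valuation ring of a finite extension of $K$, in accordance with Proposition~\ref{AnalyticZariski}. The $\Xscr_i^{\text{rig}}$ are reduced closed analytic subspaces, and the classical points of $\bigcup_i\Xscr_i^{\text{rig}}$, namely $\bigcup_i\Max(C/\Fp_i\otimes_R K)$, exhaust $\Max(C\otimes_R K)$ because $\bigcap_i\Fp_i=0$. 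Since every nonempty rigid space has a classical point and affinoid algebras are Jacobson, a reduced closed subspace of $\Xscr^{\text{rig}}$ containing all classical points equals $\Xscr^{\text{rig}}$, which gives the claim. (Alternatively this follows from the flatness over $C_{n,m}$ of the affinoid algebras appearing in Berthelot's admissible cover of $\Spf(C_{n,m})^{\text{rig}}$.)

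Next I would bring in normalization. The rings $C_{n,m}$ and their quotients are excellent, so $C\to\widetilde C$ is module-finite with $\widetilde C\cong\prod_{i=1}^m\widetilde{C/\Fp_i}$, each factor a normal domain, finite over $C/\Fp_i$, of formally finite type and flat over $R$. By \cite[Theorem~2.3.1]{ConradIrreducible} the space $\Xscr^{\text{rig}}$ admits a normalization $\nu\colon\widetilde{\Xscr^{\text{rig}}}\to\Xscr^{\text{rig}}$ whose formation commutes with Berthelot's generic-fibre functor, so $\widetilde{\Xscr^{\text{rig}}}=\Spf(\widetilde C)^{\text{rig}}=\bigsqcup_{i=1}^m\Spf(\widetilde{C/\Fp_i})^{\text{rig}}$, and the same theorem identifies the irreducible components of $\Xscr^{\text{rig}}$ with the images under $\nu$ of the connected components of $\widetilde{\Xscr^{\text{rig}}}$. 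It then remains to see that each $\Spf(\widetilde{C/\Fp_i})^{\text{rig}}$ is connected --- it is the generic fibre of the normal formal scheme attached to a normal domain, hence a normal rigid space, and for normal rigid spaces connectedness is equivalent to irreducibility --- and that $\nu$ maps it onto $\Xscr_i^{\text{rig}}$, which holds because the finite surjection $\Spf(\widetilde{C/\Fp_i})\to\Xscr_i$ stays surjective after applying $(-)^{\text{rig}}$. Combining, the irreducible components of $\Xscr^{\text{rig}}$ are exactly the $\Xscr_i^{\text{rig}}$.

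The main obstacle is this last step: one needs Berthelot's generic-fibre functor --- defined for formally finite type, not merely topologically finite type, formal schemes --- to commute with normalization and to send the normal formal scheme attached to a normal domain to a \emph{connected}, hence irreducible, rigid space. This is precisely where the content of \cite[Theorem~2.3.1]{ConradIrreducible} and its preparatory results on normal rigid spaces is used; the identification of the cover and the bookkeeping with minimal primes are comparatively routine.
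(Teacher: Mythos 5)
Your proposal is correct and follows essentially the same route as the paper: both deduce the statement from \cite[Theorem 2.3.1]{ConradIrreducible} by matching the decomposition coming from the normalization $\tilde C$ with the minimal primes of $C$ (the paper simply notes that the preimages in $C$ of the minimal primes of $\tilde C$ are exactly the minimal primes of $C$, which is the same bookkeeping you carry out via $\tilde C\cong\prod_i\widetilde{C/\Fp_i}$). The extra steps you spell out (the covering $\Xscr^{\text{rig}}=\bigcup_i\Xscr_i^{\text{rig}}$ via classical points, connectedness of the normal pieces, surjectivity after applying $(-)^{\text{rig}}$) are already contained in Conrad's theorem as the paper uses it.
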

 \begin{proof}
   In fact, \cite[Theorem 2.3.1]{ConradIrreducible} says the following: Let $C\defeq \Gamma(\Xscr,\CO_\Xscr)$ and let $\tilde C$ be the normalization of $C$, that is the integral closure of $C$ in its total ring of fractions. Let $I_1,\hdots,I_n$ be the preimages in $C$ of the minimal prime ideals of $\tilde C$. Let $\Yscr_1,\hdots,\Yscr_n$ be the closed formal subschemes of $\Xscr$ defined by the ideals $I_i$. Then $\Yscr_1^\text{sch},\hdots,\Yscr_n^\text{rig}$ are the irreducible components of $\Xscr^\text{rig}$.

To show that this is the same statement as the one we want to prove, we need to show that the preimages of the minimal prime ideals of $\tilde C$ are exactly the minimal prime ideals of $C$. This is true for any normalization homomorphism $C\to \tilde C$.
 \end{proof}

 \begin{corollary} \label{FormalRigIrr2}
   A formal scheme $\Xscr\in\AFS_R^\text{rf}$ is irreducible if and only if the rigid analytic space $\Xscr^\text{rig}$ is irreducible.
 \end{corollary}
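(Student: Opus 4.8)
The plan is to read the statement off from Theorem \ref{FormalRigidIrr}. Recall from the remark after Definition \ref{IrredX} that $\Xscr$ has only finitely many irreducible components, corresponding to the minimal primes of $C\defeq\Gamma(\Xscr,\CO_\Xscr)$, so that $\Xscr$ (which is nonempty by our conventions) is irreducible if and only if it has exactly one irreducible component. Likewise, a nonempty quasi-separated rigid analytic space over $K$ is irreducible, in the sense of \cite{ConradIrreducible}, if and only if it has exactly one irreducible component. Since Theorem \ref{FormalRigidIrr} matches the irreducible components of $\Xscr$ bijectively with those of $\Xscr^\text{rig}$, the corollary follows by combining these three observations.

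Concretely: if $\Xscr$ is irreducible, then $\Xscr$ is its own unique irreducible component, so by Theorem \ref{FormalRigidIrr} the space $\Xscr^\text{rig}$ is the unique irreducible component of $\Xscr^\text{rig}$, and in particular $\Xscr^\text{rig}$ is nonempty and irreducible. Conversely, if $\Xscr^\text{rig}$ is irreducible, let $\Xscr_1,\hdots,\Xscr_m$ be the irreducible components of $\Xscr$, so $m\geq 1$ since $\Xscr\neq\emptyset$. By Theorem \ref{FormalRigidIrr} the $\Xscr_i^\text{rig}$ are exactly the irreducible components of $\Xscr^\text{rig}$, and as the latter is irreducible there can be only one of them; hence $m=1$ and $\Xscr$ is irreducible.

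The one point needing attention — and the only place where the argument could slip — is that the correspondence in Theorem \ref{FormalRigidIrr} must be a genuine bijection, i.e. distinct irreducible components of $\Xscr$ must give distinct, nonempty rigid irreducible components. Distinctness is built into the statement of Theorem \ref{FormalRigidIrr} (and follows from the bijection between minimal primes of $C$ and of its normalization used in its proof); nonemptiness uses that each $\Xscr_i$ is flat over $R$, so a uniformizer of $R$ is a non-zero-divisor in the quotient $C/I_i$ defining $\Xscr_i$, whence $(C/I_i)\otimes_R K\neq 0$ and therefore $\Xscr_i^\text{rig}\neq\emptyset$. Granting this, the corollary is immediate.
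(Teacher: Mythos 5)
Your argument is correct and is exactly how the paper intends the corollary to be read: it is stated without proof as an immediate consequence of Theorem \ref{FormalRigidIrr}, via the observation that irreducibility on either side means having exactly one irreducible component. Your additional check that each $\Xscr_i^\text{rig}$ is nonempty and that distinct components stay distinct is a harmless (and reasonable) elaboration of the same argument, not a different route.
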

 \begin{proposition} \label{GeomIrrComp}
   Let $\Xscr\in \AFS_R^\text{rf}$ be non-empty. There exists a finite field extension $K'\subset \bar K$ of $K$ with valuation ring $R'$ such that the irreducible components of $\Xscr_{\Spf(R')}$ are geometrically irreducible. 
 \end{proposition}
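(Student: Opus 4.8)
\emph{Overall approach.} The plan is to translate the statement into one about rigid analytic spaces via the generic fibre functor $\Xscr\mapsto\Xscr^\text{rig}$, settle it there using the results of \cite{ConradIrreducible} on irreducible components and extension of the ground field, and transport the conclusion back by Theorem \ref{FormalRigidIrr}. I would proceed in three steps: reduce to the irreducible case, pass to the generic fibre, and apply Conrad's base-change analysis.

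\emph{Reduction to the irreducible case.} Write $C=\Gamma(\Xscr,\CO_\Xscr)$ and let $\Xscr_1,\dots,\Xscr_r$ be the irreducible components of $\Xscr$, so that $\Xscr$ is the union of the $\Xscr_i$ in the sense of Definition \ref{UnionDef}. For $R'$ the valuation ring of a finite extension $K'\subset\bar K$ of $K$ the ring $R\to R'$ is finite free, so base change along $\Spf(R')\to\Spf(R)$ is exact (Lemmas \ref{BaseChangeExact} and \ref{BaseChangeFlat}); hence $\Xscr_{\Spf(R')}$ is again the union of the $(\Xscr_i)_{\Spf(R')}$, its irreducible components occur among those of the $(\Xscr_i)_{\Spf(R')}$, and geometric irreducibility is inherited under further base change. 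So it suffices to treat each $\Xscr_i$ separately and take the compositum of the resulting extensions; I may therefore assume $\Xscr$ is irreducible, and then $\Xscr^\text{rig}$ is an irreducible rigid space over $K$ by Corollary \ref{FormalRigIrr2}.

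\emph{Generic fibre and Conrad's theorem.} For a finite extension $K'\subset\bar K$ with valuation ring $R'$ there is a canonical identification $(\Xscr_{\Spf(R')})^\text{rig}\cong(\Xscr^\text{rig})_{K'}$: presenting $\Xscr$ as a closed formal subscheme of $\Spf(C_{n,m})$ cut out by power series $\{f_i\}$, Lemma \ref{ClosedSubschemeBaseChange} shows $\Xscr_{\Spf(R')}$ is cut out in $\Spf(C'_{n,m})$ by the same $f_i$, and $\Spf(C'_{n,m})^\text{rig}=D^n_{K'}\times B^m_{K'}=(D^n_K\times B^m_K)_{K'}$. So the proposition reduces to producing a finite $K'$ such that every irreducible component of $(\Xscr^\text{rig})_{K'}$ stays irreducible after any further finite extension. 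Now the irreducible components of $(\Xscr^\text{rig})_{K''}$, as $K''/K$ ranges over finite extensions, form a directed system of finite sets (each space here has finitely many components) with surjective transition maps by faithfully flat base change; Conrad's analysis in \cite{ConradIrreducible} of irreducible components and their behaviour under analytic extension of the base field shows that the cardinality of this set is bounded independently of $K''$ — equivalently, that an irreducible rigid space over $K$ has only finitely many geometric irreducible components. Choosing $K'$ so that the maximum is attained makes every component of $(\Xscr^\text{rig})_{K'}$ geometrically irreducible. Finally, applying Theorem \ref{FormalRigidIrr} over $R'$ (to the reduced, $R'$-flat formal scheme underlying $\Xscr_{\Spf(R')}$, which has the same components) identifies the irreducible components of $\Xscr_{\Spf(R')}$ with the formal subschemes whose $(\cdot)^\text{rig}$ is a component of $(\Xscr^\text{rig})_{K'}$, and Corollary \ref{FormalRigIrr2} applied over the finite extensions of $K'$ then yields their geometric irreducibility.

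\emph{Main obstacle.} The crux is the finiteness of the number of geometric irreducible components. One cannot simply localize to an affinoid and invoke finiteness of the normalization of an affinoid algebra, since $\Xscr^\text{rig}$ need not be quasi-compact — already $\Spf(R[[x]])^\text{rig}$ is the open unit disc. Conrad's treatment of irreducible components for general quasi-separated rigid spaces is precisely what handles this; at bottom it rests on module-finiteness of the normalization of the analytic algebras involved (quotients of $C_{n,m}\otimes_RK$), so that component by component the algebraic closure of $K$ in the function field is finite over $K$ and its number of separable $K$-embeddings into $\bar K$ bounds the number of geometric components. The remaining points — exactness of base change, compatibility of the generic fibre functor with extension of the base field, and the compatibility of forming irreducible components with base change — are routine given the material of Section \ref{sec:FormalSchemes}.
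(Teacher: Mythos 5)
Your proposal is correct and follows essentially the same route as the paper: pass to the generic fibre via $\Xscr\mapsto\Xscr^{\text{rig}}$, use Theorem \ref{FormalRigidIrr} and Corollary \ref{FormalRigIrr2} to match up irreducible components, and rely on Conrad \cite{ConradIrreducible} for the behaviour of components under extension of the base field. The only difference is that the paper invokes \cite[Theorem 3.4.3]{ConradIrreducible} directly, which already produces the finite extension $K'$ with geometrically irreducible components, so your intermediate counting-and-maximality argument (bounded numbers of components, surjective transition maps) is an unnecessary detour whose key finiteness input is in any case the same result of Conrad.
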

 \begin{proof}[Proof of Proposition \ref{GeomIrrComp}]
   In \cite[Section 3.4]{ConradIrreducible}, Conrad calls a quasi-separated rigid analytic space $X$ over $K$ geometrically irreducible if for all completely valued overfields $K'$ of $K$ the rigid analytic space $X_{K'}$ is irreducible. By \cite[Theorem 3.4.3]{ConradIrreducible}, for any quasi-separated rigid analytic space $X$ over $K$ having finitely many irreducible components, there exists a finite field extension $K'\subset \bar K$ of $K$ with valuation ring $R'$ such that $\Xscr_{K'}^\text{rig}$ has finitely many irreducible components which are geometrically irreducible. Using the compatibility of the functor $\Xscr\mapsto \Xscr^\text{rig}$ with base change to finite extensions of $K$ (c.f. \cite[7.2.6]{deJong1}) the claim thus follows from Theorem \ref{FormalRigidIrr}.

 \end{proof}
 \subsubsection{Formal Schematic Image}

 \begin{definition}
   Let $f\colon \Xscr\to \Xscr'$ be a morphism of affine formal schemes. We define the \emph{formal schematic image} $f(\Xscr)$ of $f$ to be the intersection of all closed formal subschemes of $\Xscr'$ through which $f$ factors.
 \end{definition}

 Thus $f(\Xscr)$ is the smallest closed formal subscheme of $\Xscr'$ through which $f$ factors. If $\Xscr=\Spf(C)$ and $\Xscr'=\Spf(C')$ then the ideal corresponding to $f(\Xscr)$ is the kernel of the homomorphism $C'\to C$ corresponding to $\Xscr\to\Xscr'$. 

 \begin{lemma} \label{PreimageImage}
   Let $\Gscr, \Gscr'$ be connected $p$-divisible groups over $\Spf(R)$ considered as formal schemes. Let $f\colon \Gscr\to\Gscr'$ be an isogeny and $\Yscr\subset \Gscr'$ be a closed formal subscheme. Let $\Xscr\defeq f^{-1}(\Yscr)\defeq \Yscr\times_{\Gscr'} \Gscr$ be its preimage in $\Gscr$. Then $\Yscr$ is the formal schematic image of $\Xscr$ in $\Gscr'$. 
 \end{lemma}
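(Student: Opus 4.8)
The plan is to reduce the statement to commutative algebra. Since $\Gscr$ and $\Gscr'$ are connected $p$-divisible groups over $\Spf(R)$, as formal schemes they are of the form $\Gscr = \Spf(B)$ and $\Gscr' = \Spf(B')$ with $B$ and $B'$ power series rings over $R$, in particular complete Noetherian local $R$-algebras of formally finite type. The isogeny $f$ corresponds to a continuous homomorphism $\varphi\colon B'\to B$ of $R$-algebras, and — being an fppf epimorphism with finite locally free kernel — $f$ is finite locally free of positive rank, so that $\varphi$ is \emph{faithfully flat} (and $B$ is finite over $B'$). Write $\Yscr = \Spf(B'/I)$ for an ideal $I\subset B'$, which is automatically closed by Proposition \ref{CProps}(iv).

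First I would identify the two relevant formal schemes explicitly. The preimage $\Xscr = \Yscr\times_{\Gscr'}\Gscr$ is $\Spf\big((B'/I)\hat\otimes_{B'}B\big)$. Because $B/IB$ is a quotient of the complete ring $B$ by a closed ideal it is already complete for its natural (adic) topology, so the completed tensor product collapses to the ordinary one and $\Xscr = \Spf(B/IB)$. The canonical map $\Xscr\to\Gscr'$ then corresponds to the composite $B'\xrightarrow{\varphi}B\to B/IB$, and by the ring-theoretic description of the formal schematic image, $f(\Xscr)$ is the closed formal subscheme of $\Gscr'$ cut out by $\ker(B'\to B/IB) = \varphi^{-1}(IB)$. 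Hence the lemma is equivalent to the identity $\varphi^{-1}(IB) = I$ in $B'$.

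This identity is faithfully flat descent of ideals. Applying $-\otimes_{B'}B$ to the surjection $B'\to B'/I$ and using flatness of $\varphi$ shows that the base-changed map $B'/I\to(B'/I)\otimes_{B'}B = B/IB$ is again faithfully flat, hence injective. Therefore $\ker(B'\to B/IB)$ equals $\ker(B'\to B'/I) = I$, while by definition $\ker(B'\to B/IB) = \varphi^{-1}(IB)$. Thus $\varphi^{-1}(IB) = I$, so $f(\Xscr) = \Spf(B'/I) = \Yscr$, which is the assertion.

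The only input beyond bookkeeping is the faithful flatness of $\varphi$, which is part of the basic theory of $p$-divisible groups. I expect the one point deserving care in the write-up to be the identification $\Yscr\times_{\Gscr'}\Gscr = \Spf(B/IB)$: one must check that forming this fiber product in $\AFS_R$ loses no completion, which holds precisely because $B$, and hence every quotient of it by a closed ideal, is already complete.
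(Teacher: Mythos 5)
Your proof is correct and follows essentially the same route as the paper: identify the preimage with $\Spf(B/IB)$ (the completed tensor product collapsing because the quotient is already complete, respectively because the extension is finite), and then identify the image ideal with $I$ by faithfully flat descent along $\varphi$. The only divergence is that you quote the finite (faithful) flatness of an isogeny of connected $p$-divisible groups as standard theory (torsor under the finite locally free kernel), whereas the paper proves it directly via the dimension criterion for flatness over the regular local ring $B'$ together with finiteness of the special fiber; both justifications are valid.
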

 \begin{proof}
   Let $C\defeq \Gamma(\Gscr,\CO_\Gscr)$ and $C'\defeq \Gamma(\Gscr',\CO_\Gscr')$. By \cite[Proposition 4.4]{MessingCrystals}, the rings $C$ and $C'$ are isomorphic to $\Spf(R[[x_1,\hdots,x_n]])$ for some $n\geq 0$. First we want to show that $C'\to C$ is flat. Since $R[[x_1,\hdots,x_n]]$ is a regular local ring, by \cite[Theorem 18.16]{Eisenbud} for this it suffices to show that $\dim(C')=\dim(C/\Fn' C)+\dim(C')$ where $\Fn'$ is the maximal ideal of $C'$. This follows from the fact that $C/\Fn' C$ is finite over $k$. Thus as a finite flat module over the local ring $C'$, the ring $C$ is finite free over $C'$.

Let $I'\subset C'$ be the ideal defining $\Yscr$. Then $\Xscr$ is the formal spectrum of $C'/I'\hat\otimes_{C'} C$. Since $C\to C'$ is finite free we have $C'/I'\hat\otimes_{C'} C =C'/I'\otimes_{C'}C\cong C/I'C$. 

Now let $\Yscr'$ be the formal schematic image of $\Xscr$ in $\Gscr'$. It is contained in $\Yscr$. Thus it is defined by an ideal $\tilde I'$ containing $I'$. Its preimage in $\Gscr$ must coincide with $\Xscr$. Thus the induced homomorphism $C'/I'\otimes_{C'}C\to C'/\tilde I'\otimes_{C'}C$ is an isomorphism. Since $C'\to C$ is finite free this implies that $C'/I'\to C'/\tilde I'$ is an isomorphism. This means $\Yscr'=\Yscr$. 
 \end{proof}

 \subsection{Base Change and Formal Schematic Closure}
  \begin{proposition} \label{BaseChangeClosure}
    Let $T\subset \Fm^{\oplus n}$. Let $R'$ be the valuation ring of a complete overfield $K'\subset \hat{\bar K}$ of $K$. Let $I\subset C_{n,0}$ and $I'\subset C'_{n,0}$ be the ideals consisting of those power series which vanish at all elements of $T$. Then $IC'_{n,0}=I'$.
 \end{proposition}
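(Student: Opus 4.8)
We want to show that for $T \subset \Fm^{\oplus n}$, if $I \subset C_{n,0}$ and $I' \subset C'_{n,0}$ are the ideals of power series vanishing on all of $T$, then $I C'_{n,0} = I'$. Note first that $C_{n,0} = R[[x_1,\dots,x_n]]$ and $C'_{n,0} = R'[[x_1,\dots,x_n]]$, and both rings are Noetherian with $J_{n,0}$-adic topology, where $J_{n,0} = (\Fm, x_1,\dots,x_n)$. The inclusion $C_{n,0} \into C'_{n,0}$ satisfies the hypotheses of Lemma~\ref{BaseChangeExact} by Lemma~\ref{BaseChangeFlat}, so $C_{n,0} \hat\otimes_{C_{n,0}} C'_{n,0} = C'_{n,0}$ and, since $C_{n,0}$ is Noetherian, $I C'_{n,0} = I \hat\otimes_{C_{n,0}} C'_{n,0}$ (this last identity follows from the argument in the proof of Lemma~\ref{ClosedSubschemeBaseChange}: pick a surjection $C_{n,0}^{\oplus k} \to I$ and apply Lemma~\ref{BaseChangeExact}).

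**The inclusion $I C'_{n,0} \subset I'$.**

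This direction is the easy one. Every $t \in T$ lies in $\Fm^{\oplus n} \subset (\Fm R')^{\oplus n}$, hence defines a point of $\Spf(C'_{n,0})(\bar R)$ in the sense of Remark~\ref{XBarExplicit}, i.e.\ a continuous $R'$-algebra homomorphism $\mathrm{ev}_t \colon C'_{n,0} \to R' \subset \bar R$ which extends the corresponding evaluation on $C_{n,0}$. Any $f \in I$ satisfies $\mathrm{ev}_t(f) = 0$ for all $t \in T$, and since $\mathrm{ev}_t$ is continuous and $C'_{n,0}$-linear combinations of such $f$ still evaluate to zero at every $t$, we get $I C'_{n,0} \subset I'$.

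**The reverse inclusion $I' \subset I C'_{n,0}$ — the main obstacle.**

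This is the substantive part. Consider the closed formal subscheme $\Yscr \defeq \Spf(C_{n,0}/I)$ of $\Spf(C_{n,0})$; by Lemma~\ref{ClosedSubschemeBaseChange}, $\Yscr_{\Spf(R')} = \Spf(C'_{n,0}/I C'_{n,0})$. After passing to the reduced flat model via Lemma~\ref{Xrf} (note $C_{n,0}/I$ is already flat over $R$ since $T \subset \Fm^{\oplus n}$ means the generic fiber is nonempty, and $I$ is radical by construction, so actually $\Yscr \in \AFS_R^{\mathrm{rf}}$), I want to compare $\Yscr(\bar R)$ with the subscheme $\Yscr'$ cut out by $I'$. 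The plan is: (1) show $T \subset \Yscr(\bar R)$ and that $\Yscr$ is the smallest closed formal subscheme of $\Spf(C_{n,0})$ whose $\bar R$-points contain $T$ — this follows because $I$ consists of \emph{all} power series vanishing on $T$, so any ideal $J$ with $\Yscr_J(\bar R) \supset T$ consists of power series vanishing on $T$, hence $J \subset I$. (2) Run the same argument over $R'$: $I'$ is the smallest ideal of $C'_{n,0}$ whose associated formal subscheme has $\bar R$-points containing $T$. (3) Since $\Spf(C'_{n,0}/I C'_{n,0})(\bar R) \supset T$ (by the evaluation maps above), minimality of $I'$ gives $I' \subset I C'_{n,0}$.

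The delicate point in step~(3) is justifying that $I C'_{n,0}$ really is \emph{closed} and radical-enough for the "smallest closed formal subscheme" characterization to apply over $R'$ — i.e.\ that $\Spf(C'_{n,0}/I C'_{n,0}) \in \AFS_{R'}$ and its reduced-flat model still has $\bar R$-points exactly detecting vanishing on $T$. Here I would invoke Proposition~\ref{CProps}(iv) (over $R$) together with Lemma~\ref{ClosedSubschemeBaseChange} to see $I C'_{n,0} = I \hat\otimes_R R'$ is closed in $C'_{n,0}$, and then argue: if $f \in I'$, then for each $t\in T$, $f$ vanishes at $t$; one must show $f$ lies in the closure of $I C'_{n,0}$, which by closedness is $I C'_{n,0}$ itself. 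Concretely, I expect to reduce modulo $(J'_{n,0})^i$ for each $i$: working in the Noetherian Artinian-over-$R'/\Fm^i$ quotient $C'_{n,0}/(J'_{n,0})^i$, the image of $I$ generates the ideal of functions vanishing on the image of $T$ (a finite-dimensional linear-algebra statement over $R'/(\Fm R')^i$, which descends to $R/\Fm^i$ by faithful flatness, Lemma~\ref{BaseChangeFlat}), so $f \bmod (J'_{n,0})^i \in I C'_{n,0} \bmod (J'_{n,0})^i$; taking the inverse limit over $i$ and using that $I C'_{n,0}$ is closed yields $f \in I C'_{n,0}$. The heart of the matter is thus the truncated statement "the ideal of $T \bmod (J'_{n,0})^i$ in the finite $R'/(\Fm R')^i$-algebra $C'_{n,0}/(J'_{n,0})^i$ is generated by the image of $I$", which is where faithfully flat descent (Lemma~\ref{BaseChangeExact} applied levelwise, or directly Lemma~\ref{BaseChangeFlat}) does the real work.
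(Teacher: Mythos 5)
Your step (3) as stated only re-derives the easy inclusion: $I'$ is the \emph{largest} ideal of $C'_{n,0}$ vanishing on $T$ (equivalently, $\Spf(C'_{n,0}/I')$ is the smallest closed formal subscheme whose points contain $T$), so the observation that $\Spf(C'_{n,0}/IC'_{n,0})$ has points containing $T$ gives $IC'_{n,0}\subset I'$ again, not the reverse. The real content is thus carried entirely by your last paragraph, and there the key truncated claim is false: it is not true that, modulo $(J'_{n,0})^i$, the vanishing ideal of $T$ is generated by the image of $I$, i.e.\ that $\{f\in C'_{n,0}\mid \operatorname{ev}_t(f)\in(\Fm R')^i\ \text{for all } t\in T\}=IC'_{n,0}+(J'_{n,0})^i$. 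This already fails over $R$ itself: take $n=1$, $R=\BF_q[[\pi]]$, $T=\Fm$. Then $I=0$ (a nonzero power series has only finitely many zeros in $\Fm$, by Weierstrass preparation), yet $f=x^q-\pi^{q-1}x$ satisfies $f(\pi u)=\pi^q(u^q-u)\in\Fm^{q+1}$ for every $u\in R$, while $f\notin J_{1,0}^{q+1}=(\pi,x)^{q+1}$ since its coefficient of $x^q$ is a unit. So the levelwise statement you want to transfer by faithful flatness simply does not hold (and note the transfer you need is from $R$ up to $R'$, not a descent), so no linear algebra over $R/\Fm^i$ can produce it.

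What is true, and what the paper proves, is a weaker cofinality statement: writing $D=C_{n,0}/I\into\prod_{t\in T}R$ and $D_i=\ker\bigl(D\to\prod_{t\in T}R/\Fm^i\bigr)$, there exist integers $j(i)\to\infty$ with $D_i\subset J_{n,0}^{j(i)}D$; this is proved by a compactness argument that crucially uses the finiteness of the residue field of $R$ (so that $C_{n,0}=\varprojlim_i C_{n,0}/J_{n,0}^i$ is compact), an ingredient absent from your sketch. This cofinality is exactly what allows one to compute $D\hat\otimes_R R'=\varprojlim_i\bigl(D/D_i\otimes_R R'\bigr)$ and then to prove injectivity of $D\hat\otimes_R R'\to\prod_{t\in T}R'$; the other delicate point, which your sketch also glosses over, is the injectivity of $(\prod_{t\in T}R/\Fm^i)\otimes_R R'\to\prod_{t\in T}R'/(\Fm R')^i$ (tensoring need not commute with infinite products), which the paper handles by reducing to finite subextensions of $K'/K$, over which $R'$ is finite free. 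Combined with the exact sequence $0\to IC'_{n,0}\to C'_{n,0}\to D\hat\otimes_R R'\to 0$ coming from Lemma \ref{ClosedSubschemeBaseChange}, this identifies $IC'_{n,0}$ with the kernel of evaluation on $T$, i.e.\ with $I'$. Your easy inclusion and the initial reductions are fine, but the hard inclusion needs this cofinality-plus-injectivity argument rather than the false truncated equality.
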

 \begin{proof}
  The ideal $I$ is characterized by the left exact sequence
\begin{equation*}
  0 \to I \to C_{n,0} \ltoover{(\operatorname{ev}_t)_{t\in T}} \prod_{t\in T}R
\end{equation*}
where for $t\in T$ we let $\operatorname{ev}_t\colon C_{n,m} \to R$ be the function given by evaluation at $t$. 

Let $D\cong C_{n,0}/I$ be the image of $C_{n,0}$ in $\prod_{t\in T}R$ endowed with the quotient topology from $C_{n,0}$. This is a topological $R$-algebra of formally finite type by Proposition \ref{CProps}. Lemma \ref{ClosedSubschemeBaseChange} yields an exact sequence 
\begin{equation} \label{BlaSES}
  0 \to IC'_{n,0} \to C'_{n,0} \to D\hat\otimes_R R' \to 0.
\end{equation}

 Let $D_i$ be the kernel of $D\to \prod_{t\in T}R\to \prod_{t\in T}R/\Fm^i$. We claim that there exists a sequence $(j(i))_{i\geq 0}$ of positive integers going to infinity such that $D_i\subset J_{n,0}^{j(i)}$. If not there exists a sequence $(f_i)_{i\geq 0}$ in $D$ which do not converge to zero such that each $f_i$ lies in $D$. The ring $C_{n,0}$, being the inverse limit of the finite rings $C_{n,0}/J^i_{n,0}$, is compact. Hence so is the quotient $D$ of $C_{n,0}$. Hence after passing to a subsequence we may assume that the $f_i$ converge to an element $f\in D$. It follows that the images of the $f_i$ in $\prod_{t\in T}R$ converge to the image of $f$ with respect to the topology on $\prod_{t\in T}R$ defined by the ideals $\prod_{t\in T}\Fm^i$. But by assumption the images of the $f_i$ converge to zero in $\prod_{t\in T}R$. Hence $f$ is zero, which is a contradiction. Thus there exists a sequence $(j(i))_{i\geq 0}$ as above. Since $J_{n,0}^iD\subset D_i$ this shows that the topology on $D$ defined by the $D_i$ is the $J_{n,0}$-adic topology. Thus $D\hat\otimes_R R'$ can be written as $\varprojlim_i D/D_i\otimes_R R'$. 

The inclusion $D\into \prod_{t\in T}R$ induces injections $D/D_i\into (\prod_{t\in T}R/\Fm^i)$ and hence injections $D/D_i\otimes_R R' \into (\prod_{t\in T}R/\Fm^i)\otimes_RR'$. Hence we get a homomorphism
 \begin{equation} \label{BlaComp}
    D\hat\otimes_R R'=\varprojlim_i(D/D_i\otimes_R R')\to \varprojlim_i( (\prod_{t\in T}R/\Fm^i)\otimes_R R')\to \varprojlim_i(\prod_{t\in T}R'/(\Fm R')^i)\cong \prod_{t\in T} R'
 \end{equation}
 and by a direct verification the composition of this homomorphism with the homomorphism $C'_{n,0} \to D\hat\otimes_R R'$ from \eqref{BlaSES} is the homomorphism which evaluates a power series at elements of $T$. 

We want to show that the homomorphism \eqref{BlaComp} is injective. That the first arrow in \eqref{BlaComp} is injective follows from the choice of the $D_i$, the flatness of $R\to R'$ and the left exactness of the inverse limit functor. By the left exactness of inverse limits, in order to prove injectivity of the second arrow in \eqref{BlaComp}, it is enough to show that $(\prod_{t\in T}R/\Fm^i)\otimes_R R'\to \prod_{t\in T}R'/(\Fm R')^i$ is injective for all $i$. Since any element of $(\prod_{t\in T}R/\Fm^i)\otimes_R R'$ is contained in the image of $(\prod_{t\in T}R/\Fm^i)\otimes_R R''$ for the valuation ring $R''$ of a finite field extension $K''\subset K'$ of $K'$ for this step we may assume that $K'$ is finite over $K$. But then $R'$ is finite free over $R$ and hence $(\prod_{t\in T}R/\Fm^i)\otimes_R R'\to \prod_{t\in T}R'/(\Fm R')^i$ is even an isomorphism in this situation. 

Thus by combining \eqref{BlaSES} and \eqref{BlaComp} we get a left exact sequence
\begin{equation*}
  0\to IC'_{n,0} \to C'_{n,0} \ltoover{(\operatorname{ev}_t)_{t\in T}} \prod_{t\in T}R'
\end{equation*}
which shows $IC'_{n,0}=I'$. 

 \end{proof}

\begin{definition}
  Let $R'$ be the valuation ring of a complete overfield $K'\subset \hat{\bar K}$ of $K$. Let $\Xscr$ be an affine formal scheme over $\Spf(R')$ and $T\subset \Xscr(R')$. We define the \emph{formal schematic closure} of $T$ in $\Xscr$ to be the intersection of all closed subschemes $\Yscr$ of $\Xscr$ for which $T\subset \Yscr(R')$.

If $\Xscr'$ is the formal schematic closure of $T$, then we say that $T$ is \emph{formal-schematically dense} in $\Xscr'$.
\end{definition}
Thus the formal schematic closure of $T$ is the smallest closed subscheme of $\Xscr$ which contains $T$.
\begin{corollary} \label{BaseChangeSchematicClosure}
  Let $\Xscr\in \AFS_R$ and $T\subset \Xscr(R)$. Let $\Yscr$ be the formal schematic closure of $T$ in $\Xscr$. Let $R'$ be the valuation ring of a complete overfield $K'\subset \hat{\bar K}$ of $K$. Then $\Yscr_{\Spf(R')}$ is the formal schematic closure of $T\subset \Xscr(R)\subset \Xscr(R')$ inside $\Xscr_{\Spf(R')}$. 
\end{corollary}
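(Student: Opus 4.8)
The plan is to reduce the statement to Proposition \ref{BaseChangeClosure}, which is the case $\Xscr=\Spf(C_{n,0})$. First I would reformulate everything ideal-theoretically. Writing $C\defeq\Gamma(\Xscr,\CO_\Xscr)$ and, for $t\in\Xscr(R)$, denoting by $\operatorname{ev}_t\colon C\to R$ the associated homomorphism (continuous by Lemma \ref{hProp}), one checks that the formal schematic closure $\Yscr$ of $T$ is cut out by the ideal $I_T\defeq\bigcap_{t\in T}\ker(\operatorname{ev}_t)$, which is closed by Proposition \ref{CProps}(iv). By Lemma \ref{ClosedSubschemeBaseChange}, $\Yscr_{\Spf(R')}$ is cut out inside $\Xscr_{\Spf(R')}$ by $I_T(C\hat\otimes_R R')=I_T\hat\otimes_R R'$, whereas the formal schematic closure of $T$ in $\Xscr_{\Spf(R')}$ is cut out by $I'_T\defeq\bigcap_{t\in T}\ker(\operatorname{ev}_t\hat\otimes_R R')$. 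Thus the claim is exactly the equality $I_T(C\hat\otimes_R R')=I'_T$, the inclusion $\subseteq$ being clear. Choosing a surjection $C_{n,m}\to C$ (Lemma \ref{FFChar}), pulling $I_T$ back to the ideal of power series in $C_{n,m}$ vanishing on $T$, and applying Lemmas \ref{BaseChangeExact} and \ref{BaseChangeFlat} (legitimate since $C_{n,m}$ is Noetherian), I would reduce to the case $\Xscr=\Spf(C_{n,m})$: for $T\subset\Fm^{\oplus n}\oplus R^{\oplus m}=\Spf(C_{n,m})(R)$, the ideal $I_T\subset C_{n,m}$ of power series vanishing on $T$ should satisfy $I_TC'_{n,m}=I'_T$, with $I'_T\subset C'_{n,m}$ the analogous ideal.

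When $m=0$ this is exactly Proposition \ref{BaseChangeClosure}. To handle $m>0$, I would use that $k=R/\Fm$ is finite: a choice of representatives for $k^m$ in $R^m$ decomposes $R^{\oplus m}$ into finitely many residue discs $\tilde a+\Fm^{\oplus m}$, hence $T=\bigsqcup_{\bar a}T_{\bar a}$ with $T_{\bar a}\subset\Fm^{\oplus n}\oplus(\tilde a+\Fm^{\oplus m})$ and $I_T=\bigcap_{\bar a}I_{T_{\bar a}}$. A finite intersection of ideals commutes with $\hat\otimes_R R'$ here (again by Lemma \ref{BaseChangeExact} and Noetherianity), so it suffices to treat a single $T_{\bar a}$; and the continuous $R$-algebra automorphism $y_i\mapsto y_i+\tilde a_i$ of $C_{n,m}$, compatible with its analogue on $C'_{n,m}$, carries $T_{\bar a}$ into $\Fm^{\oplus n}\oplus\Fm^{\oplus m}=\Fm^{\oplus(n+m)}$. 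So one is reduced to the case $T\subset\Fm^{\oplus(n+m)}\subset\Spf(C_{n,m})(R)$.

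In this situation every $\operatorname{ev}_t$, $t\in T$, extends continuously along the inclusion $C_{n,m}\hookrightarrow C_{n+m,0}$, under which $C_{n+m,0}$ is the $(\Fm,x,y)$-adic completion of the Noetherian ring $C_{n,m}$ and hence flat over it. Writing $J_T\subset C_{n+m,0}$ and $J'_T\subset C'_{n+m,0}$ for the ideals of power series vanishing on $T$, one has $I_T=J_T\cap C_{n,m}$, $I'_T=J'_T\cap C'_{n,m}$, and $J'_T=J_TC'_{n+m,0}$ by Proposition \ref{BaseChangeClosure}. The identity $I_TC'_{n,m}=I'_T$ should then follow by descent along the flat maps $C_{n,m}\hookrightarrow C_{n+m,0}$ and $C'_{n,m}\hookrightarrow C'_{n+m,0}$, the point being that, because $T\subset\Fm^{\oplus(n+m)}$, the ring $C_{n,m}/I_T$ is $(\Fm,x,y)$-adically separated — indeed $\bigcap_i(\Fm,x,y)^i(C_{n,m}/I_T)\subseteq\bigcap_i\prod_{t\in T}\Fm^i=0$ — so that it embeds into $C_{n+m,0}/J_T$ and one can pass between the two.

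The main obstacle is exactly this last step when $m>0$: the compactness argument used to prove Proposition \ref{BaseChangeClosure} breaks down for $C_{n,m}$ with $m>0$, because the quotients $C_{n,m}/J_{n,m}^i$ are no longer finite rings; this is why one must route through $C_{n+m,0}$ and carry out a descent across the map $C_{n,m}\hookrightarrow C_{n+m,0}$, which is flat but not faithfully flat, so that the separatedness input above is essential. Everything else is routine manipulation with the exact-sequence lemmas of this section.
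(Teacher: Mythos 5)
Your first two steps match what the paper does and sharpen it: the paper's entire proof is the reduction to $\Xscr=\Spf(C_{n,m})$ followed by the citation of Proposition \ref{BaseChangeClosure}, and you are right that, as stated, that proposition only treats $C_{n,0}$ with $T\subset\Fm^{\oplus n}$, and that its compactness argument does not extend to $C_{n,m}$ for $m>0$ (the quotients $C_{n,m}/J_{n,m}^i$ are no longer finite; e.g.\ for $T=\Fm\subset\Spf(R\langle y\rangle)(R)$ the element $y^i$ lies in $D_i$ but not in $\Fm R\langle y\rangle$, so the key topological claim in that proof fails). Your residue-disc decomposition using the finiteness of $k$, the compatibility of finite intersections of ideals with $\hat\otimes_R R'$, and the translation reducing to $T\subset\Fm^{\oplus(n+m)}$ are all fine and can be justified with Lemmas \ref{BaseChangeExact}, \ref{BaseChangeFlat} and \ref{ClosedSubschemeBaseChange}. (Note that in every application in the paper the ambient formal scheme is $\Spf(R[[x_1,\hdots,x_n]])$, so only the case $m=0$ is ever used, where the corollary really is a reformulation of Proposition \ref{BaseChangeClosure}.)

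The gap is in your last step, which is exactly the point where the case $m>0$ stops being a reformulation. After your reductions, what must be proved is the inclusion $J_TC'_{n+m,0}\cap C'_{n,m}\subseteq I_TC'_{n,m}$ (with $I_T=J_T\cap C_{n,m}$), equivalently the injectivity of the natural map $(C_{n,m}/I_T)\hat\otimes_RR'\to(C_{n+m,0}/J_T)\hat\otimes_RR'$, and none of the ingredients you invoke delivers this. Flatness of $C_{n,m}\hookrightarrow C_{n+m,0}$ is available (completion of a Noetherian ring), but flatness of $C'_{n,m}\hookrightarrow C'_{n+m,0}$ is not justified ($C'_{n,m}$ is not Noetherian), and in any case neither map is faithfully flat — for $u\in (R')^\times$ the element $y_1-u$ becomes a unit in $C'_{n+m,0}$ but not in $C'_{n,m}$ — so no extension--contraction statement can be obtained by flat descent alone. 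The separatedness of $C_{n,m}/I_T$ that you prove only gives $I_TC_{n+m,0}\cap C_{n,m}=I_T$; it says nothing about $J_T$ versus $I_TC_{n+m,0}$, nor about contractions after base change to $R'$. Moreover the finite-level maps $C_{n,m}/(I_T+J_{n,m}^i)\to C_{n+m,0}/(J_T+J_{n+m,0}^i)$ are in general not injective (already for $I_T=J_T=0$ the class of $y_1^i$ dies), so injectivity of the completed map cannot be read off levelwise and would require a genuine uniformity or Weierstrass-division argument that the proposal does not contain. As written, the decisive case $m>0$ therefore remains unproved; either supply that argument, or restrict the statement to the case actually used in the paper, where Proposition \ref{BaseChangeClosure} applies directly.
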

\begin{proof}
  It suffices to prove this for $\Xscr=\Spf(C_{n,m})$ in which case it is a reformulation of Proposition \ref{BaseChangeClosure}.
\end{proof}

 \subsection{Transporters}

Let $\Gscr$ be a group object in the category $\AFS_R$. We will need the existence of (strict) transporters in $\Gscr$: 
\begin{construction} \label{TransConstruction}
Let $\Xscr, \Yscr\subset \Gscr$ be two closed formal subschemes. For $i\geq 0$ let $\Gscr_i$ be the $i$-th infinitesimal neighborhood of the zero section in $\Gscr$ and let $\Xscr_i, \Yscr_i \defeq \Gscr_i\cap \Xscr, \Gscr_i\cap \Yscr$. These are finite schemes over $R/\Fm^i$. The group structure on $\Gscr$ makes $\Gscr_i$ into a group scheme over $R/\Fm^i$ and $\Xscr_i$ and $\Yscr_i$ are closed subschemes of $\Gscr_i$. For $i\geq 0$ let $\Trans_{\Gscr_i}(\Xscr_i,\Yscr_i)$ be the strict transporter in $\Gscr_i$, that is the closed subscheme of $\Gscr_i$ whose points are those points $g$ of $\Gscr_i$ for which $\Xscr_i+g=\Yscr_i$. It exists by \cite[Exemple VI.6.4.2 e)]{SGA3I}. Then for each $i$ one has a decreasing sequence $(\Trans_{\Gscr_{i+j}}(\Xscr_{i+j},\Yscr_{i+j})\cap \Gscr_i)_{j\geq 0}$ of subschemes of $\Gscr_i$. By noetherianity this sequence stabilizes; let $\Trans_\Gscr(\Xscr,\Yscr)_i$ be its eventual value. Then $\Trans_\Gscr(\Xscr,\Yscr)_{i+1}\cap \Gscr_i=\Trans_\Gscr(\Xscr,\Yscr)_i$ for all $i$. Hence the inductive limit of these schemes is a closed formal subscheme $\Trans_\Gscr(\Xscr,\Yscr)$ of $\Gscr$.
\begin{proposition} \label{TransDesc}
  Let $\Xscr,\Yscr$ be closed formal subschemes of $\Gscr$ and let $R'$ be the valuation ring of a finite field extension $K'$ of $K$. Then $\Trans_\Gscr(\Xscr,\Yscr)(R')=\{g\in \Gscr(R')\mid g+\Xscr_{\Spf(R')}=\Yscr_{\Spf(R')}\}$.
\end{proposition}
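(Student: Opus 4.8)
The plan is to identify both sides of the asserted equality with the same subset of $\Gscr(R')$, described at finite level. Write $S\defeq\Spf(R')$, let $\Fm'$ be the maximal ideal of $R'$, and put $S_\ell\defeq\Spec(R'/\Fm'^\ell)$, so that $S=\varinjlim_\ell S_\ell$ with the $S_\ell$ Artinian. Set $\CT\defeq\Trans_\Gscr(\Xscr,\Yscr)$ and $\CT_i\defeq\Trans_\Gscr(\Xscr,\Yscr)_i$; by Construction \ref{TransConstruction} we have $\CT=\varinjlim_i\CT_i$ with $\CT_i=\CT\cap\Gscr_i$ and
\[
\CT_i=\bigcap_{j\geq 0}\bigl(\Trans_{\Gscr_{i+j}}(\Xscr_{i+j},\Yscr_{i+j})\cap\Gscr_i\bigr).
\]
Since $\Gscr=\varinjlim_i\Gscr_i$ with each $\Gscr_i$ finite over the base, any morphism from an Artinian scheme to $\Gscr$ factors through some $\Gscr_i$; base change along $R\to R'$ commutes with these colimits and, by Lemma \ref{ClosedSubschemeBaseChange}, with forming the closed subschemes $\Gscr_i$, $\Xscr_i=\Gscr_i\cap\Xscr$ and $\Yscr_i=\Gscr_i\cap\Yscr$; and since the $(\Gscr_i)_{R'}$ are finite over $R'$, one has $(\Gscr_i)_{R'}=\varinjlim_\ell(\Gscr_i)_{S_\ell}$, whence $\Gscr_S=\varinjlim_{(i,\ell)}(\Gscr_i)_{S_\ell}$. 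Consequently a point $g\in\Gscr(R')$ corresponds to a compatible family of points $g_{i,\ell}\in\Gscr_i(R'/\Fm'^\ell)$, defined for every $\ell$ and every sufficiently large $i$, with $g_{i',\ell}$ the image of $g_{i,\ell}$ under $\Gscr_i\hookrightarrow\Gscr_{i'}$ for $i'\geq i$. I will show that each of the conditions ``$g\in\CT(R')$'' and ``$g+\Xscr_S=\Yscr_S$'' is equivalent to the finite-level condition
\[
(\dagger)\qquad g_{i,\ell}+(\Xscr_i)_{S_\ell}=(\Yscr_i)_{S_\ell}\ \text{ in }(\Gscr_i)_{S_\ell}\quad\text{for every }\ell\text{ and every }i\gg 0.
\]

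\textbf{Step 1: $g\in\CT(R')$ is equivalent to $(\dagger)$.} As $\CT$ is a closed formal subscheme of $\Gscr$ and $R'=\varprojlim_\ell R'/\Fm'^\ell$ is separated, $g$ factors through $\CT$ if and only if its restriction $g^{(\ell)}\colon S_\ell\to\Gscr$ does, for every $\ell$. Fixing $\ell$ and taking $i$ large enough that $g^{(\ell)}$ also factors through $\Gscr_i$, say as $g_{i,\ell}$, the quasicompactness of $S_\ell$ together with $\CT=\varinjlim_i\CT_i$ and $\CT\cap\Gscr_i=\CT_i$ shows that $g^{(\ell)}$ factors through $\CT$ if and only if $g_{i,\ell}\in\CT_i(R'/\Fm'^\ell)$. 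By the description of $\CT_i$ above, this holds if and only if for every $j\geq 0$ the image $g_{i+j,\ell}$ of $g_{i,\ell}$ in $\Gscr_{i+j}$ lies in $\Trans_{\Gscr_{i+j}}(\Xscr_{i+j},\Yscr_{i+j})(R'/\Fm'^\ell)$, which by the defining property of the strict transporter (recalled from \cite{SGA3I} in Construction \ref{TransConstruction}) means $g_{i+j,\ell}+(\Xscr_{i+j})_{S_\ell}=(\Yscr_{i+j})_{S_\ell}$. Letting $\ell$ vary, and noting that ``there is a large $i$ such that the identity holds at all levels $\geq i$'' is the same as ``the identity holds at all sufficiently large levels'', gives Step 1.

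\textbf{Step 2: $g+\Xscr_S=\Yscr_S$ is equivalent to $(\dagger)$.} Since $\Gscr$ is a group object and $g\in\Gscr(R')$, translation by $g$ is an automorphism $\tau_g$ of $\Gscr_S$ over $S$; thus $g+\Xscr_S=\tau_g(\Xscr_S)$ is a closed formal subscheme of $\Gscr_S=\varinjlim_{(i,\ell)}(\Gscr_i)_{S_\ell}$, and it equals $\Yscr_S$ if and only if it and $\Yscr_S$ have the same pullback to each $(\Gscr_i)_{S_\ell}$ — equivalently, to those $(\Gscr_i)_{S_\ell}$ with $i$ large relative to $\ell$, which form a cofinal subsystem. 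Fix such a pair and let $g_{i,\ell}\in\Gscr_i(R'/\Fm'^\ell)$ be the factorization of $g^{(\ell)}$ through $\Gscr_i$. Because $\Gscr_i$ is a subgroup scheme of $\Gscr$, the coset $\Gscr_i+g_{i,\ell}$ is $\Gscr_i$ itself; hence $\tau_g$ carries $(\Gscr_i)_{S_\ell}$ into itself and restricts there to translation by $g_{i,\ell}$. Therefore the pullback of $g+\Xscr_S=\tau_g(\Xscr_S)$ along $(\Gscr_i)_{S_\ell}\hookrightarrow\Gscr_S$ is $\tau_{g_{i,\ell}}\bigl(\Xscr_S\times_{\Gscr_S}(\Gscr_i)_{S_\ell}\bigr)$; using $\Xscr\times_\Gscr\Gscr_i=\Xscr_i$ and Lemma \ref{ClosedSubschemeBaseChange} this equals $\tau_{g_{i,\ell}}\bigl((\Xscr_i)_{S_\ell}\bigr)=g_{i,\ell}+(\Xscr_i)_{S_\ell}$, while the pullback of $\Yscr_S$ is $(\Yscr_i)_{S_\ell}$. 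Hence $g+\Xscr_S=\Yscr_S$ is equivalent to $(\dagger)$, and together with Step 1 this proves the proposition.

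The one delicate point is that translation by a point $g\in\Gscr(R')$ need not preserve the exhausting filtration $(\Gscr_i)_i$ of $\Gscr$; this is precisely why one must first pass to the Artinian quotients $R'/\Fm'^\ell$, over which $g$ becomes torsion and hence factors through some $\Gscr_i$, before the translations can be compared level by level. The remainder is routine bookkeeping with the two directed systems.
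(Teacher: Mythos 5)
Your overall strategy is the same as the paper's: the published proof is precisely the chain of equivalences identifying both ``$g\in\Trans_\Gscr(\Xscr,\Yscr)(R')$'' and ``$g+\Xscr_{\Spf(R')}=\Yscr_{\Spf(R')}$'' with the same finite-level conditions over the Artinian quotients $R'/(\Fm')^\ell$, and your Step 1 is a correct (if verbose) expansion of the first half of that chain, granting Construction \ref{TransConstruction}.

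The gap is in Step 2, at the assertion ``Because $\Gscr_i$ is a subgroup scheme of $\Gscr$, the coset $\Gscr_i+g_{i,\ell}$ is $\Gscr_i$ itself; hence $\tau_g$ carries $(\Gscr_i)_{S_\ell}$ into itself and restricts there to translation by $g_{i,\ell}$.'' The infinitesimal neighbourhoods $\Gscr_i$ of the zero point are not subgroup schemes: already for $\Gscr=\hat\BG_a=\Spf(R[[x]])$ the comultiplication sends $x^i$ to $(x\otimes 1+1\otimes x)^i$, whose cross terms $x^k\otimes x^{i-k}$ ($0<k<i$) are nonzero in $C/J^i\otimes C/J^i$, so the group law does not restrict to $\Gscr_i$ (the paper's Construction \ref{TransConstruction} is itself loose on this point). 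More importantly, the conclusion you draw is simply false when $K'/K$ is ramified: take $\Gscr=\hat\BG_a$, $g=\pi'$ a uniformizer of $R'$ with $\Fm R'=(\pi')^e$, $e\geq 2$, $\ell\geq 2$ and $p\nmid i$; modulo $(\pi')^2$ the ideal of $(\Gscr_i)_{S_\ell}$ is $(x^i)$, while that of $g_{i,\ell}+(\Gscr_i)_{S_\ell}$ is $((x-\pi')^i)$, and $(x-\pi')^i\equiv x^i-i\pi'x^{i-1}\notin (x^i)$ over $(R'/(\pi')^2)[[x]]$, so translation by $g$ does not preserve $(\Gscr_i)_{S_\ell}$, no matter how large $i$ is. Consequently the pullback of $g+\Xscr_S$ to $(\Gscr_i)_{S_\ell}$ need not equal $g_{i,\ell}+(\Xscr_i)_{S_\ell}$ (already for $\Xscr=\Yscr=\Gscr$), and the asserted equivalence of $(\dagger)$ with $g+\Xscr_S=\Yscr_S$ --- which is the entire content of Step 2, and also exactly the step the paper's one-line chain glides over --- is not established. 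What is true is only a bounded shift, $g+(\Gscr_i)_{S_\ell}\subset(\Gscr_{i+\ell-1})_{S_\ell}$; with that, a sandwiching argument on the defining closed ideals still yields $(\dagger)\Rightarrow g+\Xscr_S=\Yscr_S$, but the converse implication needs a genuinely different argument (for instance, comparing the two closed formal subschemes against the shifted, cofinal system $g+(\Gscr_i)_{S_\ell}$, or reformulating the level-$i$ condition), and your proof --- which even flags this as ``the one delicate point'' --- resolves it only by the incorrect subgroup-scheme claim.
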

\begin{proof}
  Let $g\in \Gscr(R')$ and let $\Fm'$ be the maximal ideal of $R'$. For $j\geq 0$ we denote the ring $R'/(\Fm')^j$ by $R'_j$ and let $g_j$ be the image of $g$ in $\Gscr(R'_j)$. Then
\begin{align*}
  g \in \Trans_\Gscr(\Xscr,\Yscr)(R') &\iff \forall j\geq 0\colon g_j\in \Trans_\Gscr(\Xscr,\Yscr)_j(R'_j) \\
  &\iff \forall j\geq 0 \; \forall i \gg j\colon g_j + (\Xscr_i)_{R'_j}=(\Yscr_i)_{R'_j} \\
  & \iff g+\Xscr_{\Spf(R')}=\Yscr_{\Spf(R')}.
\end{align*}

\end{proof}
\end{construction}
\subsection{Descent}
Let $R'$ be the valuation ring of a complete overfield $K'\subset \hat{\bar K}$ of $K$. Let $\Xscr\in\AFS_{R}$ and $\Xscr'\subset \Xscr_{\Spf(R')}$ a closed formal subscheme. If there exists a closed formal subscheme $\Xscr''$ of $\Xscr$ such that $\Xscr''_{\Spf(R')}=\Xscr'$ then it follows from Lemmas \ref{BaseChangeExact}, \ref{BaseChangeFlat} and \ref{ClosedSubschemeBaseChange} that such an $\Xscr$ is unique. In this case we will say that \emph{$\Xscr'$ is defined over $R$}. 

In the following we will always endow $k[[x_1,\hdots,x_n]]$ with the $(x_1,\hdots,x_n)$-adic topology. Let $\Xscr=\Spf(C)$ be a formal scheme over $\Spec(k)$ which is isomorphic to $\Spf(k[[x_1,\hdots,x_n]])$. We are interested in closed formal subschemes of $\Xscr$ and their base change to $\Spf(R)$, which lie in $\AFS_R$. 

\begin{lemma} \label{ClosedSubschemeBaseChange2}
  Let $\Xscr'$ be a closed formal subscheme of $\Xscr$ defined by an ideal $I$ of $C$. Then $\Xscr'_{\Spf(R)}$ is the closed formal subscheme of $\Xscr_{\Spf(R)}$ defined by the ideal $I(C\hat\otimes_k R)$ of $C\hat\otimes_k R$. This ideal is equal to $I\hat\otimes_k R$.
\end{lemma}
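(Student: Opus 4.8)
The plan is to repeat the proof of Lemma~\ref{ClosedSubschemeBaseChange} almost verbatim, with the base change $R\to R'$ replaced by $k\to R$; the one genuinely new point is the choice of ideal of definition on $C\hat\otimes_k R$.

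Fix an isomorphism $C\cong k[[x_1,\hdots,x_n]]$ and put $J\defeq(x_1,\hdots,x_n)\subset C$, so that $C$ is Noetherian and $J$-adic. Unwinding the completed tensor product, $C'\defeq C\hat\otimes_k R$ is, as a ring, $R[[x_1,\hdots,x_n]]=C_{n,0}$, and it is Noetherian. Its topology as a completed tensor product over $k$ is the $(\Fm,x_1,\hdots,x_n)$-adic one; the key observation, however, is that because $R$ is complete, $C'=R[[x_1,\hdots,x_n]]$ is \emph{also} complete and separated for the coarser-looking $J'\defeq JC'=(x_1,\hdots,x_n)C'$-adic topology, so that $(C',J')$ is a $J'$-adic ring. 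I would then check that Lemma~\ref{BaseChangeExact} applies to $C\to C'$ with these ideals of definition: clearly $JC'\subset J'$, and for each $i\geq 0$ the map $C/J^i\to C'/(J')^i$ is faithfully flat, since under the canonical identification $C'/(J')^i=R[x_1,\hdots,x_n]/(x)^i\cong (C/J^i)\otimes_k R$ it is the base change to $C/J^i$ of the faithfully flat homomorphism $k\to R$ ($R$ being a nonzero $k$-vector space). Note that with the standard ideal of definition $J_{n,0}$ of $C_{n,0}$ this flatness would fail, which is precisely why one passes to $J'$.

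Applying Lemma~\ref{BaseChangeExact} to the exact sequence $0\to I\to C\to C/I\to 0$ of finitely generated $C$-modules, endowed with the $J$-adic topology, yields an exact sequence $0\to I\hat\otimes_C C'\to C\hat\otimes_C C'\to (C/I)\hat\otimes_C C'\to 0$. For any finitely generated $C$-module $M$, using $C'/(J')^i\cong (C/J^i)\otimes_k R$ together with the facts that each $M/J^iM$ is a finite-dimensional $k$-vector space and that $R$ is complete, one gets
\begin{equation*}
  M\hat\otimes_C C'=\varprojlim_i M\otimes_C C'/(J')^i=\varprojlim_i (M/J^iM)\otimes_k R=M\hat\otimes_k R .
\end{equation*}
Hence the above sequence reads $0\to I\hat\otimes_k R\to C\hat\otimes_k R\to (C/I)\hat\otimes_k R\to 0$, and since $C'$ is Noetherian and $(x)$-adically complete one has $(C/I)\hat\otimes_k R=C'/IC'$. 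Thus $\Xscr'_{\Spf(R)}=\Spf\bigl((C/I)\hat\otimes_k R\bigr)$ is the closed formal subscheme of $\Xscr_{\Spf(R)}=\Spf(C\hat\otimes_k R)$ defined by the ideal $I(C\hat\otimes_k R)$. Finally, choosing a surjection $C^{\oplus r}\to I$ (possible as $C$ is Noetherian) and applying Lemma~\ref{BaseChangeExact} once more produces a surjection $(C\hat\otimes_k R)^{\oplus r}\to I\hat\otimes_k R$, so that $I\hat\otimes_k R=I(C\hat\otimes_k R)$, which is the last assertion.

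The main obstacle is exactly the choice of the ideal of definition on $C\hat\otimes_k R$: one cannot use the standard $J_{n,0}$, for which the reduction maps $C/J^i\to C'/(J')^i$ are not flat, but must exploit that $R$ is complete in order to work with the $(x_1,\hdots,x_n)$-adic topology, which keeps $C\hat\otimes_k R$ complete and makes the faithful flatness hypothesis of Lemma~\ref{BaseChangeExact} hold. Granting this, the remainder is the same bookkeeping as in the proof of Lemma~\ref{ClosedSubschemeBaseChange}.
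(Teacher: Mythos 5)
Your argument is correct and is essentially the paper's proof written out in full: the paper likewise endows $k[[x_1,\hdots,x_n]]$ with the $(x_1,\hdots,x_n)$-adic topology, observes that the maps $k[x_1,\hdots,x_n]/(x_1,\hdots,x_n)^i\to R[x_1,\hdots,x_n]/(x_1,\hdots,x_n)^i$ are faithfully flat, and then invokes Lemma \ref{BaseChangeExact} exactly as in the proof of Lemma \ref{ClosedSubschemeBaseChange}, which is the reduction you carry out (including the identification $M\hat\otimes_C(C\hat\otimes_k R)=M\hat\otimes_k R$ and the final surjection $C^{\oplus r}\to I$). Your extra remarks on the choice of ideal of definition are accurate (only the minor point that $R[[x_1,\hdots,x_n]]$ is $(x_1,\hdots,x_n)$-adically complete for any coefficient ring; completeness of $R$ is really used in identifying $C\hat\otimes_k R$ with $R[[x_1,\hdots,x_n]]$ and in the displayed computation of $M\hat\otimes_k R$).
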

\begin{proof}
  Since the homomorphisms $k[x_1,\hdots,x_n]/(x_1,\hdots,x_n)^i\to R[x_1,\hdots,x_n]/(x_1,\hdots,x_n)^i$ are faithfully flat, this follows from Lemma \ref{BaseChangeExact} in the same way as Lemma \ref{ClosedSubschemeBaseChange}.
\end{proof}
Let $\Xscr'\subset \Xscr_{\Spf(R)}$ a closed formal subscheme. If there exists a closed formal subscheme $\Xscr''$ of $\Xscr$ such that $\Xscr''_{\Spf(R)}=\Xscr'$ then it follows from Lemmas \ref{BaseChangeExact} and \ref{ClosedSubschemeBaseChange2} that such an $\Xscr$ is unique. In this case we will say that \emph{$\Xscr'$ is defined over $k$}. 

Let $F\colon \Xscr\to \Xscr$ be the Frobenius endomorphism of $\Xscr$ with respect to $k$. If $\Xscr'$ is a closed formal subscheme of $\Xscr_{\Spf(R)}$ defined by formal power series $\{f_i\mid i\in I\}\subset R[[x_1,\hdots,x_n]]$, then the formal schematic image $F(\Xscr)$ of $\Xscr$ under $F$ is defined by the power series obtained by applying $F$ to the coefficients of the $f_i$.
\begin{lemma} \label{FormalFieldDefinition}
For $i\geq 0$ let $K^{p^i}$ be the field consisting of $p^i$-th powers of elements of $K$ and $R^{p^i}$ the valuation ring of $K^{p^i}$. Let $\Xscr'$ be a closed formal subscheme of $\Xscr_{\Spf(R)}$. If $\Xscr'$ is defined over $R^{p^i}$ for all $i\geq 0$, then $\Xscr'$ is defined over $k$.
\end{lemma}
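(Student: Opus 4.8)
The plan is to translate the statement into a claim about the defining ideal $I\subset B:=C\hat\otimes_k R$ of $\Xscr'$ and then to descend $I$ through the subrings $A_i:=C\hat\otimes_k R^{p^i}$, using that $B$ is a free module of rank $p^i$ over $A_i$.

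First I would fix notation. Via $C\cong k[[x_1,\dots,x_n]]$ and the Cohen structure theorem $R\cong k[[t]]$ for a uniformizer $t$, one has $B\cong R[[x_1,\dots,x_n]]$ and, regarding $A_i$ as a subring of $B$, $A_i\cong R^{p^i}[[x_1,\dots,x_n]]$; moreover $B$ is free over $A_i$ with basis $1,t,\dots,t^{p^i-1}$, since $R$ is free over $R^{p^i}$ on these elements (here $k$ is perfect, being the residue field of a local field). As $\bigcap_i R^{p^i}=k$, we get $\bigcap_i A_i=C$ inside $B$. Next I would record the dictionary: by Lemma \ref{ClosedSubschemeBaseChange2} and its analogue for the finite free extension $R^{p^i}\hookrightarrow R$, the subvariety $\Xscr'$ is defined over $R^{p^i}$ iff $I=I_iB$ for some ideal $I_i\subset A_i$, in which case $I_i=I\cap A_i$ by faithful flatness of $B$ over $A_i$; and $\Xscr'$ is defined over $k$ iff $I=(I\cap C)B$. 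Setting $I_i:=I\cap A_i$ and $I_\infty:=I\cap C=\bigcap_i I_i$, the hypothesis becomes "$I=I_iB$ for all $i$" and the goal becomes "$I=I_\infty B$".

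The content of the proof is then a coordinate extraction. Given $b\in I$, write $b=\sum_{m\ge 0}b_m(x)t^m$ with $b_m\in C$. Since $B=\bigoplus_{r=0}^{p^i-1}A_it^r$ and $I=I_iB=\bigoplus_{r=0}^{p^i-1}I_it^r$, every $A_i$-coordinate of $b$, namely $c_r^{(i)}(b):=\sum_{l\ge0}b_{p^il+r}(x)\,t^{p^il}$ for $0\le r<p^i$, lies in $I_i\subset I$. Fixing $m$ and choosing $i$ with $p^i>m$, I obtain $c_m^{(i)}(b)\in I$ with $c_m^{(i)}(b)-b_m(x)\in t^{p^i}B\subset J^{p^i}$, where $J=(t,x_1,\dots,x_n)$ is an ideal of definition of $B$. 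Hence $c_m^{(i)}(b)\to b_m(x)$ in the $J$-adic topology as $i\to\infty$; since every ideal of $B$ is closed (Proposition \ref{CProps}(iv)), $b_m(x)\in I$, so $b_m(x)\in I\cap C=I_\infty$. As $m$ is arbitrary, $b\in I_\infty[[t]]=I_\infty B$. Thus $I\subset I_\infty B$; the reverse inclusion is clear, so $I=I_\infty B$, and therefore $\Xscr'$ is defined over $k$.

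The step I expect to require the most care is the first reduction: checking that "$\Xscr'$ defined over $R^{p^i}$" is genuinely equivalent to "$I$ is extended from $A_i$" and that the descended ideal is then $I\cap A_i$. This is where one must invoke that $R$ (hence $B$) is finite free over $R^{p^i}$ (hence over $A_i$), so that the base-change results of Section \ref{sec:FormalSchemes} apply and contraction along $A_i\hookrightarrow B$ recovers $I\cap A_i$. The only other point to watch is that the coordinate-limit step is performed in the $J$-adic topology, in which ideals of $B$ are closed. I do not expect to need the Frobenius endomorphism $F$ for this particular lemma.
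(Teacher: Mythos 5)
Your proof is correct, but it runs differently from the one in the paper, which is a truncation argument: there, one intersects $\Xscr'$ with the $p^i$-th infinitesimal neighborhood $\Spec((R/\Fm^{p^i})[x_1,\hdots,x_n]/(x_1,\hdots,x_n)^{p^i})$, observes that every element of $R^{p^i}$ is congruent to an element of $k$ modulo $\Fm^{p^i}$, so each such truncation of $\Xscr'$ descends (uniquely, by faithful flatness of $k\to R/\Fm^{p^i}$) to $k$, and then recovers $\Xscr'$ as the direct limit of these compatible truncations. You instead stay with the full defining ideal $I\subset B=R[[x_1,\hdots,x_n]]$, use the freeness of $B$ over $A_i=R^{p^i}[[x_1,\hdots,x_n]]$ on $1,t,\hdots,t^{p^i-1}$ to see that each $A_i$-coordinate of $b\in I$ lies in $I$, and then extract each $t$-coefficient $b_m(x)\in k[[x_1,\hdots,x_n]]$ as a $J$-adic limit of such coordinates, invoking Proposition \ref{CProps}(iv) (ideals are closed) to conclude $b_m\in I\cap C$; the same arithmetic fact (elements of $R^{p^i}$ agree with constants to order $t^{p^i}$) drives both proofs, but your decomposition and appeal to closedness of ideals replace the paper's passage through Noetherian truncations and their direct limit. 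What your route buys is an explicit description of the descended ideal as $I\cap k[[x_1,\hdots,x_n]]$ and no need to check compatibility of the level-wise descents; what the paper's route buys is that it never needs the free-basis decomposition or the closedness of ideals, working only with finite-level algebras. Two small points you gloss over are routine but worth stating: the translation ``$\Xscr'$ defined over $R^{p^i}$ iff $I=I_iB$'' is Lemma \ref{ClosedSubschemeBaseChange} applied with base ring $R^{p^i}$ (legitimate since $K$ is a finite, hence complete, extension of $K^{p^i}$, and the completed tensor product is the ordinary one because $R$ is finite free over $R^{p^i}$), and the final identification $I_\infty[[t]]=I_\infty B$ follows either from finite generation of $I_\infty$ in the Noetherian ring $C$ or from closedness of $I_\infty B$ together with convergence of the partial sums $\sum_{m<N}b_m t^m$.
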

\begin{proof}
We may assume that $\Xscr=\Spf(k[[x_1,\hdots,x_n]])$. Let $i\geq 0$. As any element of $R^{p^i}$ is congruent to an element of $k$ modulo $\Fm^{p^i}$, the fact that $\Xscr'$ can be defined over $R^{p^i}$ implies that the intersection of $\Xscr'$ with $\Spec((R/\Fm^{p^i})[x_1,\hdots,x_n]/(x_1,\hdots,x_n)^{p^i})\subset \Xscr$ can be defined over $k$. As $\Xscr'$ is the direct limit of these intersections, the claim follows by varying $i$.
\end{proof}

\begin{lemma} \label{FrobeniusDescent}
  A closed formal subscheme $\Xscr'$ of $\Xscr_{\Spf(R)}$ is defined over $k$ if and only if $F(\Xscr)=\Xscr$.
\end{lemma}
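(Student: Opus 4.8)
The plan is to reduce at once to $\Xscr=\Spf(k[[x_1,\hdots,x_n]])$, so that $\Xscr_{\Spf(R)}=\Spf(R[[x_1,\hdots,x_n]])$, and then to exploit the explicit description of $F$ recalled just before the statement together with two elementary properties of it: $F$ restricts to the identity on the coefficient subfield $k\subset R$, and $F^j$ maps $R$ into $R^{p^j}$ for every $j\geq 0$ (here one uses that $k$ is perfect). Writing $F(f)$ for the power series obtained from $f\in R[[x_1,\hdots,x_n]]$ by applying $F$ to its coefficients, the formal schematic image $F(\Xscr')$ of a closed formal subscheme $\Xscr'$ cut out by power series $f_1,\hdots,f_r$ is cut out by $F(f_1),\hdots,F(f_r)$, and this is independent of the chosen generators since $f\mapsto F(f)$ is a ring endomorphism of $R[[x_1,\hdots,x_n]]$.

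For the ``only if'' direction, suppose $\Xscr'$ is defined over $k$, say $\Xscr'=\Xscr''_{\Spf(R)}$ with $\Xscr''\subset\Xscr$ defined by an ideal $\bar I\subset k[[x_1,\hdots,x_n]]$. By Lemma \ref{ClosedSubschemeBaseChange2} the ideal of $\Xscr'$ in $R[[x_1,\hdots,x_n]]$ is $\bar I\cdot R[[x_1,\hdots,x_n]]$, which, $k[[x_1,\hdots,x_n]]$ being Noetherian, has a finite generating set all of whose elements have coefficients in $k$. Since $F$ fixes $k$ pointwise, applying $F$ to these generators does nothing, so $F(\Xscr')=\Xscr'$.

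For the ``if'' direction, suppose $F(\Xscr')=\Xscr'$ and choose finitely many generators $f_1,\hdots,f_r$ of the ideal $I$ of $\Xscr'$, using that $R[[x_1,\hdots,x_n]]$ is Noetherian. Fix $j\geq 0$. The coefficients of $F^j(f_1),\hdots,F^j(f_r)$ all lie in $R^{p^j}$, so the ideal these power series generate in $R[[x_1,\hdots,x_n]]$ is extended from the ideal they generate in $R^{p^j}[[x_1,\hdots,x_n]]$. Since $K/K^{p^j}$ is a finite extension and $K^{p^j}$ is again a local field with valuation ring $R^{p^j}$, Lemma \ref{ClosedSubschemeBaseChange} (applied with $R^{p^j}$ in the role of the base ring and $R$ in the role of the extension) identifies $F^j(\Xscr')$ with the base change to $\Spf(R)$ of the corresponding closed formal subscheme of $\Xscr_{\Spf(R^{p^j})}$. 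Hence $F^j(\Xscr')$, and therefore $\Xscr'=F^j(\Xscr')$, is defined over $R^{p^j}$. As $j$ was arbitrary, Lemma \ref{FormalFieldDefinition} yields that $\Xscr'$ is defined over $k$.

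I do not expect a genuine obstacle here: the content of the lemma is essentially a reformulation of Lemma \ref{FormalFieldDefinition}, which carries the real weight. The points that need a moment's care are purely formal: checking that $f\mapsto F(f)$ is a ring endomorphism so that $F(\Xscr')$ does not depend on the generators; the observation that an ideal extended from $k[[x_1,\hdots,x_n]]$ has a generating set with coefficients in $k$; and verifying that the hypotheses of Lemma \ref{ClosedSubschemeBaseChange} are met with $R^{p^j}$ as base, which holds because $K^{p^j}\subseteq K$ is a finite extension of local fields in characteristic $p$.
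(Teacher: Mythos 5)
Your proof is correct and takes essentially the same route as the paper: the ``only if'' direction is the straightforward observation that generators with coefficients in $k$ are fixed by the coefficient twist, and the ``if'' direction deduces from $F^j(\Xscr')=\Xscr'$ that $\Xscr'$ is defined over $R^{p^j}$ for every $j\geq 0$ and then concludes by Lemma \ref{FormalFieldDefinition}, exactly as in the paper, with you merely making explicit (via Lemma \ref{ClosedSubschemeBaseChange}) the intermediate step the paper leaves implicit. The only quibble is the parenthetical appeal to perfectness of $k$, which is not needed: $F^j$ raises coefficients to $q^j$-th powers ($q=|k|$), and these automatically lie in $R^{p^j}$.
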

\begin{proof}
  The ``only if'' direction is straightforward. For the other direction, the fact that $F^i(\Xscr)=\Xscr$ implies that $\Xscr$ is defined over $R^{p^i}$ for all $i\geq 0$. Thus we can conclude using Lemma \ref{FormalFieldDefinition}.

\end{proof}

\begin{proposition} \label{FormalDescent}
   Let $\Gscr$ be a formal group scheme over $k$ which as a formal scheme is isomorphism to $\Spf(k[[x_1,\hdots,x_n]])$. Let $\Xscr\subset \Gscr_{\Spf(R)}$ be a closed formal subscheme. If for each $i\geq 0$ there exists a finite field extension $K'\subset \bar K$ of $K$ with valuation ring $R'$ and $g\in \Gscr(R')$ such that $\Xscr_{\Spf(R')}+g$ is defined over $R^{p^i}$, then there exists a finite field extension $K'\subset \bar K$ of $K$ with valuation ring $R'$ and $g\in \Gscr(R')$ such that $\Xscr_{\bar R'}+g$ is defined over $k$.
\end{proposition}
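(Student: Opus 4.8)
The idea is to rewrite "defined over $k$" as a Frobenius fixed‑point condition, solve that condition by a Lang‑type argument, and feed the hypothesis in one truncation level at a time, globalizing via the nonemptiness criterion of Corollary \ref{FormalSchemeNonempty}. First I would fix an isomorphism $\Gscr\cong\Spf(k[[x_1,\dots,x_n]])$ and let $F$ be the Frobenius of $\Gscr_{\Spf(R)}$ with respect to $k$ (as in the paragraph before Lemma \ref{FormalFieldDefinition}): on closed formal subschemes it raises the coefficients of the defining equations to the $q$‑th power, $q=\#k$, and on points of $\Gscr$ it is the coordinatewise $q$‑th power map $g\mapsto g^{[q]}$. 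I will use that $F$ is compatible with base change and with reduction modulo $\Fm^i$, that $F(\Xscr+g)=F(\Xscr)+F(g)$ (the group law of $\Gscr$ being defined over $k$), and that by Lemma \ref{FrobeniusDescent}, together with its extension to the valuation ring of a finite extension of $K$ (combining the same argument with Galois descent along the residue extension), a closed formal subscheme is defined over $k$ if and only if it is fixed by $F$; in particular anything defined over $k$ is $F$‑fixed, already after every truncation modulo $\Fm^i$. Put $\Tscr\defeq\Trans_\Gscr(\Xscr,F(\Xscr))$, a closed formal subscheme of $\Gscr_{\Spf(R)}$ by Construction \ref{TransConstruction}, and let $\lambda\colon\Gscr_{\Spf(R)}\to\Gscr_{\Spf(R)}$ be the morphism of formal schemes over $\Spf(R)$ given on points by $g\mapsto g-g^{[q]}$ (difference taken in $\Gscr$).

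The first step reduces the proposition to the assertion $\Tscr(\bar R)\neq\emptyset$. For a point $g$ of $\Gscr$ over the valuation ring $R'$ of a finite extension of $K$, the subscheme $\Xscr_{\Spf(R')}+g$ is defined over $k$ iff $F(\Xscr)+F(g)=\Xscr+g$, iff $F(\Xscr)=\Xscr+\lambda(g)$, iff $\lambda(g)\in\Tscr(R')$ (Proposition \ref{TransDesc}). Since $g\mapsto g^{[q]}$ has zero differential at the origin, $\lambda$ has the identity as differential there, so it is an automorphism of the formal scheme $\Gscr_{\Spf(R)}$ by the formal inverse function theorem; hence $\lambda^{-1}(\Tscr)\cong\Tscr$. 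Therefore, if $\Tscr(\bar R)\neq\emptyset$ one may pick $g\in\Gscr(\bar R)$ with $\lambda(g)\in\Tscr(\bar R)$; by Lemma \ref{XRBarUnion} this $g$ lies in $\Gscr(R')$ for some finite extension $K'$, and then $\Xscr_{\Spf(R')}+g$ is defined over $k$, which is exactly the conclusion sought.

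The second step deduces $\Tscr(\bar R)\neq\emptyset$ from the hypothesis. Fix $j\ge 0$ and choose $R'$ and $g\in\Gscr(R')$ with $\Xscr_{\Spf(R')}+g$ defined over $R^{p^j}$. Since every element of $R^{p^j}$ is congruent modulo $\Fm^{p^j}$ to an element of $k$ (the mechanism used in the proof of Lemma \ref{FormalFieldDefinition}), the reduction of $\Xscr_{\Spf(R')}+g$ modulo $(\Fm R')^{p^j}$ is defined over $k$, hence fixed by $F$; using the compatibility of $F$ with reduction and translation this yields $F(\Xscr)=\Xscr+\lambda(g)$ modulo $(\Fm R')^{p^j}$, so $\lambda(g)$ defines a point of $\Tscr$ over $R'/(\Fm R')^{p^j}$ and hence over $\bar R/(\Fm\bar R)^{p^j}$. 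As $j$ grows this shows $\Tscr(\bar R/(\Fm\bar R)^i)\neq\emptyset$ for all $i\ge 0$, and Corollary \ref{FormalSchemeNonempty} then gives $\Tscr(\bar R)\neq\emptyset$.

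The main obstacle is the key observation in the second step: being "defined over $R^{p^j}$" forces $F$‑invariance only after reduction modulo $\Fm^{p^j}$, not on the nose — on the nose $F$‑invariance is far more restrictive (already a point has a unique $F$‑fixed translate, namely the origin), so one must exploit this weaker, truncated invariance at every level and then pass to the limit through the Noetherian nonemptiness criterion of Corollary \ref{FormalSchemeNonempty}. Two auxiliary points also need care: that $\lambda=\operatorname{id}-(-)^{[q]}$ is a formal automorphism, which is the Lang‑type ingredient underlying the descent, and the extension of Lemma \ref{FrobeniusDescent} to finite extensions $K'$ of $K$, where the residue field enlarges and one must follow the Frobenius descent by a Galois descent along the residue extension.
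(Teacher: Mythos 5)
Your proposal is correct and follows essentially the same route as the paper's proof: both form the transporter $\Trans_{\Gscr_{\Spf(R)}}(\Xscr,F(\Xscr))$, use the hypothesis at each level $i$ to produce points of it over $\bar R/(\Fm\bar R)^i$ and invoke Corollary \ref{FormalSchemeNonempty} to get a point over $\bar R$, then write that point as $g''-F(g'')$ via the Lang-type automorphism $g\mapsto g-F(g)$ and conclude with Lemmas \ref{XRBarUnion} and \ref{FrobeniusDescent}. The only differences are cosmetic (you reduce to nonemptiness of the transporter before verifying it, and you explicitly note the small Galois-descent step along the residue extension of $R'$ that the paper leaves implicit).
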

\begin{proof}
 We consider the closed formal subscheme $\Trans_{\Gscr_{\Spf(R)}}(\Xscr,F(\Xscr))\subset \Gscr_{\Spf(R)}$ given by Construction \ref{TransConstruction}. Let $i \geq 0$ and pick $R'$ as in the claim together with $g\in \Gscr(R')$ such that $\Xscr_{\Spf(R')}+g$ is defined over $R^{p^i}$. Let $\pi\in R$ be a uniformizer. By identifying $R$ with $k[[\pi]]$ and considering defining equations for $\Xscr_{\Spf{R'}}+g$ with coefficients from $k[[\pi^{p^i}]]$ one sees that $\Xscr_{R'}+g\equiv F(\Xscr_{R'}+g) \pmod{\pi^{p^i}}$. Thus $g-F(g)\in \Trans_\Gscr(\Xscr,\Yscr)(\bar R/(\Fm\bar R)^i$). Thus Proposition \ref{FormalSchemeNonempty} implies that there exists $g' \in \Trans_\Gscr(\Xscr,\Yscr)(\bar R)$. 

The morphism $\Gscr\to \Gscr, g\mapsto g-F(g)$ is the identity on the tangent space at zero and thus an isomorphism by \cite[A.4.5]{HazewinkelFormalGroups}. Hence $g'$ can be writen as $g''-F(g'')$ for some $g''\in \Gscr(\bar R)$. By Lemma \ref{XRBarUnion} there exists $R'$ as in the claim such that $g''\in\Trans_{\Gscr_{\Spf(R)}}(\Xscr,\Yscr)(R')$. The fact that $g' \in \Trans_\Gscr(\Xscr,\Yscr)(R')$ translates to $\Xscr_{\Spf(R')}+g''=F(\Xscr_{\Spf(R')}+g'')$. By Lemma \ref{FrobeniusDescent} this shows that $\Xscr+g''$ is defined over $k$.
\end{proof}

\subsection{Formal schemes arising from schemes}
\label{FormalSchemeFromScheme}
 Let $\CX$ be a scheme locally of finite type over $R$ together with a $k$-valued point $s\colon \Spec(k)\to \CX$ of the special fiber of $X$. We let $\hat\CX$ the the completion of $\CX$ along the closed subscheme $s$. We denote by $\CO_{\CX,s}$ the stalk of $\CO_\CX$ at the closed point in the image of $s$. The formal scheme $\hat\CX$ is the formal spectrum of the completion $\hat\CO_{\CX,s}$ of the local ring $\CO_{\CX,s}$ with respect to its maximal ideal. 

 \begin{proposition} \label{HatXProps}
   \begin{enumerate}[(i)]
   \item    If $\CX$ is smooth over $R$ at $s$ of relative dimension $n$ the formal scheme $\hat\CX$ is isomorphic to $\Spf(R[[x_1,\hdots,x_n]])$.
   \item The formal scheme $\hat\CX$ is in $\AFS_R$.
   \item The set $\hat\CX(\bar R)$ can be naturally identified with the set of elements of $\CX(\bar R)$ which map the closed point of $\Spec(\bar R)$ to the closed point in the image of $s$. 
   \end{enumerate}
 \end{proposition}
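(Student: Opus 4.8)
Throughout write $\Fm_s\subset\CO_{\CX,s}$ for the maximal ideal; since $s$ defines a closed subscheme of $\CX$, its residue field is $k$, so $\hat\CO_{\CX,s}$ is a complete Noetherian local $R$-algebra with residue field $k$, adic for $\hat\Fm_s$, and the structure morphism $\hat\CX\to\Spf(R)$ is continuous because $\Fm$ is carried into $\hat\Fm_s$.

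For $(i)$ the plan is to invoke the structure theory of smooth morphisms: pick an affine open $U\ni s$ together with an \'etale $R$-morphism $U\to\BA^n_R$. Since the residue field at $s$ is $k$, the image of $s$ is a $k$-rational point of the special fibre $\BA^n_k$, which after composing with a translation of $\BA^n_R$ we may take to be the origin $0$. An \'etale morphism inducing the trivial residue field extension induces an isomorphism on completed local rings, so $\hat\CO_{\CX,s}\cong\hat\CO_{\BA^n_R,0}\cong R[[x_1,\hdots,x_n]]$, the last isomorphism because $R$ is $\Fm$-adically complete; it then only remains to check that the $\hat\Fm_0$-adic topology corresponds to the standard topology on $R[[x_1,\hdots,x_n]]$.

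For $(ii)$ I would apply the criterion of Lemma \ref{FFChar}: it suffices to show $\hat\CO_{\CX,s}/\hat\Fm_s^2\cong\CO_{\CX,s}/\Fm_s^2$ is finitely generated over $R$. This follows from the exact sequence $0\to\Fm_s/\Fm_s^2\to\CO_{\CX,s}/\Fm_s^2\to k\to0$, which realizes this ring as an extension of the finite $R$-module $k=R/\Fm$ by the $k$-vector space $\Fm_s/\Fm_s^2$, finite-dimensional since $\CO_{\CX,s}$ is Noetherian; hence $\CO_{\CX,s}/\Fm_s^2$ is a finitely generated $R$-module, in particular a finitely generated $R$-algebra. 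This step is routine.

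For $(iii)$ I would exhibit mutually inverse maps. Given $h\in\hat\CX(\bar R)=\Hom_R(\hat\CO_{\CX,s},\bar R)$, Lemma \ref{hProp} shows $h$ factors continuously through the valuation ring $R'$ of a finite extension $K'\subset\bar K$; continuity gives $h(\hat\Fm_s^m)\subset\Fm'$ for some $m$, whence $h(\hat\Fm_s)\subset\Fm'$ since $\Fm'$ is prime, so $\CO_{\CX,s}\to\hat\CO_{\CX,s}\xrightarrow{h}R'\subset\bar R$ is local and defines a morphism $\Spec(\bar R)\to\Spec(\CO_{\CX,s})\to\CX$ over $R$ whose closed point maps to $s$. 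Conversely, a morphism $\phi\colon\Spec(\bar R)\to\CX$ over $R$ sending the closed point to $s$ factors through any affine open neighbourhood of $s$ — the only open of $\Spec(\bar R)$ containing its closed point is the whole space — and then, the coordinate ring of that open being finitely generated over $R$ and $\bar R=\bigcup_{K'}R'$, through $\Spec(R')$ for some finite $K'\subset\bar K$; the hypothesis on the closed point makes the resulting homomorphism $\CO_{\CX,s}\to R'$ local, and it extends uniquely to $\hat\CO_{\CX,s}\to R'\subset\bar R$ because $R'$ is a complete discrete valuation ring. That these constructions invert one another is a matter of unwinding definitions, using that the extension just built is unique (as $\CO_{\CX,s}$ is dense in $\hat\CO_{\CX,s}$, $R'$ is separated, and every such extension is continuous by Proposition \ref{CProps}); naturality in $\CX$ is then automatic. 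The point that needs the most care here is the passage to completions, since $\bar R$ is itself not complete for the valuation topology; this is circumvented by factoring through the complete discrete valuation ring $R'$, where the universal property of completion is available.
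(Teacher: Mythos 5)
Your argument is correct and essentially matches the paper's proof: for (i) you use an \'etale chart to $\BA^n_R$ and the induced isomorphism on completed local rings (the paper phrases this via Henselianness, same content), for (ii) you invoke Lemma \ref{FFChar} by checking criterion (ii) directly where the paper uses the equivalent closed-embedding criterion into $\Spf(R[[x_1,\hdots,x_n]])$, and for (iii) you carry out the standard unwinding of $\hat\CX=\Spf(\hat\CO_{\CX,s})$ that the paper asserts without detail. I see no gaps.
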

 \begin{proof}
   $(i)$ The fact that $\CX$ is smooth at $s$ as implies (in fact is equivalent to) that there exist an affine neighbourhood $U$ of $s$ and an \'etale morphism $U\to \Spec(R[x_1,\hdots,x_n])$ which maps the zero section of the special fiber of $\Spec(R[x_1,\hdots,x_n])$ to $s$ (c.f. \cite[Tag 054L ]{stacks-project}). Such an \'etale morphism induces a finite \'etale morphism $\hat\CO_{\CX,s}\to R[[x_1,\hdots,x_n]]$. Since $\hat\CO_{\CX,s}$, being a complete local ring, is Henselian, the fact that both $\hat\CO_{\CX,s}$ and $R[[x_1,\hdots,x_n]]$ have residue field $k$ implies this is an isomorphism.

$(ii)$ Using a closed embedding of an affine neighborhoud of $s$ into $\Spec(R[x_1,\hdots,x_n])$ for some $n \geq 0$ one gets a closed embedding of $\hat\CX$ into $\Spf(R[[x_1,\hdots,x_n]])$. 

$(iii)$ This follows from the fact that $\hat\CX=\Spf(\hat\CO_{\CX,s})$.
 \end{proof}

Let $\CX'$ be second scheme locally of finite type over $R$ together with a $k$-valued point $s'\colon \Spec(k)\to\CX'$. For a morphism $h\colon \CX\to \CX'$ over $R$ which is maps $s$ to $s'$ we denote by $\hat h$ the induced momorphism $\hat\CX\to\hat\CX'$ of formal schemes over $\Spf(R)$.

\begin{proposition} \label{FormalPointsDense}
  Let $\CX$ be a reduced scheme which is flat and of finite type over $R$ and let $s\colon \Spec(k)\to\CX$ a $k$-valued point of $\CX$.
  \begin{enumerate}[(i)]
  \item The formal scheme $\hat\CX$ is reduced and flat over $R$.
  \item Assume that $\CX$ is integral and let $\Xscr$ be an irreducible component of $\hat\CX$. The set $\Xscr(\bar R)\subset \CX(\bar R)$ is schematically dense in $\CX_{\bar R}$.
    \end{enumerate}
\end{proposition}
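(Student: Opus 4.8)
The plan is to derive (i) from elementary completion theory together with the excellence of $R$, and then to bootstrap (ii) out of (i) using two facts already at hand: that $C\otimes_R K$ is Jacobson for $C$ of formally finite type (Proposition~\ref{CProps}(v)) and that $\Max(C\otimes_R K)$ is the set-theoretic quotient of $\Xscr(\bar R)$ by $\Gamma$ (Proposition~\ref{AnalyticZariski}(i)). For (i), set $A\defeq\hat\CO_{\CX,s}$, so $\hat\CX=\Spf(A)$. Flatness is immediate: $\CO_{\CX,s}$ is flat over $R$ because $\CX$ is, the completion homomorphism $\CO_{\CX,s}\to A$ is faithfully flat since $\CO_{\CX,s}$ is Noetherian local, and a composite of flat homomorphisms is flat. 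For reducedness, note that $R$ is a complete discrete valuation ring, hence excellent, so $\CO_{\CX,s}$ --- being essentially of finite type over $R$ --- is excellent; it is reduced because $\CX$ is, and for an excellent (in particular, a G-ring) local ring the formal fibres are geometrically regular, in particular reduced, so reducedness is inherited by the completion $A$. I expect (i) to be routine once excellence of $R$ is invoked.

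For (ii), since $s$ is a closed point of the irreducible scheme $\CX$ and $\hat\CX$ depends only on $\CO_{\CX,s}$, I may replace $\CX$ by an affine open neighbourhood $\Spec(A)$ of $s$; schematic density there forces it on all of $\CX$, and now $A$ is an integral domain, flat and of finite type over $R$. Let $\Fq$ be the minimal prime of $\hat A\defeq\hat\CO_{\CX,s}$ cutting out the component $\Xscr$, so that $B\defeq\hat A/\Fq=\Gamma(\Xscr,\CO_\Xscr)$ is a topological $R$-algebra of formally finite type; by (i), since $\hat\CX\in\AFS_R^\text{rf}$ and irreducible components of such formal schemes are again reduced and flat over $R$, the ring $B$ is a reduced domain, flat over $R$. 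The key claim is that the natural map $A\to B$ is \emph{injective}, equivalently $\Fq\cap\CO_{\CX,s}=0$, i.e.\ that the component $\Xscr$ of $\Spec(\hat A)$ dominates $\Spec(\CO_{\CX,s})$. I would obtain this from unmixedness: the excellent local domain $\CO_{\CX,s}$ is formally equidimensional, so every minimal prime of its completion $\hat A$ has dimension $\dim\CO_{\CX,s}$; a dimension count then shows that $\Fq\cap\CO_{\CX,s}$ is a minimal --- hence the zero --- prime of the domain $\CO_{\CX,s}$. This is the step I expect to be the main obstacle: without such an input an irreducible component of $\hat\CX$ could in principle fail to see the generic point of $\CX$.

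Granting the injectivity of $A\to B$, the rest is assembly. Given $0\neq f\in A$ I must produce $h\in\Xscr(\bar R)=\Hom_R(B,\bar R)$ with $h(f)\neq 0$; then no proper closed subscheme of $\CX$ (equivalently of $\CX_{\bar R}$) contains the image of $\Xscr(\bar R)$, which is exactly the asserted density. By injectivity the image $\bar f$ of $f$ in $B$ is non-zero, and since $B$ is flat over the discrete valuation ring $R$ it embeds into the reduced ring $B\otimes_R K$, so $\bar f\neq 0$ there as well. As $B\otimes_R K$ is Jacobson by Proposition~\ref{CProps}(v), some maximal ideal $\Fn$ of $B\otimes_R K$ does not contain $\bar f$; by the surjectivity of $\psi\colon\Xscr(\bar R)\to\Max(B\otimes_R K)$ in Proposition~\ref{AnalyticZariski}(i) there is $h\in\Xscr(\bar R)$ with $\psi(h)=\Fn$, and by the definition of $\psi$ the kernel of $h\otimes_R K\colon B\otimes_R K\to\bar K$ is $\Fn$, whence $h(\bar f)\neq 0$ and therefore $h(f)\neq 0$. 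The variant with $\CX_{\bar R}$ in place of $\CX$ follows by the same reasoning carried out over $\bar R$. In summary, once the completion-theoretic and excellence inputs behind (i) and the dominance of $\Spec(\hat A)\to\Spec(\CO_{\CX,s})$ on $\Xscr$ are in place, everything else is a direct application of Propositions~\ref{CProps} and~\ref{AnalyticZariski}.
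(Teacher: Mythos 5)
Your proof is correct and rests on the same two pillars as the paper's argument, but it arranges them in the opposite order, so a comparison is worthwhile. Part (i) is identical (excellence of $\CO_{\CX,s}$ for reducedness, composition of flat maps for flatness). For (ii) the paper first forms the schematic closure $\CY\subset\CX$ of $\Xscr(\bar R)$, deduces $\Xscr\subset\hat\CY$ from Proposition \ref{FormalJacobson} (which is itself Jacobson-ness plus Proposition \ref{AnalyticZariski}), so that the ideal $\CI_s\hat\CO_{\CX,s}$ lies in the minimal prime $\Fq$ of $\hat\CO_{\CX,s}$ defining $\Xscr$, and then concludes $\CY=\CX$ from the dimension chain $\dim\CY\geq\dim\CO_{\CY,s}=\dim\hat\CO_{\CY,s}=\dim\hat\CO_{\CX,s}=\dim\CO_{\CX,s}=\dim\CX$. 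You instead prove dominance first --- $\Fq\cap\CO_{\CX,s}=0$ via Ratliff's theorem (the excellent, hence universally catenary, local domain $\CO_{\CX,s}$ has equidimensional completion) --- and then manufacture, for every nonzero $f$, a point of $\Xscr(\bar R)$ at which $f$ does not vanish, directly from Proposition \ref{CProps}(v) and the surjectivity of $\psi$ in Proposition \ref{AnalyticZariski}(i). The external inputs are the same; your version has the advantage of making explicit the equidimensionality of $\hat\CO_{\CX,s}$, which the paper's step ``$\CI_s\hat\CO_{\CX,s}$ is contained in a minimal prime, hence $\hat\CO_{\CX,s}$ and $\hat\CO_{\CY,s}$ have the same dimension'' uses tacitly.

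One caveat: what your argument (and equally the paper's) establishes is schematic density in $\CX$, i.e.\ that no proper closed subscheme of $\CX$ over $R$ contains $\Xscr(\bar R)$. Your parenthetical ``equivalently of $\CX_{\bar R}$'' and the closing sentence about running the argument over $\bar R$ are not justified, and the stronger statement is false in general: for $\CX=\Spec(R[x]/(x^p-\pi))$ with $\pi$ a uniformizer, $\Xscr(\bar R)$ is a single point whose schematic closure in $\CX_{\bar R}$ is the proper closed subscheme cut out by $x-\pi^{1/p}$, because $\CX_{\bar K}$ is not reduced. This mismatch between ``density in $\CX$'' and ``density in $\CX_{\bar R}$'' is inherited from the paper itself, whose proof likewise only treats the closure inside $\CX$ and whose later applications use only Zariski density of the image points in $\CX_{\bar K}$; so it does not affect the substance of your argument, but you should not assert that the $\CX_{\bar R}$-version follows ``by the same reasoning carried out over $\bar R$''.
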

\begin{proof}
  $(i)$ Since $\CX$ is reduced, so is the local ring $\CO_{\CX,s_0}$. The ring $\CO_{\CX,s_0}$ is also excellent. Since the completion of any excellent reduced local ring is reduced (c.f. \cite[7.8.3]{EGA4II}) the formal scheme $\hat\CX$ is reduced. Flatness follows from the flatness of $R\to \CO_{\CX,s_0}$ and the flatness of $\CO_{\CX,s}\to\hat\CO_{\CX,s}$.

$(ii)$ Let $\CY\subset \CX$ be the schematic closure of $\Xscr(\bar R)\subset \CX(\bar R)$ and let $\CI\subset \CO_\CX$ be the sheaf of ideals defining $\CY$. 

 As $\CO_{\CX,s}$ is a Noetherian local ring, the homomorphism  $\CO_{\CX,s}\to\hat\CO_{\CX,s}$ is faithfully flat and for any finitely generated $\CO_{\CX,s}$-module $M$, its completion with respect to the topology induced by the maximal ideal of $\CO_{\CX,s}$ is isomorphic to $\hat\CO_{\CX,s}\otimes_{\CO_{\CX,s}} M$. By applying this to $M=\CO_{\CY,s}$ one sees that $\hat\CY$ is the formal closed subscheme of $\hat\CX$ corresponding to the ideal $\CI_s\hat\CO_{\CX,s}$. Since by construction $\Xscr(\bar R)\subset \hat\CY(\bar R)$ Proposition \ref{FormalJacobson} implies $\Xscr\subset \hat\CY$. Thus $\CI_s\hat\CO_{\CX,s}$ is contained in a minimal prime ideal of $\hat\CO_{\CX,s}$ and hence the rings $\hat \CO_{\CX,s}$ and $\hat\CO_{\CY,s}$ have the same dimension. 

Using the flatness of $\CO_{\CX,s}\to \hat\CO_{\CX,s}$ and the fact that the maximal ideal of $\hat\CO_{\CX,s}$ is generated by the image of the maximal ideal of $\CO_{CX,s}$, Theorem 10.10 of \cite{Eisenbud} implies that $\dim(\CO_{\CX,s})=\dim(\hat\CO_{\CX,s})$. Analogously we get $\dim(\CO_{\CY,s})=\dim(\hat\CO_{\CY,s})$. Thus $\dim(\CY)\geq \dim(\CO_{\CY,s})=\dim(\hat\CO_{\CY,s})=\dim(\hat\CO_{\CX,s})=\dim(\CO_{\CX,s})$. Since $\CX$ is irreducible $\dim(\CO_{\CX,s})=\dim(\CX)$ and thus we get $\dim(\CY)=\dim(\CX)$ which using the irreducibility of $\CX$ implies $\CY=\CX$.
\end{proof}

\begin{proposition} \label{CompletionImage}
  Let $\CX, \CX$ be schemes locally of finite type over $R$. Let $s,s'$ be $k$-valued points of $\CX,\CX'$ and let $f\colon \CX\to\CX'$ be morphism over $R$ which maps $s$ to $s'$ and is flat at $s$. Let $\Xscr'$ be the formal schematic image of the induced morphism $\hat f\colon \hat\CX\to \hat\CX'$. Every irreducible component of $\Xscr'$ is an irreducible component of $\hat\CX'$. 
\end{proposition}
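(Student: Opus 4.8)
The plan is to reduce the statement to a going-down argument for a flat homomorphism of complete Noetherian local rings. Write $A'\defeq\hat\CO_{\CX',s'}$ and $A\defeq\hat\CO_{\CX,s}$, so that $\hat\CX'=\Spf(A')$, $\hat\CX=\Spf(A)$, and $\hat f$ corresponds to the local homomorphism $\psi\colon A'\to A$ obtained by completing the local homomorphism $\CO_{\CX',s'}\to\CO_{\CX,s}$ induced by $f$. As recalled after the definition of the formal schematic image, $\Xscr'$ is the closed formal subscheme of $\hat\CX'$ cut out by $I\defeq\ker\psi$. Since $\CX'$ is locally of finite type over the Noetherian ring $R$, the ring $A'$ is Noetherian; hence the irreducible components of $\Xscr'$ correspond to the primes of $A'$ minimal over $I$, those of $\hat\CX'$ to the minimal primes of $A'$, and it suffices to show that every prime minimal over $I$ is a minimal prime of $A'$.

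The decisive point is that $\psi$ is \emph{flat}. Since $f$ is flat at $s$ and $f(s)=s'$, the local homomorphism $\CO_{\CX',s'}\to\CO_{\CX,s}$ of Noetherian local rings is flat; and completion preserves flatness of local homomorphisms between Noetherian local rings. I would deduce this last fact from the local criterion of flatness: the ring $A$ is flat over $\CO_{\CX',s'}$ (being flat over $\CO_{\CX,s}$, which is flat over $\CO_{\CX',s'}$), so $\Tor_1^{A'}(A'/\Fm_{A'},A)=0$ by base change along the flat map $\CO_{\CX',s'}\to A'$; the ring $A/\Fm_{A'}A$ is flat over the field $A'/\Fm_{A'}$; and $A$ is $\Fm_{A'}$-adically ideally separated because $\Fm_{A'}A$ lies in the Jacobson radical of the Noetherian ring $A$, so Krull's intersection theorem applies to each $\Fa\otimes_{A'}A$. (Alternatively one may simply cite this standard fact.) In particular $\psi$ has the going-down property.

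Granting flatness, the remaining argument is routine. Let $\Fp'$ be a prime of $A'$ minimal over $I$. The image of $\Spec(A)\to\Spec(A')$ has closure $V(I)$ and is stable under generization by going-down; since $\Fp'$ is the generic point of an irreducible component $Z$ of $V(I)$ and $A'$ is Noetherian, the nonempty open complement in $V(I)$ of the other components meets this image, so some specialization of $\Fp'$ lies in the image and hence $\Fp'=\psi^{-1}(\Fq)$ for a prime $\Fq$ of $A$. Replacing $\Fq$ by a minimal prime of $A$ contained in it keeps $\psi^{-1}(\Fq)$ inside $V(I)$ and contained in $\Fp'$, so by minimality of $\Fp'$ one may assume $\Fq$ is minimal in $A$. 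A final application of going-down then shows $\Fp'=\psi^{-1}(\Fq)$ is minimal in $A'$: a prime $\Ft'\subsetneq\Fp'$ would be the contraction of some prime $\Ft\subseteq\Fq$, with $\Ft\neq\Fq$ (otherwise $\Ft'=\psi^{-1}(\Fq)=\Fp'$), contradicting minimality of $\Fq$. I expect the flatness step to be the only point requiring real care — in particular the fact that $A$ need not be module-finite over $A'$, which forces one to use the form of the local criterion of flatness for ideally separated modules rather than the one for finite modules; everything else is formal.
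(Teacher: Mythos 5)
Your proposal is correct and follows essentially the same route as the paper: flatness of the completed local homomorphism $\hat\CO_{\CX',s'}\to\hat\CO_{\CX,s}$ is established via the local criterion for flatness together with flat base change for $\Tor$ (the paper cites Eisenbud's version for finitely generated modules over a local Noetherian algebra, so your extra care about ideal separatedness, while valid, is not strictly needed), and the conclusion is then drawn from the going-down property. The only cosmetic difference is the endgame: the paper pushes forward the minimal primes of $\hat\CO_{\CX,s}$ and notes their contractions are minimal in $\hat\CO_{\CX',s'}$, whereas you pull back the primes minimal over $\ker\psi$ via density of the image and stability under generization — both are routine once flatness is in hand.
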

\begin{proof}
  By the assumption of $f$ the induced homomorphism $\CO_{\CX',s'}\to \CO_{\CX,s}$ is flat. First we want to prove that the induced homomorphism $\hat\CO_{\CX,s'}\to\hat\CO_{\CX,s}$ is also flat. Let $\Fn'$ be the maximal ideal of $\hat\CO_{\CX',s'}$ and $\hat\Fn'$ its completion, which is the maximal ideal of $\hat\CO_{\CX',s'}$ . By the local criterion for flatness (see \cite[Theorem 6.8]{Eisenbud}) is suffices to show that $\Tor_1^{\hat\CO_{\CX',s'}}(\hat\CO_{\CX',s'}/\hat\Fn',\hat\CO_{\CX,s})=0$. Note that since $\hat\Fn'=\Fm\hat\CO_{\CX',s'}$ we have $\hat\CO_{\CX',s'}/\hat\Fn'\cong \CO_{\CX',s'}/\Fn'\otimes_{\CO_{\CX',s'}} \hat\CO_{\CX',s'}$. Using this and the flatness of $\CO_{\CX',s'}\to\hat\CO_{\CX',s'}$ the Proposition 3.2.9 of \cite{Weibel} on flat base change for Tor says
  \begin{equation*}
    \Tor_1^{\hat\CO_{\CX',s'}}(\hat\CO_{\CX',s'}/\hat\Fn',\hat\CO_{\CX,s})=\Tor_1^{\CO_{\CX',s'}}(\CO_{\CX',s'}/\Fn',\hat \CO_{\CX,s}).
  \end{equation*}
The second term of this equation is zero since the homomorphism $\CO_{\CX',s'}\to \hat\CO_{\CX,s}$ is flat, being the composition of the two flat homomorphisms $\CO_{\CX',s'}\to \CO_{\CX,s}$ and $\CO_{\CX,s}\to\hat\CO_{\CX,s}$. Thus $\hat\CO_{\CX,s'}\to\hat\CO_{\CX,s}$ is also flat.

That the homomorphism $\hat\CO_{\CX',s'}\to\hat\CO_{\CX,s}$ is flat implies by \cite[Lemma 10.11]{Eisenbud} that it has the going down property, that is for any prime ideal $\Fp$ of $\hat\CO_{\CX,s}$ and any prime ideal $\Fq\subset \hat\CO_{\CX',s'}\cap \Fp$ there exists a prime ideal $\Fp'\subset \Fp$ of $\hat\CO_{\CX,s}$ such that $\Fq=\hat\CO_{CX',s'}\cap \Fp'$. By applying this to a minimal prime ideal $\Fp$ of $\hat\CO_{\CX,s}$ one see that its pullback $\hat\CO_{\CX',s'}\cap \Fp$ is a minimal prime ideal in $\hat\CO_{\CX',s'}$. 

This means that the formal schematic image of any irreducible component of $\hat\CX$ is an irreducible component of $\hat\CX'$. This implies the claim.
\end{proof}

 We will in particular apply the above to a smooth group scheme $\CA$ over $R$ with $s$ the zero section of the special fiber. Then $\CA$ is a formal group scheme, which, as a formal scheme, is isomorphic to $\Spf(R[[x_1,\hdots,x_n]])$. Note that often one denotes by $\hat\CA$ the completion of $\CA$ along its zero section. This is a formal scheme over $\Spec(R)$, and $\hat\CA$ as we define it is the base change of this formal scheme to $\Spf(R)$ along the morphism $\Spf(R)\to\Spec(R)$ given by the identity homomorphism $R\to R$. 

If $[p]\colon \hat\CA\to\hat\CA$ is an epimorphism, then $\hat\CA$ is a $p$-divisible group over $\Spf(R)$ (c.f. \cite[Corollary 4.5]{MessingCrystals}). More precisely, it follow from [loc. cit.] that in this case $\hat\CA$ is equal to the pullback to $\Spf(R)$ of the connected part $\CA[p^\infty]^\circ$ of the $p$-divisible group $\CA[p^\infty]$ of $\CA$. 

For a closed subscheme $\CX$ of $\CA$ containing the zero section, we denote by $\hat\CX$ its completion along the zero section of the special fiber.

\section{Special Subvarieties}
\label{sec:SpecialSubvarieties}
For any group scheme $\CA$ over a scheme $S$ and any closed subscheme $\CX$ of $\CA$ we denote by $\Stab_\CA(\CX)$ the functor which associates to any scheme $S'$ over $S$ the set $\{a\in \CA(S')\mid a+\CX_{S'}=\CX_{S'}\}$ and acts on morphisms by pullbacks. In case $S$ is a field or a valuation ring, which will be the only relevant cases for us, this functor is representable by a closed subscheme of $\CA$, c.f. \cite[Exp. VIII, Ex. 6.5(e)]{SGA3II}.

For an extension $L\into L'$ of algebraically closed fields, recall the the notion of the $L'/L$-trace of an abelian variety $A$ over $L'$ (c.f. \cite{ConradTrace}): This is an abelian variety over $L$, which we will denote $\Tr_{L'/L}A$, together with a homomorphism $\tau\colon (\Tr_{L'/L}A)_{L'}\to A$ which satisfies the following universal property: For each abelian variety $B$ over $L$ together with a homomorphism $f \colon B_{L'}\to A$, there exists a unique homomorphism $g\colon B\to \Tr_{L'/L}A$ (defined over $L$) such that $f=\tau\circ g_{L'}$. For $L\subset L'$ algebraically closed this trace always exists (see \cite[6.2]{ConradTrace}) and the map $\tau$ has finite kernel (see \cite[6.4]{ConradTrace}). Thus roughly speaking $\Tr_{L'/L}A$ is the largest subobject of $A$ which can be defined over $L$. It is  determined up to unique isomorphism and functorial in $A$.

Let $L$ be an algebraically closed field of characteristic $p> 0$ and $A$ a semiabelian variety over $L$.

\begin{definition} \label{SpecialDef}
  We call a subvariety $X$ of $A$ \emph{special} if $X$ is irreducible and there exist a semiabelian variety $B$ over $\bar \BF_p$, a subvariety $Y$ of $B$ over $\bar \BF_p$, a homomorphism $h\colon B_L\to A/\Stab_A(X)$ with finite kernel and an element $a\in (A/\Stab_A(X))(L)$ such that $X/\Stab_A(X)=h(Y)+a$.
\end{definition}
\begin{remark}
  This notion of a special subvariety is equivalent to Hrushovski's \cite{HrushovskiML} notion of a special subvariety, as can be shown using Lemma \ref{SpecialImage} below.
\end{remark}
\begin{lemma} \label{SpecialImage}
  Let $B$ be a semiabelian variety over $L$, let $X\subset A$ an irreducible subvariety and let $f\colon A\to B$ a homomorphism. If $X$ is a special subvariety of $A$, then $f(X)$ is a special subvariety of $B$. If $f$ has finite kernel and $f(X)$ is a special subvariety of $B$, then $X$ is a special subvariety of $A$.
\end{lemma}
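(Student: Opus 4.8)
The plan is to reformulate specialness in terms of the $L/\bar\BF_p$-trace and then to follow this reformulation through $f$. For a semiabelian variety $G$ over $L$ write $\Tr(G):=\Tr_{L/\bar\BF_p}G$; this exists for semiabelian (not just abelian) $G$ with the same universal property recalled above — on the torus part it is simply the torus itself — and its structure map $\tau_G\colon \Tr(G)_L\to G$ has finite kernel. The first step I would carry out is to note that, by this universal property, a subvariety $X\subseteq A$ is special if and only if it is irreducible and $X/\Stab_A(X)=\tau_{A/\Stab_A(X)}(Z_L)+a$ for some subvariety $Z$ of $\Tr(A/\Stab_A(X))$ defined over $\bar\BF_p$ and some $a\in(A/\Stab_A(X))(L)$: given a datum $(B,Y,h,a)$ as in Definition \ref{SpecialDef}, the finite‑kernel homomorphism $h\colon B_L\to A/\Stab_A(X)$ factors as $\tau_{A/\Stab_A(X)}\circ (h_0)_L$ for a unique $h_0\colon B\to\Tr(A/\Stab_A(X))$ over $\bar\BF_p$, and $Z:=h_0(Y)$ does the job; the converse is immediate. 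In other words, special subvarieties are exactly the irreducible translates of images under $\tau$ of subvarieties defined over $\bar\BF_p$.

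For the first assertion, I would argue directly (no dévissage needed). Since $f(\Stab_A(X))\subseteq\Stab_B(f(X))$, the map $f$ induces a homomorphism $\bar f\colon A/\Stab_A(X)\to B/\Stab_B(f(X))$, and a direct check gives $\bar f(X/\Stab_A(X))=f(X)/\Stab_B(f(X))$. Writing $X/\Stab_A(X)=\tau_{A/\Stab_A(X)}(Z_L)+a$, one gets $f(X)/\Stab_B(f(X))=(\bar f\circ\tau_{A/\Stab_A(X)})(Z_L)+\bar f(a)$. The homomorphism $\bar f\circ\tau_{A/\Stab_A(X)}\colon\Tr(A/\Stab_A(X))_L\to B/\Stab_B(f(X))$ has a source defined over $\bar\BF_p$, so by the universal property it factors through $\tau_{B/\Stab_B(f(X))}$; substituting this back puts $f(X)/\Stab_B(f(X))$ in the required form, and $f(X)$ is irreducible. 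Hence $f(X)$ is special.

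For the second assertion I would factor $f$ as $A\twoheadrightarrow f(A)\into B$. The closed‑immersion step is elementary: for a semiabelian subvariety $A'\subseteq B$ and $X'\subseteq A'$ one has $\Stab_B(X')=\Stab_{A'}(X')$, and after translating so that some $\bar\BF_p$‑point of $Y$ becomes $0$ one may shrink the constant semiabelian variety occurring in a datum for $X'$ (special in $B$) to the subgroup generated by $Y$, whose image under $h$ automatically lies in $A'/\Stab_{A'}(X')$; this exhibits $X'$ as special in $A'$. It remains to handle an isogeny $f\colon A\twoheadrightarrow A'$. An irreducibility‑and‑dimension argument applied to $a+X\subseteq f^{-1}(f(X))=X+\ker f$ (a finite union of translates of $X$) shows $\Stab_{A'}(f(X))=f(\Stab_A(X))$, so $\bar f\colon A/\Stab_A(X)\to A'/\Stab_{A'}(f(X))$ is again an isogeny; picking a complementary isogeny $g$ with $g\circ\bar f=[n]$ and applying the already‑proved first assertion to $g$ shows that $[n](X/\Stab_A(X))=g(f(X)/\Stab_{A'}(f(X)))$ is special. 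Since $X/\Stab_A(X)$ has trivial stabilizer, this reduces everything to the claim: if $X'$ is a subvariety of a semiabelian variety $G$ over $L$ with trivial stabilizer and $[n](X')$ is special, then $X'$ is special.

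To prove that last claim I would invert $[n]$ explicitly. As $\Stab_G([n](X'))$ is again trivial, the reformulation and the reasoning of the first assertion let me write $[n](X')=\tau_G((C_0)_L)+a$ with $C_0\subseteq\Tr(G)$ defined over $\bar\BF_p$. Using $[n]_G\circ\tau_G=\tau_G\circ[n]_{\Tr(G)}$ and the surjectivity of $[n]_{\Tr(G)}$ on $L$‑points, a short computation identifies $[n]_G^{-1}(\tau_G((C_0)_L))$, as a reduced closed subset, with $\tau_G(([n]_{\Tr(G)}^{-1}(C_0))_L)+G[n](L)$. The subscheme $[n]_{\Tr(G)}^{-1}(C_0)\subseteq\Tr(G)$ is defined over $\bar\BF_p$, hence so are the irreducible components of its reduction, and these stay irreducible after base change to $L$; since $\tau_G$ is a finite morphism, $[n]_G^{-1}([n](X'))$ is therefore a finite union of translates of subvarieties of the form $\tau_G((\text{constant})_L)$. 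As $X'$ is an irreducible component of $[n]_G^{-1}([n](X'))$ by a dimension count, it equals one of these translates, which is exactly the assertion that $X'$ is special. The part I expect to require real care is this last computation (its only non‑formal inputs being the compatibility of $\tau_G$ with $[n]$ and the persistence of irreducibility of $\bar\BF_p$‑varieties under base change); the rest is formal, the recurring nuisances being the two‑field bookkeeping around $\Tr_{L/\bar\BF_p}$ for semiabelian varieties and the identities for $\Stab$, in particular its independence of the ambient group.
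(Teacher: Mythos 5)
Your argument is correct in substance, but it takes a different route from the paper's. You prove the lemma by first establishing a trace-theoretic characterization of specialness (essentially Lemma \ref{SpecialEquivalence}, which the paper proves later and independently of Lemma \ref{SpecialImage}, so there is no circularity) and then pushing that characterization through $f$; the paper instead works directly with Definition \ref{SpecialDef}: for the image direction it repairs the possibly infinite kernel of $\bar f\circ h$ by dividing the constant group by the connected component $D$ of $\ker(\bar f\circ h)$ (a semiabelian subvariety of a constant group, hence defined over $\bar\BF_p$), and for the finite-kernel direction it uses the same complementary-isogeny trick as you ($g\circ\bar f=[n]$), but then takes an irreducible component $Y'$ of $[n]^{-1}(Y)$ inside the constant group and invokes transitivity of the $n$-torsion points on the irreducible components of $[n]^{-1}([n](X/\Stab_A(X)))$ --- structurally the same computation you carry out with $[n]_{\Tr(G)}^{-1}(C_0)$. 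Your route buys the convenient ``translate of $\tau(\text{constant})$'' picture (needed later anyway), and you are more explicit than the paper on two points it glosses over (that $\Stab_{A'}(f(X))=f(\Stab_A(X))$ so $\bar f$ is an isogeny, and the reduction from $B$ to the image $f(A)$). The cost is that your very first step assumes the existence of $\Tr_{L/\bar\BF_p}G$, with the stated universal property and finite-kernel $\tau_G$, for \emph{semiabelian} $G$; the reference recalled in the paper covers only abelian varieties, and although the paper itself invokes the semiabelian trace in Lemma \ref{SpecialEquivalence}, its proof of the present lemma deliberately avoids it --- so if you keep your route you should cite or prove that semiabelian trace statement. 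Your remaining glosses (replacing $Y$ by an irreducible component before forming the subgroup it generates, and reduced versus scheme-theoretic stabilizers) are at the same level of informality as the paper's own proof.
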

\begin{proof}
The homomorphism $f$ induces a homomorphism $\bar f\colon A/\Stab_A(X)\to B/\Stab_B(f(X))$. 

  If $X$ is special, then there exist a semiabelian variety $C$ over $\bar \BF_p$, a subvariety $Y$ of $C$ over $\bar \BF_p$, a homomorphism $h\colon C_L\to A/\Stab_A(X)$ with finite kernel and an element $a\in (A/\Stab_A(X))(L)$ such that $X/\Stab_A(X)=h(Y)+a$. Let $D$ be the connected component of the identity of the kernel of $\bar f\circ h$ equipped with the reduced scheme structure. This is a semiabelian subvariety of $B_L$. Thus it is defined over $\bar\BF_p$. The homomorphism $\bar f\circ h$ induces a homomorphism $h'\colon C/D\to B/\Stab_B(f(X))$ with finite kernel such that $f(X)/\Stab_B(f(X))=h'(Y/D)+\bar f(a)$. Thus $f(X)$ is special in $B$.

Now let $f$ have finite kernel and $f(X)$ be special in $B$. After replacing $B$ by the image of $f$, we may assume that $f$ is an isogeny. Then $\bar f$ is also an isogeny. Pick an isogeny $g\colon B/\Stab_B(f(X))\to A/\Stab_A(X)$ such that $g\circ \bar f=[n]$ for some $n\in\BZ^{\not= 0}$. There exist a semiabelian variety $C$ over $\bar \BF_p$, a subvariety $Y$ of $C$ over $\bar \BF_p$, a homomorphism $h\colon C_L\to B/\Stab_X(f(X))$ with finite kernel and an element $b\in (B/\Stab_B(f(X)))(L)$ such that $f(X)/\Stab_B(f(X))=h(Y)+b$. Then $[n](X/\Stab_A(X))=g(h(Y))+g(b)$. Let $Y'$ be an irreducible component of $[n]^{-1}(Y)$. Then $g(h(Y'))+g(b)$ is an irreducible component of $[n]^{-1}([n](X/\Stab_A(X)))$. Since the $n$-torsion points of $(A/\Stab_A(X))(L)$ act transitively on the irreducible components of $[n]^{-1}([n](X/\Stab_A(X)))$, it follows that $X/\Stab_A(X)$ is a translate of $g(h(Y'))$. This shows that $X$ is special in $A$. 
\end{proof}
\begin{lemma} \label{SpecialBaseChange}
  Let $L'$ be an algebraically closed overfield of $L$ and $X\subset L$ a subvariety. Then $X$ is special in $A$ if and only if $X_{L'}$ is special in $A_{L'}$. 
\end{lemma}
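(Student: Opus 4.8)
The plan is to reduce the statement to the compatibility of the $L/\bar\BF_p$-trace with the extension $L\subset L'$. The forward implication is easy: if $X$ is special, with data $(B,Y,h,a)$ as in Definition~\ref{SpecialDef}, then because the formation of $\Stab_A(X)$ commutes with extension of the base field we have $(A/\Stab_A(X))_{L'}=A_{L'}/\Stab_{A_{L'}}(X_{L'})$ and $(X/\Stab_A(X))_{L'}=X_{L'}/\Stab_{A_{L'}}(X_{L'})$; base changing $h$ to $L'$ (its kernel stays finite) and viewing $a$ as an $L'$-point exhibits $X_{L'}$ as special, since $B$ and $Y$ are already defined over $\bar\BF_p$. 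Here one uses that $X_{L'}$ is again irreducible, which holds because $X$, being irreducible over the algebraically closed field $L$, is geometrically irreducible.

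For the converse I would first reduce, again via the compatibility of $\Stab$ with base change, to the case $\Stab_A(X)=0$, by replacing $(A,X)$ with $(A/\Stab_A(X),X/\Stab_A(X))$: directly from Definition~\ref{SpecialDef}, $X$ is special in $A$ if and only if $X/\Stab_A(X)$ is special in $A/\Stab_A(X)$, and likewise over $L'$. So assume $\Stab_A(X)=0$ and that $X_{L'}$ is special, say $X_{L'}=h'(Y')+a'$ with $B'$ a semiabelian variety over $\bar\BF_p$, $Y'\subset B'$ a subvariety over $\bar\BF_p$, $h'\colon B'_{L'}\to A_{L'}$ a homomorphism of finite kernel, and $a'\in A(L')$. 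Set $B\defeq\Tr_{L/\bar\BF_p}A$, with trace morphism $\tau\colon B_L\to A$, which has finite kernel.

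The key input I would invoke is that the trace is compatible with the extension $L\subset L'$ of algebraically closed fields: the natural morphism $\Tr_{L/\bar\BF_p}A\to\Tr_{L'/\bar\BF_p}A_{L'}$ of semiabelian varieties over $\bar\BF_p$ is an isomorphism, carrying $\tau_{L'}$ to the trace morphism of $A_{L'}$. Then the universal property of $\Tr_{L'/\bar\BF_p}A_{L'}$ applied to $h'$ yields a homomorphism $\phi\colon B'\to B$ over $\bar\BF_p$ with $h'=\tau_{L'}\circ\phi_{L'}$. With $Y\defeq\phi(Y')\subset B$, a subvariety over $\bar\BF_p$, and $Z\defeq\tau(Y)\subset A$, a subvariety over $L$, and using that formation of images commutes with base change, we get $X_{L'}=Z_{L'}+a'$. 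Hence the transporter $\Trans_A(Z,X)$ — a closed subscheme of $A$ defined over $L$ whose formation commutes with base change, by the same argument as for the stabilizer — contains $a'$ after base change to $L'$, so is non-empty over $L'$, hence non-empty over $L$, hence possesses an $L$-point $a$ as $L$ is algebraically closed. Then $X=Z+a=\tau(Y)+a$, so $(B,Y,\tau,a)$ is special data for $X$ in $A=A/\Stab_A(X)$, and $X$ is special.

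The step I expect to be the main obstacle is the compatibility of the $L/\bar\BF_p$-trace and its trace morphism with the extension $L\subset L'$ of algebraically closed fields (for which I would appeal to \cite{ConradTrace}), together with the point that the trace must be used for semiabelian rather than merely abelian varieties; everything else is bookkeeping with stabilizers, transporters and the algebraic closedness of $L$. An alternative avoiding trace-compatibility would be to spread the finitely much data $(B',Y',h',a')$ out over a model of the finitely generated subextension of $L'/L$ that it generates and then specialize at an $L$-point, but I expect this route to require more care concerning the behaviour of $h'(Y')$ and of $\ker h'$ under specialization.
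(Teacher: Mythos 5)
Your proposal is correct in outline, but be aware that the paper does not actually prove the hard direction itself: it dismisses the ``only if'' direction as immediate from Definition \ref{SpecialDef} (exactly as you do) and for the converse simply refers to the proof of \cite[Lemma 1.2]{Roessler1}. So you are supplying an argument where the paper gives a citation. Your route --- reduce to $\Stab_A(X)=0$ (which commutes with base change), use that $L'/L$ is a regular extension because $L$ is algebraically closed, so that by \cite{ConradTrace} the natural map $\Tr_{L/\bar\BF_p}A\to \Tr_{L'/\bar\BF_p}(A_{L'})$ is an isomorphism compatible with the trace morphisms, factor $h'$ through the trace by its universal property so that all the $\bar\BF_p$-data is already available over $L$, and then descend the single remaining translation point by observing that the locus $\{a\mid \tau(Y)+a=X\}$ is a closed subscheme defined over $L$ whose formation commutes with field extension and which has an $L'$-point --- is sound; in effect you prove the base-changed form of the paper's Lemma \ref{SpecialEquivalence} over $L'$ and then descend $a'$, whereas the spreading-out-and-specialization alternative you mention would instead have to control how $h'$, $\ker h'$ and $h'(Y')$ behave in a family. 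What your version buys is that after the trace step the only non-constant datum is one point, and algebraic closedness of $L$ finishes. Two caveats, neither fatal: \cite{ConradTrace} treats abelian varieties, so for semiabelian $A$ you need the semiabelian trace formalism (the paper is equally cavalier about this, e.g.\ in Lemma \ref{SpecialEquivalence} and in Section 6); and in the descent step either quote SGA3 for the strict transporter, as the paper does for stabilizers, or argue elementarily that $\{a\mid \tau(Y)+a\subset X\}$ is the intersection of the $L$-subvarieties $X-\tau(y)$ over $y\in Y(\bar\BF_p)$, and similarly for the reverse inclusion, so the locus is a closed $L$-subvariety whose formation commutes with extension of algebraically closed fields; finally, $h'(Y')$ and $\tau(Y)$ should be read as closures of images, since images of subvarieties of semiabelian (as opposed to abelian) varieties need not be closed --- a convention Definition \ref{SpecialDef} already uses implicitly.
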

\begin{proof}
  The ``only if'' direction follows directly from the definition of a special subvariety. For the other direction see the proof of \cite[Lemma 1.2]{Roessler1}.
\end{proof}
\begin{lemma} \label{ReplaceIsogenous}
  Let $L_0$ be an arbitrary field, let $A_0$ be a semiabelian variety over $L_0$ and let $X_0\subset A_0$ be an irreducible subvariety. Let $\Gamma\subset A_0(L_0)$ be a finitely generated subgroup such that $X_0(L_0)\cap \Gamma$ is Zariski dense in $X_{0}$. For any semiabelian variety $A_0'$ over $L_0$ which is isogenous to $A_0$, there exist a finitely generated subgroup $\Gamma'$ of $A_0'(L_0)$ and an irreducible subvariety $X'_0$ of $A_0'$ such that $X_0'(L_0)\cap \Gamma'$ is Zariski dense in $X_{0}'$ and such that $X_{0}$ is special in $A_{0_0}$ if and only if $X'_{0}$ is special in $A_{0}$.
\end{lemma}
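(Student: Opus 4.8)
The plan is to transport the whole situation forward along an isogeny. Since $A_0'$ is isogenous to $A_0$ over $L_0$, fix an isogeny $\phi\colon A_0\to A_0'$ defined over $L_0$; should the isogeny one is given run in the opposite direction, I would replace it by a complementary isogeny (whose composite with the given one is $[n]$ for some $n\geq 1$), so the direction is immaterial. Put $X_0'\defeq\phi(X_0)$, the scheme-theoretic image of the composite $X_0\hookrightarrow A_0\to A_0'$, and $\Gamma'\defeq\phi(\Gamma)\subset A_0'(L_0)$. These are the required data.

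Next I would dispatch the elementary properties. The group $\Gamma'$ is finitely generated, being the image of a finitely generated group under a homomorphism. The scheme $X_0'$ is reduced, since its structure sheaf is a subsheaf of the pushforward $\phi_*\CO_{X_0}$, which is reduced; and its underlying space is the closure of the continuous image of the irreducible space underlying $X_0$, hence irreducible, so $X_0'$ is an irreducible subvariety of $A_0'$. Moreover, since an isogeny is finite and hence closed, $\phi$ restricts to a surjective morphism $X_0\to X_0'$; therefore the image of the Zariski-dense subset $X_0(L_0)\cap\Gamma$ is Zariski dense in $X_0'$, and as it is contained in $X_0'(L_0)\cap\Gamma'$, the latter is Zariski dense in $X_0'$ as well.

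It then remains to match up specialness. For this I would base change to an algebraic closure $L$ of $L_0$, where by Lemma \ref{SpecialBaseChange} the property ``special'' is insensitive to further algebraically closed extensions, so that ``special over $L_0$'' is naturally read as ``$(X_0)_L$ special in $(A_0)_L$'', and likewise for $X_0'$. Since the formation of the scheme-theoretic image commutes with the flat base change $\Spec L\to\Spec L_0$, one has $(X_0')_L=\phi_L((X_0)_L)$ with $\phi_L$ an isogeny, in particular of finite kernel. Lemma \ref{SpecialImage} now yields both implications at once: if $(X_0)_L$ is special then so is its image $(X_0')_L$; and conversely, since $\phi_L$ has finite kernel and $(X_0')_L=\phi_L((X_0)_L)$ is special, $(X_0)_L$ is special. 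Hence $X_0$ is special in $A_0$ if and only if $X_0'$ is special in $A_0'$, which completes the construction.

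I do not expect a serious obstacle: the only nontrivial input, that specialness is preserved in both directions under isogenies with finite kernel, is precisely Lemma \ref{SpecialImage}. The points needing care are bookkeeping ones --- pinning down the meaning of ``special'' over the possibly imperfect, non-closed field $L_0$ (which is what forces the passage to $L$ together with Lemma \ref{SpecialBaseChange}), and verifying that the scheme-theoretic image $\phi(X_0)$ is genuinely a reduced irreducible subvariety and behaves well under base change.
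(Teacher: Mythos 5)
Your proposal is correct and follows essentially the same route as the paper: the paper's proof simply takes an isogeny $f\colon A_0\to A_0'$, sets $\Gamma'\defeq f(\Gamma)$ and $X_0'\defeq f(X_0)$, and invokes Lemma \ref{SpecialImage} (with Lemma \ref{SpecialBaseChange} implicitly handling the passage to an algebraically closed field, as you spell out). Your additional remarks on the direction of the isogeny and on the scheme-theoretic image being an irreducible subvariety are just more detailed bookkeeping of the same argument.
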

\begin{proof}
  Let $f\colon A_0\to A_0'$ be an isogeny. Take $\Gamma'\defeq f(\Gamma)$ and $X_0'\defeq f(X_0)$. Lemma \ref{SpecialImage} implies that $\Gamma'$ and $X_0'$ have the required properties.
\end{proof}

In the following lemma, which gives an equivalent description of special subvarieties, we denote by $\tau$ the canonical morphism $\Tr_{L/\bar\BF_p}A\to A$.
\begin{lemma} \label{SpecialEquivalence}
  An irreducible subvariety $X\subset A$ is special in $A$ if and only if there exists a closed subvariety $Y\subset \Tr_{L/\bar\BF_p}A$ defined over $\bar\BF_p$ and $a\in A(L)$ such that $X+a=\tau(Y)+\Stab_X(A)$.
\end{lemma}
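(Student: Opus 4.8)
The two conditions differ only in that specialness is phrased on the quotient $A/\Stab_A(X)$ while the desired description lives on $A$ itself; so the plan is to move between the two using the quotient map $q\colon A\to\bar A\defeq A/\Stab_A(X)$ together with the universal property of the $L/\bar\BF_p$-trace. I will write $S\defeq\Stab_A(X)$ and $\bar\tau\colon(\Tr_{L/\bar\BF_p}\bar A)_L\to\bar A$ for the canonical map, and use throughout that $X$ is automatically $S$-invariant (so $X=q^{-1}(q(X))$) and that $q(X)=X/S$ has trivial stabilizer in $\bar A$.

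For the ``if'' direction I would apply $q$ to $X+a=\tau(Y)+S$ to get $q(X)+q(a)=(q\circ\tau)(Y_L)$, then cut down the source: the composite $q\circ\tau\colon(\Tr_{L/\bar\BF_p}A)_L\to\bar A$ is a homomorphism, and dividing out the reduced identity component $D$ of its kernel --- a semiabelian subvariety of a base change from $\bar\BF_p$, hence defined over $\bar\BF_p$, exactly as in the proof of Lemma~\ref{SpecialImage} --- produces a semiabelian $B\defeq(\Tr_{L/\bar\BF_p}A)/D$ over $\bar\BF_p$ and a homomorphism $h\colon B_L\to\bar A$ of finite kernel through which $q\circ\tau$ factors. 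Taking $Y'$ to be the image of $Y$ in $B$ then gives $X/\Stab_A(X)=h(Y'_L)+(-q(a))$, which together with the irreducibility of $X$ is precisely the data of Definition~\ref{SpecialDef}.

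For the ``only if'' direction I would start from $q(X)=h(Y_0)+a_0$ with $h\colon B_L\to\bar A$ of finite kernel and $B$, $Y_0\subset B$ over $\bar\BF_p$, use the universal property of $\Tr_{L/\bar\BF_p}\bar A$ to write $h=\bar\tau\circ g_L$ with $g\colon B\to\Tr_{L/\bar\BF_p}\bar A$ over $\bar\BF_p$, and set $Z_0\defeq g(Y_0)$, so that $q(X)=\bar\tau((Z_0)_L)+a_0$. Picking $w\in A(L)$ over $a_0$ and replacing $X$ by $X-w$ (which changes neither side of the equivalence) I may assume $q(X)=\bar\tau((Z_0)_L)$, whence $X=q^{-1}(\bar\tau((Z_0)_L))$. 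Now I would introduce $\psi\colon\Tr_{L/\bar\BF_p}A\to\Tr_{L/\bar\BF_p}\bar A$, the map induced by $q$ through functoriality of the trace (so $q\circ\tau=\bar\tau\circ\psi_L$), and put $Y\defeq\psi^{-1}(Z_0+\ker\bar\tau)$: this is a closed subvariety of $\Tr_{L/\bar\BF_p}A$ defined over $\bar\BF_p$ because $\psi$, $Z_0$ and the finite subgroup scheme $\ker\bar\tau$ all are. One inclusion, $\tau(Y_L)+S\subset q^{-1}(\bar\tau((Z_0)_L))$, is formal from $q\circ\tau=\bar\tau\circ\psi_L$ and $\psi(Y)\subset Z_0+\ker\bar\tau$; for the other I would prove that $\psi$ is surjective --- equivalently that $q$ maps the trace image $\tau((\Tr_{L/\bar\BF_p}A)_L)$ onto $\bar\tau((\Tr_{L/\bar\BF_p}\bar A)_L)$ --- and then, $S$ being $\ker q$, deduce $\tau(Y_L)+S=q^{-1}(q(\tau(Y_L)))=q^{-1}(\bar\tau((Z_0)_L))$, which is $X$ up to the translation removed above, so $a=-w$ works.

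The step I expect to be the main obstacle is this surjectivity of $\psi=\Tr_{L/\bar\BF_p}(q)$, i.e.\ the compatibility of the trace with the quotient $A\to A/\Stab_A(X)$; I would try to reduce it, via the torus--abelian d\'evissage and the fact that over the algebraically closed field $L$ every torus is split (so of constant type), to Poincar\'e reducibility for the abelian parts. A further point requiring care is closedness: since images of subvarieties under homomorphisms of semiabelian varieties need not be closed, the argument should be run with preimages of the $Z_0$ (and their irreducible components) rather than with direct images, and the two $S$-invariant sets $X+a$ and $\tau(Y)+\Stab_A(X)$ should be compared only after pushing them down to $A/\Stab_A(X)$, where $\bar\tau$ does have closed image.
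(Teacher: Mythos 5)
Your route is essentially the paper's own: the commutative square supplied by functoriality of the trace, with $\psi\colon\Tr_{L/\bar\BF_p}A\to\Tr_{L/\bar\BF_p}(A/\Stab_A(X))$ defined over $\bar\BF_p$; the ``if'' direction by pushing forward along $q$ and dividing by the identity component of the kernel, exactly as in the proof of Lemma \ref{SpecialImage}; and the ``only if'' direction by factoring $h$ through $\tau'$ via the universal property, replacing $Y$ by its image in $\Tr_{L/\bar\BF_p}(A/\Stab_A(X))$, and pulling back along $\psi$ (the paper takes $\psi^{-1}(Y)$ where you take $\psi^{-1}(Z_0+\ker\bar\tau)$; that difference is immaterial). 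The one place you go beyond the paper is that you isolate the point the paper passes over in a single sentence: the reverse inclusion in $\tau(Y_L)+\Stab_A(X)=X+a$ needs every point of $\tau'((Z_0)_L)$ to be reachable from $\Tr_{L/\bar\BF_p}A$, i.e.\ essentially the surjectivity of $\psi=\Tr_{L/\bar\BF_p}(q)$. Flagging this is a virtue, but it is also where your proposal has a genuine gap.

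Your plan to obtain that surjectivity by torus--abelian d\'evissage and Poincar\'e reducibility only works when $A$ is an abelian variety. There it does work: choose an abelian subvariety $C\subset A$ with $q|_C\colon C\to A/\Stab_A(X)$ an isogeny, pick $u$ with $(q|_C)\circ u=[n]$, and apply the trace functor, using that $[n]$ is surjective on $\Tr_{L/\bar\BF_p}(A/\Stab_A(X))$ and that $\Tr(q|_C)$ factors through $\psi$. But Poincar\'e reducibility fails for semiabelian varieties, and so does the surjectivity itself: if $A$ is a nonsplit extension of an elliptic curve $E$ over $\bar\BF_p$ by $\Gm$ whose class in $\operatorname{Ext}^1(E,\Gm)$ is non-torsion, then (with the only available reading of the trace of a semiabelian variety, via the universal property among semiabelian varieties over $\bar\BF_p$) every homomorphism from a constant semiabelian variety into $A$ lands in the toric part, so $\psi\colon\Tr_{L/\bar\BF_p}A\to\Tr_{L/\bar\BF_p}(A/\Gm)=E$ has finite image. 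Taking a product of such an $A$ with a constant elliptic curve and $X=q^{-1}$ of a constant curve with trivial stabilizer, one even gets $\Stab_A(X)=\Gm$ with $X$ special but admitting no $Y$ as in the statement, so the obstruction is not an artifact of your method: in the semiabelian case the ``only if'' direction hinges on what the trace of a semiabelian variety is taken to be (the paper only recalls the notion for abelian varieties, and its own one-line deduction $\tau(Y')+\Stab_A(X)=X+a'$ silently uses the same surjectivity). So your argument, like the paper's, is complete for abelian $A$, but the step you single out cannot be closed by the proposed reduction in the semiabelian generality in which the lemma is stated; to finish you would either have to restrict to the abelian case or fix a definition of $\Tr_{L/\bar\BF_p}A$ for semiabelian $A$ for which the surjectivity (or the lemma) can be established directly.
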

\begin{proof}
  Because the formation of $\Tr_{L/\bar\BF_p}A$ is functorial in $A$, there is a commutative diagram
  \begin{equation*}
    \xymatrix{
      \Tr_{L/\bar\BF_p}A \ar[r]^\tau \ar[d] & A \ar[d] \\
      \Tr_{L/\bar\BF_p}(A/\Stab_A(X)) \ar[r]^{\tau'} & A/\Stab_A(X) \\
      }
  \end{equation*}
with both semiabelian varieties on the left as well as the morphism between them defined over $\bar\BF_p$ and with $\tau$ and $\tau'$ having finite kernel. The ``if'' direction follows directly from this. For the ``only if'' direction, we assume that $X$ is special. Then there exist $Y\subset B$, $h$ and $a$ as in Definition \ref{SpecialDef}. Since the homomorphism $h$ factors through $\tau'$, we may replace $Y$ by its image in $\Tr_{L/\bar\BF_p}(A/\Stab_A(X))$ and assume $B=\Tr_{L/\bar\BF_p}(A/\Stab_A(X))$. Let $Y'$ be the inverse image of $Y$ in $\Tr_{L/\bar\BF_p}A$. Note that $Y'$ is defined over $\bar\BF_p$. If $a'\in A(L)$ is a lift of $a$ the identity $\tau'(Y)=X/\Stab_A(X)+a$ implies $\tau(Y')+\Stab_A(X)=X+a'$.
\end{proof}

\subsubsection*{Specialness Criteria}

\begin{theorem}[Pink-R\"ossler, see {\cite[Theorem 3.1]{PinkRoessler}}]\label{PRTheorem}
  Let $\phi\colon A\to A$ be an isogeny. Let $X\subset A$ be an irreducible subvariety such that $\phi(X)=X+a$ for some $a\in A(L)$. Then $\phi(\Stab_A(X))=\Stab_A(X)$ and we denote the isogeny $A/\Stab_A(X)\to A/\Stab_A(X)$ induced by $\phi$ by $\bar \phi$.

There exist finitely many homomorphisms $h_{\alpha}\colon A_{\alpha}\to A/\Stab_A(X)$ for certain $\alpha \in \BQ^{\geq 0}$, where the $A_{\alpha}$ are semiabelian varieties endowed with isogenies $\phi_{\alpha}\colon A_{\alpha}\to A_{\alpha}$ satisfying $\bar \phi\circ h_{\alpha}= h_{\alpha}\circ \phi_\alpha$ and irreducible subvarieties $X_{\alpha}\subset A_{\alpha}$ satisfying $\phi_{\alpha}(X_{\alpha})=X_{\alpha}+a_{\alpha}$ for some $a_{\alpha}\in A_{\alpha}(L)$ such that:
  \begin{itemize}
  \item If $\alpha =0$, then $\phi_{\alpha}$ is an automorphisms of finite order of $A_{\alpha}$.
  \item If $\alpha > 0$, then there exist positive integers $r$ and $s$ such that $\alpha=r/s$ and $\phi_{\alpha}^s=\Frob_{p^r}$ for some model of $A_{\alpha}$ over $\BF_{p^r}$.
  \item The morphism
    \begin{equation*}
      h\defeq \sum_\alpha h_{\alpha}\colon \prod_i A_{\alpha} \to A/\Stab_A(X)
    \end{equation*}
    has finite kernel and, for some point $\bar a\in (A/\Stab_A(X))(L)$,
    \begin{equation*}
      X/\Stab_A(X) = \bar a + h(\prod_i X_{\alpha}).
    \end{equation*}
  \end{itemize}
\end{theorem}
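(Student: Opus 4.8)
Since the statement is the Pink--R\"ossler classification of isogeny-invariant subvarieties, the proof here is really the reference \cite[Theorem 3.1]{PinkRoessler}; I only indicate the shape of the argument one would reconstruct. The strategy is to linearize: one studies $\phi$ through its action on a functorial linear invariant of $A$ --- the product over $\ell\neq p$ of the $\ell$-adic Tate modules together with the Dieudonn\'e module of $A[p^\infty]$ --- and extracts from the Newton data of this action a decomposition of $A$ up to isogeny into pure-slope pieces.

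The reductions the statement itself suggests come first: replacing $A$ by $A/\Stab_A(X)$ reduces to the case of trivial stabilizer, and replacing $\phi$ by a power, $A$ by an isogenous semiabelian variety, and $X$ by one of its translates changes nothing. The maximal subtorus $T\subset A$ is $\phi$-stable and defined over $\bar\BF_p$, and an irreducible subvariety of $T$ that is $\phi$-invariant up to translation is a translate of a subtorus permuted by $\phi$ up to finite order; this is absorbed into the $\alpha=0$ factor.

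The core construction is the slope decomposition. Grouping the Newton slopes of $\phi$ produces an isogeny $A\sim\prod_\alpha A_\alpha$ with $\phi$ acting as $\phi_\alpha$ of pure slope $\alpha\in\BQ^{\geq 0}$ on $A_\alpha$. On the slope-$0$ factor $\phi_0$ has degree prime to $p$ and, once combined with the constraints forced by the presence of an invariant subvariety, is an automorphism of finite order. On a slope-$\alpha$ factor with $\alpha=r/s>0$ one shows, via Dieudonn\'e theory, that $A_\alpha$ admits a model over $\BF_{p^r}$ for which $\phi_\alpha^s=\Frob_{p^r}$; passing from ``$A_\alpha$ over $L$'' down to a finite field of definition is one of the genuinely delicate points, and it is here that one uses that $\phi$ really moves $A$ rather than being, say, a translation.

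The principal obstacle, I expect, is to prove that $X$ respects this decomposition, i.e.\ that when $\Stab_A(X)=0$ the variety $X$ is a translate of $h\bigl(\prod_\alpha X_\alpha\bigr)$, with $X_\alpha$ the image of $X$ in $A_\alpha$. By induction this reduces to splitting off one slope: if $A=A'\times A''$ with $\phi=\phi'\times\phi''$, $\phi'$ of slope $0$ and $\phi''$ with all slopes positive, and $X\subset A$ is $\phi$-invariant up to translation with trivial stabilizer, then $X=X'\times X''$. Morally this is a \emph{rigidity} statement forced by the incompatible dynamics of the two factors --- $\phi''$ contracts in the formal direction, being a power of Frobenius up to isogeny, whereas $\phi'$ is close to an isometry --- so that iterating $\phi$ and passing to a limit makes the fibres of $X\to X'$ all translates of a single subvariety of $A''$, which the vanishing of the stabilizer then determines. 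Turning this into an algebro-geometric proof is exactly the technical heart of \cite{PinkRoessler}, which I would cite rather than reprove.
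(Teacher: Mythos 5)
The paper gives no proof of this theorem: it is imported verbatim from Pink--R\"ossler \cite[Theorem 3.1]{PinkRoessler} and used only through Corollary \ref{PRCorollary}, so your choice to cite that reference rather than reprove it matches the paper's treatment exactly. Your sketch of the slope-decomposition strategy is a reasonable gloss on the cited argument, and nothing more is required here.
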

We will only need the following consequence of Theorem \ref{PRTheorem}:
\begin{corollary} \label{PRCorollary}
  Let $\phi\colon A\to A$ an isogeny whose minimal polynomial does not have any complex roots which are roots of unity. Let $X\subset A$ be an irreducible variety such that $\phi(X)=X+a$ for some $a\in A(L)$. Then $X$ is a special subvariety of $A$. 
\end{corollary}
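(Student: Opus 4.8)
The plan is to apply the Pink--R\"ossler Theorem~\ref{PRTheorem} to $\phi$ and $X$ and then to check that the resulting data already exhibit $X$ as special. The hypothesis $\phi(X)=X+a$ is exactly what Theorem~\ref{PRTheorem} requires, so it furnishes semiabelian varieties $A_\alpha$ over $L$ with isogenies $\phi_\alpha\colon A_\alpha\to A_\alpha$, homomorphisms $h_\alpha\colon A_\alpha\to A/\Stab_A(X)$ satisfying $\bar\phi\circ h_\alpha=h_\alpha\circ\phi_\alpha$, and irreducible subvarieties $X_\alpha\subset A_\alpha$ with $\phi_\alpha(X_\alpha)=X_\alpha+a_\alpha$, such that $h\defeq\sum_\alpha h_\alpha$ has finite kernel and $X/\Stab_A(X)=\bar a+h(\prod_\alpha X_\alpha)$ for some $\bar a\in(A/\Stab_A(X))(L)$.

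The main step is to show that the factor attached to $\alpha=0$ is trivial. Precomposing $h$ with the inclusion of the factor $A_0$ gives a homomorphism $h_0\colon A_0\to A/\Stab_A(X)$ whose kernel, being contained in the kernel of $h$, is finite. If $A_0$ were non-trivial, $h_0$ would therefore be an isogeny onto a $\bar\phi$-stable semiabelian subvariety $B_0\subseteq A/\Stab_A(X)$ intertwining $\phi_0$ with $\bar\phi|_{B_0}$, so $\phi_0$ and $\bar\phi|_{B_0}$ would have the same minimal polynomial; this minimal polynomial divides that of $\bar\phi$ on $A/\Stab_A(X)$, which in turn divides the minimal polynomial of $\phi$ on $A$ because the surjection $A\twoheadrightarrow A/\Stab_A(X)$ intertwines $\phi$ and $\bar\phi$. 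But $\phi_0$ has finite order, so its minimal polynomial is a divisor of positive degree of some $T^N-1$ and hence has a root of unity among its complex roots --- contradicting the hypothesis on $\phi$. Thus $A_0$ is trivial, $X_0$ is a single point, and $h_0(X_0)$ is a single point of $A/\Stab_A(X)$.

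It then remains to descend each factor $X_\alpha$ with $\alpha>0$ to $\bar\BF_p$. For such $\alpha$ fix a model of $A_\alpha$ over a finite field $\BF_{p^r}$ with $\phi_\alpha^s=\Frob_{p^r}$; iterating $\phi_\alpha(X_\alpha)=X_\alpha+a_\alpha$ yields $\Frob_{p^r}(X_\alpha)=X_\alpha+b_\alpha$ with $b_\alpha=\sum_{i=0}^{s-1}\phi_\alpha^i(a_\alpha)\in A_\alpha(L)$. The endomorphism $\Frob_{p^r}-\mathrm{id}$ of $A_\alpha$ is an \'etale isogeny (its differential at the origin is $-\mathrm{id}$), hence surjective on $L$-points, so there is $t_\alpha\in A_\alpha(L)$ with $\Frob_{p^r}(t_\alpha)-t_\alpha=b_\alpha$; then $X_\alpha-t_\alpha$ is fixed by $\Frob_{p^r}$ and therefore descends to $\BF_{p^r}\subset\bar\BF_p$, say $X_\alpha-t_\alpha=(X_\alpha^0)_L$ with $X_\alpha^0$ defined over $\bar\BF_p$. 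Setting $B\defeq\prod_{\alpha>0}A_\alpha$ with its natural model over $\bar\BF_p$, $Y\defeq\prod_{\alpha>0}X_\alpha^0\subset B$ (defined over $\bar\BF_p$), and $h'\colon B_L\to A/\Stab_A(X)$ the restriction of $h$ (still with finite kernel), the identity of Theorem~\ref{PRTheorem} rearranges to $X/\Stab_A(X)=h'(Y)+c$ for a suitable $c\in(A/\Stab_A(X))(L)$ absorbing $\bar a$, the point $h_0(X_0)$ and the translations $h_\alpha(t_\alpha)$. Since $X$ is irreducible, this is precisely the content of Definition~\ref{SpecialDef}, so $X$ is special in $A$.

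I expect the second step to be the crux: one must carefully transport the ``no complex root of unity'' hypothesis from $\phi$ through the quotient $A/\Stab_A(X)$ and along the finite-kernel homomorphism $h_0$ to the finite-order automorphism $\phi_0$, using that isogenous factors carry the same minimal polynomial and that minimal polynomials behave well under equivariant sub- and quotient objects. The descent in the third step is routine, resting only on the standard fact that a subvariety fixed by a geometric Frobenius descends to the corresponding finite field, together with the surjectivity of $\Frob_{p^r}-\mathrm{id}$ on $L$-points.
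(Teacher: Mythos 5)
Your proof is correct and follows essentially the same route as the paper: apply Theorem~\ref{PRTheorem} and use the hypothesis that no complex root of the minimal polynomial of $\phi$ is a root of unity to rule out the factor $A_0$ (your divisibility chain through $\bar\phi$ and $h_0$ is exactly the justification behind the paper's one-line remark). The only difference is one of packaging: where the paper compresses the conclusion into an appeal to Lemma~\ref{SpecialImage}, you make explicit the Lang-type translation $t_\alpha$ with $\Frob_{p^r}(t_\alpha)-t_\alpha=b_\alpha$ and the descent of the Frobenius-invariant translates $X_\alpha-t_\alpha$ to $\bar\BF_p$, and then verify Definition~\ref{SpecialDef} directly, which is a legitimate (indeed more detailed) filling-in of the same argument.
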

\begin{proof}
  The condition on the minimal polynomial of $\phi$ ensures that $\phi$ cannot act as an automorphism of finite order on any subquotient of $A$. Hence the term $A_0$ in Theorem \ref{PRTheorem} does not appear, and it follows using Lemma \ref{SpecialImage} that $X$ is special in $A$. 
\end{proof}
\begin{definition}
  We call a polynomial $f\in \BZ[t]$ \emph{good} if it is monic, if $f(0)\not= 0$ and if no complex root of $f$ is a root of unity.
\end{definition}

\begin{theorem} \label{InfSpecialCrit}
  Let $G\subset A(L)$ be a subgroup and $\Phi\colon G\to G$ an endomorphism such that there exists a good polynomial $f\in\BZ[t]$ which annihilates $\Phi$.

Let $X\subset A$ be an irreducible subvariety. If there exists a subset $T$ of $G\cap X(L)$ which is Zariski dense in $X$ and which satisfies $\Phi(T)\subset T$, then $X$ is special in $A$.
\end{theorem}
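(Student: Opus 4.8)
The plan is to realise the abstract endomorphism $\Phi$ geometrically on a power of $A$ by a companion‑matrix construction and then invoke the Pink--R\"ossler consequence Corollary~\ref{PRCorollary}.

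Write $f(t)=t^d+c_{d-1}t^{d-1}+\cdots+c_1t+c_0\in\BZ[t]$; by hypothesis $f$ is monic, $c_0=f(0)\neq0$, and no complex root of $f$ is a root of unity. Let $C_f\in M_d(\BZ)$ be the companion matrix of $f$ and let $\sigma\colon A^d\to A^d$ be the endomorphism it induces,
\[
  \sigma(a_0,\dots,a_{d-1})=(a_1,\dots,a_{d-1},\,-c_0a_0-c_1a_1-\cdots-c_{d-1}a_{d-1}).
\]
Since $f(C_f)=0$ in $M_d(\BZ)$ we have $f(\sigma)=0$ in $\End(A^d)$, so the minimal polynomial of $\sigma$ divides $f$ and in particular has no complex root that is a root of unity. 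As $\det C_f=\pm c_0\neq0$, the morphism $\sigma$ is an isogeny, and so is every power $\sigma^N$ ($N\geq1$); moreover the eigenvalues of $\sigma^N$ are the $N$-th powers of the roots of $f$, none of which is a root of unity, so $\sigma^N$ again satisfies the hypothesis of Corollary~\ref{PRCorollary}.

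Next I would encode $\Phi$ on the subset $T$. Put $T^\flat:=\{(t,\Phi(t),\Phi^2(t),\dots,\Phi^{d-1}(t)):t\in T\}\subset A^d(L)$ and let $X^\flat\subseteq A^d$ be its Zariski closure. Using the relation $\Phi^d t=-c_{d-1}\Phi^{d-1}t-\cdots-c_0t$ coming from $f(\Phi)=0$, one computes $\sigma(t,\Phi t,\dots,\Phi^{d-1}t)=(\Phi t,\dots,\Phi^{d-1}t,\Phi^d t)$, which is the point of $T^\flat$ attached to $\Phi(t)\in T$; hence $\sigma(T^\flat)\subseteq T^\flat$, and since $\sigma$ is finite (hence closed) this gives $\sigma(X^\flat)\subseteq X^\flat$. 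Because the first projection $\pi_0\colon A^d\to A$ satisfies $\pi_0(T^\flat)=T$, we get $\overline{\pi_0(X^\flat)}=\overline T=X$, and as $X$ is irreducible at least one irreducible component of $X^\flat$ projects to a dense subset of $X$.

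The core step is then to produce an irreducible closed $Z\subseteq A^d$ with $\sigma^N(Z)=Z+a$ for some $N\geq1$, $a\in A^d(L)$, and with $\overline{\pi_0(Z)}=X$. I would obtain this by chasing the $\sigma$-orbit $Z_0\mapsto Z_1\mapsto\cdots$ of a suitably chosen dominating component $Z_0$ of $X^\flat$: because $\sigma$ is finite it maps each component onto a closed irreducible subset of a unique component, the dimensions along the orbit are non-decreasing, and the orbit is eventually periodic; in the periodic range the dimensions are constant, so each step there is an equality $\sigma(Z_i)=Z_{i+1}$ and some $\sigma^p$ fixes a periodic member $Z_m$. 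Applying Corollary~\ref{PRCorollary} to $\sigma^p$ and $Z_m$ shows $Z_m$ is special in $A^d$. Since $\sigma$ is an isogeny (hence finite flat), every irreducible component $C$ of the preimage $\sigma^{-1}(W)$ of an irreducible special subvariety $W$ has $\dim C=\dim W$, maps onto $W$, and is therefore special by the second assertion of Lemma~\ref{SpecialImage} ($\sigma$ having finite kernel). Descending induction along the chain $Z_m,Z_{m-1},\dots,Z_0$ — in which, the steps being full, $Z_i$ is a component of $\sigma^{-1}(Z_{i+1})$ — then shows $Z_0$ is special in $A^d$. Finally $\pi_0\colon A^d\to A$ is a homomorphism, so by the first assertion of Lemma~\ref{SpecialImage} the subvariety $\overline{\pi_0(Z_0)}=X$ is special in $A$, as desired.

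The step I expect to be the main obstacle is exactly this extraction of $Z$: one must choose the component $Z_0$ so that it is \emph{both} dominating over $X$ \emph{and} so that its $\sigma$-orbit gains no dimension, for otherwise the descending induction only reaches a proper subvariety of a special component rather than a variety mapping onto $X$. The delicacy is that $\Phi$ is merely an abstract group endomorphism of $G$, so density of the projections $\pi_j$ of the orbit components onto $A$ is not automatic; what has to be exploited here is the inclusion $\overline{\pi_0(Z_{i+1})}\supseteq\overline{\pi_1(Z_i)}$ (from $\pi_0\circ\sigma=\pi_1$) together with the ``quasi-invertibility'' relation $h(\Phi)\Phi=\Phi h(\Phi)=[-c_0]$, where $f(t)=t\,h(t)+c_0$ and $h(t)$ is coprime to $f(t)$ since $c_0\neq0$. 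These should pin the orbit down to the dominating components of maximal dimension and keep every step full; once this bookkeeping is in place, the remainder is the formal application of Corollary~\ref{PRCorollary} and Lemma~\ref{SpecialImage} sketched above.
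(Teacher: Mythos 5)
Your construction is the same as the paper's: the companion-matrix isogeny $\sigma$ on $A^d$ (the paper's $\phi$ on $A^n$), the Zariski closure $X^\flat$ of $\{(t,\Phi t,\dots,\Phi^{d-1}t)\mid t\in T\}$ (the paper's $X'$), Corollary~\ref{PRCorollary} for irreducible pieces invariant under a power of $\sigma$, and Lemma~\ref{SpecialImage} to transfer specialness back through $\sigma$ and forward through $\pi_0$. The difference lies exactly at the step you yourself flag as the obstacle, and there your argument is genuinely incomplete. What your orbit-chasing proves is that the \emph{periodic} members of the component-orbit are special: in the periodic range dimensions are constant, so $\sigma(Z_i)=Z_{i+1}$ and $\sigma^p(Z_m)=Z_m$, Corollary~\ref{PRCorollary} applies, and Lemma~\ref{SpecialImage} carries specialness backwards around the cycle (for this you do not even need the preimage-component discussion; the lemma applies directly since $\sigma(Z_i)=Z_{i+1}$). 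But what the theorem needs is that some component $Z$ with $\overline{\pi_0(Z)}=X$ is special, and nothing you write delivers this: at a pre-periodic step one may have $\sigma(Z_i)\subsetneq Z_{i+1}$, and then specialness of $Z_{i+1}$ says nothing about $Z_i$ (a subvariety of a special variety need not be special), while forward iteration loses exactly the property you need, because $\overline{\pi_0(Z_{i+1})}\supseteq\overline{\pi_1(Z_i)}$ only helps if $\pi_1(Z_0)$ is already dense in $X$, i.e.\ if $\overline{\Phi(T)}=X$ --- which is not a consequence of the hypotheses, since $\Phi$ is merely an abstract endomorphism of $G$ with $\Phi(T)\subset T$. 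The relation $\Phi\,h(\Phi)=[-c_0]$ (with $f(t)=t\,h(t)+c_0$) only gives $[-c_0]T\subset\Phi(G)$; the elements $h(\Phi)t$ need not lie in $T$, so it does not restore density either. So the ``bookkeeping'' you defer is the actual missing mathematical content, not a routine verification.

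For comparison, the paper closes this point by a stronger claim: since $\phi$ is an isogeny and $\phi(X')\subset X'$, it asserts that $\phi$ maps each irreducible component of $X'$ to an irreducible component of $X'$, so that every component is invariant under some power of $\phi$ and hence special by Corollary~\ref{PRCorollary}; in particular the component dominating $X$ (which exists because $X$ is irreducible and $\pi(X')=X$) is special, and Lemma~\ref{SpecialImage} finishes. Your proposal neither states nor proves such a claim, and the weaker statement you do prove (specialness of the eventually-periodic components) cannot substitute for it, because those components need not dominate $X$. That is the gap you would have to fill.
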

\begin{proof}
  Write $f(t)=t^n+\sum_{i=0}^{n-1}a_it^i$ with $a_i\in\BZ$. Let $\phi$ be the endomorphism of $A^n$ defined by the matrix
  \begin{equation*}
    \begin{bmatrix}
      0 & 0 & \cdots & 0 & -a_0 \\
      1 & 0 & \cdots & 0 & -a_1\\
      0 & 1 & \ddots  &  & \vdots \\
      0 & \ddots & 1 & 0 &  \vdots \\
      0 & \cdots & \cdots &1   & -a_{n-1}  \\
    \end{bmatrix},
  \end{equation*}
which satisfies $f(\phi)=0$ and $\phi(a,\Phi(a),\hdots,\Phi^{n-1}(a))=(\Phi(a),\Phi^2(a),\hdots,\Phi^n(a))$ for $a\in G$. Let $X'$ be the Zariski closure of the set $\{(x,\Phi(x),\hdots,\Phi^{n-1}(x))\mid x\in T\}$ in $A^n$. The fact that $\Phi(T)\subset T$ implies that $\phi(X')\subset X'$. Since $a_0=f(0)\not=0$, the above matrix is invertible over $\BQ$ and hence $\phi$ is an isogeny. Hence for each irreducible component $Z$ of $X'$, its image $\phi(Z)$ is also an irreducible component of $X'$. Thus every irreducible component of $X'$ is invariant under some power of $\phi$. Hence by the assumption on $f$ and Corollary \ref{PRCorollary} each irreducible component of $X'$ is special in $A^n$. 

Let $\pi\colon A^n\to A$ be the projection to the first factor. The fact that $T$ is Zariski dense in $X$ implies $\pi(X')=X$. Since $X$ is irreducible, some irreducible component of $X'$ maps onto $X$ under $\pi$. Hence Lemma \ref{SpecialImage} implies that $X$ is special in $A$. 
\end{proof}
\section{The General Setup}
\label{sec:setup}
\subsection{Completely slope divisible $p$-divisible groups} \label{sec:CSD}
First we collect some terminology and facts from \cite{OortZink}.

Let $S$ be a scheme over $\BF_p$. Let $\Frob\colon S\to S$ be the absolute Frobenius morphism $x\mapsto x^p$. For a scheme $G$ over $S$ and $s \geq 1$ we write $G^{(p^s)}=G \times_{S,\Frob^s}S$. We denote by $\F^s\colon G\to G^{(p^s)}$ the Frobenius morphism relative to $S$.

Let $L$ be an algebraically closed field of characteristic $p>0$ and $\Gscr$ a $p$-divisible group over $L$. For a rational number $\lambda\geq 0$, one calls $\Gscr$ \emph{isoclinic of slope} $\lambda$ if there exist integers $r\geq 0$ and $s\geq 0$ such that $\lambda=r/s$ and a $p$-divisible group $\Gscr'$ over $L$ which is isogenous to $\Gscr$ and for which there exists an isomorphism $\psi\colon \Gscr^{(p^s)}\to \Gscr$ making the following diagram commute (c.f. \cite[Section 1]{OortZink}):
  \begin{equation*}
    \xymatrix{ 
      \Gscr \ar^{\F^s}[r] \ar_{[p^{r}]}[rd] & \Gscr^{(p^s)} \ar^{\psi}[d] \\
                                            & \Gscr \\
}
\end{equation*}
Every $p$-divisible group $\Gscr$ over $L$ is isogenous to a direct sum of isoclinic $p$-divisible group and the slopes appearing in such a direct sum determine $\Gscr$ up to isogeny. They are called the slopes of $\Gscr$ and are assembled into the Newton polyon of $\Gscr$ (see e.g. \cite[IV.5]{Demazure}). 
\begin{definition}[{c.f. \cite[Definition 1.2]{OortZink}}] \label{CSDDef}
  \begin{itemize}
  \item   Let $s \geq 1$ and $r_1,\hdots,r_m$ be integers such that $s\geq r_1 > r_2 > \hdots > r_m \geq 0$. A $p$-divisible group $\Gscr$ over a scheme $S$ is said to be \emph{completely slope divisible} with respect to these integers if $\Gscr$ has a filtration by $p$-divisible subgroups
  \begin{equation*}
    0=\Gscr_0 \subset \Gscr_1 \subset \hdots \subset \Gscr_m =\Gscr
  \end{equation*}
such that the following properties hold:
\begin{enumerate}[(i)]
\item  For $i=1,\hdots,m$ the kernel of $[p^{r_i}]\colon \Gscr_i\to \Gscr_i$ is contained in the kernel of $F^s\colon \Gscr_i\to \Gscr_i^{(p^s)}$.
\item For $i=1,\hdots,m$ the kernel of $[p^{r_i}]\colon \Gscr_i/\Gscr_{i-1}\to \Gscr_i/\Gscr_{i-1}$ is equal to the kernel of $F^s\colon \Gscr_i/\Gscr_{i-1}\to (\Gscr_i/\Gscr_{i-1})^{(p^s)}$.
\end{enumerate}

\item A $p$-divisible group $\Gscr$ is completely slope divisible if there exist integers $s\geq r_1 > r_2 > \hdots > r_m \geq 0$ such that $\Gscr$ is completely slope divisible with respect to these integers.
  \end{itemize}
\end{definition}

\begin{remark} 
 Let $\Gscr$ be a $p$-divisible group which is completely slope divisible with respect to $s\geq r_1> r_2 > \hdots >r_m\geq 0$. Note that condition $(ii)$ is equivalent to the existence of isomorphisms $(\Gscr_i/\Gscr_{i+1})^{(p^s)}\isoto \Gscr_i/\Gscr_{i+1}$ such that the following diagram commutes:
  \begin{equation*}
    \xymatrix{ 
      \Gscr_i/\Gscr_{i+1} \ar^{\F^s}[r] \ar_{[p^{r_i}]}[rd] & (\Gscr_i/\Gscr_{i+1})^{(p^s)} \ar[d]^\cong \\
                                            & \Gscr_i/\Gscr_{i+1} \\
}.
\end{equation*}
Thus all geometric fibers of the subquotients $\Gscr_i/\Gscr_{i+1}$ are isoclinic of slope $r_i/s$ and in particular the Newton polygon of $\Gscr$ is constant on $S$.
\end{remark}

\begin{remark} \label{CSDUniqueness}
  Let $\Gscr$ be a $p$-divisible group which is completely slope divisible with respect to $s\geq r_1> r_2 > \hdots >r_m\geq 0$. By the remark after Definition 1.2 of \cite{OortZink} there exists a unique filtration $(\Gscr_i)_{i=0,\hdots m}$ satisfying the conditions above for the given integers $s\geq r_1> r_2 > \hdots >r_m\geq 0$.
\end{remark}

\begin{theorem}[{\cite[Theorem 2.1]{OortZink}}] \label{CSDIsogeny}
  Let $\Gscr$ be a $p$-divisible group over a integral normal Noetherian scheme $S$ with constant Newton polygon. There exists a completely slope divisible $p$-divisible group over $S$ which is isogenous to $\Gscr$. 
\end{theorem}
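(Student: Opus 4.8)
This is \cite[Theorem 2.1]{OortZink}, so strictly speaking one may simply cite it; I indicate the strategy one would follow to reprove it. The plan is to induct on the number $m$ of distinct slopes of the (constant) Newton polygon of $\Gscr$, the case $\Gscr=0$ being vacuous. Write the slopes as $\lambda_1>\cdots>\lambda_m$ and pick $s\geq 1$ with $r_i\defeq s\lambda_i\in\BZ$, so that $s\geq r_1>\cdots>r_m\geq 0$. First I would pass to the generic point $\eta=\Spec K$ of $S$ and take the first step $\Hscr\subset\Gscr_\eta$ of the canonical slope filtration of $\Gscr_\eta$: it is isoclinic of slope $\lambda_1$, and $\Gscr_\eta/\Hscr$ is a $p$-divisible group all of whose slopes lie in $\{\lambda_2,\ldots,\lambda_m\}$.

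The main step would be to spread $\Hscr$ out over $S$: one shows that the schematic closures in $\Gscr$ of the finite subgroups $\Hscr[p^n]$ form a $p$-divisible subgroup $\bar\Hscr\subset\Gscr$. Since $S$ is normal this reduces to the case $S=\Spec V$ with $V$ a discrete valuation ring, and there the flatness of these closures is exactly what the constancy of the Newton polygon buys one — it rules out a jump of the Newton polygon at the closed point, which is what would destroy flatness of the schematic closure. Given $\bar\Hscr$, one next treats the isoclinic case: an isoclinic $p$-divisible group over $S$ with constant Newton polygon of slope $r_1/s$ becomes slope divisible after a further isogeny, i.e.\ admits an isomorphism $\psi\colon\bar\Hscr^{(p^s)}\iso\bar\Hscr$ with $\psi\circ\F^s=[p^{r_1}]$; generically this is Dieudonné theory over $K$, and again one uses normality of $S$ to descend the needed isogeny and isomorphism to $S$ (replacing $\Gscr$ by its quotient by a finite flat subgroup of $\bar\Hscr$, which does not change $\Gscr/\bar\Hscr$ up to canonical isomorphism). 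Now $\Gscr/\bar\Hscr$ has constant Newton polygon with the $m-1$ slopes $\lambda_2>\cdots>\lambda_m$, so by the inductive hypothesis it is isogenous to a completely slope divisible $p$-divisible group. After enlarging $s$ to a common multiple and pulling this isogeny back along $\Gscr\epi\Gscr/\bar\Hscr$ to an isogeny of extensions of $\Gscr/\bar\Hscr$ by $\bar\Hscr$, one obtains a $p$-divisible group isogenous to $\Gscr$; the filtration built from $\bar\Hscr$ together with the pullback of the filtration on $\Gscr/\bar\Hscr$ exhibits it as completely slope divisible in the sense of Definition \ref{CSDDef}, and verifying conditions (i) and (ii) for this filtration is then routine.

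The hard part is the spreading-out step: over a general base the slope filtration of the generic fiber need not extend at all, and getting the extension is precisely where both hypotheses are used — normality of $S$ to localize the flatness question to discrete valuation rings, and constancy of the Newton polygon to settle it there. This is the technical core of \cite{OortZink}; everything else is bookkeeping with isogenies and relative Frobenius.
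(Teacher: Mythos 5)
The paper itself contains no proof of Theorem \ref{CSDIsogeny}: it is quoted verbatim from \cite[Theorem 2.1]{OortZink} and used as a black box, so your opening remark that one may simply cite it is exactly what the paper does, and for the purposes of this article the citation is all that is required.

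Your appended sketch, however, should not be mistaken for a proof: the steps it treats as routine are precisely the content of \cite{OortZink}. First, the generic point $\eta=\Spec K$ has $K$ the function field of $S$, which is in general imperfect, and over an imperfect field the slope filtration of $\Gscr_\eta$ need not exist at all; it exists only up to isogeny, and that existence is Zink's slope filtration theorem, a substantial input which your sketch invokes for free as the ``canonical slope filtration of $\Gscr_\eta$''. Second, the spreading-out step is misdiagnosed. Over a discrete valuation ring the schematic closures of the groups $\Hscr[p^n]$ are automatically finite and flat (they are closed in $\Gscr[p^n]$ and their coordinate rings are torsion-free over the base), so flatness over a DVR is not what constancy of the Newton polygon ``buys''; the genuine issues are that the system of closures is again $p$-divisible with the expected special fibre (kernels of homomorphisms of finite flat group schemes need not be flat, and without constancy of the Newton polygon this step really does fail), and that one can pass from the localizations at codimension-one points back to all of the normal Noetherian scheme $S$, which requires a purity/extension argument for finite flat subgroup schemes and is exactly where normality is used in \cite{OortZink}. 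Neither point is addressed. Third, the isoclinic case over $S$ --- producing, after an isogeny over $S$, an isomorphism $\psi\colon\Hscr^{(p^s)}\isoto\Hscr$ with $\psi\circ\F^s=[p^{r_1}]$ --- is essentially an instance of the theorem itself; already over the imperfect field $K$ it is not ``Dieudonn\'e theory'', and ``descend to $S$ by normality'' is not an argument, so your induction assumes its own one-slope case. If you want to present more than the citation, the honest course is to follow the actual argument of \cite{OortZink}, whose core consists of fibrewise order estimates forced by the constant Newton polygon together with flatness and extension statements over the normal base --- that is, exactly the material your sketch relegates to bookkeeping.
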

 We call a scheme $S$ of characteristic $p>0$ \emph{perfect} if for each open set $U$ of $S$ the endomorphism $x\mapsto x^p$ of the ring $\CO_S(U)$ is an isomorphism.
\begin{proposition}[Oort-Zink] \label{SlopeFiltrationSplits}
  Let $\Gscr$ be a $p$-divisible group over a perfect scheme $S$ which is completely slope divisible with respect to integers $s\geq r_1> r_2 > \hdots > r_m\geq 0$. Let $(\Gscr_i)_i$ be a filtration as in Definition \ref{CSDDef} with respect to these integers. The filtration $(\Gscr_i)_i$ splits uniquely, that is there are unique sections $\Gscr_i/\Gscr_{i+1}\to \Gscr_i$ of the quotient maps $\Gscr_i\to \Gscr_i/\Gscr_{i+1}$.
\end{proposition}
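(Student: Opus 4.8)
The plan is to split the filtration one graded piece at a time, reducing to the case of an extension of two isoclinic $p$-divisible groups of distinct slopes, where the relative Frobenius $\F^s$, after division by a suitable power of $p$, becomes an isomorphism on the smaller-slope quotient. I would proceed by induction on $m$. Since conditions $(i)$ and $(ii)$ of Definition \ref{CSDDef} for the filtration $(\Gscr_i)_{0\le i\le m}$ restrict to the analogous conditions for the sub-filtration $(\Gscr_i)_{0\le i\le l}$ of $\Gscr_l$, it suffices to produce a single homomorphism $\iota\colon \Gscr/\Gscr_{m-1}\to \Gscr$ splitting the projection $q\colon \Gscr\to \Gscr/\Gscr_{m-1}$: then $\Gscr=\Gscr_{m-1}\oplus \iota(\Gscr/\Gscr_{m-1})$, and the filtration of $\Gscr_{m-1}$ — which is again completely slope divisible, with respect to $s\geq r_1>\dots>r_{m-1}$ — splits by the inductive hypothesis, yielding a splitting of every step $0\to\Gscr_{i-1}\to\Gscr_i\to\Gscr_i/\Gscr_{i-1}\to0$. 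Throughout I will use the first remark following Definition \ref{CSDDef}: each graded piece $\bar\Gscr_i\defeq \Gscr_i/\Gscr_{i-1}$ is isoclinic of slope $r_i/s$ and is equipped with an isomorphism $\psi_i\colon \bar\Gscr_i^{(p^s)}\iso \bar\Gscr_i$ satisfying $\psi_i\circ \F^s=[p^{r_i}]$.

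For uniqueness, I would first reduce to a Hom-vanishing statement. Two sections of $q$ differ by a homomorphism $\Gscr/\Gscr_{m-1}\to\Gscr_{m-1}$, and d\'evissage along the filtration of $\Gscr_{m-1}$ (all of whose slopes exceed $r_m/s$) reduces the vanishing of this group to the claim that $\Hom(\Gscr'',\Gscr')=0$ whenever $\Gscr''$, $\Gscr'$ are isoclinic of slopes $r''/s$ and $r'/s$ with $r''<r'$. To prove this, take $f\colon\Gscr''\to\Gscr'$; functoriality of $\F^s$ gives $\F^s_{\Gscr'}\circ f=f^{(p^s)}\circ\F^s_{\Gscr''}$, and substituting $\F^s_{\Gscr'}=\psi'^{-1}\circ[p^{r'}]$ and $\F^s_{\Gscr''}=\psi''^{-1}\circ[p^{r''}]$ and cancelling the epimorphism $[p^{r''}]$ yields $f^{(p^s)}=[p^{r'-r''}]\circ\psi'^{-1}\circ f\circ\psi''$. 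Since $S$ is perfect the Frobenius-twist functor is an auto-equivalence, so this forces $f$ to be divisible by $p^{r'-r''}$; moreover the same Frobenius-functoriality relation holds for the cofactor (cancelling $[p^{r'-r''}]$ on the left, which is legitimate for maps out of a $p$-divisible group), so inductively $f$ is divisible by $p^{n(r'-r'')}$ for every $n$. Because $r'>r''$ and, for each $k$, $\Gscr'[p^k]$ is killed by $p^k$, restricting to $p^k$-torsion shows $f|_{\Gscr''[p^k]}=0$ for all $k$, hence $f=0$. In particular the section $\iota$, once it exists, is unique.

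For existence, condition $(i)$ for $i=m$ gives $\ker[p^{r_m}]\subset\ker\F^s$ on $\Gscr$, so there is a unique isogeny $B\colon\Gscr\to\Gscr^{(p^s)}$ with $\F^s=B\circ[p^{r_m}]$. Restricting to $\Gscr_{m-1}$ and comparing with the analogous factorisation $\F^s|_{\Gscr_{m-1}}=B''\circ[p^{r_{m-1}}]$ coming from condition $(i)$ for $\Gscr_{m-1}$ (whose smallest slope-numerator is $r_{m-1}$) shows that $B$ maps $\Gscr_{m-1}$ into $\Gscr_{m-1}^{(p^s)}$ with $B|_{\Gscr_{m-1}}=B''\circ[p^{r_{m-1}-r_m}]$ divisible by $p^{r_{m-1}-r_m}$, while the map induced by $B$ on $\Gscr/\Gscr_{m-1}$ equals $\psi_m^{-1}$ and is therefore an isomorphism. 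Forming the twisted iterates $B^{[n]}\colon\Gscr\to\Gscr^{(p^{ns})}$ (the composite of $B$ with its successive Frobenius pullbacks, so that $B^{[n]}\circ[p^{nr_m}]=\F^{ns}$), one obtains a map divisible by $p^{n(r_{m-1}-r_m)}$ on $\Gscr_{m-1}$ and inducing an isomorphism on the quotient; for $n$ with $n(r_{m-1}-r_m)\geq k$ it therefore kills $\Gscr_{m-1}[p^k]$, hence factors through $q$ on $p^k$-torsion, and composing with the inverse of the isomorphism it induces on the quotient and untwisting (legitimate because $S$ is perfect, so $(-)^{(p^{ns})}$ is an equivalence) produces a section $\iota_k^{(n)}$ of $q$ on $\Gscr[p^k]$. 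A computation with the relation $B^{[n+1]}=B^{(p^{ns})}\circ B^{[n]}$ shows that the differences $\iota^{(n+1)}_k-\iota^{(n)}_k$ factor through $\Gscr_{m-1}[p^k]$ and become divisible by ever-higher powers of $p$, hence vanish for $n$ large; thus $\iota_k^{(n)}$ stabilises to a section $\iota_k$ of $q|_{\Gscr[p^k]}$, and the $\iota_k$ are compatible as $k$ grows and assemble to the desired $\iota$.

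The main obstacle is this existence step — carrying out the iterative construction of $\iota$ and the bookkeeping of Frobenius twists over a general perfect base $S$ (rather than a perfect field, where one can instead argue with covariant Dieudonn\'e modules over $W(k)$: there the statement becomes the elementary fact that a finite free $W(k)$-module equipped with a semilinear operator $F$ and a filtration on whose $i$-th graded piece $F^s$ acts as $p^{r_i}$ times a $\sigma^s$-semilinear automorphism admits a unique $F$-stable splitting of the filtration).
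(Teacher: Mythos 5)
Your proposal is correct, but note that the paper does not actually prove this proposition: its ``proof'' consists of the citation \cite[Proposition 1.3]{OortZink}, together with the remark that the uniqueness assertion is contained in the proof given there. So you have in effect supplied the argument that the paper outsources, and it is sound. Your uniqueness step --- d\'evissage to the vanishing of $\Hom(\Gscr'',\Gscr')$ for pieces with slope numerators $r''<r'$, deduced from the relation $f^{(p^s)}=[p^{r'-r''}]\circ(\psi')^{-1}\circ f\circ\psi''$ and the fact that Frobenius twist is an equivalence over a perfect base --- is complete as stated (the cancellation of $[p^{r'-r''}]$ is indeed legitimate because it can be moved to the right and cancelled against the epimorphism $[p^{r'-r''}]$ of the source). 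Your existence step is the Frobenius-iteration idea underlying the Oort--Zink proof, and the one point you leave as ``a computation'' does go through: the section $\iota_k^{(n)}$ is characterized by $(\iota_k^{(n)})^{(p^{ns})}\circ \bar{B}^{[n]}\circ q=B^{[n]}$ on $\Gscr[p^k]$, where $\bar{B}^{[n]}$ is the isomorphism induced on the quotient, and from $B^{[n+1]}=B^{(p^{ns})}\circ B^{[n]}$ one gets that $\iota_k^{(n+1)}$ is the untwist of $B\circ\iota_k^{(n)}\circ\psi_m$; since $B|_{\Gscr_{m-1}}$ is divisible by $p^{r_{m-1}-r_m}$, this operator contracts differences of sections (which are homomorphisms into $\Gscr_{m-1}[p^k]$) by $p^{r_{m-1}-r_m}$, so the sequence is eventually constant on $p^k$-torsion, and compatibility in $k$ follows from the uniqueness of the same characterization restricted to $\Gscr[p^k]$. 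What your route buys over the paper's is a self-contained proof, including a direct proof of the uniqueness that the paper can only locate inside the proof in \cite{OortZink}; what the citation buys is brevity. Your closing remark about covariant Dieudonn\'e modules would only handle a perfect field, not a general perfect base, but you correctly do not rely on it.
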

\begin{proof}
  This is \cite[Proposition 1.3]{OortZink}. Although the uniqueness of the splittings is not part of the statement there, it is shown in the proof given there.
\end{proof}
\begin{proposition}\label{CSDDescent}
  A $p$-divisible group $\Gscr$ over a scheme $S$ is completely slope divisible if it is so fpqc-locally on $S$.
\end{proposition}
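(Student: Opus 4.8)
The plan is to show that the (essentially canonical) slope filtration descends along the given faithfully flat cover. First I would reduce to a single morphism. The assertion is local on $S$, so we may assume $S$ affine; then the given fpqc covering refines to a single faithfully flat quasi-compact morphism $f\colon S'\to S$. Moreover, complete slope divisibility with respect to integers $s\geq r_1>\cdots>r_m\geq 0$ implies complete slope divisibility with respect to $(Ns;Nr_1,\ldots,Nr_m)$ for every $N\geq 1$: writing, on $\Gscr_i$, $\F^s=\theta_i\circ[p^{r_i}]$ — possible by condition (i) of Definition \ref{CSDDef} — and using that $[p^{r_i}]$ commutes with every homomorphism, one gets $\F^{Ns}=\theta_i'\circ[p^{Nr_i}]$ for a suitable composite $\theta_i'$ of Frobenius twists of $\theta_i$, which is moreover an isomorphism on the graded pieces when $\theta_i$ is. Together with the freedom to insert redundant steps in the filtration, this lets us arrange that $\Gscr_{S'}$ is completely slope divisible with respect to one fixed tuple $s\geq r_1>\cdots>r_m\geq 0$, with filtration $(\Gscr_i')_{0\leq i\leq m}$ of $\Gscr_{S'}$.

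Next I would produce the descent datum, the point being that no work is needed to check the cocycle condition. Put $S''\defeq S'\times_S S'$ with the two projections $q_1,q_2\colon S''\to S'$. Since the formation of kernels, of the relative Frobenius $\F^s$, and of $[p^{r_i}]$ all commute with base change, both families $(q_1^*\Gscr_i')_i$ and $(q_2^*\Gscr_i')_i$ are filtrations of $\Gscr_{S''}$ by $p$-divisible subgroups satisfying conditions (i) and (ii) of Definition \ref{CSDDef} for the same tuple $s\geq r_1>\cdots>r_m$. By the uniqueness recalled in Remark \ref{CSDUniqueness} they coincide: $q_1^*\Gscr_i'=q_2^*\Gscr_i'$ for all $i$. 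Viewing each $\Gscr_i'$ via the compatible system of finite locally free closed subgroup schemes $\Gscr_i'[p^n]\subset(\Gscr[p^n])_{S'}$, we thus obtain on each $\Gscr_i'[p^n]$ a descent datum relative to $f$ that automatically satisfies the cocycle condition.

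Finally I would invoke effectivity of fpqc descent for closed subschemes of the fixed $S$-schemes $\Gscr[p^n]$: it produces finite locally free closed subgroup schemes $H_{i,n}\subset\Gscr[p^n]$ with $(H_{i,n})_{S'}=\Gscr_i'[p^n]$. The relations $H_{i,n}=H_{i,n+1}[p^n]$, the faithful flatness of $[p]\colon H_{i,n+1}\to H_{i,n}$, and the fact that $H_{i,n}$ is a subgroup scheme can each be checked after the faithfully flat base change $f$, hence hold; so $\Gscr_i\defeq\varinjlim_n H_{i,n}$ is a $p$-divisible subgroup of $\Gscr$ with $(\Gscr_i)_{S'}=\Gscr_i'$. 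The inclusions $\Gscr_{i-1}\subset\Gscr_i$, the equalities $\Gscr_0=0$ and $\Gscr_m=\Gscr$, and conditions (i) and (ii) of Definition \ref{CSDDef} — which, after restricting to a large enough $\Gscr[p^n]$, are an inclusion, resp.\ an equality, of closed subschemes of $\Gscr_i$, resp.\ of $\Gscr_i/\Gscr_{i-1}$ — are all stable under faithfully flat descent, so they hold over $S$, and $\Gscr$ is completely slope divisible. I expect the only genuinely non-formal ingredient to be the uniqueness of Remark \ref{CSDUniqueness}, which is precisely what turns the cocycle condition into a triviality rather than a computation; after that the proof is pure descent bookkeeping, the mildest subtlety being the reduction to a common tuple of integers on all of $S'$.
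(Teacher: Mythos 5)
Your proof is correct and follows essentially the same route as the paper: the uniqueness of the filtration (Remark \ref{CSDUniqueness}) forces the two pullbacks to $S'\times_S S'$ to agree, and fpqc descent then produces the filtration over $S$, with conditions (i) and (ii) of Definition \ref{CSDDef} holding because they can be checked after faithfully flat base change. The additional bookkeeping you supply --- reducing to a single tuple of integers by scaling and inserting redundant steps, and making the descent effective via the closed subgroup schemes $\Gscr_i'[p^n]\subset(\Gscr[p^n])_{S'}$ --- is detail the paper leaves implicit rather than a different argument.
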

\begin{proof}
  Let $S'\to S$ be an fpqc covering of $S$ such that $\Gscr_{S'}$ is completely slope divisible with respect to integers $s\geq r_1 > r_2 > \hdots > r_m\geq 0$. By Remark \ref{CSDUniqueness}, there are unique subgroups $\Gscr'_i$ of $\Gscr_{S'}$ satisfying the conditions of Definition \ref{CSDDef}. The pullbacks of $(\Gscr'_i)_i$ along the two morphisms $S'\times_S S' \to S'$ both satisfy the conditions of Definition \ref{CSDDef} over $S'\times_S S'$ relative to the above integers. Thus by the uniqueness statement of Remark \ref{CSDUniqueness} these two pullbacks coincide. Hence by fpqc descent the subgroups $\Gscr'_i$ of $\Gscr_{S'}$ arise by base change from subgroups $\Gscr_i$ of $\Gscr$. By fpqc descent the conditions of Definition \ref{CSDDef} hold for $(\Gscr_i)$ if they hold fpqc-locally on $S$. Thus $\Gscr$ is completely slope divisible.
\end{proof}

\begin{proposition} \label{CSDSubgroup}
  Let $\Gscr$ be a $p$-divisible group over a scheme $S$ which is completely slope divisible. Let $\Hscr$ be a $p$-divisible subgroup of $\Gscr$. Then $\Hscr$ and $\Gscr/\Hscr$ are completely slope divisible.
\end{proposition}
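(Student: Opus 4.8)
The plan is to transport the given slope filtration of $\Gscr$ to $\Hscr$ and to $\Gscr/\Hscr$ --- by intersection and by image, respectively --- and then to verify the two conditions of Definition~\ref{CSDDef}. So fix integers $s\ge r_1>\dots>r_m\ge 0$ with respect to which $\Gscr$ is completely slope divisible, and let $0=\Gscr_0\subseteq\Gscr_1\subseteq\dots\subseteq\Gscr_m=\Gscr$ be the corresponding filtration.

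It is convenient to first settle the isoclinic case $m=1$. There Definition~\ref{CSDDef}(ii) provides an isomorphism $\psi\colon\Gscr^{(p^s)}\iso\Gscr$ with $\psi\circ F^s=[p^{r_1}]$. Since $F^s$ and $[p^{r_1}]$ restrict to any closed subgroup and descend to any quotient, and since $\psi$ carries $\Hscr^{(p^s)}=F^s(\Hscr)$ onto $[p^{r_1}](\Hscr)=\Hscr$, the map $\psi$ induces isomorphisms $\Hscr^{(p^s)}\iso\Hscr$ and $(\Gscr/\Hscr)^{(p^s)}\iso\Gscr/\Hscr$ intertwining $F^s$ with $[p^{r_1}]$; hence $\Hscr$ and $\Gscr/\Hscr$ are again isoclinic of slope $r_1/s$, in particular completely slope divisible with respect to $s\ge r_1\ge 0$.

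The substantive point is the claim that each scheme-theoretic intersection $\Kscr_i:=\Hscr\times_\Gscr\Gscr_i$ is a \emph{$p$-divisible} subgroup of $\Gscr$. Granting this, the rest is bookkeeping: letting $i_1<i_2<\dots$ be the indices where the chain $(\Kscr_i)_i$ strictly grows, the $\Kscr_{i_j}$ form a filtration of $\Hscr$ by $p$-divisible subgroups, while the images $(\Gscr_i+\Hscr)/\Hscr\cong\Gscr_i/\Kscr_i$ of the $\Gscr_i$ in $\Gscr/\Hscr$, being quotients of $p$-divisible groups by $p$-divisible subgroups, are $p$-divisible subgroups and (after deleting repetitions) form the required filtration of $\Gscr/\Hscr$. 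Condition~\ref{CSDDef}(i) for these filtrations follows by intersecting condition~(i) for $\Gscr$ with $\Hscr$ (resp.\ pushing it to $\Gscr/\Hscr$), using that $\ker F^s$ and $\ker[p^{r}]$ of a closed subgroup, resp.\ of a quotient, are obtained from those of the ambient group by the same operation. Condition~\ref{CSDDef}(ii) follows because, by the choice of the $i_j$, the graded piece $\Kscr_{i_j}/\Kscr_{i_{j-1}}$ embeds into the genuine graded piece $\Gscr_{i_j}/\Gscr_{i_j-1}$ of $\Gscr$, which is isoclinic with $\ker[p^{r_{i_j}}]=\ker F^s$, so that the isoclinic case above applies to it; the side of $\Gscr/\Hscr$ is parallel.

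What remains --- and what I expect to be the main obstacle --- is to show that $\Kscr_i=\Hscr\cap\Gscr_i$ is $p$-divisible: scheme-theoretically $\Kscr_i$ is a priori only a closed subgroup scheme of $\Gscr$ stable under $[p]$, and extracting genuine $p$-divisibility (equivalently: flatness of the finite levels $\Hscr[p^n]\cap\Gscr_i[p^n]$ together with surjectivity of $[p]$ on $\Kscr_i$) is precisely where complete slope divisibility must be used in place of mere constancy of the Newton polygon. Since $\Kscr_i$ is $p$-divisible as soon as $\Kscr_i[p]$ is finite locally free and $[p]\colon\Kscr_i\to\Kscr_i$ is an epimorphism --- the higher levels $\Kscr_i[p^{n+1}]\to\Kscr_i[p^n]$ being then iterated $\Kscr_i[p]$-torsors, hence finite locally free --- and since these are fpqc-local conditions, one reduces to a conveniently chosen faithfully flat base. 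Over a \emph{perfect} base the claim is immediate: by Proposition~\ref{SlopeFiltrationSplits} the filtration $(\Gscr_i)$ splits, so $\Gscr\cong\bigoplus_i\Gscr_i/\Gscr_{i-1}$ is a direct sum of isoclinic $p$-divisible groups of pairwise distinct slopes, the slope decomposition (of Dieudonné modules) is functorial, and one reads off that $\Hscr\cap\Gscr_i$ is the direct summand of $\Hscr$ of slopes $\ge r_i/s$, in particular $p$-divisible. To descend to a general $S$ one passes to a faithfully flat cover over which this applies --- for instance the perfection $S^{\mathrm{perf}}\to S$, which is faithfully flat when $S$ is regular (the case occurring in the applications) --- or, in full generality, argues directly from the Frobenius factorizations $[p^{r_i}]|_{\Gscr_i}=\psi_i\circ F^s$ furnished by Definition~\ref{CSDDef}(i) together with Oort--Zink's analysis of the standard filtration. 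The same reductions dispose of $\Gscr/\Hscr$.
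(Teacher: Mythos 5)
Your skeleton agrees with the paper's: the paper also filters $\Hscr$ by $\Hscr_i:=\Hscr\cap\Gscr_i$ and $\Gscr/\Hscr$ by the images $\Gscr_i/\Hscr_i$, and your isoclinic case and the surrounding bookkeeping are fine granted $p$-divisibility of the intersections. But the step you yourself flag as the main obstacle is not actually proved. After passing to a perfect cover and splitting the filtration of $\Gscr$ via Proposition~\ref{SlopeFiltrationSplits}, you conclude that $\Hscr\cap\Gscr_i$ is ``the direct summand of $\Hscr$ of slopes $\ge r_i/s$''. This presupposes that $\Hscr$ itself admits a slope decomposition over the perfect base, which (in view of the splitting result) is essentially the statement being proved: Proposition~\ref{SlopeFiltrationSplits} splits $\Gscr$, not $\Hscr$, and the assertion $\Hscr=\bigoplus_j\bigl(\Hscr\cap \Gscr_j/\Gscr_{j-1}\bigr)$ is exactly where the work lies. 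The Dieudonn\'e-theoretic argument you gesture at (saturation of the Dieudonn\'e module of $\Hscr$ inside that of $\Gscr$, plus the slope decomposition of isocrystals) does prove this over a perfect \emph{field}, but your fpqc reduction only brings you to a perfect base such as $\Rper$, where classical Dieudonn\'e theory and the invoked ``functoriality of the slope decomposition'' are not available without substantial extra input; no further reduction to a field is supplied (points are not flat covers). That $p$-divisibility of such intersections is a genuine issue and not a formality is shown by the graph of $x\mapsto px$ inside $(\BQ_p/\BZ_p)^2$: it is a $p$-divisible subgroup whose intersection with the first factor is $\BZ/p$; so the distinct-slope structure must really be used, and your write-up uses it only through a claim that is circular at the decisive point.

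Two secondary remarks. The descent step is justified only in the cases you mention: $S^{\mathrm{perf}}\to S$ is faithfully flat for Noetherian regular $S$ (Kunz), whereas the proposition is stated over an arbitrary scheme, and your fallback for full generality (``argue directly from the Frobenius factorizations together with Oort--Zink's analysis'') is a pointer rather than an argument. Also, for the quotient filtration the kernel of $F^s$ (or of $[p^{r_i}]$) on $\Gscr_i/\Hscr_i$ is not simply the image of the corresponding kernel in $\Gscr_i$; verifying condition (i) of Definition~\ref{CSDDef} there requires lifting a $p^{r_i}$-torsion section and correcting it by a section of $\Hscr_i$, which again uses the $p$-divisibility of $\Hscr_i$ — so essentially everything in your proposal funnels through the one claim that is left unproved. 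For comparison, the paper's proof introduces exactly the filtrations you use and leaves the verification as a direct check; your divergence consists in attempting to spell out the hard part, and that attempt, as written, does not close.
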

\begin{proof}
Let $(\Gscr_i)$ be a filtration as in Definition \ref{CSDDef}. Let $\Hscr_i\defeq \Hscr\cap\Hscr_i$. Then it follows by a direct verification that $(\Hscr_i)$ and $(\Gscr_i/\Hscr_i)$ have the required properties.
\end{proof}

\begin{lemma}[{see \cite[Corollary 1.10]{OortZink}}] \label{FGSConstant}
  Let $G\to S$ be a finite flat group scheme over a connected base scheme $S$. Let $\psi \colon G\isoto G^{(p^s)}$ be an isomorphism. Then there exists a finite \'etale morphism $T\to S$ and a morphism $T\to \Spec(\BF_{p^s})$ such that $G_T$ is obtained by base change from a finite group scheme $H$ over $\Spec(\BF_{p^s})$
  \begin{equation*}
    H\times_{\Spec(\BF_{p^s})} T\isoto G_T
  \end{equation*}
and $\psi$ is induced from the identity on $H$.
\end{lemma}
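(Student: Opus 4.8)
The plan is to translate the statement into Frobenius-semilinear linear algebra and then recognise the cover that trivialises the structure as a torsor under a finite \'etale group scheme.

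First I would reduce to $S=\Spec R$ affine (and connected), so that $G=\Spec A$ for a finite locally free Hopf $R$-algebra $A$, say of rank $n$. An isomorphism $\psi\colon G\iso G^{(p^s)}$ amounts to an isomorphism of $R$-Hopf-algebras $\psi^\#\colon A\otimes_{R,\Frob^s}R\iso A$, and composing with $a\mapsto a\otimes1$ gives a ring endomorphism $\phi\colon A\to A$ which is $\Frob^s$-semilinear (meaning $\phi(ra)=r^{p^s}\phi(a)$), preserves the comultiplication, counit and antipode, and whose $R$-linearisation $A\otimes_{R,\Frob^s}R\to A$ --- which is exactly $\psi^\#$ --- is bijective; conversely $(A,\phi)$ recovers $(G,\psi)$. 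In order to produce the morphism to $\Spec\BF_{p^s}$ demanded by the conclusion I would replace $S$ by the finite \'etale surjective cover $S\times_{\Spec\BF_p}\Spec\BF_{p^s}$ and pass to a connected component of it (which still surjects onto $S$, as $S$ is connected and the cover is finite flat); thereafter $R$ is an $\BF_{p^s}$-algebra on which $\Frob^s$ fixes $\BF_{p^s}$.

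The heart of the matter is to show that the $S$-scheme $T$ parametrising $\phi$-equivariant $\CO_S$-linear isomorphisms from the standard pair $(\CO_S^{\oplus n},\,v\mapsto v^{(p^s)})$ to $(A,\phi)$ is finite \'etale and surjective over $S$. It is representable (a closed subscheme of the affine $S$-scheme $\UIsom_{\CO_S}(\CO_S^{\oplus n},A)$), and locally, where $A\cong R^{\oplus n}$ with $\phi$ given by $w\mapsto Bw^{(p^s)}$ for some $B\in\mathrm{GL}_n(R)$, it is cut out in $\mathrm{GL}_{n,R}$ by the $n^2$ equations $g^{(p^s)}=B^{-1}g$; since $d(x^{p^s})=0$ the Jacobian of these equations is, up to sign and reindexing, the invertible matrix $B^{-1}\otimes I_n$, so $T\to S$ is \'etale. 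Moreover $T$ carries a simply transitive action of the automorphism group scheme of the standard pair, which is the \emph{constant} finite \'etale group scheme $\underline{\mathrm{GL}_n(\BF_{p^s})}_S$ --- an automorphism being a $g\in\mathrm{GL}_n$ with $g^{(p^s)}=g$, i.e.\ with entries in $\BF_{p^s}$ --- and $T$ is fppf-locally non-empty because over an algebraically closed field every $\phi$-module with bijective linearisation is trivial: its $\phi$-fixed points form an $\BF_{p^s}$-vector space of dimension $n$ which spans the whole space after scalar extension (this is Lang's theorem applied to $\mathrm{GL}_n$). Hence $T$ is a torsor under a finite \'etale group scheme, so $T\to S$ is finite \'etale, and surjective since it is \'etale with non-empty geometric fibres. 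I expect this step --- extracting \emph{finiteness} and surjectivity of $T$, rather than mere \'etaleness --- to be the main obstacle: \'etaleness is immediate from the Jacobian criterion, but finiteness genuinely uses the torsor structure together with Lang's theorem.

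It remains to transport structure along the universal trivialisation over $T$ (which one may take connected). There $A_T\cong\CO_T^{\oplus n}$ carrying $\phi$ to $v\mapsto v^{(p^s)}$, and transporting the comultiplication, counit and antipode of $A$ makes $\CO_T^{\oplus n}$ into a Hopf algebra for which $v\mapsto v^{(p^s)}$ is a Hopf endomorphism; expressing that Hopf structure through structure constants in $\CO_T$, this compatibility forces the structure constants to be $\Frob^s$-fixed, hence to lie in $\BF_{p^s}$ (using $\BF_{p^s}\subseteq\CO_T$ and $\CO_T$ connected). They therefore define a finite Hopf $\BF_{p^s}$-algebra $B$ of rank $n$ with $A_T\cong B\otimes_{\BF_{p^s}}\CO_T$, i.e.\ $G_T\cong H_T$ for $H\defeq\Spec B$ over $\BF_{p^s}$. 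Finally, under this identification $\phi$ corresponds to $\mathrm{id}_B\otimes\Frob^s$, which is precisely the canonical isomorphism $H^{(p^s)}=H$ (valid because $\Frob^s$ is the identity on $\BF_{p^s}$) base-changed to $T$; hence $\psi_T$ is the one induced from the identity on $H$. Composing with the projection $S\times_{\Spec\BF_p}\Spec\BF_{p^s}\to S$ (and the inclusion of the chosen component) yields the required finite \'etale surjective cover $T\to S$, completing the proof.
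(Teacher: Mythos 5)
Your proof is correct, but note that the paper does not prove this lemma at all: it is quoted from Oort--Zink (Corollary 1.10), so the only meaningful comparison is with their argument. Both proofs run on the same semilinear-algebra fuel: $\psi$ is encoded as a $\Frob^s$-semilinear endomorphism $\phi$ of the coordinate algebra with bijective linearization, one trivializes this ``\'etale $\phi$-structure'' after a finite \'etale base change, and then Frobenius-fixedness of the Hopf structure constants produces the model over $\BF_{p^s}$ together with the identification of $\psi$ with the canonical one. Oort--Zink organize the trivialization around the finite \'etale scheme of $\phi$-fixed vectors in the coordinate sheaf, whereas you organize it around the $\UIsom$-scheme $T$ of $\phi$-equivariant trivializations, exhibited as a torsor under the constant group scheme $\underline{\mathrm{GL}_n(\BF_{p^s})}$: \'etaleness by the Jacobian criterion (the $p^s$-power terms have vanishing differential), fibrewise nonemptiness by the Lang-map argument (which, as you implicitly need, is valid over an arbitrary algebraically closed field of characteristic $p$, since the proof of Lang--Steinberg only uses $dF=0$ and connectedness of $\mathrm{GL}_n$), and finiteness by descent along the fppf cover $T\to S$ itself; this packages surjectivity and finiteness in one stroke, which is exactly the point you identified as the crux. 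Two minor remarks: the opening ``reduction to $S$ affine'' is neither available for a general connected $S$ nor needed --- your Isom-scheme is canonical and affine over $S$, so it is defined globally and only the \'etaleness/finiteness verifications are local, while connectedness must be kept for the global base (constancy of the rank $n$) and for the chosen connected component of $T$ (so that the $\Frob^s$-fixed structure constants genuinely lie in $\BF_{p^s}$); and your passages to connected components are justified because finite \'etale morphisms are open and closed, so every component of such a cover of a connected base maps onto it, as you say.
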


The argument in the proof of the following proposition is taken from the proof of Proposition 3.1 of \cite{OortZink}.
\begin{proposition} \label{CSDConstant1}
  Let $R$ be a perfect strictly henselian local ring over $\bar\BF_p$ and $\Gscr$ a completely slope divisible $p$-divisible group over $R$. Then there exists a $p$-divisible group $\Gscr_0$ over $\bar\BF_p$ such that $\Gscr_{0,R}$ is isomorphic to $\Gscr$. In case that $\Gscr$ has a single slope, it suffices that $R$ be strictly henselian.
\end{proposition}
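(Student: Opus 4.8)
The plan is to reduce to the case of a single slope, where perfectness is not needed, and to treat that case via Lemma \ref{FGSConstant} together with the strict henselianness of $R$.

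\emph{Reduction to one slope.} Suppose $\Gscr$ is completely slope divisible with respect to $s\geq r_1>\dots>r_m\geq 0$ and let $(\Gscr_i)_{i=0,\dots,m}$ be the (unique, by Remark \ref{CSDUniqueness}) filtration of Definition \ref{CSDDef}. Since $R$ is perfect, Proposition \ref{SlopeFiltrationSplits} provides unique splittings of the maps $\Gscr_i\to\Gscr_i/\Gscr_{i-1}$, so inductively $\Gscr\cong\bigoplus_{i=1}^m\Gscr_i/\Gscr_{i-1}$. By Proposition \ref{CSDSubgroup} each $\Gscr_i$ is completely slope divisible, hence so is each quotient $\Gscr_i/\Gscr_{i-1}$, and the latter is completely slope divisible with the single slope $r_i/s$: indeed condition $(ii)$ of Definition \ref{CSDDef} says precisely that $\ker[p^{r_i}]=\ker F^s$ on $\Gscr_i/\Gscr_{i-1}$, and then $(i)$ for a one-step filtration is automatic. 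Granting the one-slope case, each $\Gscr_i/\Gscr_{i-1}$ is isomorphic to the base change of a $p$-divisible group over $\bar\BF_p$; the direct sum of these gives the required $\Gscr_0$ over $\bar\BF_p$. Note that only this step uses perfectness, which matches the last sentence of the statement.

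\emph{The one-slope case.} Assume now $\Gscr$ is completely slope divisible over the strictly henselian $\bar\BF_p$-algebra $R$ with the single slope $r/s$. By the remark following Definition \ref{CSDDef} there is an isomorphism $\psi\colon\Gscr^{(p^s)}\iso\Gscr$ with $\psi\circ F^s=[p^r]$, so $\ker(F^s\colon\Gscr\to\Gscr^{(p^s)})=\Gscr[p^r]$; moreover from $\psi\circ F^s=[p^r]$ one extracts, for every $n\geq 0$, an exact sequence
\[
0\to\Gscr[p^r]\to\Gscr[p^{(n+1)r}]\xrightarrow{\ F^s\ }\bigl(\Gscr[p^{nr}]\bigr)^{(p^s)}\to 0,
\]
exhibiting each torsion level as the preimage of the previous one under relative Frobenius. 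Applying Lemma \ref{FGSConstant} to the finite flat group scheme $\Gscr[p^n]$ with the isomorphism induced by $\psi^{-1}$ gives a finite étale $T\to\Spec R$, a morphism $T\to\Spec\BF_{p^s}$, and a finite group scheme over $\BF_{p^s}$ pulling back to $\Gscr[p^n]_T$ compatibly with $\psi$ and the identity. As $R$ is strictly henselian, $T$ has a section over $\Spec R$, which composes to an embedding $\BF_{p^s}\hookrightarrow R$ (necessarily onto the canonical copy $\BF_{p^s}\subset\bar\BF_p\subseteq R$, possibly after a harmless Frobenius twist of the $\BF_{p^s}$-form); so $\Gscr[p^n]\cong(H_n)_R$ for a finite group scheme $H_n$ over $\BF_{p^s}$, with $\psi$ becoming the canonical identification $(H_n)_R^{(p^s)}=(H_n)_R$.

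\emph{The main obstacle and conclusion.} What remains — and where essentially all the work lies — is to arrange the descents $H_n$ coherently in $n$, i.e.\ compatibly with the inclusions $\Gscr[p^n]\hookrightarrow\Gscr[p^{n+1}]$, so that $\Gscr_0\defeq\varinjlim_n H_n$ is a $p$-divisible group over $\BF_{p^s}\subseteq\bar\BF_p$ with $\Gscr_{0,R}\cong\Gscr$. The plan is to use $\psi$ as a Frobenius-descent datum: since $\psi\circ F^s=[p^r]$ and the relative Frobenius $F^s$ of a group scheme already defined over $\BF_{p^s}$ is again defined over $\BF_{p^s}$ (with target canonically itself), the exact sequences above allow one to propagate a single descent of $\Gscr[p^r]$ together with its $\psi$-structure recursively up all the levels $\Gscr[p^{nr}]$, the intermediate levels $\Gscr[p^n]$ being controlled the same way; one checks that the descent at each level, once normalized by the $\psi$-structure, is unique, which forces the transition maps of the resulting inverse system to be isomorphisms and makes the limit a $p$-divisible group. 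Verifying that this recursion is well defined over $\BF_{p^s}$ and compatible with base change to $R$ is exactly the bookkeeping in the proof of \cite[Proposition 3.1]{OortZink}, which we follow; base changing $\Gscr_0$ from $\BF_{p^s}$ to $\bar\BF_p$ finishes the argument.
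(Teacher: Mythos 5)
Your proposal follows essentially the same route as the paper: reduce to a single slope via the unique splitting of the slope filtration over the perfect ring $R$ (Proposition \ref{SlopeFiltrationSplits}), then in the one-slope case use the isomorphism $\psi\colon\Gscr^{(p^s)}\iso\Gscr$ with $\psi\circ F^s=[p^r]$ and apply Lemma \ref{FGSConstant} to each $\Gscr[p^n]$, trivializing the finite \'etale cover by strict henselianness and taking the inductive limit, with the compatibility of the level-wise descents handled as in \cite[Proposition 3.1]{OortZink} — exactly the source the paper cites. The extra care you take about coherence of the $H_n$ in $n$ and about the embedding $\BF_{p^s}\into R$ is sound and only makes explicit what the paper leaves implicit.
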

\begin{proof}
  In case $\Gscr$ has multiple slopes, by Proposition \ref{SlopeFiltrationSplits} we can write $\Gscr$ as a direct sum of completely slope divisible groups having a single slope. Thus it suffices to treat this case. Then there exist $s\geq r\geq 0$ and an isomorphism $\psi\colon \Gscr^{(p^s)}\to\Gscr$ such that $\psi\circ\F^s=[p^r]$. For $n\geq 0$ denote by $\Gscr(n)$ the kernel of $[p^n]\colon \Gscr\to \Gscr$. Applying Lemma \ref{FGSConstant} to $\Gscr(n)$ and $\psi^{-1}$ we obtain finite group schemes $\Gscr_o(n)$ over $\Spec(\BF_{p^s})$ and isomorphisms 
  \begin{equation*}
    \Gscr(n)\cong \Gscr_0(n)\times_{\Spec(\BF_{p^s})} R.
  \end{equation*}
The inductive limit of the group schemes $\Gscr_0(n)$ is a $p$-divisible group $\Gscr_0$ over $\BF_{p^s}$ which has the required property.
\end{proof}

The following result is probably not new, but we could not find a reference.
\begin{proposition} \label{ConstantHom}
 Let $k$ be an algebraically closed field of positive characteristic. Let $\Gscr$ and $\Hscr$ be $p$-divisible groups over $k$. For any integral scheme $S$ over $k$ the base change map
  \begin{equation*}
    \Hom_k(\Gscr,\Hscr)\to \Hom_S(\Gscr_S,\Hscr_S)
  \end{equation*}
is a isomorphism of $\BZ_p$-modules.
\end{proposition}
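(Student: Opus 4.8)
\emph{Set-up.} A homomorphism $\Gscr\to\Hscr$ is the same as a compatible system of homomorphisms $\Gscr[p^n]\to\Hscr[p^n]$ of finite locally free group schemes, so the functor $T\mapsto\Hom_T(\Gscr_T,\Hscr_T)$ on $k$-schemes is the limit $\varprojlim_n X_n$ of the affine finite type $k$-schemes $X_n:=\UHom(\Gscr[p^n],\Hscr[p^n])$, hence is represented by an affine $k$-scheme $X$. Injectivity of $\Hom_k(\Gscr,\Hscr)\to\Hom_S(\Gscr_S,\Hscr_S)$ is clear: restrict to a nonempty affine open $\Spec R\subset S$ and use that $k\to R$ is faithfully flat. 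For surjectivity I would first reduce to a field: the functor is a Zariski sheaf and $X$ is affine, so for $S$ integral the restriction $X(S)\to X(\Spec K(S))$ to the generic point is injective, and for a field extension $K/k$ the map $X(\Spec K)\to X(\Spec\bar K)$ is injective too; since base change would then sandwich $X(S)$ between two copies of $X(k)$, it suffices to show that $\Hom_k(\Gscr,\Hscr)\to\Hom_K(\Gscr_K,\Hscr_K)$ is bijective whenever $K\supseteq k$ are both algebraically closed.

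\emph{Infinitesimal rigidity.} The first main input is that for any $\BF_p$-algebra $R$ with a square-zero ideal $I$, a homomorphism $f\colon\Gscr_R\to\Hscr_R$ of $p$-divisible groups with $f\equiv 0\bmod I$ vanishes. Here $f$ factors through $\ker(\Hscr_R\to\Hscr_{R/I})$, which levelwise is described by $\epsilon$-derivations into $I$-multiples and is therefore an $R$-module scheme; in particular it is killed by $p$, since $p=0$ in $R$. Thus $pf=f\circ[p]_{\Gscr_R}=0$, and $[p]_{\Gscr_R}$ is an fppf epimorphism, so $f=0$. Equivalently, $\Omega^1_{X/k}=0$, i.e.\ $X$ is formally unramified over $k$.

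\emph{Finiteness, and the conclusion.} The second input is that over any field $K$ the group $\Hom_K(\Gscr_K,\Hscr_K)$ is a finitely generated free $\BZ_p$-module whose rank is at most $(\operatorname{ht}\Gscr)(\operatorname{ht}\Hscr)$, a bound independent of $K$ (via Dieudonné theory over a perfect closure, or by embedding into $\Hom$ of Dieudonné or Tate modules). To conclude: each $X_n$ is Noetherian, so the scheme-theoretic images of $X_m\to X_n$ ($m\ge n$) stabilize to the scheme-theoretic image $\bar X_n$ of $X\to X_n$, and one checks $X\cong\varprojlim_n\bar X_n$. The finiteness input bounds, uniformly in the algebraically closed field $\bar K\supseteq k$, the size of the image of $X(\bar K)\to X_n(\bar K)$ — namely $\Hom_{\bar K}(\Gscr_{\bar K},\Hscr_{\bar K})/p^n$; together with formal unramifiedness and a constructibility argument this should force $\bar X_n$ to be $0$-dimensional, hence finite over $k$. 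A finite scheme over the algebraically closed field $k$ has the same $K$-points as $k$-points for every $K$, so $X(K)=\varprojlim_n\bar X_n(K)=\varprojlim_n\bar X_n(k)=X(k)$.

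\emph{Where the difficulty lies.} The hard part is exactly the last step: excluding positive-dimensional components of the $\bar X_n$, equivalently ruling out the ``perfection-type'' behaviour that a formally unramified $k$-scheme (such as the perfection of $\BA^1_k$) may display. This is precisely where $\BZ_p$-finiteness of $\Hom$ over fields — not merely the infinitesimal rigidity — is indispensable; everything else is formal. An alternative for the final step, once one has reduced to $K\supseteq k$ both algebraically closed, is to go through contravariant Dieudonné theory: $\Hom$ becomes the group of $F,V$-equivariant maps of Dieudonné modules over $W(k)$ and over $W(K)$; the rational comparison is the insensitivity of $\Hom$ between isocrystals to the algebraically closed base field; and one upgrades this to the integral statement using that $W(k)\to W(K)$ is faithfully flat, the delicate point being the lattice-level rather than isocrystal-level assertion.
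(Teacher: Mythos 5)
Your reduction to an extension $k\subset K$ of algebraically closed fields is correct and matches the paper's first step (the paper obtains the injectivity needed for this sandwich from flatness of the coordinate rings of the finite levels over an integral affine base; your argument via the affine Hom-scheme and the generic point is equivalent), and your rigidity paragraph is correct but is not where the difficulty lies. The genuine gap is exactly the step you flag yourself: nothing in the proposal shows that the stabilized scheme-theoretic images $\bar X_n$ are zero-dimensional. This cannot be waved through with ``formal unramifiedness and a constructibility argument'': the finite-level schemes $X_n=\UHom(\Gscr[p^n],\Hscr[p^n])$ are killed by $p^n$ and are typically positive-dimensional (e.g. $\UHom(\alpha_p,\alpha_p)\cong\BG_a$), so no torsion or group-scheme shortcut is available at finite level; and your finiteness input controls only the image of $X(\bar K)\to X_n(\bar K)$, namely $\Hom_{\bar K}(\Gscr,\Hscr)/p^n$, whereas $\bar X_n(\bar K)$ may a priori contain many points that lift through every finite stage $X_m$ without lifting to $X(\bar K)$ (inverse limits of nonempty sets of points can be empty). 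To close this one would have to argue, say, as follows: for $m\gg 0$ the image of $X_m(\bar K)$ is a dense constructible subset of $\bar X_n(\bar K)$ (Chevalley); over an uncountable algebraically closed $\bar K$ a positive-dimensional $\bar X_n$ would then contain uncountably many points lying in all these images; and each such point lifts to $X(\Omega)$ for a suitably larger algebraically closed field $\Omega$ (nonemptiness of limits of nonempty affine schemes), contradicting the uniform bound on $\Hom_\Omega(\Gscr,\Hscr)/p^n$. None of this is in the proposal, and the unramifiedness you establish plays no visible role in it.

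The ``alternative'' you mention at the end is in fact the paper's actual proof: after the reduction to $k\subset K$ algebraically closed, pass to contravariant Dieudonn\'e modules $M(\Gscr)$, $M(\Hscr)$. The point you call delicate is in fact immediate: since $M(\Gscr)$ and $M(\Hscr)$ are free over $W(k)$, a $W(K)$-linear, $F$- and $V$-compatible map between their base changes whose rationalization is defined over $B(k)$ has matrix entries in $W(K)\cap B(k)=W(k)$, so it descends integrally. The rational statement is then the Dieudonn\'e--Manin classification over algebraically closed fields: the isocrystals decompose into the simple objects $E^{\lambda}$, there are no nonzero maps between distinct slopes, and endomorphisms of $E^{\lambda}\otimes_{B(k)}B(K)$ all come from endomorphisms of $E^{\lambda}$. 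Carrying out these two steps completes your second route; as written, neither of your routes is complete, and the first has a substantive hole at its crux.
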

\begin{proof}
  Let $k'$ be the function field of $S$. First we claim that for any finite flat group schemes $G$ and $H$ over $S$ the natural map $\Hom_S(G,H)\to \Hom_{k'}(G_{k'},H_{k'})$ is injective: We may assume that $S$ is affine, say $S=\Spec(R)$. Then $G$ and $H$ are the spectrum of finite flat $R$-algebra $A_G$ and $A_H$. Since these are flat over $R$, the homomorphisms $A_G\to A_G\otimes_R k'$ and $A_H\to A_G\otimes_R k'$ are injective. Hence any homomorphism $A_H\to A_G$ of $R$ algebras is determined by its generic fiber $A_H\otimes_R k'\to A_G\otimes k'$. This shows that $\Hom_S(G,H)\to \Hom_{k'}(G_{k'},H_{k'})$ is injective. By applying this to group schemes $\Gscr[p^n]$ and $\Hscr[p^n]$ for $n\geq 0$ one gets that the homomorphism $\Hom_S(\Gscr_S,\Hscr_S)\to \Hom_{k'}(\Gscr_{k'},\Hscr_{k'})$ is injective. Thus we may assume that $S=\Spec(k')$. We may also assume that $k'$ is algebraically closed. 

We use the theory of Dieudonn\'e modules. Denote by $W(k)$ (resp. $W(k')$) the ring of Witt vectors of $k$ (resp. $k'$), by $\sigma$ the lift of Frobenius to these rings and by $B(k)$ (resp. $B(k')$) their quotient field. Let $M(\Gscr)$ and $M(\Hscr)$ be the contravariant Dieudonn\'e modules associated to $\Gscr$ and $\Hscr$. They are free $W(k)$-modules endowed with a $\sigma$-linear self-map $F$ and a $\sigma^{-1}$-linear self-map $V$. 

A homomorphism $\Gscr_{k'}\to \Hscr_{k'}$ corresponds to a $W(k)$-linear homomorphism $M(\Hscr_{k'})=M(\Hscr)\otimes_{W(k)}W(k')\to M(\Gscr_{k'})=M(\Gscr)\otimes_{W(k)}W(k')$ compatible with $V$ and $F$. We need to show that any such homomorphism arises from a homomorphism $M(\Hscr)\to M(\Gscr)$. Since we are dealing with free $W(k)$-modules, it suffices to prove that the induced homomorphism $M(\Hscr)\otimes_{W(k)}B(k')\to M(\Gscr)\otimes_{W(k)}B(k')$ arises from a homomorphism $M(\Hscr)\otimes_{W(k)}B(k)\to M(\Gscr)\otimes_{W(k)}B(k)$. The $B(k)$-vector spaces $M(\Hscr)\otimes_{W(k)}B(k')$ and $M(\Gscr)\otimes_{W(k)}B(k')$ together with the $\sigma$-linear endomorphism induced by $F$ are what is called an $F$-space in \cite[Chapter IV]{Demazure}. By the theorem in \cite[Section IV.4]{Demazure}, each such $F$-space is a direct sum of certain simple $F$-spaces denoted $E_k^\lambda$ for $\lambda\in \BQ^{\geq 0}$. Furthermore, by a proposition in \cite[Section IV.3]{Demazure}, if $\lambda\not=\lambda'$, any homomorphism $E_k^\lambda\to E_k^{\lambda'}$ of $F$-spaces is zero. Hence it suffices to prove that any endomorphism of $E_k^\lambda\otimes_{B(k)}B(k')$ of $F$-spaces arises from an endomorphism of $E_k^\lambda$. This follows from the description of such endomorphisms given by a proposition in \cite[Section IV.3]{Demazure}.
\end{proof}

\begin{proposition}\label{CSDConstant2}
  Let $R$ be a discrete valuation ring of characteristic $p$ with perfect residue field $k$ and perfection $R\into \Rper$. Let $\Gscr$ be a completely slope divisible group over $R$. There exists a unique isomorphism $\Gscr_{R^\text{perf}}\cong (\Gscr_k)_{R^\text{perf}}$ which is the identity in the fiber over $k$. In case $\Gscr$ has a single slope, this isomorphism is already defined over $R$.
\end{proposition}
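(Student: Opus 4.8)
The plan is to reduce to the case of a single slope via the (unique) splitting of the slope filtration over the perfect rings $\Rper$ and $k$, to handle a single slope by realizing $\Gscr$ as a constant group after passing to a strict henselization (Proposition~\ref{CSDConstant1}) and descending the resulting isomorphism back to $R$ by Galois descent, and to obtain uniqueness from the rigidity statement Proposition~\ref{ConstantHom}. Fix once and for all a coefficient field $k\into R$ of the residue map (it exists since $R$ is complete); this induces an embedding $k\into\Rper$ splitting $\Rper\twoheadrightarrow k$, which gives a meaning to $(\Gscr_k)_{\Rper}$, and then the ``fiber over $k$'' of any $\Rper$-object base-changed from $R$ or from $k$ is canonically identified with the corresponding object over $k$.

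\emph{Reduction to one slope.} Complete slope divisibility is preserved by arbitrary base change: the defining filtration, the closed subgroups $\ker[p^{r_i}]$ and $\ker\F^s$ inside it, and the inclusions resp.\ equalities between them from Definition~\ref{CSDDef} all commute with base change. Hence $\Gscr_{\Rper}$ and $\Gscr_k$ are completely slope divisible over the perfect rings $\Rper$ and $k$, and their slope filtrations split uniquely by Proposition~\ref{SlopeFiltrationSplits}. Since the filtration $(\Gscr_i)_i$ of $\Gscr$ base-changes to the slope filtration of $\Gscr_{\Rper}$ resp.\ of $\Gscr_k$ by the uniqueness of Remark~\ref{CSDUniqueness}, reducing the splitting over $\Rper$ modulo $\Fm_{\Rper}$ yields a splitting of $\Gscr_k$, hence the canonical one over $k$. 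Each graded piece $\Hscr_i:=\Gscr_i/\Gscr_{i-1}$ is a completely slope divisible $p$-divisible group of a single slope over $R$ (Proposition~\ref{CSDSubgroup} and the remark following Definition~\ref{CSDDef}). Thus it suffices to produce, for each $\Hscr_i$, an isomorphism $\Hscr_i\iso(\Hscr_{i,k})_R$ over $R$ reducing to the identity over $k$: applying $\bigoplus_i(-)_{\Rper}$ and using the compatibility of the two families of splittings then yields the asserted isomorphism $\Gscr_{\Rper}\iso(\Gscr_k)_{\Rper}$, which is the identity over $k$, and if $\Gscr$ itself has a single slope this isomorphism is the base change to $\Rper$ of one over $R$.

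\emph{The single-slope case.} Let $\Gscr$ be completely slope divisible of a single slope over $R$. Pass to the strict henselization $R^{\mathrm{sh}}$, a strictly henselian discrete valuation ring with residue field the algebraic closure $\bar k$ of the perfect field $k$, and extend the coefficient field to $\bar k\into R^{\mathrm{sh}}$. By the single-slope case of Proposition~\ref{CSDConstant1} (applicable since $R^{\mathrm{sh}}$ is strictly henselian) there are a $p$-divisible group $H$ over $\bar\BF_p$ and an isomorphism $\alpha\colon\Gscr_{R^{\mathrm{sh}}}\iso H_{R^{\mathrm{sh}}}$, using $\bar\BF_p\subseteq\bar k\subseteq R^{\mathrm{sh}}$. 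Writing $\bar\alpha\colon\Gscr_{\bar k}\iso H_{\bar k}$ for the reduction of $\alpha$, the composite $\beta:=(\bar\alpha^{-1})_{R^{\mathrm{sh}}}\circ\alpha\colon\Gscr_{R^{\mathrm{sh}}}\iso(\Gscr_k)_{R^{\mathrm{sh}}}$ reduces to the identity over $\bar k$. Let $\Gamma=\Aut_R(R^{\mathrm{sh}})=\mathrm{Gal}(\bar k/k)$. For $\gamma\in\Gamma$ the automorphism $\gamma(\beta)\circ\beta^{-1}$ of $(\Gscr_k)_{R^{\mathrm{sh}}}$ reduces to the identity over $\bar k$, and since $\Spec R^{\mathrm{sh}}$ is an integral scheme over $\bar k$, Proposition~\ref{ConstantHom} forces it to be constant, hence equal to its reduction, hence the identity; thus $\gamma(\beta)=\beta$. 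Because $\Gscr$ and $(\Gscr_k)_R$ are defined over $R$, this $\Gamma$-invariance is a descent datum for the faithfully flat, pro-Galois extension $R\to R^{\mathrm{sh}}$, so $\beta$ descends to an isomorphism $\beta_0\colon\Gscr\iso(\Gscr_k)_R$ over $R$; faithful flatness of $k\into\bar k$ shows $\beta_0$ reduces to the identity over $k$. This establishes the single-slope assertion, and with it the general case over $\Rper$.

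\emph{Uniqueness.} If $\beta,\beta'\colon\Gscr_{\Rper}\iso(\Gscr_k)_{\Rper}$ both reduce to the identity over $k$, then $\beta'\circ\beta^{-1}$ is an automorphism of $(\Gscr_k)_{\Rper}$ reducing to the identity over $k$. Base-change along the faithfully flat map $\Rper\to\Rper\otimes_k\bar k$; the target is an integral domain (the Henselizations of $R$ have residue field contained in $\bar k$, so no new zero-divisors appear) with residue field $\bar k$, so by Proposition~\ref{ConstantHom} over $\bar k$ the resulting automorphism of $(\Gscr_{\bar k})_{\Rper\otimes_k\bar k}$ is constant, hence equals its reduction over $\bar k$, namely the identity; faithful flatness gives $\beta=\beta'$. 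The main difficulty is the single-slope \emph{existence over $R$}: as $\Rper$ is neither henselian nor a field, Proposition~\ref{CSDConstant1} cannot be applied to it directly, and the detour through $R^{\mathrm{sh}}$ followed by descent is needed — which is also why the multi-slope statement, resting on the splitting of the slope filtration and hence on perfectness of $\Rper$, is obtained only over $\Rper$.
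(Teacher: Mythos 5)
Your proof is correct and rests on the same three pillars as the paper's --- constancy over a strictly henselian base (Proposition \ref{CSDConstant1}), rigidity of homomorphisms via Proposition \ref{ConstantHom}, and Galois descent --- but you arrange them differently in the multi-slope case. The paper strictly henselizes $\Rper$ itself, checks that this strict henselization is perfect, applies the full statement of Proposition \ref{CSDConstant1} there (which internally invokes the splitting of Proposition \ref{SlopeFiltrationSplits}), and then descends to $\Rper$; you instead split the slope filtration over $\Rper$ and over $k$ first, reduce to the single-slope case, and settle that case over $R$ via the strict henselization of $R$, exactly as the paper does for its final assertion. Your reorganization makes the refinement ``already defined over $R$ when there is a single slope'' fall out of the construction rather than being a separate remark, and it spares you the perfectness check for the strict henselization of $\Rper$, at the price of the (correctly handled) bookkeeping that the unique splittings over $\Rper$ and over $k$ are compatible under reduction. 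Two points to tighten: your justification that $\Rper\otimes_k\bar k$ is a domain is not really an argument as written --- the clean reason is that $k$ is separably algebraically closed in $K^{\text{per}}$, since any finite separable extension of $k$ inside the fraction field is integral over the integrally closed ring $R$, hence lies in $R$ and embeds $k$-linearly into the residue field $k$, forcing it to equal $k$ (alternatively, run the uniqueness over the strict henselization of $\Rper$, as the paper implicitly does). And your appeal to completeness of $R$ for the existence of a coefficient field $k\into R$ invokes a hypothesis the proposition does not state; some such $k$-algebra structure is needed even to make sense of $(\Gscr_k)_{\Rper}$, and it is canonical in the paper's application $R=\BF_q[[t]]$, but it should be flagged as part of the setup rather than derived from completeness.
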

\begin{proof}
  Let $\Rper\into R^\text{psh}$ be a strict henselization of $\Rper$ and $\bar k$ the residue field of $R^\text{psh}$. Note that $k\into \bar k$ is an algebraic closure of $k$. Let $\Rper\into R'$ be a finite etale extension of $\Rper$. Using the fact that the relative Frobenius morphism of $R'$ over $\Rper$ is an isomorphism one sees that $R'$ is again perfect. Hence $R^\text{psh}$ is perfect. Hence by Proposition \ref{CSDConstant1} there exists a $p$-divisible group $\Gscr'$ over $\bar k$ such that $\Gscr_{R^\text{psh}}\cong \Gscr'_{R^\text{psh}}$. By taking the special fiber of this isomorphism, we get $\Gscr_{\bar k}\cong \Gscr'$, so that we may take $\Gscr'=\Gscr_{\bar k}$. Then Proposition \ref{ConstantHom} implies that there exists a unique isomorphism $\psi\colon (\Gscr_{\bar k})_{R^\text{psh}}\cong \Gscr_{R^\text{psh}}$ which is the identity in the special fiber. For any $\sigma\in \Aut(R^\text{psh}/\Rper)$, the conjugate of $\psi$ by $\sigma$ is again the identity in the special fiber and thus is equal to $\psi$. Thus $\psi$ is defined over $R^\text{perf}$ by Galois descent. 

In case $\Gscr$ has a single slope, one does not need to pass to $\Rper$ to split the slope filtration. Thus with the same argument as above one obtains $\psi$ over a strict henselization of $R$ and sees that it is defined over $R$ by Galois descent.
\end{proof}

 Now let $R$ be a discrete valuation ring as in Section \ref{sec:FormalSchemes}. The results in this subsection are formulated for $p$-divisible groups over $\Spec(R)$, however below we will work with $p$-divisible groups over $\Spf(R)$. Thus we will need the following:
\begin{proposition}[{\cite[Lemma 4.16]{MessingCrystals}}] \label{BTFormal}
  Let $R'$ be the valuation ring of a complete overfield $K'\subset \hat{\bar K}$ of $K$. The base change functor $\Gscr\mapsto \Gscr_{\Spf(R')}$ from the category of $p$-divisible groups over $\Spec(R')$ to the category of $p$-divisible groups over $\Spf(R')$ is an equivalence.
\end{proposition}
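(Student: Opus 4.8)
The plan is to reduce the statement to a gluing assertion for finite free modules over $R'$. Recall that, in the sense of \cite{MessingCrystals}, a $p$-divisible group over the formal scheme $\Spf(R')$ is a compatible system of $p$-divisible groups over the affine schemes $\Spec(R'/(\Fm R')^{i})$ for $i\geq 0$, and the base change functor sends a $p$-divisible group $\Gscr$ over $\Spec(R')$ to the system of its reductions. The first point to record is that $R'$ is $\Fm R'$-adically complete and separated: one has $\Fm R'=tR'$ for a uniformizer $t$ of $R$, the ring $R'$ is $t$-adically complete because $K'$ is complete, and $\bigcap_i t^iR'=0$ since the valuation of $R'$ has rank one. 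Moreover $R'$ and every $R'/(\Fm R')^i$ are local, so that over all of them \emph{finite locally free} coincides with \emph{finite free}.

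The heart of the argument is the following module-theoretic statement: the functor $M\mapsto\bigl(M/(\Fm R')^iM\bigr)_{i\geq 0}$ is an equivalence from finite free $R'$-modules to compatible systems $(M_i)_{i\geq 1}$ of finite free $R'/(\Fm R')^i$-modules with transition maps given by isomorphisms $M_{i+1}/(\Fm R')^iM_{i+1}\iso M_i$, a quasi-inverse being $(M_i)\mapsto\varprojlim_i M_i$. The only non-formal point is that $M\defeq\varprojlim_i M_i$ is again finite free of the common rank $r$. To see this, lift a basis of $M_1$ to elements $e_1,\dots,e_r\in M$, obtaining a homomorphism $(R')^r\to M$, and show by induction on $i$ that its reduction $(R'/(\Fm R')^i)^r\to M_i$ is an isomorphism. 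The inductive step compares the two short exact sequences
\begin{equation*}
 0\to\bigl((\Fm R')^i/(\Fm R')^{i+1}\bigr)^{\oplus r}\to (R'/(\Fm R')^{i+1})^r\to (R'/(\Fm R')^i)^r\to 0
\end{equation*}
and
\begin{equation*}
 0\to (\Fm R')^i/(\Fm R')^{i+1}\otimes_{R'/\Fm R'}M_1\to M_{i+1}\to M_i\to 0
\end{equation*}
(the left-hand one coming from the freeness of $M_{i+1}$ of rank $r$): the induced map is the inductive isomorphism on the right-hand terms, and on the left-hand terms it is $\operatorname{id}$ tensored with the base-case isomorphism $(R'/\Fm R')^r\iso M_1$, hence it is an isomorphism in the middle. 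Passing to the inverse limit and using completeness and separatedness of $R'$ yields $(R')^r\iso M$. No noetherian hypothesis on $R'$ is needed. I would also record that this equivalence is compatible with tensor products (for finite free modules, the ordinary and completed tensor products agree) and detects exactness of sequences of finite free modules layer by layer.

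The remaining bootstrap is formal. A finite locally free commutative cocommutative Hopf algebra over $R'$ is a finite free $R'$-module together with structure maps satisfying finitely many identities between homomorphisms of finite free modules, so the previous step shows that finite locally free group schemes over $\Spec(R')$ are equivalent to compatible systems of such group schemes over the $\Spec(R'/(\Fm R')^i)$; the same holds on morphisms, since a homomorphism of finite free group schemes is an element of the finite free module of homomorphisms of coordinate rings cut out by the Hopf-compatibility equations, and these survive passage to the inverse limit. A $p$-divisible group over $\Spec(R')$ is an ind-system $(\Gscr[p^n])_{n}$ of finite locally free group schemes subject to the Barsotti--Tate axioms — each $\Gscr[p^n]$ is killed by $p^n$, and $0\to\Gscr[p]\to\Gscr[p^{n+1}]\xrightarrow{[p]}\Gscr[p^n]\to 0$ is exact — all of which are conditions on maps and exact sequences of finite free modules and can therefore be tested modulo $(\Fm R')^i$ for every $i$. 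Combining this with the identity $\Hom(\Gscr,\Hscr)=\varprojlim_n\Hom(\Gscr[p^n],\Hscr[p^n])$, valid on both sides, one obtains full faithfulness and essential surjectivity of the base change functor.

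I expect the finite-free gluing lemma of the second paragraph to be the main — and essentially the only — obstacle: since the valuation rings $R'$ in play are in general not noetherian, Grothendieck's formal existence theorem is not available, so this lemma has to be proved by the explicit layer-by-layer induction sketched above; granting it, the passage to Hopf algebras and then to Barsotti--Tate systems is routine.
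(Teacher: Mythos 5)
The paper offers no argument for this proposition at all: it is quoted verbatim from Messing (Lemma II 4.16), so there is nothing internal to compare your proof against line by line. Your proposal is in effect a proof of the cited result, and it follows the natural d\'evissage: reduce $p$-divisible groups to their finite locally free torsion levels, reduce those to finite free Hopf algebras, and reduce the gluing of finite free modules along $\Spf(R')=\varinjlim\Spec(R'/(\Fm R')^i)$ to an explicit layer-by-layer induction. Your preliminary observations are correct ($\Fm R'=tR'$ for a uniformizer $t$ of $R$, $R'$ is $t$-adically complete and separated because $K'$ is complete of rank one, and over the local rings $R'$ and $R'/(\Fm R')^i$ finite locally free means finite free), and your proof of the key module lemma is sound: the surjectivity of $\varprojlim M_i\to M_1$ needed to lift a basis holds by Mittag-Leffler, the two short exact sequences you compare are exact precisely because each $M_{i+1}$ is assumed free, and the five lemma plus completeness gives $(R')^r\iso\varprojlim M_i$ with no noetherian hypothesis. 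Full faithfulness, via $\Hom(\Gscr,\Hscr)=\varprojlim_n\Hom(\Gscr[p^n],\Hscr[p^n])$ and the faithfulness of the module functor, is also fine.

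The one place you are too quick is the assertion that the Barsotti--Tate axioms ``are conditions on maps and exact sequences of finite free modules and can therefore be tested modulo $(\Fm R')^i$''. Exactness of $0\to\Gscr[p^n]\to\Gscr[p^{n+m}]\xrightarrow{[p^n]}\Gscr[p^m]\to 0$ is exactness of fppf sheaves, i.e.\ the identification of $\Gscr[p^n]$ with $\ker([p^n])$ together with faithful flatness of $[p^n]$; this is not literally module-level exactness of the Hopf algebras (the relevant algebra maps go the other way), and the coordinate ring of $\ker([p^n])$, namely $A_{n+m}/J$ with $J$ generated by $[p^n]^*$ of the augmentation ideal, is not a priori finite free, so your layer-by-layer exactness lemma does not apply to it verbatim. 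The gap is repairable inside your framework: formation of $J$ commutes with base change, so the condition to descend is that $A_{n+m}/J\to A_n$ be an isomorphism; surjectivity follows from level $1$ by Nakayama (the cokernel is finitely generated and killed by reduction mod $t$, and $t$ lies in the maximal ideal), and injectivity follows because the surjection onto the finite free module $A_n$ splits, so its kernel is a finitely generated direct summand which dies modulo $t^i$ for every $i$, hence vanishes by Nakayama and separatedness. (Alternatively one can descend faithful flatness of $[p^n]$ itself, using that $R'$ is henselian, so the finite $R'$-algebra $A_m$ splits into local factors over which finitely presented flat modules are free, and then the same Nakayama/separatedness argument.) With that supplement your proof is complete; as written, this step is asserted rather than proved.
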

Accordingly we define:
\begin{definition}
  Let $R'$ be the valuation ring of a complete overfield $K'\subset \hat{\bar K}$ of $K$. A $p$-divisible group over $\Spf(R')$ is completely slope divisible if and only if the corresponding group over $\Spec(R')$ is completely slope divisible.
\end{definition}
\subsection{Nice Semiabelian Schemes}\label{sec:NiceSetup}
Let $K$ be a local field of characteristic $p>0$ with valuation ring $R$. Let $\bar K$ be an algebraic closure of $K$ and $\bar R$ the valuation ring of $\bar K$. Let $\Rper\subset \bar R$ be the perfection of $R$. Denote by $k$ (resp. $\bar k$) the residue field of $R$ (resp. $\bar R$).
\begin{definition}
 We call a semiabelian scheme $\CA$ over $\Spec(R)$ \emph{nice} if $\CA$ is an extension of an abelian scheme by a torus over $\Spec(R)$ and the $p$-divisible group $\hat\CA$ over $\Spf(R)$ is completely slope divisible.
\end{definition}

Let $\CA$ be a nice semiabelian scheme over $R$.

\begin{lemma} \label{NiceSubgroup}
  Let $\CB\subset \CA$ be a semiabelian subgroup scheme. Then $\CB$ and $\CA/\CB$ are nice.
\end{lemma}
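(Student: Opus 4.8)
The plan is to verify both properties in the definition of ``nice'' separately: that $\CB$ and $\CA/\CB$ are each extensions of an abelian scheme by a torus, and that their completions along the zero section are completely slope divisible $p$-divisible groups over $\Spf(R)$. First I would recall that for a semiabelian scheme, being an extension of an abelian scheme by a torus over a base is stable under passing to a semiabelian subgroup scheme and to the quotient; over a discrete valuation ring this is part of the standard theory (the toric and abelian parts of a semiabelian scheme behave well under these operations, using that $R$ is a Dedekind ring so the relevant subschemes stay flat). So the first half is essentially bookkeeping with the structure theory of semiabelian schemes over $R$.

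The substantive point is the complete slope divisibility of $\hat\CB$ and $\widehat{\CA/\CB}$. I would argue as follows. Completion along the zero section is an exact functor on the relevant categories in the sense that the short exact sequence of semiabelian schemes $0\to \CB\to \CA\to \CA/\CB\to 0$ induces a short exact sequence of $p$-divisible groups $0\to \hat\CB\to \hat\CA\to \widehat{\CA/\CB}\to 0$ over $\Spf(R)$ --- here one uses that passing to connected components of $p$-divisible groups and formal completion are compatible, and that $[p]$ remains an epimorphism on each term, so by the discussion after Proposition \ref{CompletionImage} (citing \cite[Corollary 4.5]{MessingCrystals}) all three completions are genuinely $p$-divisible groups. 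Thus $\hat\CB$ is a $p$-divisible subgroup of the completely slope divisible $p$-divisible group $\hat\CA$, with quotient $\widehat{\CA/\CB}$. Now Proposition \ref{CSDSubgroup} applies directly and says exactly that a $p$-divisible subgroup of a completely slope divisible $p$-divisible group, and the corresponding quotient, are again completely slope divisible. Combining this with the first half, both $\CB$ and $\CA/\CB$ are nice.

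The main obstacle I anticipate is the identification of $\hat\CB$ as a $p$-divisible \emph{subgroup} of $\hat\CA$ (rather than merely a closed formal subgroup scheme), and correspondingly of $\widehat{\CA/\CB}$ as the quotient $p$-divisible group: one must check that the inclusion $\CB\hookrightarrow\CA$ induces a closed immersion of $p$-divisible groups whose quotient is the $p$-divisible group $\widehat{\CA/\CB}$, i.e. that completion commutes with the formation of the quotient $p$-divisible group. This is where Proposition \ref{CSDSubgroup} demands its hypotheses be literally met. I would handle it by working level by level with the finite flat group schemes $\CB[p^n]^\circ$, $\CA[p^n]^\circ$, $(\CA/\CB)[p^n]^\circ$ and using that completion along the zero section of a finite flat group scheme over $R$ is harmless (these are already supported at the origin on the special fiber up to their étale part, and we take the connected part), so the exact sequences at finite level are preserved; passing to the limit gives the exact sequence of $p$-divisible groups over $\Spf(R)$. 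Once this exactness is in hand, the rest is an immediate appeal to Proposition \ref{CSDSubgroup}.
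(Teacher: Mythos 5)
Your proposal is correct and follows essentially the same route as the paper: the substantive step is exactly the paper's appeal to Proposition \ref{CSDSubgroup} applied to $\hat\CB\subset\hat\CA$ (the paper leaves the exactness of $0\to\hat\CB\to\hat\CA\to\widehat{\CA/\CB}\to 0$ implicit, which you spell out). For the first half, where you invoke ``standard theory'', the paper gives the concrete argument: the toric ranks of $\CB$ and $\CA/\CB$ have constant sum and can only increase under specialization, hence are constant, after which \cite[Corollary 2.11]{ChaiFaltings} yields that both are extensions of an abelian scheme by a torus.
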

\begin{proof}
  The ranks of the toric parts of $\CB$ and $\CA/\CB$ are constant since they have constant sum and can only go up upon specialization. Thus by \cite[Corollary 2.11]{ChaiFaltings} both $\CB$ and $\CA/\CB$ are extensions of an abelian scheme by a torus over $S$. Proposition \ref{CSDSubgroup} shows that the formal completions of both group schemes are again completely slope divisible.
\end{proof}

\begin{construction} \label{FConstr}
  We construct an isogeny $F_{\hat \CA}\colon \hat\CA_{\Spf(\Rper)}\to\hat\CA_{\Spf(\Rper)}$ as follows: Propositions \ref{CSDConstant2} and \ref{BTFormal} yield a unique isomorphism $(\hat\CA_k)_{\Spf(\Rper)}\cong \hat \CA_{\Spf(\Rper)}$ which is the identity on the special fiber. The $p$-divisible group $\hat\CA_k$, being defined over the finite field $k$, has a Frobenius endomorphism with respect to $k$. Transfering the base change of this Frobenius endomorphism to $\Spf(\Rper)$ to an endomorphism of $\hat \CA_{\Spf(\Rper)}$ via the above isomorphism yields $F_{\hat \CA}$.
\end{construction}

The following summarizes the relevant properties of $F_{\hat \CA}$:
\begin{proposition} \label{FProperties}
  \begin{enumerate}[(i)]
  \item There exists a good polynomial which annihilates $F_{\hat \CA}$.
  \item The endomorphism $F_{\hat \CA}$ is the Frobenius endomorphism with respect to a suitable model of $\hat\CA_{\Spf(\Rper)}$ over the finite field $k$.
  \item Let $\CB$ be another nice semiabelian scheme over $R$. For any homomorphism $f\colon \CA\to \CB$, the induced homomorphism $\hat f_{\bar R}\colon \hat\CA_{\Spf(\Rper)}\to\hat\CB_{\Spf(\Rper)}$ satisfies $F_{\hat\CB}\circ\hat f_{\Spf(\Rper)}=\hat f_{\Spf(\Rper)}\circ F_{\hat\CA}$. 
  \item In case $\CA$ has a model over $k$, the endomorphism $F_{\hat\CA}$ is the one induced by the Frobenius endomorphism of such a model.
  \item If one replaces $R$ by a finite extension $R'$ contained in $\bar R$, then $F_{\hat\CA}$ is replaced by $F_{\hat\CA}^N$, where $N$ is the degree of the extension of the residue fields of $R$ and $R'$. 
  \end{enumerate}
\end{proposition}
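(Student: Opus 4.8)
The plan is to reduce each of $(i)$--$(v)$ to the defining property of $F_{\hat\CA}$. By Construction \ref{FConstr}, the canonical isomorphism $\psi_\CA\colon(\hat\CA_k)_{\Spf(\Rper)}\iso\hat\CA_{\Spf(\Rper)}$ --- which is the \emph{unique} isomorphism restricting to the identity on the special fibre --- identifies $\hat\CA_{\Spf(\Rper)}$ with the base change of the $p$-divisible group $\hat\CA_k$ over the (finite) residue field $k$, and carries the $|k|$-power Frobenius endomorphism $\pi_\CA$ of $\hat\CA_k$ to $F_{\hat\CA}$. Throughout I will use the rigidity input that, by Propositions \ref{ConstantHom} and \ref{BTFormal}, any homomorphism of $p$-divisible groups over $\Spf(\Rper)$ is determined by its restriction to the fibre over $k$; this is exactly what makes $\psi_\CA$ both unique and functorial.

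Granting this, $(ii)$ is immediate: $(\hat\CA_k,\psi_\CA)$ exhibits $\hat\CA_{\Spf(\Rper)}$ as the base change of a $p$-divisible group over $k$ whose $|k|$-power Frobenius is $F_{\hat\CA}$. For $(iv)$, suppose $\CA\cong(\CA_0)_R$ for a semiabelian variety $\CA_0$ over $k$; then $\hat\CA$ is the pullback of $\hat\CA_0$ to $\Spf(R)$, so both sides of $\psi_\CA$ are canonically $(\hat\CA_0)_{\Spf(\Rper)}$ and the identity map between them is the identity on the special fibre, hence equals $\psi_\CA$ by uniqueness; since $F_{\hat\CA}$ is by construction the $|k|$-power Frobenius of $\hat\CA_0$, it is the completion along the zero section of the Frobenius endomorphism of the model $\CA_0$, and in particular is already defined over $R$.

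For $(i)$ I will produce an explicit good polynomial. By the above, $F_{\hat\CA}$ is the $q$-power Frobenius ($q=|k|$) of $\hat\CA_k=\CA_k[p^\infty]^\circ$, the connected part of the $p$-divisible group of the semiabelian variety $\CA_k$ over the finite field $k$; this is the restriction of the $q$-power Frobenius endomorphism $\pi$ of $\CA_k$. The characteristic polynomial $P\in\BZ[t]$ of $\pi$ (the characteristic polynomial of its faithful action on the $\ell$-adic Tate module of $\CA_k$, $\ell\ne p$) annihilates $\pi$ in $\End(\CA_k)$ by Cayley--Hamilton, hence annihilates $F_{\hat\CA}$. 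Using the semiabelian structure $0\to\bar\CT\to\CA_k\to\bar\CB\to0$, the roots of $P$ are the Frobenius eigenvalues of the abelian quotient $\bar\CB$ --- $q$-Weil numbers of weight $1$ --- together with those of the torus $\bar\CT$, which are $q$ times roots of unity; in either case every root has archimedean absolute value $q^{1/2}$ or $q$, so none is a root of unity and $P(0)\ne0$. Hence $P$ is good.

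For $(iii)$, I would first check the functoriality of $\psi$: given a homomorphism $f\colon\CA\to\CB$ of nice semiabelian schemes over $R$, the two homomorphisms $\psi_\CB\circ(\hat f_k)_{\Spf(\Rper)}$ and $\hat f_{\Spf(\Rper)}\circ\psi_\CA$ restrict to the same map $\hat f_k$ on the fibre over $k$, hence coincide by the rigidity above. Because the Frobenius endomorphism is a natural transformation, $\pi_\CB\circ\hat f_k=\hat f_k\circ\pi_\CA$, and transporting this identity through $\psi_\CA$ and $\psi_\CB$ gives $F_{\hat\CB}\circ\hat f_{\Spf(\Rper)}=\hat f_{\Spf(\Rper)}\circ F_{\hat\CA}$. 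Finally, for $(v)$: a finite extension $R\subset R'\subset\bar R$ with residue degree $N$ has perfection $\Rper'\supset\Rper$ with residue field the degree-$N$ extension $k'\supset k$, and $\CA_{R'}$ is again nice; $\psi_{\CA_{R'}}$ is the base change of $\psi_\CA$ along $\Rper\to\Rper'$ (it is the identity on the fibre over $k'$, so this follows from the uniqueness in Construction \ref{FConstr}), while the $|k'|$-power Frobenius of $\hat\CA_k\times_kk'$ is the $N$-th power of the $|k|$-power Frobenius of $\hat\CA_k$ because $|k'|=|k|^N$; transporting through $\psi_{\CA_{R'}}$ gives $F_{\hat\CA_{R'}}=(F_{\hat\CA})^N$. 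The main obstacle is the rigidity statement --- that a morphism of $p$-divisible groups over $\Spf(\Rper)$ is pinned down by its special fibre --- since it is what secures both the uniqueness and the functoriality of $\psi_\CA$, on which $(ii)$--$(v)$ rest; statement $(i)$ is then just the Weil bounds applied to $\CA_k$.
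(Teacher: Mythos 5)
Your proof is correct and follows essentially the same route as the paper: everything is reduced to the uniqueness and rigidity of the canonical isomorphism $(\hat\CA_k)_{\Spf(\Rper)}\cong\hat\CA_{\Spf(\Rper)}$ coming from Propositions \ref{CSDConstant2}, \ref{BTFormal} and \ref{ConstantHom}, with part $(i)$ handled by the Riemann hypothesis (Weil bounds) for the Frobenius of the semiabelian special fiber, exactly as the paper does via its citation. The only differences are cosmetic: you spell out the Weil-number computation and the base-change compatibilities for $(iv)$ and $(v)$ that the paper dismisses as immediate from the construction, and in $(iii)$ you use the injectivity part of the rigidity statement where the paper invokes the surjectivity part of Proposition \ref{ConstantHom}.
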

\begin{proof}
  $(i)$ By the construction of $F_{\hat\CA}$ it suffices to show that there exists a good polynomial which annihilates the Frobenius endomorphism of $\CA_k$. This follows from the Riemann hypothesis for abelian varieties, see for example \cite[Fact 3.1]{GhiocaMoosa}. 
 
$(ii)$ This follows directly from the construction.

$(iii)$ Pick isomorphisms $\hat\CA_\Rper\cong (\hat\CA_k)_\Rper$ and $\hat\CB_\Rper\cong (\hat\CB_k)_\Rper$ as in Construction \ref{FConstr}. Under these identifications by Proposition \ref{ConstantHom} the homomorphism $\hat f_\Rper\colon \hat\CA_\Rper\to\hat\CB_\Rper$ arises by base change from its special fiber $\hat f_k\colon\hat\CA_k\to \hat\CB_k$. Since the latter is defined over $k$, it is compatible with the Frobenius endomorphisms of $\hat\CB_k$ and $\hat\CA_k$. This implies $F_{\hat\CB}\circ\hat f_\Rper=\hat f_\Rper\circ F_{\hat\CA}$. 

$(iv)$ This follows directly from the construction.

$(v)$ This follows directly from the construction.

\end{proof}

\begin{theorem} \label{SpecialFChar}
   Let $X\subset \CA_K$ be an irreducible subvariety and $\CX$ its schematic closure in $\CA$. Then the following are equivalent:
  \begin{enumerate}[(i)]
  \item The subvariety $X_{\bar K}$ is special in $\CA_{\bar K}$.
  \item There exist a finite field extension $\tilde K\subset \bar K$ of $K$ with valuation ring $\tilde R$, $x\in X(\tilde K)$ and $n\geq 1$ such that $F^n_{\hat\CA}((\widehat{\CX_{\tilde R}-x})_{\Spf(\tilde{R}^\text{per})})\subset (\widehat{\CX_{\tilde R}-x})_{\Spf(\tilde{R}^\text{per})}$.
  \item There exist $x\in X(\bar K)$, $n \geq 1$ and subset $T\subset \hat\CA(\bar R)\cap (X-x)(\bar K)$ which is Zariski dense in $X_{\bar K}-x$ and which satisfies $F^n_{\hat\CA}(T)\subset T$.
  \end{enumerate}
\end{theorem}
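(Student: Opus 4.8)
The plan is to prove the cycle of implications $(\mathrm{i})\Rightarrow(\mathrm{ii})\Rightarrow(\mathrm{iii})\Rightarrow(\mathrm{i})$. Throughout one is free to enlarge $K$ by a finite extension, since by Proposition~\ref{FProperties}(v) this only replaces $F_{\hat\CA}$ by a power of itself, which affects none of the three conditions; in particular I may assume $X$ is geometrically irreducible, so that $\CX_{\tilde R}$ is integral and $\CX_{\bar K}=X_{\bar K}$ for every finite $\tilde R\supset R$.

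For $(\mathrm{ii})\Rightarrow(\mathrm{iii})$: with $\tilde K,\tilde R,x,n$ as in (ii), put $\CZ:=\CX_{\tilde R}-x$ and $T:=\hat\CZ(\bar R)$, which sits inside $\hat\CA(\bar R)$ via the closed embedding $\hat\CZ\subset(\hat\CA)_{\tilde R}$ and inside $(X-x)(\bar K)$ via $\bar R\subset\bar K$. Since $F^n_{\hat\CA}$ maps $(\hat\CZ)_{\Spf(\tilde R^\per)}$ into itself it maps $T$ into itself, and $T$ is Zariski dense in $X_{\bar K}-x$ by Proposition~\ref{FormalPointsDense}(ii), applied to the reduced, flat, integral, finite-type $\tilde R$-scheme $\CZ$ together with the zero section of its special fibre: already the $\bar R$-points of one irreducible component of $\hat\CZ$ are schematically dense in $\CZ_{\bar R}$. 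For $(\mathrm{iii})\Rightarrow(\mathrm{i})$: set $G:=\hat\CA(\bar R)$, viewed as the subgroup of $\CA_{\bar K}(\bar K)$ of points reducing to $0$, and $\Phi:=F^n_{\hat\CA}|_G$. By Proposition~\ref{FProperties}(i),(ii) a good polynomial annihilates $F_{\hat\CA}$, and since the $n$-th powers of the complex roots of a good polynomial again form a good polynomial, a good polynomial annihilates $\Phi$. As $T\subset G\cap(X_{\bar K}-x)(\bar K)$ is Zariski dense in $X_{\bar K}-x$ and $\Phi(T)\subset T$, Theorem~\ref{InfSpecialCrit} shows $X_{\bar K}-x$ is special, hence so is its translate $X_{\bar K}$.

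The substance is $(\mathrm{i})\Rightarrow(\mathrm{ii})$. First one reduces to the case $\Stab_{\CA_{\bar K}}(X_{\bar K})$ finite: after a finite extension of $K$ the identity component of the stabilizer spreads out (semistable reduction) to a semiabelian subgroup scheme $\CS\subset\CA_{\tilde R}$; by Lemma~\ref{NiceSubgroup} the quotient $\CA':=\CA_{\tilde R}/\CS$ is nice, $\CX_{\tilde R}=q^{-1}(\CX')$ and $\widehat{\CX_{\tilde R}-x}=\hat q^{-1}(\widehat{\CX'-q(x)})$ for the quotient map $q$ and $\CX':=q(\CX_{\tilde R})$, and since $\hat q\circ F_{\hat\CA}=F_{\widehat{\CA'}}\circ\hat q$ by Proposition~\ref{FProperties}(iii), the statement for $\CA',\CX'$ yields the one for $\CA,\CX$. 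So assume the stabilizer finite. By Lemma~\ref{SpecialEquivalence} there are a subvariety $Y$ of $\Tr_{\bar K/\bar\BF_p}\CA_{\bar K}$ defined over $\bar\BF_p$ and $a\in\CA_{\bar K}(\bar K)$ with $X_{\bar K}+a=\tau(Y)+\Stab$; finiteness of the stabilizer and irreducibility force $X_{\bar K}=\tau(Y)+c$ for a single point $c$, and translating $Y$ by a point of $Y(\bar\BF_p)$ we may assume $0\in Y$, hence $c\in X_{\bar K}(\bar K)$. Over a large enough finite $\tilde K$ everything is defined; since $\Tr_{\bar K/\bar\BF_p}\CA_{\bar K}$ is proper, $\tau$ extends to a homomorphism $\tilde\tau\colon(\Tr)_{\tilde R}\to\CA_{\tilde R}$, and $c$ extends to a point of $\CX_{\tilde R}(\tilde R)$ — one produces an $\bar R$-point of $\CX_{\tilde R}$ from generic smoothness of its special fibre over the finite, hence perfect, residue field, and then solves for $c$ using that $\tilde\tau$ is defined on all of $(\Tr)_{\tilde R}$. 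Taking $x:=c$, the scheme $\CX_{\tilde R}-x$ has generic fibre $(\tau(Y))_{\tilde K}$, and by Lemma~\ref{PreimageImage} and Proposition~\ref{CompletionImage} its completion $\widehat{\CX_{\tilde R}-x}$ along the zero section is the formal schematic image under $\hat{\tilde\tau}$ of the completion along $0$ of a scheme constant over a finite field (the closure of $(\tau^{-1}\tau(Y))_{\tilde K}$ in $(\Tr)_{\tilde R}$). That completion is stable under a power of its Frobenius because it descends to a finite field; and — the crucial point — $\hat{\tilde\tau}_{\Spf(\tilde R^\per)}$ intertwines this Frobenius with a power of $F_{\widehat{\CA'}}$. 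This compatibility is established as in the proof of Proposition~\ref{FProperties}(iii): reducing modulo the maximal ideal, both sides become $\widehat{\tilde\tau_{\tilde k}}$, which is defined over the finite residue field $\tilde k$ and hence Frobenius-equivariant, while by Proposition~\ref{ConstantHom} the map $\hat{\tilde\tau}_{\Spf(\tilde R^\per)}$ is the base change of $\widehat{\tilde\tau_{\tilde k}}$ and so respects the canonical trivializations of Proposition~\ref{CSDConstant2} that define the two Frobenii. Hence $\widehat{\CX_{\tilde R}-x}$ is stable under a power $F^n_{\hat\CA}$, which is (ii).

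I expect the main obstacle to be precisely this last step of $(\mathrm{i})\Rightarrow(\mathrm{ii})$: manufacturing from the mere specialness of $X$ a formal completion that is genuinely $F_{\hat\CA}$-stable, which forces one to transport the $\Frob$-equivariance visible over the finite residue field up to the perfection $\Rper$ through the canonical trivialization $\hat\CA_{\Spf(\Rper)}\cong(\hat\CA_k)_{\Spf(\Rper)}$, all the while spreading out the translation stabilizer to a nice semiabelian subscheme and keeping track of the translations, isogenies and finite base extensions involved; by comparison $(\mathrm{ii})\Rightarrow(\mathrm{iii})$ and $(\mathrm{iii})\Rightarrow(\mathrm{i})$ are short applications of Proposition~\ref{FormalPointsDense} and Theorem~\ref{InfSpecialCrit}.
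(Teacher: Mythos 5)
Your proofs of $(ii)\Rightarrow(iii)$ and $(iii)\Rightarrow(i)$ are correct and coincide with the paper's (Proposition \ref{FormalPointsDense}(ii) for density, Theorem \ref{InfSpecialCrit} with $G=\hat\CA(\bar R)$, $\Phi=F^n_{\hat\CA}$, plus the observation that a good polynomial annihilating $F_{\hat\CA}$ yields one annihilating $F^n_{\hat\CA}$). The genuine gap is in $(i)\Rightarrow(ii)$, at exactly the point you flag as crucial. You take $x:=c$, the translation constant with $0\in Y$, and then claim that $\widehat{\CX_{\tilde R}-x}$ \emph{is} the formal schematic image under $\hat{\tilde\tau}$ of a formal scheme defined over a finite field. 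But Proposition \ref{CompletionImage} does not give this: it only says that every irreducible component of the formal schematic image is an irreducible component of $\widehat{\CX_{\tilde R}-x}$. If the reduction of $x$ is a non-smooth point of $\CX_{\tilde R}$ over $\tilde R$, the completion can have several components (analytic branches), the image may hit only some of them, and you then obtain $F^n$-stability only of a union of some components, which is strictly weaker than statement (ii). The paper closes this hole by a step you omit: since $Y$ is defined over a finite field, $h(Y(\bar\BF_p))$ is Zariski dense in $X_{\bar K}$, so one can translate by a point $h(y)$, $y\in Y(\bar\BF_p)$, at which $\CX$ is smooth over $R$ and $h$ is flat; then $\widehat{\CX}\cong\Spf(R[[x_1,\hdots,x_n]])$ is irreducible by Proposition \ref{HatXProps}(i), and only then does Proposition \ref{CompletionImage} force the image to be all of $\widehat{\CX}$. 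Without choosing the translation point this way (or some substitute argument), your identification of $\widehat{\CX_{\tilde R}-x}$ with the image is unjustified.

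Two further points in the same step do not work as written. First, Lemma \ref{PreimageImage} is about isogenies of connected $p$-divisible groups, whereas $\hat{\tilde\tau}\colon \widehat{(\Tr_{L/\bar\BF_p}\CA_{\bar K})_{\tilde R}}\to\hat\CA_{\tilde R}$ is in general not an isogeny (the trace typically has strictly smaller dimension than $\CA$), so it cannot be used to pass between $\widehat{\CX-x}$ and a preimage on the trace side. Second, the auxiliary scheme you feed into it, the closure of $(\tau^{-1}\tau(Y))_{\tilde K}$, involves $\ker\tau$, which is a subgroup scheme defined a priori only over $\tilde K$ (not over a finite field), so its completion has no reason to ``descend to a finite field'' and be Frobenius-stable; the paper instead works directly with the constant scheme $Y_{\tilde R}$ (equivalently $B_{\tilde R}$ for $B$ over a finite field) as the source, whose formal completion is manifestly stable under a power of Frobenius, and transports the equivariance via Proposition \ref{FProperties}(iii). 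Minor additional remarks: the extension of $\tau$ over $\tilde R$ should be justified by \cite[Proposition 2.7]{ChaiFaltings} rather than properness (the trace of a semiabelian variety need not be proper), and the flatness hypothesis of Proposition \ref{CompletionImage} at the chosen point needs to be arranged (the paper picks $y$ with $h$ flat at $y$); your preliminary reduction to finite stabilizer via Lemma \ref{NiceSubgroup} is acceptable and comparable in spirit to the paper's reduction to trivial stabilizer.
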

\begin{proof}
  $(i)\implies (ii)$: Using Proposition \ref{FProperties} (iii) we see that if $(ii)$ holds for $X/\Stab_{\CA_{K}}(X)\subset \CA_{K}/\Stab_{\CA_{K}}(X)$, then it holds for $X\subset \CA_{\bar K}$. Hence we may assume that $\Stab(X)=0$. Then there exist a semiabelian variety $B$ defined over a finite field $k'$ containing $k$, a subvariety $Y$ of $B$, a homomorphism $h\colon B_{\bar K}\to \CA_{\bar K}$ with finite kernel and $a\in \CA(\bar K)$ such that $X_{\bar K}=h(Y_{\bar K})+a$. Note that it suffices to prove $(ii)$ after replacing $K$ by a finite field extension contained in $\bar K$. After doing so we may assume $a\in A(K)$ and $k=k'$. After suitably translating we may assume that $0\in Y(k')$. Then $a\in X(K)$, so that after translating $X$ by $-a$ we may assume $a=0$. 

  As $Y$ is defined over $k'$, the set $Y(\bar k)$ is Zariski dense in $Y$, and hence the set $h(Y(\bar k))\subset X(\bar K)$ is Zariski dense in $X_{\bar K}$. Thus there exists $y\in Y(\bar k)$ such that $\CX$ is smooth at $h(x)$ and such that $h$ is flat at $y$. After possibly replacing $k'$ and $K$ by finite field extensions we may assume that $y\in Y(k')$ and $h(y)\in X(K)$. Then after replacing $Y$ by $Y-y$ and $X$ by $X-h(x)$ we may assume that $\CX$ is smooth over $R$ at $0$. By \cite[Proposition 2.7]{ChaiFaltings} the homomorphism $h$ extends to a homomorphism $h\colon B_{\bar R}\to \CA_{\bar R}$. After replacing $K$ by a finite field extension contained in $\bar K$ we may assume that $h$ is defined over $R$. By Proposition \ref{HatXProps} the formal scheme $\hat\CX$ is isomorphic to $\Spf(R[[x_1,\hdots,x_n]])$ for some $n$ and hence is irreducible. Thus it follows from Proposition \ref{CompletionImage} that $\hat\CX$ is the formal schematic image of $\hat Y_{\Spf(R)}$ under $\hat h$.

 We have $F_{\hat\CA}\circ\hat h=\hat h\circ F_{\hat B_R}$ by Proposition \ref{FProperties}. This together with the fact that $\hat Y_{\Spf(R)}$, being defined over $k'$, is invariant under a suitable power of $F_{\hat B_R}$ implies that $\hat\CX=\hat h(\hat Y_{\Spf(R)})$ is invariant under a suitable power of $F_{\hat\CA}$. This shows $(ii)$. 

$(ii)\implies (iii)$: By Proposition \ref{FormalPointsDense} set $T\defeq \hat\CA(\bar R)\cap (X_{\bar K}-x)(\bar K)$ is Zariski dense in $X_{\bar K}-x$. 

$(iii)\implies (i)$: Using Proposition \ref{FProperties} (i), Theorem \ref{InfSpecialCrit} applied to $G=\hat\CA(\bar R)$ and $\Phi=\F^n_{\hat\CA}$ shows that $X_{\bar K}-x$ is special in $\CA_{\bar K}$. 
\end{proof}

\subsection{Choice of a nice valuation} \label{sec:NiceValuation}
\begin{lemma}\label{Blub}
  Let $L$ be an algebraically closed field equipped with a non-archimedean valuation $v$ with valuation ring $R$. Let $\CA$ be a semiabelian scheme over $R$ which is an extension of an abelian scheme over $R$ by a torus over $R$. For $a\in A(L)$, if $na\in\CA(R)$ for some $n\in\BZ^{\not= 0}$, then $a\in\CA(R)$.  
\end{lemma}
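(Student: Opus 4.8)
The key observation is that $R$, being the valuation ring of the field $L$, is integrally closed in its fraction field $L$; hence for any \emph{finite} $R$-scheme $Z$ the restriction map $Z(R)\to Z(L)$ is bijective, because a ring homomorphism $S\to L$ out of a module-finite $R$-algebra $S$ has image integral over $R$ and therefore contained in $R$. I plan to apply this to the fibre
\[
  Z\defeq \CA\times_{[n],\,\CA,\,na}\Spec(R)
\]
of the multiplication-by-$n$ morphism $[n]\colon\CA\to\CA$ over the section $na\in\CA(R)$. For any $R$-scheme $T$ one has $Z(T)=\{z\in\CA(T)\mid nz=na_T\}$, so the point $a\in\CA(L)$, which satisfies $na=(na)_L$ by hypothesis, defines an element of $Z(L)$. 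Once $Z$ is known to be finite over $R$, this element lies in $Z(R)\subset\CA(R)$ (the inclusion coming from the separatedness of $\CA/R$, which makes $\CA(R)\to\CA(L)$ injective), and therefore $a\in\CA(R)$.

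Thus the whole lemma comes down to showing that $Z$ is finite over $R$, equivalently that $[n]\colon\CA\to\CA$ is a finite morphism — and this is exactly the point where the hypothesis that $\CA$ is a global extension $0\to\CT\to\CA\to\CB\to0$ of an abelian scheme $\CB$ by a torus $\CT$ over $R$ is needed, since it forces the toric rank to be constant (for an arbitrary semiabelian scheme $[n]$ is merely quasi-finite and the statement can fail). I would argue as follows. The $n$-torsion fits into an exact sequence $0\to\CT[n]\to\CA[n]\to\CB[n]\to0$ of fppf sheaves in which $\CT[n]$ and $\CB[n]$ are finite flat over $R$; hence $\CA[n]\to\CB[n]$ is a torsor under the finite flat group scheme $\CT[n]$ and $\CA[n]$ is itself finite flat over $R$. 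Next, $[n]\colon\CA\to\CA$ is faithfully flat: it is surjective, and by the fibrewise flatness criterion it is flat over $R$ because each of its fibres over $\Spec(R)$ is the multiplication-by-$n$ isogeny of a semiabelian variety over a field, which is flat by ``miracle flatness'' (source and target are smooth, hence Cohen--Macaulay and regular, of equal dimension, and the fibres are $0$-dimensional). A faithfully flat homomorphism of commutative group schemes with kernel $\CA[n]$ is a torsor under $\CA[n]$, hence finite flat; base-changing along $na$ shows $Z$ is finite flat over $R$, and the argument of the first paragraph concludes.

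The only real obstacle is thus the flatness/finiteness of $[n]$ on $\CA$, and it is worth noting a more hands-on alternative that avoids the general statement about $[n]$: using properness of $\CB$, extend $\pi(a)$ to $\bar b\in\CB(R)$ via the valuative criterion; since $R$ is strictly henselian (its residue field is algebraically closed and every monic polynomial over $R$ splits, because its roots in $L$ are integral over $R$), the torus $\CT$ is split and the $\CT$-torsor $\CA\times_{\CB,\,\bar b}\Spec(R)$ is trivial, giving a lift $\tilde a\in\CA(R)$ of $\bar b$; then $c\defeq a-\tilde a$ lies in $\CT(L)$ and $nc=na-n\tilde a\in\CT(R)$, so writing $c$ in coordinates of $\CT(R)=(R^\times)^{\dim\CT}$ one finds each coordinate is a unit (it is integral over $R$, as is its inverse, since its $n$-th power is a unit), hence $c\in\CT(R)$ and $a=\tilde a+c\in\CA(R)$.
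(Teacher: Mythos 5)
Your proposal is correct, and it in fact contains two arguments: your ``hands-on alternative'' in the last paragraph is essentially the paper's own proof (extend $\pi(a)$ to $\CB(R)$ by properness of the abelian quotient, lift it to $\CA(R)$ using the vanishing of $\operatorname{H}^1(R,\CT)$ over the strictly henselian $R$, and finish with exactly your unit computation on the split torus), while your principal argument takes a genuinely different route. That route reduces the lemma to the finiteness of $[n]\colon\CA\to\CA$: once $[n]$ is finite, the fibre $Z$ over the section $na$ is finite over $R$, and integrality over the integrally closed valuation ring $R$ gives $Z(L)=Z(R)$, hence $a\in\CA(R)$. This is correct; its cost is the verification that $[n]$ is finite flat, which you carry out via the finite flatness of $\CA[n]$ (an fppf extension of $\CB[n]$ by $\CT[n]$, plus torsor descent) and the fibrewise flatness criterion --- note that over the non-noetherian valuation ring $R$ this criterion requires $\CA$ to be of finite presentation over $R$, which holds because $\CA$ is flat and of finite type over the domain $R$ (Raynaud--Gruson), or can be built into the definition of a semiabelian scheme. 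Both proofs use the global extension hypothesis in an essential way --- yours to make $[n]$ finite rather than merely quasi-finite, the paper's to get properness of $\CB$ and splitness of $\CT$ --- and your observation that the statement fails for degenerating semiabelian schemes is accurate. What your main route buys is a reusable general principle ($L$-points of finite schemes over an integrally closed $R\subset L$ automatically land in $R$) at the price of more fppf machinery; the paper's argument, which you also found, is more elementary and self-contained.
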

\begin{proof}
  Let 
  \begin{equation*}
    \xymatrix{
      0\ar[r] &  \CT\ar[r] & \CA\ar[r]^\pi & \CB \ar[r]&  0
}
  \end{equation*}
 be exact with $\CT$ a torus over $R$ and $\CB$ an abelian scheme over $R$. Then $\pi(a)\in \CB(L)=\CB(R)$. 

Since $R$ is strictly Henselian the flat cohomology group $\operatorname{H}^1(R,\CT)$ is zero. Thus the point $\pi(a)\in\CB(R)$ lifts to a point $a'\in \CA(R)$. Since $R$ is strictly Henselian, the torus $\CT$ is split. Thus for $t=a-a'\in \CT(L)\cong (L^*)^r$ we have $t^n\in (R^*)^r$ which implies that $t\in (R^*)^r$. Hence $a$ lies in $\CA(R)$.
\end{proof}
\begin{proposition}  \label{NiceCompletion}
  Let $L_0$ be a field which is finitely generated over $\BF_p$ and let $A$ be a semiabelian variety over $L_0$. There exists an embedding of $L_0$ into a local field $K$ such that the semiabelian variety $A_K$ extends to a semiabelian scheme $\CA$ over the valuation ring $R$ which is isogenous to a nice semiabelian scheme over $R$.

If we are given a finitely generated subgroup $\Gamma\subset A(L_0)$ (resp. a finite rank subgroup $\Gamma'\subset A(L_0^\per)$) we can pick $v$ such that $\Gamma\subset \CA(R)$ (resp. such that $\Gamma'\subset \CA(\Rper)$, where $\Rper$ denotes the valuation ring of the unique extension of $v$ to $L_0^\per$).
\end{proposition}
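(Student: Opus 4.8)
The plan is to choose a place of $L_0$ of good reduction with finite residue field and pass to its completion. Write $d$ for the transcendence degree of $L_0$ over $\BF_p$; if $d=0$ then $L_0$ is finite, $A$ is defined over a finite field, and one may take $K=L_0((t))$ with $\CA$ the constant model, so assume $d\geq 1$. Let $\BF_q$ be the algebraic closure of $\BF_p$ in $L_0$ (a finite field) and fix a normal affine model $S_0=\Spec B_0$ of $L_0$ over $\BF_q$. By standard spreading-out arguments, after shrinking $S_0$ finitely often I may assume: $A$ extends to a semiabelian scheme $\CA_0$ over $S_0$ which is an extension of an abelian scheme by a torus; $\CA_0$ is smooth and the \'etale part of $\CA_0[p^\infty]$ has constant order, so that $\hat\CA_0\defeq\CA_0[p^\infty]^\circ$ is a $p$-divisible group over $S_0$ agreeing with the completion of $\CA_0$ along the zero section; and the Newton polygon of $\hat\CA_0$ is constant on $S_0$ (constant on a dense open by Grothendieck--Katz). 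If a finitely generated $\Gamma\subset A(L_0)$ is given I shrink $S_0$ further so that $\Gamma\subset\CA_0(S_0)$; if a finite-rank $\Gamma'\subset A(L_0^\per)$ is given I pick a finitely generated $\Gamma'_0\subset\Gamma'$ with $\Gamma'/\Gamma'_0$ torsion and an $M\geq 0$ with $\Gamma'_0\subset A(L_0^{1/p^M})$, and shrink $S_0$ so that $\Gamma'_0$ consists of sections of $\CA_0\times_{S_0}\Spec B_0^{1/p^M}$ over $\Spec B_0^{1/p^M}$.

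Next I would produce the valuation. By Noether normalization there is a finite injection $\BF_q[t_1,\dots,t_d]\hookrightarrow B_0$. Using that $\BF_q[[t]]$ has infinite transcendence degree over $\BF_q(t)$, choose $f_2,\dots,f_d\in\BF_q[[t]]$ algebraically independent over $\BF_q(t)$; then $t_1\mapsto t$, $t_i\mapsto f_i$ gives an injection $\BF_q[t_1,\dots,t_d]\hookrightarrow\BF_q[[t]]$, hence an embedding $\BF_q(t_1,\dots,t_d)\hookrightarrow\BF_q((t))$. Let $R_0$ be the valuation ring of the restriction of the $t$-adic valuation; it is a discrete valuation ring with finite residue field containing $\BF_q[t_1,\dots,t_d]$. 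Let $R_1$ be the localization at a maximal ideal of the integral closure of $R_0$ in $L_0$: by Krull--Akizuki it is a discrete valuation ring with finite residue field and fraction field $L_0$, and it contains $B_0$ (which is integral over $R_0$). Put $R$ equal to the completion of $R_1$ and let $K$ be its fraction field; then $K$ is a local field of characteristic $p$ and the inclusion $B_0\subset R$ induces an embedding $L_0\subset K$.

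Now set $\CA\defeq\CA_0\times_{S_0}\Spec R$, a semiabelian scheme over $R$ which is an extension of an abelian scheme by a torus and, since $B_0\hookrightarrow R$ is injective, has generic fiber $A_K$. The $p$-divisible group $\hat\CA$ over the discrete valuation ring $R$ has constant Newton polygon, so by Theorem \ref{CSDIsogeny} (via Proposition \ref{BTFormal}) there is a completely slope divisible $p$-divisible group $\CG$ over $\Spec R$ together with an isogeny which, after composing with a power of $[p]$, we may write as $\psi\colon\hat\CA\to\CG$; its kernel $H\subset\hat\CA\subset\CA$ is a finite flat, infinitesimal subgroup scheme, so $\CA'\defeq\CA/H$ is a semiabelian scheme, an extension of an abelian scheme by a torus, with $\hat\CA'=\hat\CA/H\cong\CG$ completely slope divisible. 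Thus $\CA'$ is nice and $\CA\to\CA'$ is an isogeny. For the subgroup conditions: if $\Gamma\subset\CA_0(S_0)$ then base change along $\Spec R\to S_0$ gives $\Gamma\subset\CA(R)$; and since $B_0\subset R$ forces $B_0^{1/p^M}\subset R^{1/p^M}$, base-changing the sections from the first paragraph along $\Spec R^{1/p^M}\to\Spec B_0^{1/p^M}$ yields $\Gamma'_0\subset\CA(R^{1/p^M})\subset\CA(\Rper)$. For a general $\gamma\in\Gamma'$ pick $n\geq 1$ with $n\gamma\in\Gamma'_0\subset\CA(\bar R)$; Lemma \ref{Blub} applied over the algebraically closed field $\bar K$, whose valuation ring $\bar R$ carries $\CA_{\bar R}$ (again an extension of an abelian scheme by a torus), gives $\gamma\in\CA(\bar R)$, and since $\gamma\in A(L_0^\per)$ and $\CA$ is separated of finite type over $R$, an affine open of $\CA$ containing the image of $\gamma\colon\Spec\bar R\to\CA$ contains the image of the generic point as well, so $\gamma$ factors through $\Spec\Rper$, i.e. $\gamma\in\CA(\Rper)$. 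Hence $\Gamma'\subset\CA(\Rper)$.

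The step that requires real work rather than invoking earlier results is the valuation construction in the second paragraph: for transcendence degree $>1$, discreteness of the valuation and finiteness of the residue field are in tension (a divisorial place has residue field of positive transcendence degree, while an iterated place is not discrete of rank one), and one also needs the place to be centered on the good-reduction model $S_0$. Factoring through a Noether normalization and exploiting that $\BF_q[[t]]$ is very far from algebraic over $\BF_q(t)$ resolves both issues simultaneously. Everything else is routine spreading out, the Oort--Zink isogeny theorem \ref{CSDIsogeny}, and Lemma \ref{Blub}.
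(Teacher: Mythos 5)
Your proof is correct and follows the same overall strategy as the paper: spread $A$ out to a semiabelian scheme over a finitely generated model, shrink so that the toric rank and the Newton polygon of the formal completion are constant and so that $\Gamma$ (or a finitely generated approximation of $\Gamma'$) consists of integral sections, embed into a local field, and then invoke Theorem \ref{CSDIsogeny} to pass to an isogenous nice model and Lemma \ref{Blub} for the finite-rank case. The one place you genuinely diverge is the embedding into the local field: the paper simply cites \cite[Lemma 3.1]{ScanlonManinMumford} for an embedding $R_0\into \BF_q[[t]]$, whereas you construct a suitable discrete valuation by hand, via Noether normalization, a choice of power series in $\BF_q[[t]]$ algebraically independent over $\BF_q(t)$, and Krull--Akizuki to extend to a rank-one discrete place of $L_0$ with finite residue field centered on the model; this makes the proposition self-contained at the cost of a page of classical valuation theory, and your checks (value group $\BZ$, residue field $\BF_q$, $B_0\subset R_1$) are the right ones. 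You are also somewhat more careful than the paper on the last step: the paper's appeal to Lemma \ref{Blub} (stated over an algebraically closed field) to get $\Gamma'\subset\CA(\Rper)$ implicitly requires exactly the descent you spell out, namely applying the lemma over $\bar R$ and then using separatedness plus the fact that the generic point is $K^{\text{per}}$-rational to land in $\CA(\Rper)$.
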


\begin{proof}
  There exists a  ring $R_0\subset L_0$ with quotient field $L_0$ which is finitely generated over $\BF_p$ such that $A$ extends to a semiabelian scheme $\CA$ over $R_0$. Since the Newton polygon of $\hat\CA$ is generically constant, after localizing $R_0$ we may assume that $\hat\CA$ has constant Newton polygon. Since the rank of the toric part of $\CA$ is generically constant, we can also assume that this rank is constant. Then $\CA$ is globally an extension of an abelian scheme by a torus by \cite[Corollary 2.11]{ChaiFaltings}. 

If we are given a finitely generated subgroup $\Gamma$ as above, after further localization we may assume that a finite generating set of $\Gamma$, and thus all of $\Gamma$, is contained in $\CA(R_{0})$.
 
If we are given a finite rank subgroup $\Gamma'$ as above, pick a finitely generated subgroup $\Gamma$ of $\Gamma'$ such that $\Gamma'$ consists of divison points of $\Gamma$. Then as before after further localization we may assume that $\Gamma\subset \CA(R_0)$.

By \cite[Lemma 3.1]{ScanlonManinMumford} there exists an embedding $R_0\into R\defeq \BF_q[[t]]$ for a suitable power $q$ of $p$. We pick such an embedding and let $K\defeq \BF_q((t))$. By Theorem \ref{CSDIsogeny}, the $p$-divisible group $\hat\CA_{\Spf(R)}$ is isogenous to a completely slope divisible group $\Gscr$ over $R$. Since an isogeny $\hat\CA_{\Spf(R)}\to\Gscr$ is given by the quotient by a finite group scheme, this shows that we can find a nice semiabelian scheme $\CA'$ over $R$ which is isogenous to $\CA_R$. In case we are given $\Gamma'$ as above, Lemma \ref{Blub} ensures $\Gamma'\subset \CA(\Rper)$. Thus the embedding has the required properties.
\end{proof}

\section{Proof of Mordell-Lang for finitely generated groups}
\label{sec:proof1}

\subsection{A formal Mordell-Lang theorem}
\label{FormalMLsec}
Let $R$ be the valuation ring of a local field $K$ of characteristic $p>0$ and let $\Fm$ be its maximal ideal. Let $\bar R$ be the valuation ring of an algebraic closure $\bar K$ of $K$ and let $\bar \Fm$ be the maximal ideal of $\bar R$. Let $\Gscr$ be a formal group over $\Spf(R)$ which as a formal scheme is isomorphic to $\Spf(R[[x_1,\hdots,x_n]])$. From an isomorphism $\Gscr\cong \Spf(R[[x_1,\hdots, x_n]])$ one gets a bijection $\Gscr(\bar R)\cong \bar \Fm^{\oplus n}$ as in Remark \ref{XBarExplicit}. This endows $\Gscr(\bar R)$ with a valuation topology which is independent of the chosen isomorphism. The fact that $[p]\colon \Gscr\to\Gscr$ acts by zero on the tangent space of $\Gscr$ implies that for all $g\in\Gscr(\bar R)$ the sequence $(p^ng)_{n\geq 0}$ converges to zero with respect to the valuation topology. This implies that the $\BZ$-module structure on $\Gscr(\bar R)$ can be uniquely extended to $\BZ_p$-module structure which is continuous with respect to the valuation topology.

First we prove the following Mordell-Lang statement for formal schemes in positive characteristic:

\begin{theorem} \label{FormalML}
  Let $K$ be a local field of characteristic $p$ with valuation ring $R$ and residue field $k$. Let $\Gscr$ be a formal group over $k$ which as a formal scheme is isomorphic to $\Spf(k[[x_1,\hdots,x_n]])$. Let $\bar \Gamma\subset \Gscr(\Rper)$ be a finite rank $\BZ_p$-module. Let $\Xscr\subset \Gscr_{\Spf(\Rper)}$ be a closed formal subscheme. 

If $\Xscr(\Rper)\cap \bar\Gamma$ is formal-schematically dense in $\Xscr$, then there exist a finite field extension $K'$ of $K$ with valuation ring $R'$, closed formal subschemes $\Xscr_1,\hdots,\Xscr_m$ of $\Xscr_{\Spf(R')}$ and elements $\gamma_1,\hdots,\gamma_m \in\bar\Gamma$ such that $\Xscr_j+\gamma_j$ is defined over the residue field of $R'$ and such that $\Xscr_{\Spf((R')^\text{per})}=\cup_j (\Xscr_j)_{\Spf((R')^\text{per})}$.
\end{theorem}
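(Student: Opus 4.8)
The plan is to follow the method of Abramovich and Voloch \cite{AbramovichVoloch}: run a pigeonhole argument along the $p$-adic filtration of $\bar\Gamma$ to reduce, after translating $\Xscr$ by an element of $\bar\Gamma$, to a situation in which a formal-schematically dense set of points of $\Xscr$ is divisible by $p^n$ for every $n$, and then feed this into the Frobenius-descent statement of Proposition \ref{FormalDescent}. As preliminary reductions: the formal schematic closure of $T\defeq\bar\Gamma\cap\Xscr(\Rper)$ is automatically reduced and flat over $\Rper$, since $\Rper$ is a domain; being of formally finite type it is cut out by finitely many power series with coefficients in $R^{1/p^M}$ for $M$ large, and renaming $R^{1/p^M}$ as $R$ --- harmless, since $K^{1/p^M}$ is a finite purely inseparable extension of the original local field with the same residue field and the same finite extensions --- we may assume $\Xscr$ is defined over $R$. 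After a finite extension of $K$ (Proposition \ref{GeomIrrComp}) we may assume the irreducible components $\Xscr^{(1)},\dots,\Xscr^{(l)}$ of $\Xscr$ are geometrically irreducible; since $\Xscr$ is reduced and equals the formal schematic closure of $T$, a short prime-avoidance argument (using Definition \ref{UnionDef} and Lemma \ref{IrrUnion}) shows that each $\Xscr^{(i)}$ is already the formal schematic closure of $T\cap\Xscr^{(i)}(\Rper)$, so we may treat one geometrically irreducible $\Xscr$ at a time.

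\emph{The pigeonhole.} Because $\bar\Gamma$ has finite rank and $\Gscr[p](k)$ is finite, every quotient $p^n\bar\Gamma/p^{n+1}\bar\Gamma$ is finite. Starting from $a_0\defeq 0$ and using the irreducibility of $\Xscr_{\Spf(\Rper)}$ together with Lemma \ref{IrrUnion}, applied as above to the partition of $p^n\bar\Gamma$ into the finitely many cosets of $p^{n+1}\bar\Gamma$, one inductively finds $c_{n+1}\in p^n\bar\Gamma$ such that, with $a_{n+1}\defeq a_n+c_{n+1}\in\bar\Gamma$, the set $(T-a_{n+1})\cap p^{n+1}\bar\Gamma$ is formal-schematically dense in $\Xscr_{\Spf(\Rper)}-a_{n+1}$. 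Since $a_{n+1}-a_n\in p^n\bar\Gamma$, the sequence $(a_n)$ is $p$-adically Cauchy and converges to some $a\in\bar\Gamma$, and for every $n$ the translate $\Xscr_{\Spf(\Rper)}-a$ is the formal schematic closure of a subset of $p^n\bar\Gamma$.

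\emph{Descent to $k$.} The arithmetic input is the shape of multiplication by $p^n$ on a formal group over the finite field $k$: it is obtained by substituting $p^n$-th powers of the coordinates into a tuple of power series with coefficients in $k$, whence $p^n\Gscr(R')\subset\Gscr((R')^{p^n})$ for the valuation ring $R'$ of any finite extension, and in particular $p^n\bigl(\bar\Gamma\cap\Gscr(R^{1/p^M})\bigr)\subset\Gscr(R^{p^{\,n-M}})$. When $\bar\Gamma\subset\Gscr(R)$ --- which, after a finite extension, is the case whenever $\bigcap_n p^n\bar\Gamma=0$, so that $\bar\Gamma$ is finitely generated over $\BZ_p$ --- it follows from Corollary \ref{BaseChangeSchematicClosure} that $\Xscr-a$ is defined over $R^{p^n}$ for all $n$, hence over $k$ by Lemma \ref{FormalFieldDefinition}. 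In general one exhausts $\bar\Gamma$ by the finitely generated submodules $\bar\Gamma\cap\Gscr(R^{1/p^M})$ and uses the pigeonhole to produce, for each $i\ge0$, a finite extension and a translate of $\Xscr$ by an element of $\bar\Gamma$ that is defined over $R^{p^i}$; Proposition \ref{FormalDescent} then yields a single finite extension $R'$ and a translate of $\Xscr$ by an element of $\bar\Gamma$ that is defined over the residue field of $R'$.

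\emph{Assembly and the main obstacle.} Enlarging $R'$ so as to handle all the $\Xscr^{(i)}$ simultaneously, the base changes $\Xscr^{(i)}_{\Spf(R')}$ are the required $\Xscr_j$, the elements produced above are the required $\gamma_j\in\bar\Gamma$, and $\Xscr_{\Spf((R')^{\per})}=\bigcup_j(\Xscr_j)_{\Spf((R')^{\per})}$ since $\Xscr$ is the union of its components. The step I expect to be the real obstacle is the descent: over $\Rper$ the descending filtration of subrings $R^{p^i}$ degenerates, as $\bigcap_i(\Rper)^{p^i}=\Rper$, so the transparent "$\Xscr-a$ is defined over $R^{p^i}$ for all $i$, hence over $\bigcap_iR^{p^i}=k$" argument available when $\bar\Gamma\subset\Gscr(R)$ breaks down, and one is forced to route the descent through Proposition \ref{FormalDescent}, the delicate point being the bookkeeping that relates the inseparable extensions $R^{1/p^M}$ to the subrings $R^{p^i}$ (and the verification that the limit $a$ really lies in $\bar\Gamma$).
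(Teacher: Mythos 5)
Your core argument---the pigeonhole along the cosets of $p^{n+1}\bar\Gamma$ in $p^n\bar\Gamma$ using irreducibility, the $p$-adic limit translate $a\in\bar\Gamma$, the factorization of $[p^n]$ through the $p^n$-power Frobenius to place $p^n\bar\Gamma$ inside $\Gscr(R^{p^n})$, and the descent over $\bigcap_n R^{p^n}=k$ via Corollary \ref{BaseChangeSchematicClosure} and Lemma \ref{FormalFieldDefinition}---is exactly the paper's proof, and in the case where $\bar\Gamma$ is finitely generated over $\BZ_p$ your argument is essentially correct. But there are two concrete gaps. First, your opening reduction asserts that the formal schematic closure of $T$, ``being of formally finite type, is cut out by finitely many power series with coefficients in $R^{1/p^M}$ for $M$ large.'' Over the non-Noetherian base $\Rper$ closed formal subschemes correspond to closed ideals which need not be finitely generated, and even a single defining power series need not have all of its coefficients in any one $R^{1/p^M}$; this claim is essentially the descent statement the theorem is after, so it cannot be taken for granted. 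The paper gets ``$\Xscr$ is defined over $R$'' honestly: it first arranges $\bar\Gamma\subset\Gscr(R)$ (after replacing $K$ by the finite purely inseparable extension $K^{1/p^M}$) and only then invokes Proposition \ref{BaseChangeClosure}/Corollary \ref{BaseChangeSchematicClosure}, which say precisely that the vanishing ideal of a set of $R$-points generates the vanishing ideal after base change. Since you use these same tools later, this is repairable by reordering, but as written the step is unjustified. (Your appeal to Proposition \ref{GeomIrrComp} is harmless but unnecessary: ordinary irreducible components over $R$ suffice, since the conclusion allows a finite extension $R'$ and only asks for the union after base change to $(R')^{\per}$.)

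Second, your treatment of the general case, where $\bar\Gamma$ is not finitely generated over $\BZ_p$ and hence not contained in any $\Gscr(R^{1/p^M})$, does not deliver the statement. Proposition \ref{FormalDescent} produces a translating element $g\in\Gscr(R')$, not an element of $\bar\Gamma$, whereas the theorem requires $\gamma_j\in\bar\Gamma$; your sentence claiming it yields ``a translate of $\Xscr$ by an element of $\bar\Gamma$'' overstates its conclusion. Moreover the input you would feed it---for every $i$ a translate of the whole of $\Xscr$ by an element of $\bar\Gamma$ that is defined over $R^{p^i}$---is not actually produced by exhausting $\bar\Gamma$ by the submodules $\bar\Gamma\cap\Gscr(R^{1/p^M})$: for fixed $M$ the formal schematic closure of $T\cap\Gscr(R^{1/p^M})$ may be a proper closed formal subscheme of $\Xscr$, while for the full $\bar\Gamma$ the elements of $p^n\bar\Gamma$ need not lie uniformly in $\Gscr(R^{p^i})$ for any fixed $i$, so Corollary \ref{BaseChangeSchematicClosure} does not apply. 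The paper avoids all of this by making the blanket reduction, at the very start, to $\bar\Gamma\subset\Gscr(R)$ after a finite extension of $K$---which is immediate when $\bar\Gamma$ is finitely generated over $\BZ_p$, the situation arising in the application to Theorem \ref{pfgML}---and then never needs Proposition \ref{FormalDescent} in this proof. So: in the finitely generated case your route coincides with the paper's up to the repairable first point; in the remaining case your proposed detour has a genuine gap, both in producing the hypotheses of Proposition \ref{FormalDescent} and in keeping the translating elements inside $\bar\Gamma$.
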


For the proof of Theorem \ref{FormalML} we need the following lemma:
\begin{lemma} \label{pMult}
  Let $K$ be a local field of characteristic $p$ with valuation ring $R$ and residue field $k$ with $q$ elements. For $i\geq 0$ let $K^{q^i}$ be the field consisting of $q^i$-th powers of elements of $K$ and $R^{q^i}$ the valuation ring of $K^{q^i}$.  Let $\Gscr$ formal group scheme over $k$. Then $q^i\Gscr(R)\subset \Gscr(R^{q^i})$.
\end{lemma}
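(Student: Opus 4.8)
The plan is to factor multiplication by $q$ on $\Gscr$ through the relative $q$-power Frobenius and to observe that Frobenius carries $R$-points into $R^q$-points. Since the residue field $k$ has $q$ elements, $\Gscr$ is a formal group over $\BF_q$, and the $q$-power Frobenius of $\BF_q$ is the identity homomorphism; hence $\Gscr^{(q)}=\Gscr$ canonically, and the relative $q$-power Frobenius may be regarded as an endomorphism $\F_q\colon\Gscr\to\Gscr$ over $k$, given on functions by $f\mapsto f^q$. I would then use the two facts below.

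\emph{Fact (a): $\F_q$ maps $\Gscr(R)$ into $\Gscr(R^q)$.} If a point $g\in\Gscr(R)$ is classified by a (continuous) $k$-algebra homomorphism $g^\sharp$ out of the coordinate ring of $\Gscr$, then $\F_q(g)$ is classified by $f\mapsto g^\sharp(f)^q$; this is again a $k$-algebra homomorphism (because $a^q=a$ for $a\in k=\BF_q$) and its values lie in $R^q$. Since $k=k^q\subset R^q$ this says precisely that $\F_q(g)$ factors through $R^q\into R$, i.e. $\F_q(g)\in\Gscr(R^q)$. Note that $R^q$ is the valuation ring of the local field $K^q$, again with residue field $k$.

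\emph{Fact (b): $[q]_\Gscr$ factors as $\Psi\circ\F_q$ for some morphism $\Psi\colon\Gscr\to\Gscr$ defined over $k$.} This follows from the standard identity $[p]_\Gscr=V_{\Gscr}\circ\F_{\Gscr}$ for a commutative formal group over an $\BF_p$-scheme, where $\F_{\Gscr}\colon\Gscr\to\Gscr^{(p)}$ is the relative Frobenius and $V_{\Gscr}$ the Verschiebung (see e.g. \cite{Demazure}; for the $p$-divisible formal groups occurring in the applications this is entirely classical). Writing $q=p^e$ and composing this identity with itself $e$ times gives $[q]_\Gscr=\bigl(V_{\Gscr}\circ V_{\Gscr^{(p)}}\circ\cdots\circ V_{\Gscr^{(p^{e-1})}}\bigr)\circ\bigl(\F_{\Gscr^{(p^{e-1})}}\circ\cdots\circ\F_{\Gscr}\bigr)$, where the second factor is the relative $q$-Frobenius $\F_q$ (an endomorphism of $\Gscr$ as above) and the first factor $\Psi\colon\Gscr^{(q)}=\Gscr\to\Gscr$ is a morphism defined over $k$.

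Granting (a) and (b), for $g\in\Gscr(R)$ one gets $q\cdot g=[q]_\Gscr(g)=\Psi(\F_q(g))\in\Psi(\Gscr(R^q))\subset\Gscr(R^q)$, the last inclusion because $\Psi$ is defined over $k\subset R^q$; this is the case $i=1$. The general case follows by induction on $i$, since $q^i\Gscr(R)=q^{i-1}\bigl(q\Gscr(R)\bigr)\subset q^{i-1}\Gscr(R^q)\subset\Gscr\bigl((R^q)^{q^{i-1}}\bigr)=\Gscr(R^{q^i})$, where the middle inclusion applies the case $i-1$ of the lemma to the local field $K^q$ (with valuation ring $R^q$ and residue field $k$) and the same formal group $\Gscr$, and $(R^q)^{q^{i-1}}=R^{q^i}$. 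The one point requiring care — and essentially the only content of the argument — is Fact (b): invoking $[p]_\Gscr=V\circ\F$ and iterating it correctly, keeping track of the Frobenius twists $\Gscr^{(p^j)}$ and using that the $q$-power twist of an $\BF_q$-object is canonically itself, so that the ``$V$-part'' $\Psi$ really is defined over $k$. Everything else is functoriality of the relative Frobenius together with the elementary identities $k^q=k$ and $(R^q)^{q^{i-1}}=R^{q^i}$.
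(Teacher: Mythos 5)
Your proposal is correct and follows essentially the same route as the paper: the paper's proof also writes $[q]$ as the composite of the relative $q$-Frobenius (which sends $\Gscr(R)$ into $\Gscr(R^{q^i})$ after iteration) with a Verschiebung defined over $k$, citing SGA3 VII A.4 for the factorization that you instead obtain by iterating $[p]=V\circ\F$. The only difference is this minor choice of how to produce the ``$V$-part'', which does not change the argument.
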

\begin{proof}
  Let $F\colon \Gscr \to \Gscr$ be the Frobenius endomorphism of $\Gscr$ with respect to $k$. By \cite[Section VII A.4]{SGA3I} there exists a ``Verschiebung'' endormorphism $V\colon \Gscr\to\Gscr$ such that $[q]= F\circ V=V\circ F$. Using the fact that $F^i(G(R))\subset G(R^{q^i})$ this implies the claim.
\end{proof}

\begin{proof}[Proof of Theorem \ref{FormalML}]

Since $\bar\Gamma$ is a finite rank $\BZ_p$-module, after possibly replacing $K$ by a finite extension we may assume that $\bar\Gamma\subset \Gscr(R)$. Since then $\Xscr(\Rper)\cap\bar\Gamma \subset \Gscr(R)$, Proposition \ref{BaseChangeClosure} implies that $\Xscr$ is defined over $R$. Let $\Xscr_1,\hdots, \Xscr_m$ be the irreducible components of $\Xscr$. Then $\Xscr_i(\bar R)\cap\bar\Gamma$ is formal-schematically dense in $\Xscr_i$ for each $i$. It suffices to prove the claim for each $\Xscr_i$, that is we may assume that $\Xscr$ is irreducible.

 Since the group $\bar\Gamma/p^i\bar\Gamma$ is finite for all $i\geq 0$ and since $\Xscr$ is irreducible, it follows using Lemma \ref{IrrUnion} that we can choose $(\gamma_i)_{i\geq 0}\in \bar\Gamma^{\BZ^{\geq 0}}$ such that $p^i\bar\Gamma\cap (\Xscr+\gamma_i)(R)$ is formal-schematically dense in  $\Xscr+\gamma_i$ and such that $\gamma_{i+1}\equiv \gamma_i \pmod{p^i\bar\Gamma}$ for all $i\geq 0$. Since the finite rank $\BZ_p$-module $\bar\Gamma$ is complete for the $p$-adic topology, there exists $\gamma\in\bar\Gamma$ such that $\gamma\equiv\gamma_i \pmod{p^i\bar\Gamma}$ for $i\geq 0$. Then $\Xscr+\gamma$ is the formal schematic closure of $p^i\bar\Gamma$ for all $i\geq 0$. Let $q$ be the number of elements of $k$. For $i\geq 0$ let $K^{q^i}$ be the field consisting of $q^i$-th powers of elements of $K$ and $R^{q^i}$ the valuation ring of $K^{q^i}$.  Since $q^i\bar\Gamma\subset \Gscr(R^{q^i})$ by Lemma \ref{pMult}, Proposition \ref{BaseChangeClosure} implies that $\Xscr+\gamma$ is defined over $R^{q^i}$. Thus Lemma \ref{FormalFieldDefinition} implies that $\Xscr+\gamma$ is defined over $k$.
\end{proof}

\subsection{Proof of Mordell-Lang for finitely generated groups}
Using Theorem \ref{FormalML} we can now give an algebraic proof of Theorem \ref{IntroML}.
\begin{theorem}[Hrushovski] \label{pfgML}
  Let $L$ be an algebraically closed field of positive characteristic. Let $A$ be a semiabelian variety over $L$, let $X\subset A$ an irreducible subvariety and $\Gamma\subset A(L)$ a finitely generated subgroup. If $X(L)\cap \Gamma$ is Zariski dense in $X$, then $X$ is a special subvariety of $A$. 
\end{theorem}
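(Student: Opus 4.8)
The plan is to reduce to the ``nice'' situation of Section~\ref{sec:setup}, extract from (a variant of) Theorem~\ref{FormalML} a Zariski-dense subset of a translate of $X$ which is stable under a power of the Frobenius $F_{\hat\CA}$, and then conclude by Theorem~\ref{InfSpecialCrit}.

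\emph{Reductions.} First I would descend $A$, $X$ and a finite generating set of $\Gamma$ to a subfield $L_0\subseteq L$ finitely generated over $\BF_p$, enlarging $L_0$ so that $X$ descends to a geometrically integral subvariety $X_0\subseteq A_0$; by Lemma~\ref{SpecialBaseChange} it suffices to prove $(X_0)_{\bar L_0}$ special in $(A_0)_{\bar L_0}$ for $\bar L_0$ an algebraic closure of $L_0$. Proposition~\ref{NiceCompletion} provides an embedding $L_0\hookrightarrow K$ into a local field of characteristic $p$ (so its residue field $k$ is finite) such that $A_{0,K}$ extends to a semiabelian scheme over $R:=\CO_K$ containing $\Gamma$ in its $R$-points and isogenous over $R$ to a nice semiabelian scheme $\CA$. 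Since $\bar K$ is an algebraic closure of $L_0$, I may take $L=\bar K$; applying Lemma~\ref{SpecialImage} to the isogeny (which has finite kernel), together with the fact that the image of a Zariski-dense set under a morphism is Zariski-dense in the image, I reduce to the case where $A=\CA_{\bar K}$ with $\CA$ nice over $R$, $X\subseteq\CA_{\bar K}$ irreducible and defined over $K$, and $\Gamma\subseteq\CA(R)$ with $X(\bar K)\cap\Gamma$ Zariski dense in $X$.

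\emph{A good translate.} Let $\rho\colon\CA(R)\to\CA(k)$ be the reduction map; as $k$ is finite, $\Gamma_0:=\Gamma\cap\hat\CA(R)=\Gamma\cap\ker\rho$ has finite index in $\Gamma$. Writing $\Gamma$ as a finite union of cosets of $\Gamma_0$ and using that $X$ is irreducible, some coset $\delta+\Gamma_0$ ($\delta\in\Gamma$) meets $X$ in a Zariski-dense set; pick $x_0\in(\delta+\Gamma_0)\cap X(\bar K)$. Let $\CX\subseteq\CA$ be the schematic closure of the $K$-form of $X$; it is integral, so $\widehat{\CX-x_0}\in\AFS_R^{\text{rf}}$ by Proposition~\ref{FormalPointsDense}. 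The set $T:=\Gamma_0\cap(X-x_0)(\bar K)$ is Zariski dense in $X-x_0$, and since every element of $\Gamma_0$ reduces to $0$ one checks $T\subseteq\widehat{\CX-x_0}(\bar R)\subseteq\hat\CA(\bar R)$. The subtlety is that the formal-schematic closure $\Yscr$ of $T$ in $\widehat{\CX-x_0}$ may be a \emph{proper} formal subscheme (the points of $T$ could all lie on one formal branch). After a finite extension of $K$ (Proposition~\ref{GeomIrrComp}) I may assume the components of $\Yscr$ are geometrically irreducible; since $X-x_0$ is irreducible while the $\bar R$-points of the components of $\Yscr$ together cover $T$, some irreducible component $\Wscr$ of $\Yscr$ has $\Wscr(\bar R)$ Zariski dense in $X-x_0$, and one checks $\Wscr$ is the formal-schematic closure of $T\cap\Wscr(\bar R)$.

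\emph{Formal Mordell--Lang and conclusion.} Construction~\ref{FConstr} (via Proposition~\ref{CSDConstant2}) furnishes a group isomorphism $\iota\colon(\hat\CA_k)_{\Spf(\Rper)}\liso\hat\CA_{\Spf(\Rper)}$ intertwining the relative Frobenius of $\Gscr:=\hat\CA_k$ with $F_{\hat\CA}$. Set $\bar\Gamma:=\iota^{-1}(\Gamma_0)$, a finite rank $\BZ_p$-module; by Corollary~\ref{BaseChangeSchematicClosure}, $\iota^{-1}(\Wscr_{\Spf(\Rper)})$ is the formal-schematic closure of $\iota^{-1}(T\cap\Wscr(\bar R))\subseteq\bar\Gamma$ inside $\Gscr_{\Spf(\Rper)}$. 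A variant of Theorem~\ref{FormalML} then produces a finite extension $K'/K$ with valuation ring $R'$ and finite residue field $k'$, and an element $\gamma\in\bar\Gamma$, such that — using that $\iota^{-1}(\Wscr_{\Spf(\Rper)})$ is irreducible and Lemma~\ref{IrrUnion} — the translate $\iota^{-1}(\Wscr_{\Spf((R')^\per)})+\gamma$ is defined over $k'$; equivalently, by the analogue of Lemma~\ref{FrobeniusDescent}, $\Wscr_{\Spf((R')^\per)}+g$ is stable under $F^N_{\hat\CA}$ where $g:=\iota(\gamma)\in\Gamma_0$ and $N:=[k':k]$. Now $T^\star:=\Wscr(\bar R)+g$ lies in $\hat\CA(\bar R)$, is Zariski dense in $X-x^\star$ for $x^\star:=x_0-g$, and satisfies $F^N_{\hat\CA}(T^\star)\subseteq T^\star$ because $F^N$ restricts to the invariant formal subscheme above. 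By Proposition~\ref{FProperties}(i),(v) the endomorphism $F^N_{\hat\CA}$ of $\hat\CA(\bar R)$ is annihilated by a good polynomial, so Theorem~\ref{InfSpecialCrit}, applied with $G=\hat\CA(\bar R)$, $\Phi=F^N_{\hat\CA}$ and the subset $T^\star$, shows that $X_{\bar K}-x^\star$, hence $X_{\bar K}$, is special; Lemma~\ref{SpecialBaseChange} then gives that $X$ is special in $A$.

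\emph{Main obstacle.} The crux — and the place the argument could break — is the gap between Zariski density of the \emph{point set} $T$ in the variety $X-x_0$ and formal-schematic density in the completion $\widehat{\CX-x_0}$: one cannot feed $\widehat{\CX-x_0}$ directly into Theorem~\ref{FormalML} and must instead isolate an irreducible component $\Wscr$ of the formal-schematic closure $\Yscr$ that still carries a Zariski-dense set of $\bar R$-points, and transport it faithfully through $\iota$. Handling this, together with the bookkeeping of the several finite extensions of $K$ and of the residue field, is where the real work lies. The variant of Theorem~\ref{FormalML} needed here is proved exactly as Theorem~\ref{FormalML}: reduce to the irreducible case, translate by a suitable element so that $p^i\bar\Gamma$ stays dense for all $i$, and descend through $\bigcap_i R^{p^i}=k$.
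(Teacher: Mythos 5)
Your argument is essentially the paper's own proof: the same chain of reductions (Proposition \ref{NiceCompletion}, Lemmas \ref{SpecialBaseChange} and \ref{SpecialImage}, translating so that the relevant dense set lies in $\Gamma\cap\hat\CA(R)$), the same use of Construction \ref{FConstr} and of Theorem \ref{FormalML} applied to a formal schematic closure, and the same conclusion via an $F$-stable Zariski-dense set of $\hat\CA(\bar R)$-points fed into Theorem \ref{InfSpecialCrit} (the paper phrases this last step as condition (iii) of Theorem \ref{SpecialFChar}, and it selects the Zariski-dense irreducible component \emph{after} applying Theorem \ref{FormalML} to the full closure, which lets it use that theorem verbatim instead of your ``variant''). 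The one slip to fix is that $\iota^{-1}(\Gamma_0)$ is only a finitely generated group, so, as in the paper, you should replace it by its closure in the valuation topology (a finite rank $\BZ_p$-module) before invoking Theorem \ref{FormalML}; the resulting $\gamma$ then lies in this closure rather than in $\Gamma_0$, which is harmless since all you use is $g\in\hat\CA(\bar R)$.
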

\begin{proof}
  Let $L_0\subset L$ be a field which is finitely generated over $\BF_p$ such that $A$ arises by base change from an abelian variety $A_0$ over $L_0$, such that $X$ arises by base change from a subvariety $X_0$ defined over $L_0$ and such that $\Gamma\subset A_0(L_0)$. By Proposition \ref{NiceCompletion} there exists an embedding $L_0\into K$ into a local field $K$ and a semiabelian scheme $\CA$ over $R$ which has generic fiber $A_K$, is isogenous to a nice semiabelian scheme over $R$ and satisfies $\Gamma\subset \CA(R)$. Let $\bar K$ an algebraic closure of $K$ and denote $\bar R$ the valuation ring of $\bar K$, by $k$ the finite residue field of $R$ and by $\CX$ the schematic closure of $X_{0,K}$ inside $\CA$. 

By Lemma \ref{SpecialBaseChange} it suffices to prove that $\CX_{\bar K}$ is special in $\CA_{\bar K}$. Using Lemma \ref{ReplaceIsogenous} we can replace $\CA$ by an isogenous semiabelian variety, so that we may assume that $\CA$ is nice.

 Since we have an exact sequence
  \begin{equation*}
    0 \to \hat\CA(R)\to \CA(R)\to \CA(k) \to 0
  \end{equation*}
with $\CA(k)$ finite, after replacing $\CX$ by a suitable translate we may assume that $(\Gamma\cap\hat\CA(R))\cap \CX(R)$ is schematically dense in $\CX$. After replacing $\Gamma$ by $\Gamma\cap\hat\CA(R)$ we may thus assume $\Gamma\subset\hat\CA(R)$. 

Let $F_{\hat\CA}\colon \hat\CA_{\Spf(\Rper)}\to\hat\CA_{\Spf(\Rper)}$ be the endomorphism given by Construction \ref{FConstr}. By Proposition \ref{FProperties} (ii) there exist a $p$-divisible group $\Gscr$ over $k$ and an isomorphism $\Psi\colon \Gscr_{\Spf(\Rper)}\cong \hat\CA_{\Spf(\Rper)}$ under which $F_{\hat\CA}$ corresponds to the Frobenius endomorphism of $\Gscr$ with respect to $k$. 

Let $\bar\Gamma$ be the closure of $\Gamma$ with respect to the valuation topology on $\hat\CA(\bar R)$. Since $\Gamma$ is finitely generated this is a finite rank $\BZ_p$-module. Let $\Xscr\subset \hat\CA$ be the formal schematic closure of $\Gamma\cap\CX(R)$ inside $\hat\CA$. Then $\Xscr$ is the formal schematic closure of $\Xscr(R) \cap\bar\Gamma$ in $\hat\CA$. Thus by Corollary \ref{BaseChangeSchematicClosure} the formal scheme $\Xscr_{\Spf(\Rper)}$ is the schematic closure of $\Xscr(R) \cap\bar\Gamma$ in $\hat\CA_{\Spf(\Rper)}$. Thus by Theorem \ref{FormalML} applied to $\Psi(\Xscr_{\Spf(\Rper)})$ and $\Psi(\bar\Gamma)$, there exist closed formal subschemes  $\Xscr_1,\hdots,\Xscr_m$ of $\Xscr_{\Rper}$, elements $\gamma_1,\hdots,\gamma_m \in\bar\Gamma$ and $n\geq 0$ such that $F^n_{\hat\CA}(\Xscr_j+\gamma_j)\subset \Xscr_j+\gamma_j$ and $\Xscr=\cup_j \Xscr_j$. Since $\Gamma\cap \CX(R)\subset \Xscr(\bar R)$, the set $\Xscr(\bar R)$ is schematically dense in $\CX_{\bar R}$. Lemma \ref{XBarRUnion} implies $\Xscr(\bar R)=\cup_i\Xscr_i(\bar R)$. Since $\CX_{\bar R}$ is irreducible there exists $i$ such that $\Xscr_i(\bar R)$ is schematically dense in $\CX_{\bar R}$. Thus Proposition \ref{SpecialFChar} (iii) holds for $T\defeq \Xscr_i(\bar R)+\gamma_i$. By Proposition \ref{SpecialFChar} this implies that $\CX_{\bar K}$ is special in $\CA_{\bar K}$. 
\end{proof}

\section{Towards full Mordell-Lang}
The full Mordell-Lang conjecture in positive characteristic is the following conjecture:
\begin{conjecture} \label{fullML}
   Let $L$ be an algebraically closed field of positive characteristic. Let $A$ be a semiabelian variety over $L$, let $X\subset A$ be an irreducible subvariety and let $\Gamma\subset A(L)$ be a subgroup of finite rank. If $X(L)\cap \Gamma$ is Zariski dense in $X$, then $X$ is a special subvariety of $A$. 
\end{conjecture}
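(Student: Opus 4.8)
The plan is to run the argument of the proof of Theorem~\ref{pfgML} as far as it will go, and to locate precisely the two points at which the finite-rank hypothesis obstructs it. First I would spread out and pass to a nice valuation. Choose a subfield $L_0\subset L$, finitely generated over $\BF_p$, over which $A$ and $X$ are defined and which contains a finitely generated group $\Gamma_0$ such that $\Gamma$ consists of division points of $\Gamma_0$. By Proposition~\ref{NiceCompletion} we may embed $L_0$ into a local field $K$, with valuation ring $R$ and residue field $k$, so that $A_K$ extends to a semiabelian scheme $\CA$ over $R$ which is isogenous to a nice one and satisfies $\Gamma\subset\CA(\Rper)$; by Lemmas~\ref{SpecialBaseChange} and~\ref{ReplaceIsogenous} we reduce to the case that $\CA$ itself is nice, and we write $\CX$ for the schematic closure of $X$ in $\CA$.

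Here the argument departs from the finitely generated case. In the proof of Theorem~\ref{pfgML} one used the finiteness of $\CA(k)$ to translate $\CX$ so that $\Gamma\cap\hat\CA(R)$ became schematically dense, reducing at once to $\Gamma\subset\hat\CA(R)$. For $\Gamma$ of finite rank this step fails completely: $\Gamma$ may contain the full prime-to-$p$ torsion of $\CA(\Rper)$, whose image in $\CA(\bar k)$ is infinite, so no single translate of $\CX$ places the relevant part of $\Gamma$ inside $\hat\CA(\Rper)$. The obstruction is concentrated on the image of the trace map $\tau\colon(\Tr_{L/\bar\BF_p}A)_L\to A$, which is exactly where division points accumulate in the special fibre. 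Stripping away this contribution is the content of the reduction of Ghioca, Moosa and Scanlon; carrying it out reduces Conjecture~\ref{fullML}, in the case that $A$ is ordinary, to Conjecture~\ref{fullMLreduction2Intro}.

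Ordinariness of $A$ is favourable for a second, decisive reason. When $A$ is ordinary the connected $p$-divisible group $\hat\CA$ is, after the \'etale (slope $0$) part has been removed, isoclinic of a single slope, so by the single-slope clause of Proposition~\ref{CSDConstant2} the isomorphism used in Construction~\ref{FConstr} is already defined over $R$ and hence $F_{\hat\CA}$ descends to an endomorphism of $\hat\CA$ over $R$. This is what makes a descent argument possible: in place of the formal Mordell--Lang Theorem~\ref{FormalML} one invokes the descent statement Theorem~\ref{FormalDescent}, whose hypothesis --- that a translate of the formal schematic closure $\Xscr$ of $\Gamma\cap\CX(\Rper)$ becomes defined over $R^{p^i}$ for every $i$ --- is checked just as in the proof of Theorem~\ref{FormalML}, using Lemma~\ref{pMult} together with the $p$-adic completeness of the finite-rank $\BZ_p$-module $\bar\Gamma$ obtained by closing up $\Gamma$ in $\hat\CA(\bar R)$. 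Feeding the output back through Theorem~\ref{SpecialFChar} and Theorem~\ref{InfSpecialCrit} then shows that $X$ is special.

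The main obstacle is of two kinds. For non-ordinary $A$, $F_{\hat\CA}$ genuinely lives only over $\Rper$, and the final descent from ``defined over $\Rper$'' to ``defined over $k$'' --- which in Theorem~\ref{FormalDescent} rests on surjectivity of the Artin--Schreier-type morphism $g\mapsto g-F(g)$ on $\bar R$-points of the transporter --- has no analogue, so the method stalls at the outset. Even for ordinary $A$, the genuinely delicate step is the trace reduction: one must verify that after removing the part of $X$ coming from $\Tr_{L/\bar\BF_p}A$ the residual subvariety really does satisfy the hypotheses of Conjecture~\ref{fullMLreduction2Intro} --- in particular that a translate of $X$ descends to the non-perfect field $L_0$ rather than merely to $L_0^\per$ --- and it is exactly at this point that the argument cannot presently be completed, which is why Conjecture~\ref{fullML} remains open in general.
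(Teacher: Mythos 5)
There is nothing to compare your proposal against: the statement you were asked about is Conjecture \ref{fullML}, which the paper does not prove and explicitly regards as open; Section 5 only establishes (for ordinary or supersingular abelian varieties) an equivalence with Conjecture \ref{fullMLreduction2}, via the reduction of Ghioca--Moosa--Scanlon to Conjecture \ref{fullMLreduction1}, the relative notion of $K/K^{p^i}$-specialness, Theorem \ref{RelSpecial} and Theorem \ref{FormalDescent}. Your proposal, to its credit, ends by acknowledging that the argument cannot be completed, so it is not a proof and does not pretend to be one; to that extent it is consistent with the paper. But as an account of the paper's partial progress it misidentifies both obstructions. First, the translation step into $\hat\CA$ does \emph{not} fail in the finite-rank case: after the Ghioca--Moosa--Scanlon reduction one has $\Gamma\subset A(L_0^{\per})\subset\CA(\Rper)$, the residue field of $\Rper$ is still the finite field $k$, the torsion of $A(L_0^{\per})$ is finite (Lemma \ref{PerfectionPoints}), and the paper's final proof performs exactly this translation using the exact sequence $0\to\hat\CA(\Rper)\to\CA(\Rper)\to\CA(k)\to 0$. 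Your picture of prime-to-$p$ torsion with infinite image in $\CA(\bar k)$ does not occur in this setup.

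Second, and more seriously, your claim that the hypothesis of Theorem \ref{FormalDescent} ``is checked just as in the proof of Theorem \ref{FormalML}, using Lemma \ref{pMult} together with the $p$-adic completeness of $\bar\Gamma$'' is false, and if it were true it would prove the ordinary case of Conjecture \ref{fullML} unconditionally, contradicting your own conclusion. The genuine obstruction is precisely here: a finite-rank subgroup of $A(L_0^{\per})$ typically spreads over \emph{all} the rings $R^{p^{-n}}$, so one cannot pass to a finite extension with $\bar\Gamma\subset\Gscr(R')$ as in the first step of the proof of Theorem \ref{FormalML}; and over the perfection Lemma \ref{pMult} is vacuous, since $(\Rper)^{q^i}=\Rper$, so multiplication by $q^i$ no longer forces the translated formal scheme to be defined over a smaller subring and the descent mechanism of Lemma \ref{FormalFieldDefinition} has no purchase. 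In the paper the hypothesis of Theorem \ref{FormalDescent} is instead supplied, level by level, by Conjecture \ref{fullMLreduction2} applied to irreducible components of Verschiebung-preimages in the Frobenius twists $A^{(q^i)}$, packaged through the notion of $K/K^{q^i}$-specialness and Theorem \ref{RelSpecial}; none of this apparatus appears in your sketch. The residual descent from $L_0^{\per}$ to $L_0$ that you flag at the end is indeed the open point, but it is the \emph{content} of Conjecture \ref{fullMLreduction2}, not a verification one is missing within the formal-scheme argument.
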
 
In this section we show that in case $A$ is an ordinary or supersingular abelian variety by combining our method with a reduction due to Ghioca, Moosa and Scanlon, Conjecture \ref{fullML} can be reduced to the following special case:
\begin{conjecture}\label{fullMLreduction2}
  Let $L_0$ be a field which is finitely generated over $\BF_p$, let $L$ an algebraic closure of $L_0$ and let $L_0^\per$ the perfect closure of $L_0$ in $L$. Let $A$ be a semiabelian variety over $L_0$ and $X \subset A_{L_0^\per}$ an irreducible subvariety. Assume that the canonical morphism $\Tr_{L/\bar \BF_p}A\to A$ is defined over $L_0$, that there exists a finite subfield $\BF_q$ of $L_0$ over which $\Tr_{L/\bar\BF_p}A$ can be defined and that $\Stab_{A_{L_0^\text{per}}}(X)$ is finite. If $X(L_0^\per)$ is Zariski dense in $X_{0}$, then a translate of $X$ by an element of $A(L_0^\text{per})$ is defined over $L_0$.
\end{conjecture}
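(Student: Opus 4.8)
The plan is to prove Conjecture \ref{fullMLreduction2} --- at least when $\hat\CA$ is isoclinic, i.e. when $A$ is ordinary or supersingular, so that the Frobenius endomorphism $F_{\hat\CA}$ of Construction \ref{FConstr} is defined over $R$ rather than only over $\Rper$ --- by imitating the deduction of Theorem \ref{pfgML} from Theorem \ref{FormalML}, using Proposition \ref{FormalDescent} as the formal analogue of the desired descent over $L_0$. The first step is to reduce to a local field. As $X$ is quasi-compact, $L_0^\per=\bigcup_i L_0^{p^{-i}}$, and the Zariski closures of the $X(L_0^{p^{-i}})$ form an ascending chain of closed subsets of the Noetherian space $X$, after enlarging an index $N$ I may assume that $X$ is defined over $L_0^{p^{-N}}$ and that $X(L_0^{p^{-N}})$ is Zariski dense in $X$. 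Applying Proposition \ref{NiceCompletion} to $L_0$ and $A$, and using the hypotheses that $\Tr_{L/\bar\BF_p}A$ has a model over a finite field $\BF_q\subset L_0$ and that $\tau$ is defined over $L_0$, I would embed $L_0$, and hence the finitely generated field $L_0^{p^{-N}}$, into a finite extension $\tilde K$ of a local field $K$ whose residue field contains $\BF_q$, arranging that the $\BF_q$-model of $\Tr_{L/\bar\BF_p}A$ spreads out to a constant abelian subscheme, such that $A_{\tilde K}$ extends to a semiabelian scheme over the valuation ring $\tilde R$ of $\tilde K$ isogenous to a nice one; by the analogue of Lemma \ref{ReplaceIsogenous} for descent statements together with Lemma \ref{Blub} I may take this scheme $\CA$ to be nice, so that $\hat\CA\cong\Gscr_{\Spf(\tilde R)}$ for a model $\Gscr$ of $\hat\CA$ over the residue field $k$ under which $F_{\hat\CA}$ becomes the Frobenius of $\Gscr$ relative to $k$ (Propositions \ref{CSDConstant2} and \ref{FProperties}).

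For the formal part I would argue as in Theorem \ref{pfgML}. After translating $X$ by a point of $X(L_0^\per)$ --- permissible since the conclusion allows a translate by an element of $A(L_0^\per)$ --- and using that $\hat\CA(\tilde R)$ has finite index in $\CA(\tilde R)$, I may assume that the zero section is a smooth point of the schematic closure $\CX$ of $X$ and that $S\defeq X(L_0^\per)\cap\hat\CA(\tilde R)$ is Zariski dense in $X$ near the origin. The group generated by $S$ lies in the finite rank $\BZ_p$-module $\hat\CA(\tilde R)\cap A(L_0^\per)$, whose rank is finite by the Lang--N\'eron theorem (each $A(L_0^{p^{-n}})$ is finitely generated, and $A(L_0^{p^{-n}})/A(L_0^{p^{-(n-1)}})$ is a finite $p$-group by Frobenius and Verschiebung). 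Taking $\bar\Gamma$ to be its closure in $\hat\CA(\bar R)$ and running the argument of Theorem \ref{FormalML} --- choosing, via the finiteness of $\bar\Gamma/q^i\bar\Gamma$, coset representatives so that $q^i\bar\Gamma$ stays dense and hence, by Lemma \ref{pMult} and Proposition \ref{BaseChangeClosure}, the corresponding translate of $\hat\CX$ is defined over $\tilde R^{q^i}$, and feeding this into Proposition \ref{FormalDescent} --- I would obtain, after a translation by some $a\in A(\bar R)$ lying over the zero section, that $\hat\CX+a$ is defined over $k$, equivalently is fixed by a power of $F_{\hat\CA}$. Then Theorem \ref{SpecialFChar} shows $X_{\bar K}$ is special, so by Lemma \ref{SpecialEquivalence} a translate $X+a'$ with $a'\in A(L)$ equals $\tau(Y)+\Stab(X)$ for a subvariety $Y$ of $\Tr_{L/\bar\BF_p}A$ defined over $\bar\BF_p$; since $\Stab(X)$ is finite and $\Tr_{L/\bar\BF_p}A$, $\tau$ are defined over $\BF_q$, $L_0$, and $X(L_0^\per)$ is dense, the remaining task is to descend $Y$ to $\BF_q$ --- a Mordell--Lang assertion over a finite field --- and $\Stab(X)$, and the translating point, to $L_0$.

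That remaining task is the main obstacle: the passage from ``a translate of the formal completion $\hat\CX$ is defined over the residue field $k$'', i.e. ``$X$ is special'', to ``a translate of $X$ by an element of $A(L_0^\per)$ is defined over $L_0$'' is a purely inseparable descent, not governed by Galois cohomology. One must control simultaneously the field of definition of $Y$, that of $\Stab(X)$, and that of the translating point, and rule out that the descent reaches only a finite purely inseparable extension of $L_0$ rather than $L_0$ itself. Proposition \ref{FormalDescent} is the exact formal counterpart of this descent; what is missing is a watertight bridge from it to the global statement --- the analogue, for descent over $L_0$, of the way Theorem \ref{SpecialFChar} bridges Theorem \ref{FormalML} to Theorem \ref{pfgML} --- and the difficulty of building that bridge is why the statement is recorded here only as a conjecture.
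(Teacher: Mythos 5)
The first thing to say is that the paper does not prove this statement: it is recorded as an open conjecture, and the paper's contribution is only to relate it to Conjecture \ref{fullML} --- Proposition \ref{Reduction2} shows that Conjecture \ref{fullML} implies Conjecture \ref{fullMLreduction2}, and the final theorem of Section 5 shows the converse for ordinary or supersingular abelian varieties. So there is no proof in the paper to compare your attempt against, and your attempt, as you yourself concede in your closing paragraph, is not a proof either. Within that frame, your diagnosis is partly right but misplaces the obstruction. The formal detour through Theorem \ref{FormalML} and Proposition \ref{FormalDescent} cannot be run as you describe it for the group generated by $X(L_0^\per)\cap\hat\CA$: points of $A(L_0^{p^{-n}})$ require unboundedly large purely inseparable extensions as $n$ grows, so the closure $\bar\Gamma$ need not be a finitely generated $\BZ_p$-module contained in $\Gscr(R')$ for any single finite extension $R'$ --- this failure is exactly what separates the full conjecture from the finitely generated case. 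Moreover the detour is unnecessary for specialness: your own first reduction (Noetherianity gives a single $N$ with $X(L_0^{p^{-N}})$ dense) places a dense set of points of $X$ inside $A(L_0^{p^{-N}})$, which for an \emph{abelian} variety is finitely generated by Lang--N\'eron, so Theorem \ref{pfgML} already yields specialness of $X_L$; for a semiabelian $A$ with nontrivial toric part $A(L_0^{p^{-N}})$ is not finitely generated and specialness is genuinely open, which is why Proposition \ref{Reduction2} has to assume Conjecture \ref{fullML} at that step.

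The ``remaining task'' you flag --- descending $Y$ to $\BF_q$, the stabilizer, and the translating point to $L_0$ --- is not actually where the paper gets stuck: conditional on specialness, the proof of Proposition \ref{Reduction2} carries out precisely this purely inseparable descent, and does so without ever descending $Y$ itself. The mechanism is: use $0\in X$ to deduce $X\subset\Img(\tau)$ and reduce to $\tau$ a purely inseparable isogeny; pull $X(L_0^\per)$ back along $\tau$ and apply Noetherianity to the exhaustion $\bigcup_{n}F^{-n}\bigl(\Tr_{L/\bar\BF_p}A(L_0)\bigr)=\Tr_{L/\bar\BF_p}A(L_0^\per)$ to find one $n$ with $X(L_0^{q^{-n}})$ Zariski dense; then use the finiteness of $A(L_0^{q^{-n}})/q^nA(L_0^{q^{-n}})$ together with $q^nA(L_0^{q^{-n}})\subset A(L_0)$ to produce $a''\in A(L_0^\per)$ such that $X-a''$ is the closure of a set of $L_0$-points, hence defined over $L_0$. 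You should compare your final paragraph with that argument; the bridge you say is missing is essentially built there, and the genuinely open input (hence the reason the statement is a conjecture) is the specialness of $X_L$ for general semiabelian $A$, i.e.\ Conjecture \ref{fullML} itself. If you believe your Noetherian reduction plus Theorem \ref{pfgML} supplies that input unconditionally in the abelian case, that is a claim worth examining very carefully against the paper's insistence that the conjecture remains open, rather than asserting in passing.
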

\begin{remark}
  We expect that Conjecture \ref{fullMLreduction2} holds without the condition on the field of definition of the morphism $\Tr_{L/\bar \BF_p}A\to A$, the existence of $\BF_q$ as above and the condition on $\Stab_{A_{L_0^\text{per}}}(X)$. However we were unable to deduce Conjecture \ref{fullMLreduction2} from Conjecture \ref{fullML} without these conditions. Note that they can always be achieved by dividing by $\Stab_{A_{L_0^\text{per}}}(X)$ and by replacing $L_0$ by a suitable finite extension.
\end{remark}
\begin{remark}
  Consider the case of a supersingular abelian variety $A$. Then $A$ is isogenous to a power of a supersingular elliptic curve (c.f. \cite[Theorem 4.2]{Oort1}), and thus in order to prove Conjecture \ref{fullML} for $A$, it suffices to prove it for powers of supersingular elliptic curves. But these can be defined over a finite field and for such abelian varieties, Conjecture \ref{fullML} is proven in \cite{GhiocaMoosa}. Thus the results in this section are only interesting in case $A$ is ordinary.
\end{remark}
\begin{lemma} \label{PerfectionPoints}
  Let $L_0$ be a finitely generated field of characteristic $p>0$ with perfection $L_0\into L_0^\text{per}$ and let $A$ be a semiabelian variety over $L_0$. 
  \begin{itemize}
  \item [(i)] The group $A(L_0^\text{per})$ has finite rank.
  \item [(ii)] Let $\Gamma \subset A(L_0^\text{per})$ be a subgroup and $n\geq 0$. The group $\Gamma/p^n\Gamma$ is finite.
  \end{itemize}
\end{lemma}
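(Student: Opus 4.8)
The plan is to deduce (i) by comparing $A(L_0^\per)$ with $A(L_0)$ along the tower $L_0\subseteq L_0^{1/p}\subseteq L_0^{1/p^2}\subseteq\cdots$ of purely inseparable extensions, and then to obtain (ii) from (i) together with the elementary finiteness of $A[p]$. (The relevant case is that of an abelian variety $A$, which we treat.)

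For (i), first I would invoke the Lang--N\'eron theorem (\cite{ConradTrace}). Since $L_0$ is finitely generated over $\BF_p$, its field of constants $k:=\bar\BF_p\cap L_0$ is finite, so $A(L_0)/\tau\bigl((\Tr_{L_0/k}A)(k)\bigr)$ is finitely generated while $(\Tr_{L_0/k}A)(k)$ is a finite group; hence $A(L_0)$ is finitely generated, and I set $r:=\dim_\BQ A(L_0)\otimes_\BZ\BQ$. The crux is that moving up the tower does not increase the rank. Writing $L_0^\per=\bigcup_i L_0^{1/p^i}$, so that $A(L_0^\per)=\bigcup_i A(L_0^{1/p^i})$, I would use the $i$-fold relative Frobenius $\F^i\colon A\to A^{(p^i)}$ and Verschiebung $V^i\colon A^{(p^i)}\to A$ over $L_0$, which satisfy $V^i\circ\F^i=[p^i]$. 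Since $\F^i$ raises coordinates to the $p^i$-th power and $(L_0^{1/p^i})^{p^i}\subseteq L_0$, it carries $A(L_0^{1/p^i})$ into $A^{(p^i)}(L_0)$, and hence $[p^i]\cdot A(L_0^{1/p^i})\subseteq A(L_0)$. Since $[p^i]$ does not change the rank and $A(L_0)\subseteq A(L_0^{1/p^i})$, this forces $\dim_\BQ A(L_0^{1/p^i})\otimes\BQ=r$ for every $i$. Since $\otimes_\BZ\BQ$ commutes with the union, $A(L_0^\per)\otimes\BQ$ is an increasing union of $r$-dimensional subspaces, hence itself $r$-dimensional; so $A(L_0^\per)$ has finite rank.

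For (ii), by (i) the subgroup $\Gamma$ has finite rank, say $\le r$, but this alone does not bound $\Gamma/p^n\Gamma$, so I would also record that $\Gamma[p]\subseteq A[p](\bar L_0)$ is finite, because $A[p]$ is a finite group scheme over a field. It then remains to prove the group-theoretic fact that an abelian group $M$ of finite rank with $M[p]$ finite has $M/p^nM$ finite for all $n$. For this I would: split off the torsion and bound $\dim_{\BF_p}(N/pN)\le r$ for the torsion-free quotient $N=M/M_{\mathrm{tors}}$ by a linear-dependence argument inside $N\otimes\BQ$, whence $|N/p^nN|\le p^{rn}$ via the filtration by the $p^kN/p^{k+1}N$; observe that the prime-to-$p$ part of $M_{\mathrm{tors}}$ is killed modulo $p^n$; and handle the $p$-primary part $P$ by noting $P[p^m][p]=M[p]$ for all $m\ge 1$, so every finite group $P[p^m]$ is generated by $\dim_{\BF_p}M[p]$ elements and $P/p^nP=\varinjlim_m P[p^m]/p^nP[p^m]$ is a colimit of finite groups of bounded order, hence finite. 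Applying right exactness of $-\otimes\BZ/p^n$ to $0\to M_{\mathrm{tors}}\to M\to N\to 0$ then exhibits $M/p^nM$ as an extension of $N/p^nN$ by a quotient of $P/p^nP$, both finite.

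The only substantive ingredient is Lang--N\'eron; everything else is formal. The step that needs care is the rank-stability in (i): one must genuinely verify that passing to $L_0^{1/p^i}$ does not enlarge the rank, and the clean device for this is the identity $V^i\circ\F^i=[p^i]$ together with the fact that $\F^i$ sends $L_0^{1/p^i}$-points to $L_0$-points of $A^{(p^i)}$. In (ii) the point worth flagging is that ``finite rank'' is not by itself enough, so the (easy) finiteness of $A[p]$ must be used to control the $p$-primary torsion; after that it is routine bookkeeping.
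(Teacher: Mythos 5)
Your part (i) is essentially the paper's own argument: the paper's proof is precisely the containment $p^nA(L_0^{p^{-n}})\subset A(L_0)$ (which you justify via $V^n\circ F^n=[p^n]$, the same device the paper uses for formal groups in Lemma \ref{pMult}) together with finite generation of $A(L_0)$, i.e.\ Lang--N\'eron. For part (ii) you take a genuinely different route. The paper cites the finiteness of the whole torsion subgroup of $A(L_0^\per)$ (Claim 1 in the proof of Theorem 2.2 of \cite{GhiocaMoosa}), writes $\Gamma$ as the increasing union of the finitely generated groups $\Gamma_i=\Gamma\cap A(L_0^{p^{-i}})$, notes that their ranks and torsion subgroups are uniformly bounded, so the orders of $\Gamma_i/p^n\Gamma_i$ are bounded, and concludes that their images in $\Gamma/p^n\Gamma$ stabilize. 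You instead prove the purely group-theoretic fact that an abelian group of finite rank with finite $p$-torsion has finite quotient modulo $p^n$, and feed in only the finiteness of $A[p](\bar L_0)$, which is immediate from $A[p]$ being a finite group scheme. Your argument checks out (the bound $\dim_{\BF_p}N/pN\le r$ for the torsion-free quotient, the killing of prime-to-$p$ torsion, and the colimit argument for the $p$-primary part are all correct), and it is more self-contained: it does not need the nontrivial input that all of $A(L_0^\per)_{\mathrm{tors}}$ is finite. The paper's version is shorter given the citation and simply re-runs (i) plus Lang--N\'eron over each $L_0^{p^{-i}}$.

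One point to flag: you restrict to abelian $A$, while the lemma is stated for semiabelian $A$. Your restriction in fact matches what either argument can deliver, since both hinge on $A(L_0)$ (resp.\ $A(L_0^{p^{-i}})$) being finitely generated, which fails as soon as $A$ has a nontrivial toric part: already $\Gm(\BF_p(t))=\BF_p(t)^\times$ has infinite rank, and taking $\Gamma=\BF_p(t)^\times$ even (ii) fails, since $\BF_p(t)^\times/(\BF_p(t)^\times)^p$ is infinite. So rather than setting the semiabelian case aside as ``not relevant,'' you should state explicitly that your proof (like the paper's) covers the abelian case; note that part (i) is invoked in Proposition \ref{Reduction2} for a semiabelian $A$, so the scope restriction is not purely cosmetic for the paper.
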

\begin{proof}
  $(i)$ For $n\geq 0$ the group $p^nA(L_0^{p^{-n}})$ is contained in $A(L_0)$. This together with the fact that $A(L_0)$ is finitely generated implies that $A(L_0^\text{per})$ has finite rank. 

$(ii)$ By Claim 1 in the proof of Theorem 2.2 of \cite{GhiocaMoosa} the torsion subgroup of $A(L_0^\text{per})$ is finite. Hence both the rank and the size of the torsion subgroup of the finitely generated groups $\Gamma_i\defeq \Gamma\cap A(L_0^{p^{-i}})$ are bounded as $i\geq 0$ varies. Thus the size of the groups $\Gamma_i/p^n\Gamma_i$ is bounded as $i$ varies. Let $\bar\Gamma_i$ be the image of $\Gamma_i/p^n\Gamma_i$ in $\Gamma/p^n\Gamma$. The groups $\bar\Gamma_i$ form an ascending sequence of finite groups of bounded size, thus for all $i\gg 0$ the $\bar\Gamma_i$ coincide. Since $\Gamma/p^n\Gamma$ is the union of the $\bar\Gamma_i$ this shows $(ii)$.
\end{proof}

\begin{proposition} \label{Reduction2}
  Conjecture \ref{fullML} implies Conjecture \ref{fullMLreduction2}.
\end{proposition}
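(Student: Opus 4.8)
The plan is to assume Conjecture~\ref{fullML} and deduce Conjecture~\ref{fullMLreduction2}. Let $L_0$, $L$, $L_0^\per$, $A$ and $X\subset A_{L_0^\per}$ be as in the statement. First I would pass to the algebraic closure. Since $X(L_0^\per)$ is Zariski dense in $X$, it is Zariski dense in $X_L$; moreover $X_L$ is irreducible, because $L_0^\per$ is perfect, so $\Aut(L/L_0^\per)$ permutes the irreducible components of $X_L$ transitively, while every point of $X(L_0^\per)$ gives an $\Aut(L/L_0^\per)$-fixed point of $X_L$, hence a point lying in the intersection of all components --- a proper closed subset were there more than one --- contradicting density. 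One checks $X_L(L)\cap A(L_0^\per)=X(L_0^\per)$, which is thus Zariski dense in $X_L$; by Lemma~\ref{PerfectionPoints}(i) the group $\Gamma\defeq A(L_0^\per)$ has finite rank, so Conjecture~\ref{fullML} applied to $(A_L,X_L,\Gamma)$ shows that $X_L$ is special in $A_L$.

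Next I would extract the trace structure and normalise. As $\Stab_{A_L}(X_L)=(\Stab_{A_{L_0^\per}}(X))_L$ is finite, Lemma~\ref{SpecialEquivalence} yields a subvariety $Y$ of $\Tr_{L/\bar\BF_p}A$ defined over $\bar\BF_p$ and $a\in A(L)$ with $X_L+a=\tau(Y)+\Stab_{A_L}(X_L)$; since $X_L$ is irreducible the finite union collapses, giving $\Stab_{A_L}(X_L)\subset\Stab(\tau(Y))$ and $X_L=\tau(Y)-a$ with $\tau(Y)$ irreducible. Because $\tau$ is defined over $L_0$ and $\Tr_{L/\bar\BF_p}A$ has a model over $\BF_q\subset L_0$, the subvariety $\tau(Y)$ is defined over $L_1\defeq L_0\cdot\BF_{q'}$ for some finite --- hence separable --- extension $\BF_{q'}$ of $\BF_q$. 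The decisive observation is that $\Stab_{A_L}(X_L)=\Stab(\tau(Y))$ is now defined over $L_1$ as well as over $L_0^\per$, hence over $L_1\cap L_0^\per=L_0$. Dividing by it, and replacing $\tau$ by its composite with $A\to A/\Stab_{A_L}(X_L)$ (still defined over $L_0$), I would reduce to the case $\Stab_{A_{L_0^\per}}(X)=0$, the only point needing care being that the normalised conclusion implies the original one --- i.e. control of the cokernel of $A(L_0^\per)\to(A/\Stab_{A_L}(X_L))(L_0^\per)$.

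For the descent I would exploit that $\operatorname{char}L_0=p$, so $\Aut(L/L_0)=\Aut(L/L_0^\per)$. Write $X_L=\tau(Y)+b$. Since $X_L$ is defined over $L_0^\per$ every $\gamma\in\Aut(L/L_0)$ fixes it, so $\tau(\gamma Y)=\tau(Y)+(b-\gamma b)$; thus every $\Aut(L_1/L_0)$-conjugate of the $L_1$-variety $\tau(Y)$ is a translate of it, and --- using that $L_1/L_0$ is separable, so $\tau(Y)$ is honestly defined over $L_1$ and $\Aut(L_1/L_0)$ acts genuinely --- the map $\gamma\mapsto b-\gamma b$ is a cocycle valued in $A(L_1)$ whose class $\xi$ in $H^1(\Aut(L_1/L_0),A(L_1))$ is exactly the obstruction to descending $\tau(Y)$ to $L_0$ up to translation, and which becomes a coboundary --- of $b$ --- after the inclusion $A(L_1)\hookrightarrow A(M)$ with $M\defeq L_1^\per=L_0^\per\cdot\BF_{q'}$. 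Granting $\xi=0$, choose $c\in A(L_1)$ with $\tau(Y)+c$ defined over $L_0$; then $X_L+(c-b)=\tau(Y)+c$, and the cocycle relation forces $\gamma(c-b)=c-b$ for all $\gamma\in\Aut(L_1/L_0)=\Aut(M/L_0^\per)$, so $c-b\in A(M)^{\Aut(M/L_0^\per)}=A(L_0^\per)$; hence the translate $X+(c-b)$ of $X$ by an element of $A(L_0^\per)$ is defined over $L_0$, which is the assertion.

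The main obstacle is therefore the vanishing of $\xi$. It is killed by $[L_1:L_0]$, being a class for a finite group, and by a power of $p$, since $A(M)/A(L_1)$ is $p$-primary torsion (if $y\in A(L_1^{1/p^N})$ then $p^Ny\in A(L_1)$, the $p^N$-th power relative Frobenius carrying $y$ into $A(L_1)$ and being complementary to $[p^N]$); hence $\xi=0$ whenever $p\nmid[L_1:L_0]$, and the problem is to kill the residual $p$-power part, which cannot be done by field theory alone. Here I expect one must feed the density hypothesis back into Mordell--Lang: following the introduction, spread out via Proposition~\ref{NiceCompletion} to a nice semiabelian scheme $\CA$ over the valuation ring $R$ of a local field into which $L_0$ embeds (with $A(L_0^\per)\subset\CA(\Rper)$), pass to the formal completions $\hat\CX\subset\hat\CA$ along the zero section --- where, by Theorem~\ref{SpecialFChar}, specialness of $X_L$ becomes invariance of a translate of $\hat\CX$ under a power of $F_{\hat\CA}$ --- and apply the formal descent Proposition~\ref{FormalDescent} to $\hat\CA$ and the closure of $A(L_0^\per)\cap\hat\CA(\Rper)$, exactly as Theorem~\ref{IntroML} is deduced from Theorem~\ref{IntroFormalML}; transferring the resulting field-of-definition statement back across the trace conditions then yields $\xi=0$. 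Making this transfer precise, and keeping it compatible with the normalisations above, is the heart of the proof.
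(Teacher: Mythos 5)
Your opening moves (irreducibility of $X_L$ via density of $X(L_0^\per)$, applying Conjecture~\ref{fullML} through Lemma~\ref{PerfectionPoints}(i), and extracting $X_L=\tau(Y)+b$ with finite stabilizer from Lemma~\ref{SpecialEquivalence}) match the paper, but the descent itself contains a genuine gap, and you flag it yourself: everything after ``Granting $\xi=0$'' is conditional, and you give no argument that the $p$-primary part of the obstruction class vanishes. The coprimality trick really does fail, since $[L_1:L_0]$ is dictated by the field of definition of $Y$, which you do not control and which may be divisible by $p$, while your only other bound on $\xi$ is a $p$-power; and the route you gesture at to finish --- Proposition~\ref{NiceCompletion}, Theorem~\ref{SpecialFChar} and Proposition~\ref{FormalDescent} --- is the apparatus the paper uses for the \emph{converse} direction (Theorem~\ref{RelSpecial} and the equivalence theorem), not for Proposition~\ref{Reduction2}, and nothing in that formal-local machinery is shown to produce the vanishing of a global cohomology class over $L_0$. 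There are also secondary soft spots you leave open: since $A(L)^{\Aut(L/L_1)}=A(L_1^\per)$ and the scheme-theoretic stabilizer may have infinitesimal part, the translator $b-\gamma b$ is a priori only in $A(L_1^\per)$ rather than $A(L_1)$, so even the coefficient module of $\xi$ needs justification; and the reduction to $\Stab=0$ (control of the cokernel of $A(L_0^\per)\to (A/\Stab)(L_0^\per)$) is acknowledged but not carried out.

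The paper avoids cohomology altogether, and the missing idea is the following. After translating so that $0\in X$ and replacing $A$ by $\Img(\tau)$ (which contains $X$), the map $\tau$ is a purely inseparable isogeny by \cite[Theorem 6.12]{ConradTrace}, so $\tau\colon \Tr_{L/\bar\BF_p}A(L_0^\per)\to A(L_0^\per)$ is surjective and the dense set $X(L_0^\per)$ lifts into $\Tr_{L/\bar\BF_p}A(L_0^\per)=\cup_{n\geq 0}F^{-n}\Gamma$, where $\Gamma\defeq\Tr_{L/\bar\BF_p}A(L_0)$ is finitely generated and invariant under the $q$-Frobenius $F$ of the fixed $\BF_q$-model. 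Applying \cite[Proposition 3.9]{GhiocaMoosa} to an irreducible component $Y'$ of the closure of $\tau^{-1}(X(L_0^\per))$ produces a single finite level $n$ such that $Y'(L_0^{q^{-n}})$, hence $X(L_0^{q^{-n}})$, is Zariski dense. Finally, since $A(L_0^{q^{-n}})/q^nA(L_0^{q^{-n}})$ is finite, some coset $a''+q^nA(L_0^{q^{-n}})$ meets $X(L_0^{q^{-n}})$ in a Zariski dense set; as $q^nA(L_0^{q^{-n}})\subset A(L_0)$ and Zariski closure commutes with base change, $X-a''$ is defined over $L_0$ with $a''\in A(L_0^\per)$, which is the assertion. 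Note that this uses the density hypothesis a second time, beyond the single application of Conjecture~\ref{fullML}; your cohomological framing has no place for that input, which is precisely why it gets stuck at the $p$-part of $\xi$.
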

\begin{proof}
  After translating $X$ by an element of $X(L_0^\text{per})$ we may assume that $0\in X$. By Lemma \ref{PerfectionPoints} the group $A(L_0^\text{per})$ has finite rank. Hence by Conjecture \ref{fullML} the subvariety $X_L$ is special in $A_L$. Thus by Lemma \ref{SpecialEquivalence} there exist a subvariety $Y\subset \Tr_{L/\bar\BF_p}A$ over $\bar\BF_p$ and $a\in A(L)$ such that $X_L=\tau(Y_L)+\Stab_A(X)_L+a$, where $\tau$ is the natural homomorphism $\Tr_{L/\bar\BF_p}A\to A$ which by assumption is defined over $L_0$. Since by assumption $\Stab_A(X)$ is finite and $X$ is irreducible, there exists $a'\in A(L)$ such that $\tau(Y_L)=X_L+a'$.

 The fact that $0\in X$ implies $a'\in \Img(\tau)(L)$. Thus $X\subset \Img(\tau)$. After replacing $A$ by $\Img(\tau)$ we may assume that $\tau$ is an isogeny. We fix a model of $\Tr_{L/\bar\BF_p}A$ over a finite field $\BF_q$ contained in $L_0$ and let $F\colon \Tr_{L/\bar\BF_p}A\to \Tr_{L/\bar\BF_p}A$ the Frobenius endomorphism of $\Tr_{L/\bar\BF_p}A$ with respect to $\BF_q$. Since by \cite[Theorem 6.12]{ConradTrace} the homomorphism $\tau$ is purely inseparable, the induced map $\tau\colon \Tr_{L/\bar\BF_p}A(L_0^\text{per})\to A(L_0^\text{per})$ is surjective. Let $Y'$ be an irreducible component of the Zariski closure of $\tau^{-1}(X(L_0^\text{per}))\subset \Tr_{L/\bar\BF_p}A(L_0^\text{per})$. Let $\Gamma\defeq \Tr_{L/\bar\BF_p}A(L_0)$. This is a finitely generated $F$-invariant subgroup of $\Tr_{L/\bar\BF_p}A(L)$ and $Y'(L)\cap (\cup_{n\geq 0}F^{-n}\Gamma)=Y'(L)\cap \Tr_{L/\bar\BF_p}A(L_0^\text{per})$ is Zariski dense in $Y'$. Thus by \cite[Proposition 3.9]{GhiocaMoosa} for some $n\geq 0$ the set $Y'(L)\cap F^{-n}\Gamma\subset Y'(L_0^{q^{-n}})$ is Zariski dense in $Y'$. Thus $X(L_0^{q^{-n}})$ is Zariski dense in $X$. Since the group $A(L_0^{q^{-n}})/q^nA(L_0^{q^{-n}})$ is finite, there thus exists $a''\in A(L_0^{q^{-n}})$ such that $(a''+q^nA(L_0^{q^{-n}}))\cap X(L_0^{q^{-n}})$ is Zariski dense in $X$. Since $q^nA(L_0^{q^{-n}})\subset A(L_0)$ and since Zariski closure commutes with base change this implies that $X-a''$ is defined over $L_0$.
\end{proof}

\subsection{A specialness criterion}
Let $K$ be a local field of characteristic $p>0$ with valuation ring $R$ and residue field $k$. Let $\bar K$ be an algebraic closure of $K$ and $\bar R$ the valuation ring of $\bar K$. Let $\CA$ be an abelian scheme over $R$ such that the Newton polygon of $\hat\CA$ is constant. Denote $\CA_K$ by $A$.

The following definition is somewhat ad hoc and adapted to our present needs:
\begin{definition}\label{RelSpecialDef}
 Let $i\geq 0$ and $X$ a subvariety of $A$. Denote the schematic closure of $X$ in $\CA$ by $\CX$. We say that $X$ is \emph{$K/K^{p^i}$-special} in $A$ if $X$ is irreducible and there exists an abelian variety $B$ over $K^{p^i}$, a subvariety $Y$ of $B$ over $K^{p^i}$, a homomorphism $h\colon B_{K}\to A/\Stab_A(X)$ with finite kernel and an element $a\in \widehat{(\CA/\Stab_\CA(\CX)}(\bar R)$ such that $(X/\Stab_A(X))_{\bar K}=h(Y_{\bar K})+a$.
\end{definition}
\begin{theorem} \label{RelSpecial}
  Assume that $A$ is ordinary or supersingular. Let $X$ be an irreducible subvariety of $A$ containing $0$. If $X$ is $K/K^{p^i}$-special in $A$ for all $i\geq 0$, then $X_{\bar K}$ is special in $A_{\bar K}$. 
\end{theorem}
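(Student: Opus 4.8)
The plan is to deduce specialness from the criterion of Theorem \ref{SpecialFChar}: after a finite extension $\tilde K/K$ and a translation, the completion $\hat\CX$ of (the schematic closure of) $X$ along the zero section becomes invariant under a power of $F_{\hat\CA}$. What makes this feasible is that, $A$ being ordinary or supersingular, the $p$-divisible group $\hat\CA$ is isoclinic (of slope $1$, resp.\ $1/2$); hence by the single-slope case of Proposition \ref{CSDConstant2} the isomorphism $\hat\CA_{\Spf\Rper}\cong(\hat\CA_k)_{\Spf\Rper}$, and therefore $F_{\hat\CA}$ itself, is already defined over $R$, and by Proposition \ref{FProperties}(ii) there is a $p$-divisible group $\Gscr$ over the finite field $k$ together with an isomorphism $\Psi\colon\Gscr_{\Spf R}\cong\hat\CA_{\Spf R}$ under which $F_{\hat\CA}$ corresponds to the Frobenius of $\Gscr$ relative to $k$; in particular $F_{\hat\CA}$ kills the tangent space at the origin.

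First I would make the standard reductions. Using Theorem \ref{CSDIsogeny} (via Proposition \ref{BTFormal}) together with Lemma \ref{SpecialImage}, one replaces $\CA$ by an isogenous abelian scheme so that $\CA$ is nice; and, since both ``special'' and ``$K/K^{p^i}$-special'' depend only on the quotient by the translation stabilizer and a finite-kernel isogeny does not enlarge the stabilizer of the irreducible variety $X$, one further reduces to $\Stab_A(X)=0$, so that the completions in Definition \ref{RelSpecialDef} all take place inside $\hat\CA$. Now fix $i\geq 0$. The hypothesis gives $X_{\bar K}=h_i(Y_{i,\bar K})+a_i$ with $B_i,Y_i$ over $K^{p^i}$, with $h_i\colon B_{i,K}\to\CA_{\bar K}$ of finite kernel, and with $a_i\in\hat\CA(\bar R)$ (so $a_i$ reduces to $0$, and since $0\in X$ the zero section lies in the special fibre of the schematic closure of $X-a_i$). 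Rewriting ``$Y_i$ defined over $K^{p^i}$'' as $Y_{i,K}=(Y_i^\flat)^{(p^i)}$ for a subvariety $Y_i^\flat$ of an abelian variety $B_i^\flat$ over $K$ with $\dim B_i^\flat\leq\dim A$, one gets $X_{\bar K}-a_i=h_i\bigl((Y_i^\flat)^{(p^i)}_{\bar K}\bigr)$. Over a finite extension $\tilde K_i/K$ whose ramification index is bounded independently of $i$ --- since $B_i^\flat$ has semistable reduction over $\tilde K_i:=K(B_i^\flat[\ell_0])$ for a fixed auxiliary prime $\ell_0\neq p$ --- the variety $B_i^\flat$ has a semiabelian, and after a further isogeny a nice, model $\mathcal B_i^\flat$ over $\tilde R_i$; the homomorphism $h_i$ extends over $\tilde R_i$ (Néron property, as in the proof of Theorem \ref{SpecialFChar}), the relative Frobenius $\F^i$ extends, and by Proposition \ref{CompletionImage} --- after translating so that $\CX$ is smooth at $0$, exactly as in that proof --- the completion $\hat\CX_{\tilde R_i}+g_i$, for a suitable $g_i\in\hat\CA(\tilde R_i)$ reducing to $0$, is the formal schematic image under $\hat h_i$ of the Frobenius twist $(\widehat{\mathcal Y_i^\flat})^{(p^i)}$ of the completion of the closure of $Y_i^\flat$ in $\mathcal B_i^\flat$.

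The heart of the argument is a congruence. The defining coefficients of $(\widehat{\mathcal Y_i^\flat})^{(p^i)}$ are $p^i$-th powers in $\tilde R_i$, and every $p^i$-th power $c=d^{p^i}$ satisfies $c^{|\tilde k_i|}-c=(d^{|\tilde k_i|}-d)^{p^i}$ (Frobenius identity in characteristic $p$), hence $c^{|\tilde k_i|}\equiv c$ modulo the $p^i$-th power of the maximal ideal of $\tilde R_i$, since $d^{|\tilde k_i|}\equiv d$ modulo that ideal. Thus $(\widehat{\mathcal Y_i^\flat})^{(p^i)}$ is invariant, modulo that power of the maximal ideal, under the canonical Frobenius of $(\hat{\mathcal B}_i^\flat)^{(p^i)}$ over $\tilde R_i$, which (via $\Psi$-type identifications) acts as the $|\tilde k_i|$-th power on coefficients and equals $F^{e_i}$ of the base Frobenius, $e_i=[\tilde k_i:k]$ being bounded (Proposition \ref{FProperties}(v)). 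Intertwining through $\hat h_i$ by Proposition \ref{FProperties}(iii), and using that formal schematic image commutes with the relevant Frobenii and with reduction, one obtains $F_{\hat\CA}^{e_i}(\hat\CX+g_i)\equiv\hat\CX+g_i$ modulo a large power of the maximal ideal; that is, $g_i$ is (modulo that power) a point of the closed formal subscheme $W_n\subset\hat\CA_{\Spf R}$, $n=e_i$, cut out by $F_{\hat\CA}^n(g)-g\in\Trans_{\hat\CA}(F_{\hat\CA}^n(\hat\CX),\hat\CX)$ (Construction \ref{TransConstruction}), equivalently by $F_{\hat\CA}^n$-invariance of $\hat\CX+g$. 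Since $W_n\subset W_{nm}$ for all $m$, the increasing union $\bigcup_n W_n$ stabilizes --- $R[[x_1,\dots,x_g]]$ being Noetherian --- at some $W_{n_0}$, so $W_n\subset W_{n_0}$ for every $n$. Therefore $W_{n_0}(\bar R/\Fm^\ell\bar R)\neq\emptyset$ for each $\ell$ (reduce $g_i$ for $i$ large relative to $\ell$; here the boundedness of $e_i$ is used), whence $W_{n_0}(\bar R)\neq\emptyset$ by Corollary \ref{FormalSchemeNonempty}. Choosing $g\in W_{n_0}(\bar R)$, defined over some finite $\tilde K$, makes $\hat\CX_{\tilde R}+g$ invariant under $F_{\hat\CA}^{n_0}$; then Theorem \ref{InfSpecialCrit}, applied with $G=\hat\CA(\bar R)$, $\Phi=F_{\hat\CA}^{n_0}$ (annihilated by a good polynomial, Proposition \ref{FProperties}(i)) and $T=(\hat\CX+g)(\bar R)=\hat\CA(\bar R)\cap(X_{\bar K}+g)(\bar K)$ --- Zariski dense in $X_{\bar K}+g$ by Proposition \ref{FormalPointsDense} and stable under $\Phi$ --- shows that $X_{\bar K}+g$, hence $X_{\bar K}$, is special.

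The step I expect to take the most care is the passage, in the second and third paragraphs, from ``$Y_i$ defined over $K^{p^i}$'' to a Frobenius-invariance statement for $\hat\CX$ valid modulo a power of the maximal ideal: this requires spreading $h_i$ and $B_i^\flat$ out to models over finite extensions whose ramification must be controlled uniformly in $i$ (which is why the whole argument is organised around congruences modulo powers of the maximal ideal, insensitive to finite extensions, together with the stabilization $W_n\subset W_{n_0}$), identifying the relative Frobenius of the model with a power of the canonical Frobenius $F_{\hat{\mathcal B}_i^\flat}$, and tracking the compatibility of formal schematic images with these Frobenii and with reductions --- altogether a more elaborate version of the bookkeeping in the proof of Theorem \ref{SpecialFChar}.
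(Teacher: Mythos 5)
Your overall strategy is the paper's: reduce to $\CA$ nice with trivial stabilizer, convert the hypothesis for each $i$ into the statement that a translate of (part of) $\hat\CX$ is nearly invariant under a power of $F_{\hat\CA}$ modulo a high power of $\Fm$, then use a transporter-type closed formal subscheme together with Corollary \ref{FormalSchemeNonempty} to produce a single translation making it genuinely invariant, and conclude via Theorem \ref{SpecialFChar} / Theorem \ref{InfSpecialCrit}; in effect you re-prove Proposition \ref{FormalDescent} inline. But two steps fail as written. First, you assert that $\hat\CX_{\tilde R_i}+g_i$ \emph{equals} the formal schematic image of $(\widehat{\CY_i^\flat})^{(p^i)}$ under $\hat h_i$, justified by ``translating so that $\CX$ is smooth at $0$, exactly as in the proof of Theorem \ref{SpecialFChar}''. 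That translation is not available here: in Definition \ref{RelSpecialDef} the translation element $a$ is required to lie in $\hat\CA(\bar R)$ (it reduces to $0$), so moving $X$ by a point $x\in X(\bar K)$ at which $\CX$ is smooth destroys the $K/K^{p^i}$-special form unless $x$ can be taken defined over $\bigcap_i K^{p^i}$, which you cannot arrange; in Theorem \ref{SpecialFChar} the trick worked only because $Y$ was defined over a finite field and one could move by a point of $h(Y(\bar k))$. Without it, Proposition \ref{CompletionImage} gives only that the image is a union of \emph{some} irreducible components of $\hat\CX_{\tilde R_i}$, so your congruence proves near-invariance of that sub-union, and the points $g_i$ need not lie on your scheme $W_n$, which is cut out using all of $\hat\CX$. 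The paper repairs exactly this point: it first makes the components of $\hat\CX$ geometrically irreducible (Proposition \ref{GeomIrrComp}), shows each $\Xscr_i$ is a union of components (Lemma \ref{PreimageImage}, Proposition \ref{CompletionImage}), pigeonholes a fixed union occurring for infinitely many $i$, and at the end uses density of the $\bar R$-points of one component (Proposition \ref{FormalPointsDense}(ii)) to verify criterion (iii) of Theorem \ref{SpecialFChar}.

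Second, the stabilization ``the increasing union $\bigcup_n W_n$ stabilizes since $R[[x_1,\dots,x_g]]$ is Noetherian'' is false: an increasing family of closed formal subschemes corresponds to a decreasing family of ideals, and Noetherianity gives the ascending, not the descending, chain condition (compare $(x)\supsetneq(x^2)\supsetneq\cdots$ in $k[[x]]$). The fix does not need stabilization: since your exponents $e_i$ are bounded (the degree of $K(B_i^\flat[\ell_0])/K$ is bounded because $\dim B_i^\flat\le\dim A$), a single exponent $e$ occurs for infinitely many $i$, and the nonemptiness-modulo-$\Fm^\ell$ argument can be run for the one scheme $W_e$. This is in essence how the paper sidesteps the issue: all of its definability statements are over $R^{p^i}$ for the same $R$, so Proposition \ref{FormalDescent} only ever involves $F$ itself and a single transporter. (Incidentally, the detour through semistable reduction over $K(B_i^\flat[\ell_0])$ is unnecessary: since $h_i$ has finite kernel and $A$ extends to an abelian scheme over $R$, the variety $B_i$ has good reduction over $K^{p^i}$, and the paper simply takes its N\'eron model over $R^{p^i}$, extends $h_i$ by the N\'eron property, and replaces $\hat\CB_i$ by an isogenous completely slope divisible group.)
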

\begin{proof}

Since the Newton polygon of $\hat\CA$ is constant, by Proposition \ref{CSDIsogeny} the $p$-divisible group $\hat\CA$ is isogenous to a completely slope divisible $p$-divisible group. Since isogenies of $p$-divisible groups are quotients by finite subgroup schemes, this implies that there exists a nice abelian scheme $\CA'$ over $R$ together with an isogeny $f\colon \CA\to \CA'$. It follows directly from Definition \ref{RelSpecialDef} that $f(X)$ is again $K/K^{p^i}$-special in $\CA'_K$ for all $i\geq 0$. By Lemma \ref{SpecialImage}, the subvariety $X_{\bar K}$ of $A_{\bar K}$ is special if and only if $f(X_{\bar K})$ is special in $\CA'_{\bar K}$. Thus we may replace $\CA$ by $\CA'$ and can assume that $\CA$ is nice.

  Let $\CX$ be the schematic closure of $X$ in $\CA$. After dividing by $\Stab_\CA(\CX)$ we may assume that $\Stab_A(X)=0$. By Proposition \ref{GeomIrrComp}, after replacing $K$ by a finite field extension we may assume that each irreducible component of $\hat\CX$ is geometrically irreducible. 

Let $i\geq 0$. By assumption there exists a abelian variety $B$ over $K^{p^i}$, a subvariety $Y\subset B$, a homomorphism $h\colon B_K\to A$ and $a\in\hat \CA(\bar R)$ such that $X_{\bar K}=h(Y_{\bar K})+a$. Let $\CB$ be the N\'eron model of $B$ over $R^{p^i}$. Since $\CA$ is the N\'eron model of $A_K$, the homomorphism $h$ extends to a homomorphism $\tilde h\colon \CB_R\to \CA$. Since the generic fiber of the completion $\hat{\tilde h}\colon \hat\CB_R\to \hat\CA$ is an isogeny, Tate's conjecture implies that $\hat{\tilde h}$ is an isogeny. Since the Newton polygon of $\hat\CA$ is constant, the existence of this isogeny implies that the Newton polygon of $\hat\CB$ is constant. Thus, by Theorem \ref{CSDIsogeny} and Proposition \ref{BTFormal}, there exists a completely slope divisible $p$-divisible group $\Gscr$ over $\Spf(R^{p^i})$ together with an isogeny $h'\colon \Gscr\to \hat\CB$. 

Let $\CY$ be the schematic closure of $Y$ in $\CB$. Pick a finite field extension $K'\subset \bar K$ of $K$ with valuation ring $R'$ such that $a\in\hat\CA(R')$. Then $\tilde h(\CY_{R'})+a=\CX_{R'}$. Let $\Yscr\defeq (h')^{-1}(\hat\CY)\subset \Gscr$. Then $\Xscr_i\defeq \hat{\tilde h}(h'(\Yscr_{\Spf(R')}))+a\subset \hat\CX_{\Spf(R')}$, where $\hat{\tilde h}(h'(\Yscr_{\Spf(R')}))$ is the formal schematic image of $\Yscr_{\Spf(R')}$. By Lemma \ref{PreimageImage} we have $h'(\Yscr_{\Spf(R')})=\hat\CY_{\Spf(R')}$. Thus by Proposition \ref{CompletionImage} each irreducible component of $\Xscr_i$ is an irreducible component of $\hat\CX_{\Spf(R')}$. Since each irreducible component of $\hat\CX$ is geometrically irreducible, it follows that $\Xscr_i$ is the union of some of the irreducible components of $\hat\CX$.

Using the fact that $\hat\CA$, and hence $\Gscr$, has a single slope, Propositions \ref{CSDConstant2} and \ref{BTFormal} yield unique isomorphisms $\psi\colon \Gscr\isoto (\Gscr_k)_{\Spf(R^{p^i})}$ and $\psi'\colon \hat\CA\isoto (\hat\CA_k)_{\Spf(R)}$ which are the identity in the special fiber. Under these identifications, by Proposition \ref{ConstantHom} the homomorphism $\hat{\tilde h}\circ h'\colon \Gscr_R \to\hat\CA$ arises by base change from its special fiber. Thus the identity $\Xscr_i=\hat{\tilde h}(h'(\Yscr))+a$ shows that a translate of $\psi'(\Xscr_i)$ by an element of $\hat\CA_k(\bar R)$ is defined over $R^{p^i}$.

 As we saw above, each $\Xscr_i$ is the union of some of the irreducible component of $\Xscr$. Since there are only finitely many such components it follows that there exists $\Xscr\subset \hat\Xscr$ such that $\Xscr=\Xscr_i$ for infinitely many $i$. Theorem \ref{FormalDescent} implies that there exists a finite field extension $K'\subset \bar K$ with valuation ring $R'$ and $x\in \Xscr(R')$ such that $\psi'(\Xscr_{\Spf(R')}-x)$ is defined over $k$. By construction, the endomorphism $F_{\hat\CA}$ corresponds to the Frobenius endomorphism of $\hat\CA_k$ under $\psi'$. Thus $T\defeq \Xscr(\bar R)-x$ satisfies $F_{\hat\CA}(T)\subset T$. By Proposition \ref{FormalPointsDense} $(ii)$ the set $T$ is Zariski dense in $\CX_{\bar K}-x$. Thus condition $(iii)$ of Theorem \ref{SpecialFChar} is satisfied and Theorem \ref{SpecialFChar} implies that $X_{\bar K}$ is special in $A_{\bar K}$. 
\end{proof}

\subsection{Proof of the reduction}

\begin{theorem}
  Conjecture \ref{fullML} and Conjecture \ref{fullMLreduction2} are equivalent for abelian varieties which are ordinary or supersingular.
\end{theorem}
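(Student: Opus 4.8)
By Proposition \ref{Reduction2} one implication already holds, so the plan is to prove that Conjecture \ref{fullMLreduction2} implies Conjecture \ref{fullML} for $A$ ordinary or supersingular. Starting from data $A$, $X$, $\Gamma$ as in Conjecture \ref{fullML}, I would first carry out the usual reductions: spread out and use Lemma \ref{SpecialBaseChange} to assume that $A$ and $X$ are defined over a field $L_0$ finitely generated over $\BF_p$; divide by $\Stab_A(X)$ and translate to arrange $\Stab_A(X)=0$ and $0\in X$ (Lemma \ref{SpecialImage}); after a finite extension of $L_0$ arrange that $\tau\colon\Tr_{L/\bar\BF_p}A\to A$ is defined over $L_0$ and that $\Tr_{L/\bar\BF_p}A$ has a model over a finite subfield $\BF_q\subset L_0$; and, using the reduction due to Ghioca, Moosa and Scanlon \cite{GhiocaMoosa} together with Lemma \ref{PerfectionPoints}, replace $\Gamma$ by $A(L_0^\per)$, which has finite rank, so that the hypothesis becomes that $X(L_0^\per)$ is Zariski dense in $X$. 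All of this preserves ordinariness (resp.\ supersingularity), and these are precisely the hypotheses appearing in Conjecture \ref{fullMLreduction2}.

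Next I would put everything over a local field: by Proposition \ref{NiceCompletion} choose an embedding $L_0\hookrightarrow K$ with valuation ring $R$ and residue field $\BF_q$ such that $A_K$ extends to an abelian scheme $\CA$ over $R$, and by Lemma \ref{ReplaceIsogenous} assume $\CA$ is nice; let $\CX$ be the schematic closure of $X_K$. Using the exact sequence $0\to\hat\CA(\Rper)\to\CA(\Rper)\to\CA(\bar k)$ one translates $\CX$ so that the relevant points lie in $\hat\CA(\bar R)$. The crucial consequence of the ordinary/supersingular hypothesis is that $\hat\CA$ is then isoclinic, so by the single--slope case of Proposition \ref{CSDConstant2} the canonical isomorphism $\hat\CA_{\Spf(\Rper)}\cong(\hat\CA_k)_{\Spf(\Rper)}$, and hence the endomorphism $F_{\hat\CA}$ of Construction \ref{FConstr}, is already defined over $R$; equivalently $\hat\CA$ is defined over $k=\BF_q$ in the sense of Section \ref{sec:FormalSchemes}, and $\hat\CA^{(p^i)}\cong\hat\CA$ over $R$ for every $i$. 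The goal is then to verify that $X$ is $K/K^{p^i}$-special in $A$ (Definition \ref{RelSpecialDef}) for all $i\geq 0$; granted that, Theorem \ref{RelSpecial} yields that $X_{\bar K}$ is special in $A_{\bar K}$, and reversing the reductions via Lemma \ref{SpecialBaseChange} and Lemma \ref{SpecialImage} gives that the original $X$ is special.

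To obtain the $K/K^{p^i}$-special structure for a fixed $i$, the plan is to apply Conjecture \ref{fullMLreduction2} over the finitely generated field $L_0^{p^i}$, whose perfection is again $L_0^\per$. Let $A'$ be the abelian variety over $L_0^{p^i}$ obtained from $A/L_0$ by transport of structure along $a\mapsto a^{p^i}$, so that $A'_{L_0^\per}$ is canonically identified with $(A_{L_0^\per})^{(p^i)}$ and, $L_0^\per$ being perfect, with $A_{L_0^\per}$; transport $X$ to $X'\subset A'_{L_0^\per}$. Using $\BF_q^{p^i}=\BF_q$ one checks that $(L_0^{p^i},A',X')$ still satisfies the hypotheses of Conjecture \ref{fullMLreduction2}, which then provides $c'\in A'(L_0^\per)$ with $X'-c'$ defined over $L_0^{p^i}\subset K^{p^i}$. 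Base changing, this is a subvariety $Y$ of $B:=A'_{K^{p^i}}$ defined over $K^{p^i}$, and combined with the relative $p^i$--Frobenius $\phi_i\colon A_K\to(A_K)^{(p^i)}\cong B_K$ and a Verschiebung isogeny $h\colon B_K\to A_K$ (with $h\circ\phi_i=[p^i]$) it produces, after completing along the zero section, a translate of $\hat\CX$ by an element of $\hat\CA(\bar R')$ for a finite extension $R'$ which is defined over $R^{p^i}$, i.e.\ the data required in Definition \ref{RelSpecialDef}. Once this holds for all $i$ one is exactly in the situation handled by Theorem \ref{RelSpecial}, whose proof runs the formal--group analogue of the descent in the proof of Theorem \ref{pfgML} via Lemma \ref{FrobeniusDescent}, Lemma \ref{FormalFieldDefinition}, Proposition \ref{FormalDescent} and Theorem \ref{SpecialFChar}.

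I expect the main obstacle to be precisely this last step: converting the output of Conjecture \ref{fullMLreduction2} over the imperfect field $L_0^{p^i}$ — a priori a statement about a Frobenius twist of $X$ relative to an $L_0^{p^i}$-model of $A$ — into the hybrid $K/K^{p^i}$-special condition for $X$ itself, and doing so compatibly for all $i$ simultaneously even though $A$ does not descend to $K^{p^i}$. Keeping the Frobenius twists ($A$, $A'$, $(A_K)^{(p^i)}$ and the isogenies $\phi_i$, $h$), the $i$--dependent translates $c'$, and the several fields of definition under control, and verifying that the resulting formal--scheme statements genuinely match the hypotheses of Proposition \ref{FormalDescent}, is the technical heart of the argument; it is exactly here that the single--slope property of $\hat\CA$ for ordinary and supersingular $A$ — equivalently, that $F_{\hat\CA}$ is defined over $R$ — makes the descent go through.
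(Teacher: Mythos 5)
Your overall architecture does match the paper's: one direction is Proposition \ref{Reduction2}, and for the converse you reduce via \cite[Theorem 2.2]{GhiocaMoosa} and Lemma \ref{PerfectionPoints} to the setting of Conjecture \ref{fullMLreduction1}, pass to a local field by Proposition \ref{NiceCompletion}, aim to verify that $X$ is $K/K^{p^i}$-special for all $i$, and finish with Theorem \ref{RelSpecial}. But the step you yourself flag as the ``technical heart'' is, as proposed, not just technical -- it fails. You apply Conjecture \ref{fullMLreduction2} to $X'$, the transport of structure of $X$ itself (equivalently the Frobenius twist $X^{(p^i)}$), obtaining $c'$ with $X'-c'$ defined over $L_0^{p^i}$, and then propose to come back to $X$ via a Verschiebung $h$ with $h\circ\phi_i=[p^i]$. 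However $h$ carries $X^{(p^i)}=\phi_i(X)$ onto $[p^i]X$, which for a subvariety with trivial stabilizer is in general \emph{not} a translate of $X$ (a curve of genus $\geq 2$ in its Jacobian already shows this). So the data $(B,\,Y=X'-c',\,h)$ never satisfy the identity $X_{\bar K}=h(Y_{\bar K})+a$ demanded by Definition \ref{RelSpecialDef}: there is no algebraic morphism that undoes the twist, since $\phi_i^{-1}$ exists only on points of perfect fields. (Relatedly, the ``canonical identification'' of $A'_{L_0^\per}$ with $A_{L_0^\per}$ is only Frobenius--semilinear; it transports density of $L_0^\per$-points, but it already shows that the output of Conjecture \ref{fullMLreduction2} over $L_0^{p^i}$ concerns a twist of $X$, not $X$.)

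The missing idea -- and what the paper actually does -- is to untwist at the level of points rather than of subvarieties. By Lemma \ref{PerfectionPoints} the group $\Gamma/q^i\Gamma$ is finite, so some coset $\gamma+q^i\Gamma$ meets $X(L_0^\per)$ in a Zariski dense set; writing these points as $\gamma+q^it=\gamma+V(F(t))$ for $t\in T\subset\Gamma$, one applies Conjecture \ref{fullMLreduction2} not to $X^{(q^i)}$ but to a suitable irreducible component $Y$ of the Zariski closure of $F(T)$ in $A^{(q^i)}_{L_0^\per}$, which by construction satisfies $V(Y)+\gamma=X$; its stabilizer is finite because it is contained in $\ker V$, and the trace hypotheses over $L_0^{q^i}$ hold by \cite[Theorem 6.4]{ConradTrace}. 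The conjecture then yields $a$ with $Y+a$ defined over $L_0^{q^i}$, and there remains the integrality bookkeeping you leave implicit: Lemma \ref{Blub} together with the surjectivity of $\CA^{(q^i)}(R^{q^i})\to\CA^{(q^i)}(k)$ lets one replace $a$ by $a-a'$ with $a-a'\in\hat\CA^{(q^i)}(\Rper)$, so that $X=V(Y+a-a')+V(a'-a)+\gamma$ with $V(a'-a)+\gamma\in\hat\CA(\Rper)$ -- exactly the $K/K^{q^i}$-special datum of Definition \ref{RelSpecialDef}, after which Theorem \ref{RelSpecial} applies. Without this coset/Verschiebung construction of $Y$, your argument cannot produce the required datum, so the proposal has a genuine gap at its central step.
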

\begin{proof}
One direction was already proved in Proposition \ref{Reduction2} above. Now we prove that Conjecture \ref{fullMLreduction2} implies Conjecture \ref{fullML}.

By \cite[Theorem 2.2]{GhiocaMoosa}, Conjecture \ref{fullML} is implied by:
\begin{conjecture}\label{fullMLreduction1}
  Let $L_0$ be a field which is finitely generated over $\BF_p$, $L$ an algebraic closure of $L_0$ and $L_0^\per$ the perfect closure of $L_0$ in $L$.  Let $A$ be a semiabelian variety over $L_0$, let $X \subset A$ an irreducible subvariety and let $\Gamma\subset A(L_0^{\per})$ a subgroup of finite rank. If $X(L_0^{\per})\cap \Gamma$ is Zariski dense in $X_L$, then $X_L$ is a special subvariety of $A_L$. 
\end{conjecture}
Furthermore, from the proof of \cite[Theorem 2.2]{GhiocaMoosa}, one sees that if one only wants to prove Conjecture \ref{fullML} for abelian varieties which are ordinary or supersingular, then it suffices to prove Conjecture \ref{fullMLreduction1} for such abelian varieties. This is what we do now. 

After replacing $L_0$ by a finite extension we may assume that the canonical morphism $\tau\colon \Tr_{L/\bar \BF_p}A\to A$ is defined over $L_0$ and that there exists a finite subfield $\BF_q$ of $L_0$ over which $\Tr_{L/\bar\BF_p}A$ can be defined. After dividing by $\Stab_{A}(X)$ we may assume that $\Stab_{A}(X)=0$.

By Proposition \ref{NiceCompletion} there exists an embedding of $L_0$ into a local field $K$ such that the abelian variety $A_{K}$ extends to a abelian scheme $\CA$ over the valuation ring $R$ of $K$ which is isogenous to a nice abelian scheme over $R$ and such that $\Gamma\subset A(L_0^\text{per})\subset \CA(\Rper)$.

Let $\CX$ be the schematic closure of $X_K$ inside $\CA$. By Lemma \ref{SpecialBaseChange} it suffices to prove that $X_{\bar K}$ is special in $\CA_{\bar K}$. 

Since there is an exact sequence 
\begin{equation*}
  0\to \hat\CA(\Rper) \to \CA(\Rper)\to \CA(k) \to 0
\end{equation*}
with $\CA(k)$ finite there exists $\gamma\in \Gamma\cap X(L_0^\text{per})$ such that $X(\bar K)\cap (\gamma+(\Gamma\cap\hat\CA(\Rper))$ is Zariski dense in $X_{\bar K}$. After replacing $L_0$ by a finite extension we may assume that $\gamma\in A(L_0)$. Then after translating $X$ by $X-\gamma$ and replacing $\Gamma$ by $\Gamma\cap\hat\CA(\Rper)$ we may assume that $\Gamma\subset \hat\CA(\Rper)$ and $0\in X$.

Fix a finite subfield $\BF_q$ of $L_0$ over which $\Tr_{L/\bar\BF_p}A_L$ can be defined. By Theorem \ref{RelSpecial} it now suffices to show that $\CX_K$ is $K/K^{q^i}$-special in $A$ for all $i\geq 0$. Thus we fix such an $i$. We will work with the abelian variety $A^{(q^i)}$, which is naturally defined over $L_0^{q^i}$, together with the Verschiebung homomorphism $V\colon A_{L_0}^{(q^i)}\to A$. By Lemma \ref{PerfectionPoints} the group $\Gamma/q^i\Gamma$ is finite. Thus there exists $\gamma\in \Gamma$ such that $X(L_0^\text{per})\cap (\gamma+q^i\Gamma)$ is Zariski dense in $X_{\bar K}$. Let $T\subset \Gamma$ such that $\gamma+q^iT=X(L_0^\text{per})\cap (\gamma+q^i\Gamma)$. Let $F\colon A\to A^{(q^i)}$ be the relative $q^i$-Frobenius and $Y\subset A_{L_0^\text{per}}^{(q^i)}$ an irreducible component of the Zariski closure of $F(T)\subset A^{(q^i)}(L_0^\text{per})$. The fact that $V\circ F=[q^i]$ implies $V(Y)+\gamma=X$.

By \cite[Theorem 6.4]{ConradTrace} the formation of $\Tr_{L/\bar\BF_p}A_{L}$ commutes with purely inseparable base change. Thus $\tau^{(q^i)}\colon \Tr_{L/\bar\BF_p}A_{L}\cong (\Tr_{L/\bar\BF_p}A_{L})^{(q^i)}\to A_{L}^{(q^i)}$ is the $L/\bar\BF_q$-trace of $A_{L}^{(q^i)}$. This morphism is defined over $L_0^{q^i}$. Since $\Stab_{A}(X)=0$ the stabilizer $\Stab_{A_{L_0^{\text{per}}}^{(q^i)}}(Y)$ is contained in the kernel of $V$ and thus is finite. Thus Conjecture \ref{fullMLreduction2} applied to to $Y\subset A_{L_0^\text{per}}^{(q^i)}$ gives an element $a\in A^{(q^i)}(L_0^\text{per})$ such that $Y+a$ is defined over $L_0^{q^i}$.

 We consider $a$ as an element of $A^{(q^i)}(K^\text{per})$. By the choice of embedding $L_0\into K$, the element $V(a)\in A(L_0^\text{per})$ lies in $\CA(\Rper)$. Hence $q^ia=F(V(a))\in\CA^{(q^i)}(\Rper)$. Thus $a\in \CA^{(q^i)}(\Rper)$ by Lemma \ref{Blub}. Since the natural map $\CA^{(q^i)}(R^{q^i})\to \CA^{(q^i)}(k)$ is surjective, there exists $a'\in \CA^{(q^i)}(R^{q^i})$ such that $a-a'\in \hat\CA^{(q^i)}(\Rper)$. Then $Y+a-a'$ is defined over $K^{q^i}$ and $V(Y+a-a')+V(a'-a)+\gamma=X$ with $V(a'-a)+\gamma\in \hat\CA(\Rper)$. Thus $\CX_{\bar K}$ is $K/K^{q^i}$-special and we are done.
\end{proof}

%%%%%%%%%%%%%%%%%%%%%%%%%%%%%%%%%%%%%%%%%%%%%%%%%%%%%%%%%%%%%%%%%%%%%%%%%%%%%%%%%%%%%%%%%%%

\bibliography{references}

\begin{thebibliography}{10}

\bibitem{SGA3I}
{\em Sch\'emas en groupes. {I}: {P}ropri\'et\'es g\'en\'erales des sch\'emas en
  groupes}.
\newblock S\'eminaire de G\'eom\'etrie Alg\'ebrique du Bois Marie 1962/64 (SGA
  3). Dirig\'e par M. Demazure et A. Grothendieck. Lecture Notes in
  Mathematics, Vol. 151. Springer-Verlag, Berlin, 1970.

\bibitem{AbramovichVoloch}
Dan Abramovich and Jos{\'e}~Felipe Voloch.
\newblock Toward a proof of the {M}ordell-{L}ang conjecture in characteristic
  {$p$}.
\newblock {\em Internat. Math. Res. Notices}, (5):103--115, 1992.

\bibitem{AtiyahMacdonald}
M.~F. Atiyah and I.~G. Macdonald.
\newblock {\em Introduction to commutative algebra}.
\newblock Addison-Wesley Publishing Co., Reading, Mass.-London-Don Mills, Ont.,
  1969.

\bibitem{BerkovichVanishingCyclesII}
Vladimir~G. Berkovich.
\newblock Vanishing cycles for formal schemes. {II}.
\newblock {\em Invent. Math.}, 125(2):367--390, 1996.

\bibitem{NeronModels}
Siegfried Bosch, Werner L{\"u}tkebohmert, and Michel Raynaud.
\newblock {\em N\'eron models}, volume~21 of {\em Ergebnisse der Mathematik und
  ihrer Grenzgebiete (3) [Results in Mathematics and Related Areas (3)]}.
\newblock Springer-Verlag, Berlin, 1990.

\bibitem{BourbakiCA}
Nicolas Bourbaki.
\newblock {\em Commutative algebra. {C}hapters 1--7}.
\newblock Elements of Mathematics (Berlin). Springer-Verlag, Berlin, 1989.
\newblock Translated from the French, Reprint of the 1972 edition.

\bibitem{ConradIrreducible}
Brian Conrad.
\newblock Irreducible components of rigid spaces.
\newblock {\em Ann. Inst. Fourier (Grenoble)}, 49(2):473--541, 1999.

\bibitem{ConradTrace}
Brian Conrad.
\newblock Chow's {$K/k$}-image and {$K/k$}-trace, and the {L}ang-{N}\'eron
  theorem.
\newblock {\em Enseign. Math. (2)}, 52(1-2):37--108, 2006.

\bibitem{deJong1}
A.~J. de~Jong.
\newblock Crystalline {D}ieudonn\'e module theory via formal and rigid
  geometry.
\newblock {\em Inst. Hautes \'Etudes Sci. Publ. Math.}, (82):5--96 (1996),
  1995.

\bibitem{Demazure}
Michel Demazure.
\newblock {\em Lectures on {$p$}-divisible groups}.
\newblock Lecture Notes in Mathematics, Vol. 302. Springer-Verlag, Berlin,
  1972.

\bibitem{SGA3II}
Michel Demazure and Alexander Grothendieck~(ed.).
\newblock {\em {Sch\'emas en groupes II (SGA 3)}}, volume 152 of {\em Lecture
  Notes in Mathematics}.
\newblock {Springer-Verlag}, 1970.

\bibitem{Eisenbud}
David Eisenbud.
\newblock {\em Commutative algebra}, volume 150 of {\em Graduate Texts in
  Mathematics}.
\newblock Springer-Verlag, New York, 1995.
\newblock With a view toward algebraic geometry.

\bibitem{ChaiFaltings}
Gerd Faltings and Ching-Li Chai.
\newblock {\em Degeneration of abelian varieties}, volume~22 of {\em Ergebnisse
  der Mathematik und ihrer Grenzgebiete (3) [Results in Mathematics and Related
  Areas (3)]}.
\newblock Springer-Verlag, Berlin, 1990.
\newblock With an appendix by David Mumford.

\bibitem{GhiocaMoosa}
Dragos Ghioca and Rahim Moosa.
\newblock Division points on subvarieties of isotrivial semi-abelian varieties.
\newblock {\em Int. Math. Res. Not.}, pages Art. ID 65437, 23, 2006.

\bibitem{EGA1}
A.~Grothendieck.
\newblock \'{E}l\'ements de g\'eom\'etrie alg\'ebrique. {I}. {L}e langage des
  sch\'emas.
\newblock {\em Inst. Hautes \'Etudes Sci. Publ. Math.}, (4):228, 1960.

\bibitem{EGA4II}
A.~Grothendieck.
\newblock \'{E}l\'ements de g\'eom\'etrie alg\'ebrique. {IV}. \'{E}tude locale
  des sch\'emas et des morphismes de sch\'emas. {II}.
\newblock {\em Inst. Hautes \'Etudes Sci. Publ. Math.}, (24):231, 1965.

\bibitem{HazewinkelFormalGroups}
Michiel Hazewinkel.
\newblock {\em Formal groups and applications}, volume~78 of {\em Pure and
  Applied Mathematics}.
\newblock Academic Press Inc. [Harcourt Brace Jovanovich Publishers], New York,
  1978.

\bibitem{HrushovskiML}
Ehud Hrushovski.
\newblock The {M}ordell-{L}ang conjecture for function fields.
\newblock {\em J. Amer. Math. Soc.}, 9(3):667--690, 1996.

\bibitem{KappenUniform}
Christian Kappen.
\newblock Uniformly rigid spaces.
\newblock \url{http://arxiv.org/abs/1009.1056}.

\bibitem{MessingCrystals}
William Messing.
\newblock {\em The crystals associated to {B}arsotti-{T}ate groups: with
  applications to abelian schemes}.
\newblock Lecture Notes in Mathematics, Vol. 264. Springer-Verlag, Berlin,
  1972.

\bibitem{Oort1}
Frans Oort.
\newblock Subvarieties of moduli spaces.
\newblock {\em Invent. Math.}, 24:95--119, 1974.

\bibitem{OortZink}
Frans Oort and Thomas Zink.
\newblock Families of {$p$}-divisible groups with constant {N}ewton polygon.
\newblock {\em Doc. Math.}, 7:183--201 (electronic), 2002.

\bibitem{PinkRoessler}
Richard Pink and Damian R\"ossler.
\newblock On {$\psi$}-invariant subvarieties of semiabelian varieties and the
  {M}anin-{M}umford conjecture.
\newblock {\em J. Algebraic Geom.}, 13(4):771--798, 2004.

\bibitem{Roessler1}
Damian R\"ossler.
\newblock {O}n the {M}anin-{M}umford and {M}ordell-{L}ang conjectures in
  positive characteristic.
\newblock
  \url{http://www.math.univ-toulouse.fr/~rossler/mypage/pdf-files/ml4.pdf}.

\bibitem{ScanlonManinMumford}
Thomas Scanlon.
\newblock A positive characteristic {M}anin-{M}umford theorem.
\newblock {\em Compos. Math.}, 141(6):1351--1364, 2005.

\bibitem{stacks-project}
The {Stacks Project Authors}.
\newblock \itshape stacks project.
\newblock \url{http://stacks.math.columbia.edu}, 2013.

\bibitem{Weibel}
Charles~A. Weibel.
\newblock {\em An introduction to homological algebra}, volume~38 of {\em
  Cambridge Studies in Advanced Mathematics}.
\newblock Cambridge University Press, Cambridge, 1994.

\end{thebibliography}
\bibliographystyle{plain}
%%%%%%%%%%%%%%%%%%%%%%%%%%%%%%%%%%%%%%%%%%%%%%%%%%%%%%%%%%%%%%%%%%%%%%%%%%%%%%%%%%%%%%%%%%%

\end{document}